\numberwithin{equation}{section}
\theoremstyle{plain}
\newtheorem{Assump}{Assumption}[section]
\newtheorem{Theo}[Assump]{Theorem}
\newtheorem{Prop}[Assump]{Proposition}
\newtheorem{Lemma}[Assump]{Lemma}
\theoremstyle{remark}
\newtheorem{Remark}[Assump]{Remark}
\newtheorem{example}[Assump]{Example}
\newcommand{\ca}{\varphi}
\newcommand{\cB}{\bm{B}}
\newcommand{\cA}{\bm{A}}
\newcommand{\cb}{\psi}
\newcommand{\bbl}{\mathbbm{l}}
\newcommand{\dto}{\Rightarrow}
\newcommand{\pto}{\xrightarrow{p}}
\newcommand{\mathph}[1]{\mathrel{\phantom{#1}}}
\newcommand{\ang}[1]{\langle{#1}\rangle}
\newcommand{\floor}[1]{\lfloor{#1}\rfloor}
\newcommand{\ceil}[1]{\lceil{#1}\rceil}
\newcommand{\op}[1]{o_{\Prob}({#1})}
\newcommand*\diff{\mathop{}\!\mathrm{d}}
\newcommand{\ind}{\mathbbm{1}}
\newcommand{\ep}{\epsilon}
\newcommand{\ta}{\widetilde{a}}
\newcommand{\ba}{\bm{a}}
\newcommand{\bbu}{\mathbbm{u}}
\newcommand{\tc}{\widetilde{c}}
\newcommand{\cC}{\mathcal{C}}
\newcommand{\hC}{\hat{C}}
\newcommand{\dC}{\dot{C}}
\newcommand{\tbbC}{\tilde{\mathbbm{C}}}
\newcommand{\bbC}{\mathbbm{C}}
\newcommand{\cD}{\mathcal{D}}
\newcommand{\be}{\bm{e}}
\newcommand{\cF}{\mathcal{F}}
\newcommand{\bF}{\bm{F}}
\newcommand{\hF}{\hat{F}}
\newcommand{\bh}{\bm{h}}
\newcommand{\bI}{\bm{I}}
\newcommand{\ck}{\mathcal{k}}
\newcommand{\cK}{\mathcal{K}}
\newcommand{\cl}{\mathcal{l}}
\newcommand{\bM}{\bm{M}}
\newcommand{\tM}{\tilde{M}}
\newcommand{\tbM}{\widetilde{\bm{M}}}
\newcommand{\utbM}{\underline{\widetilde{\bm{M}}}}
\newcommand{\ubM}{\underline{\bm{M}}}
\newcommand{\uutbM}{\underline{\underline{\widetilde{\bm{M}}}}}
\newcommand{\bbR}{\mathbbm{R}}
\newcommand{\cT}{\mathcal{T}}
\newcommand{\bu}{\bm{u}}
\newcommand{\bU}{\bm{U}}
\newcommand{\cU}{\mathcal{U}}
\newcommand{\hU}{\hat{U}}
\newcommand{\tU}{\tilde{U}}
\newcommand{\tbU}{\tilde{\bm{U}}}
\newcommand{\hbU}{\hat{\bm{U}}}
\newcommand{\bv}{\bm{v}}
\newcommand{\cV}{\mathcal{V}}
\newcommand{\cW}{\mathcal{W}}
\newcommand{\bx}{\bm{x}}
\newcommand{\bX}{\bm{X}}
\newcommand{\uubx}{\underline{\underline{\bm{x}}}}
\newcommand{\cX}{\mathcal{X}}
\newcommand{\tZ}{\tilde{Z}}
\newcommand{\bbZ}{\mathbbm{Z}}
\newcommand{\bbW}{\mathbbm{W}}
\newcommand{\bbV}{\mathbbm{V}}
\newcommand{\bbG}{\mathbbm{G}}
\newcommand{\bbN}{\mathbbm{N}}
\newcommand{\SC}{{\mathrm{SC}}}
\newcommand{\sumb}{\sum_{i=1}^{b}}
\newcommand{\N}{\mathbbm{N}}
\newcommand{\R}{\mathbbm{R}}
\newcommand{\Z}{\mathbbm{Z}}
\DeclareMathOperator{\Var}{Var}
\DeclareMathOperator{\Cov}{Cov}
\DeclareMathOperator*{\argmin}{arg\,min}
\DeclareMathOperator*{\argmax}{arg\,max}
\newcommand{\Exp}{\mathbb{E}}
\newcommand{\Prob}{\mathbb{P}}
\newcommand{\ctheta}{\check{\theta}}
\newcommand{\cL}{\mathcal{L}}
\newcommand{\hcL}{\hat{\mathcal{L}}}
\newcommand{\tcL}{\tilde{\mathcal{L}}}
\newcommand{\ccL}{\check{\mathcal{L}}}
\newcommand{\trho}{\tilde{\rho}}
\newcommand{\tb}{\tilde{b}}
\newcommand{\bw}{\bm{w}}
\newcommand{\cbu}{\check{\bm{u}}}
\newcommand{\tRSS}{\widetilde{RSS}}
\newcommand{\hRSS}{\widehat{RSS}}
\begin{document}
\begin{frontmatter}

\title{Multiple block sizes and overlapping blocks for multivariate time series extremes}
\runtitle{Multiple block sizes and overlapping blocks}

\author{\fnms{Nan} \snm{Zou,}\corref{}\ead[label=e1]{nan.zou@utoronto.ca}}
\author{\fnms{Stanislav} \snm{Volgushev\thanksref{T2}}\corref{}\ead[label=e2]{stanislav.volgushev@utoronto.ca}}
\and
\author{\fnms{Axel} \snm{B\"ucher\thanksref{T1}}\corref{}\ead[label=e3]{axel.buecher@hhu.de}}

\runauthor{Zou, Volgushev and B\"ucher}

\address{
Heinrich-Heine-Universität D\"usseldorf \\  
Mathematisches Institut \\ 
Universit\"atsstr.~1 \\ 
40225 D\"usseldorf, Germany \\ 
\printead{e3}}

\address{
Department of Statistical Sciences \\
University of Toronto \\
100 St. George St. \\
Toronto, M5S 3G3, Ontario, Canada \\ 
\printead{e1} \\  \phantom{E-mail:\ }\printead*{e2}}

\affiliation{University of Toronto and Heinrich-Heine-Universität D\"usseldorf}

\thankstext{T1}{Supported by the collaborative research center ``Statistical modeling of nonlinear dynamic processes" (SFB 823) of the German Research Foundation (DFG).}
\thankstext{T2}{Supported by a Discovery Grant from the Natural
Sciences and Engineering Research Council of Canada.}

\begin{abstract}
Block maxima methods constitute a fundamental part of the statistical toolbox in extreme value analysis. However, most of the corresponding theory is derived under the simplifying assumption that block maxima are independent observations from a genuine extreme value distribution. In practice however, block sizes are finite and observations from different blocks are dependent. Theory respecting the latter complications is not well developed, and, in the multivariate case, has only recently been established for disjoint blocks of a single block size. We show that using overlapping blocks instead of disjoint blocks leads to a uniform improvement in the asymptotic variance of the multivariate empirical distribution function of rescaled block maxima and any smooth functionals thereof (such as the empirical copula), without any sacrifice in the asymptotic bias. We further derive functional central limit theorems for multivariate empirical distribution functions and empirical copulas that are uniform in the block size parameter, which seems to be the first result of this kind for estimators based on block maxima in general. The theory allows for various aggregation schemes over multiple block sizes, leading to substantial improvements over the single block length case and opens the door to further methodology developments. In particular, we consider bias correction procedures that can improve the convergence rates of extreme-value estimators and shed some new light on estimation of the second-order parameter when the main purpose is bias correction.
\end{abstract}

\begin{keyword}[class=AMS]
\kwd[Primary ]{62G32} 
\kwd{62E20} 
\kwd[; secondary ]{60G70}	
\kwd{62G20.} 
\end{keyword}

\begin{keyword}
\kwd{extreme-value copula}
\kwd{second order condition}
\kwd{bias correction}
\kwd{mixing coefficients}
\kwd{block maxima}
\kwd{empirical process}
\end{keyword}

\end{frontmatter}

\section{Introduction} 
\label{sec:intro}

Extreme-value theory provides a central statistical ingredient in various fields like hydrology, meteorology and financial risk management, which all have to deal with highly unlikely but important events, see, e.g., \cite{BeiGoeSegTeu04} for an overview. Mathematically, the properties of such events can be understood by studying the (multivariate) tail of probability distributions and the potential temporal dependence of tail events. Respective statistical methodology typically relies on some version of one of two fundamental approaches: the peaks-over-threshold (POT) method which considers only observations that exceed a certain high threshold, or the block maxima (BM) method which is based on taking maxima of observed values over consecutive blocks of observations and treating those maxima as (approximate) data from an extreme value distribution.

While historically the BM approach was the first to be invented \citep{Gum58}, the mathematical interest soon shifted towards the POT approach. POT methods are by now well understood, and there is a rich and mature literature on various theoretical and practical aspects of such methods, see \cite{DehFer06} for a review of many classical results and \citealp{DreRoo10, CanEinKhaLae15, FouDehMer15, EinDehZho16}  for recent developments. In the last couple of years, there has been an increased interest in the theoretical aspects of the BM approach for univariate observations, and recent work in this direction includes \cite{Dom15, FerDeh15, DomFer17, BucSeg18, BucSeg18b}. The case of multivariate observations has received much less attention, and the only theoretical analysis of (component-wise) block maxima in the multivariate setting that we are aware of is due to \cite{BucSeg14}. The present paper is motivated by this apparent imbalance of theoretical developments for BM methods as compared to POT methods in the multivariate case. 

It is well known that the analysis of multivariate distributions can be decomposed into two distinct parts: the analysis of marginal distributions and the analysis of the dependence structure as described by the associated copula. Classical results from extreme-value theory further show that the possible dependence structures of extremes have to satisfy certain constraints, but do not constitute a parametric family. In fact, the possible dependence structures  may be described in various equivalent ways (see, e.g., \citealp{Res87, BeiGoeSegTeu04, DehFer06}): by the exponent measure $\mu$ \citep{BalRes77}, by the spectral measure $\Phi$ \citep{DehRes77}, by the Pickands dependence function $A$ \citep{Pic81}, by the stable tail dependence function $L$ \citep{Hua92}, by the tail copula $\Lambda$ \citep{SchSta06}, by the madogram~$\nu$ \citep{NavGuiCooDie09}, by the extreme-value copula~$C_\infty$ (see \cite{GudSeg10} for an overview), or by other less popular objects. 

Since statistical theory for estimators of, e.g., the Pickands dependence function, the stable tail dependence function, or the madogram may be derived from corresponding results for the empirical copula process (see, e.g., \citealp{GenSeg09}), we focus on constructing estimators for the extreme-value copula $C_\infty$, which can in turn serve as a fundamental building block for subsequent developments. This approach was also taken in the above-mentioned reference \cite{BucSeg14}, who analyse the empirical copula process based on (disjoint) block maxima, and then apply the results to obtain the asymptotic behavior of estimators for the Pickands dependence function. 

The basic observational setting that we consider is the same as in \cite{BucSeg14}: data are assumed to come from a strictly stationary multivariate time series, and we assume that the copula of the random vector of component-wise block-maxima converges, as the block length tends to infinity, to a copula $C_\infty$ which is our main object of interest. However, in contrast to \cite{BucSeg14}, we base our estimators on overlapping instead of disjoint blocks. While the corresponding theoretical analysis is more involved due to the additional dependence introduced by overlaps in the blocks, we show that this always leads to a reduction in the asymptotic variance of the resulting empirical copula process and smooth functionals thereof. Another major difference with \cite{BucSeg14} is that we consider functional central limit theorems which explicitly involve the block size as a parameter. This generalization is crucial for various applications, some of which are considered in Section~\ref{sec:app}. 

As a first simple but useful application, we consider estimators for $C_\infty$ which are based on aggregating over various block length parameters, thereby providing estimators which are less sensitive to the choice of a single block length parameter. The corresponding asymptotic theory is a straightforward consequence of the asymptotic theory mentioned before. A Monte Carlo simulation study reveals the superiority of the aggregated estimators over their non-aggregated versions in typical finite-sample situations.

A second more involved application concerns the construction of bias-reduced estimators for $C_\infty$ (see \citealp{FouDehMer15, BeiEscGoeGui16} for recent proposals in the multivariate POT approach for i.i.d.\ observations). As is typically done when tackling the problem of bias reduction in extreme value statistics, the estimators are obtained by explicitly taking into account the second order structure of the extreme value model in the estimation step. We are not aware of any results on bias-reduced estimators within the block maxima framework in general. In fact, even for POT methods such results do not seem to exist in the multivariate time series setting (some results on the univariate time series case can be found in \citealp{DehMerZho16}). As a necessary intermediate step for bias correction, we need to consider estimation of a second order parameter which naturally shows up in the second order condition. We show that special care needs to be taken when estimating this parameter for its use in bias correction, and propose a penalized estimator which explicitly takes this specific aim into account. 

The improvement in both variance and bias of one of the estimators for $C_\infty$ proposed in this paper over the disjoint blocks estimator from \cite{BucSeg14} is illustrated in Figure~\ref{fig:fincomp}.

\begin{figure}[t!]
\begin{center}
\includegraphics[width=0.99\textwidth]{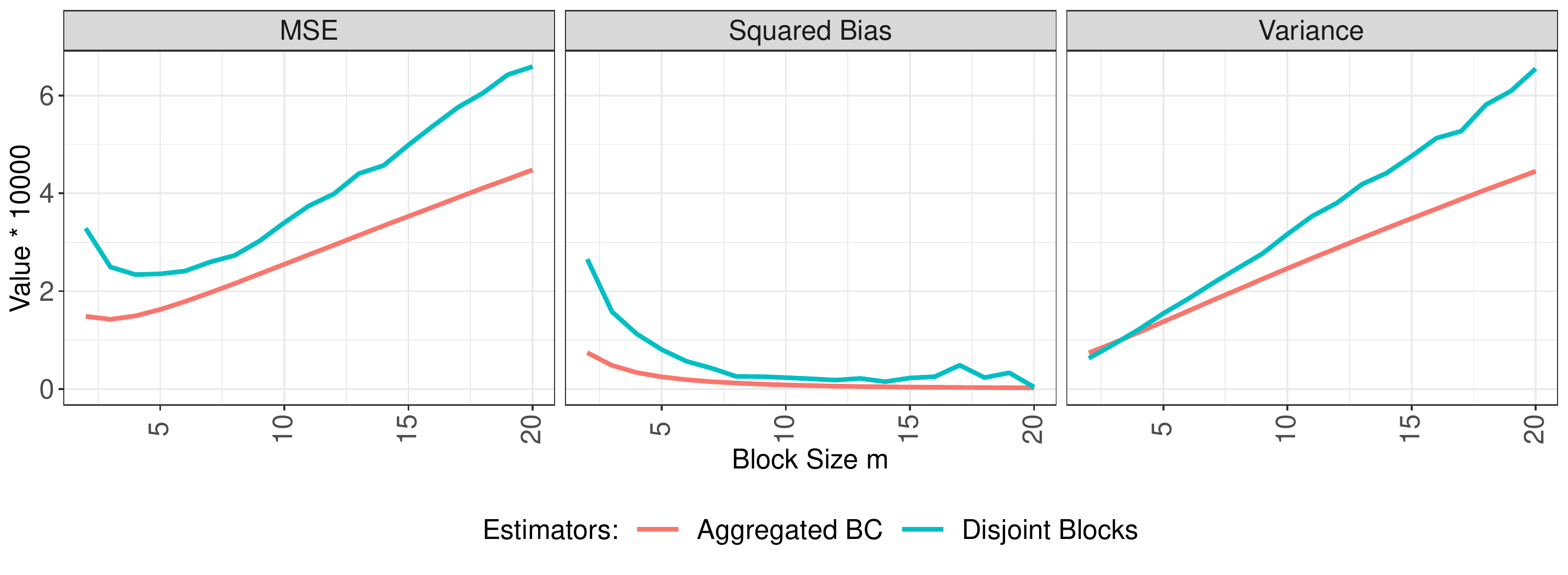}
\vspace{-.6cm}
\end{center}
\caption{\label{fig:fincomp} 
$10^{4} \times$average MSE, squared bias and variance of the disjoint blocks estimator from \cite{BucSeg14} and the aggregated bias corrected estimator proposed in this paper. Data generating process and estimators are as described in Section~\ref{sec:fin}, Model (M2).}
\vspace{-.3cm}
\end{figure}

The idea of using sliding/overlapping block maxima for statistical inference appears to be quite new to the extreme value community, whence similar results in the literature actually are rare, even in univariate situations. To the best of our knowledge, the idea first appeared in the context of estimating the extremal index of a univariate stationary time series, see \cite{RobSegFer09,Nor15,BerBuc18}. The only paper we are aware of in the classical univariate case is \cite{BucSeg18b}, which is restricted to the heavy tailed case. The idea of basing inference on multiple block sizes seems to be new, and is possibly transferable to the univariate case as well.

We further remark that there is a rich and mature literature that deals with estimation of extreme-value copulas and related objects when observations from an extreme-value copula are available (see, among many others, \citealp{Pic81}, \citealp{CapFouGen1997} for early contributions and \citealp{GenSeg09},\citealp{GudSeg10} for rank-based methods). However, the setting in that literature is different from ours since we do not assume that data from the extreme value copula are available directly.

The remaining parts of this paper are organized as follows: the sliding block maxima (empirical) copula process, including the block length as an argument of the process, is considered in Section~\ref{sec:master}. The applications on aggregated estimators, bias-reduced estimators and estimators of second order parameters are worked out in Section~\ref{sec:app}. Some theoretical examples, as well as a detailed Monte Carlo simulation study are presented in Section~\ref{sec:fin}. {All proofs are deferred to a supplementary material (\citealp{ZouVolBuc19supp}).} 

Throughout, for $\xi\in\R$, let $\ceil{\xi}$ be the smallest integer greater or equal to $\xi$. Let $\ang{\xi}$ be the largest integer smaller or equal to $\xi$ if $\xi
\geq0$ and the smallest integer greater or equal to $\xi$ if $\xi<0$. For $\bm u, \bm v \in \R^d$, write $\bu\leq \bv$ if $u_{j}\leq v_{j}$ for all $j$, and $\bu\nleq \bv$ if there exists $j$ such that $u_{j}>v_{j}$. Let $\bu\wedge\bv=(\min(u_{1},v_{1}),\dots,\min(u_{d},v_{d})).$ All convergences will be for $n\to\infty$, if not mentioned otherwise. The arrow $\dto$ denotes weak convergence in the sense of Hoffman-J{\o}rgensen, see \cite{VanWel96}.

\section{Functional weak convergence of empirical copula processes based on sliding block maxima} 
\label{sec:master}

Suppose $(\bX_{t})_{t\in \bbZ}=(X_{t,1},\dots,X_{t,d})_{t\in \bbZ}$ is a multivariate strictly stationary process, and that $(\bX_{t})_{t=1}^{n}$ is observable data. Let $m\in\{1, \dots, n\}$ be a block size parameter and, for $i=1, \dots,n-m+1$ and $j=1, \dots, d$, let $M_{m,i,j} = \max \{X_{t,j}:t\in [i,i+m)\cap \bbZ\}$ be the maximum of the $i$th sliding block of observations in the $j$th coordinate. For $\bm x=(x_1, \dots, x_d) \in \R^d$, let 
\begin{align*}
    &\bM_{m,i}=(M_{m,i,1},\dots,M_{m,i,d})
		\\
    &F_{m,j}(x)=\Prob(M_{m,1,j}\leq x)
		\\
    &F_{m}(\bx)=\Prob(\bM_{m,1}\leq \bx)
		\\
    &\bF_{m}(\bx)=(F_{m,1}(x_{1}),\dots,F_{m,d}(x_{d}))
		\\
    &\bF_{m}^{\leftarrow}(\bx)=(F_{m,1}^{\leftarrow}(x_{1}),\dots,F_{m,d}^{\leftarrow}(x_{d}))
		\\
    &U_{m,i,j}=F_{m,j}(M_{m,i,j})
		\\
    &\bU_{m,i}=(U_{m,i,1},\dots,U_{m,i,d}),
\end{align*}

where $G^{\leftarrow}$ denotes the left-continuous generalized inverse of a c.d.f.\ $G$. Subsequently, we assume that the marginal c.d.f.s of $X_{1,1}, \dots, X_{1,d}$ are continuous. 
In that case, the marginal c.d.f.s of $\bm M_{m,1}$ are continuous as well and
\[
C_{m}(\bu)=\Prob(\bU_{m,1}\leq \bu), \quad \bu \in [0,1]^{d},
\]
is the unique copula associated with $\bm M_{m,1}$. Throughout, we shall work under the following fundamental domain-of-attraction condition.
\begin{Assump}\label{assump:copula_convergence}
There exists a copula $C_{\infty}$ such that 
\[
\lim_{m\to\infty}C_{m}(\bu)= C_{\infty}(\bu), \qquad \bm u \in [0,1]^d.
\]
\end{Assump}

Typically, the limit $C_\infty$ will be an extreme value copula \citep{Hsi89, Hus90}, that is, {$C_\infty(\bm u^{1/s})^s=C_\infty(\bm u)$ for all $s>0$ and $\bm u \in [0,1]^d$ and}
\[
C_\infty(\bm u) = \exp\{- L(-\log u_1, \dots, -\log u_d) \}, \quad \bm u \in [0,1]^d,
\]
for some \textit{stable tail dependence function} $L:[0,\infty]^d \to [0,\infty]$ satisfying
\smallskip
\begin{compactenum}
\item[(i)] $L$ is homogeneous: $L(s\, \cdot) = sL(\cdot)$ for all  $s>0$;
\item[(ii)] $L(\bm e_j)=1$ for $j=1, \dots, d$, where $\bm e_j$ denotes the $j$th unit vector;
\item[(iii)] $\max(x_1, \dots, x_d) \le L(\bm x) \le x_1+ \dots + x_d$ for all $\bm x\in [0,\infty)^d$;
\item[(iv)] $L$ is convex;
\end{compactenum} \smallskip
see, e.g., \cite{BeiGoeSegTeu04}. By Theorem 4.2 in \cite{Hsi89}, this is for instance the case if the time series $(\bm X_t)_t$ is beta-mixing. However, $C_\infty$ is in general different from the extreme value attractor, say $C_\infty^{\rm{iid}}$, in case the observations are i.i.d.\ from the stationary distribution of the time series, see for instance Section 4.1 in \cite{BucSeg14}. In fact, (block) maxima calculated from time series naturally incorporate information about the serial dependence (as, e.g., measured by the multivariate extremal index, see Section 10.5.2. in \citealp{BeiGoeSegTeu04}), whence the BM approach is typically more suitable when it comes to, e.g., assessing return levels or periods. 
In the i.i.d.\ case, Assumption~\ref{assump:copula_convergence} is equivalent to the existence of a stable tail dependence function $L$ such that
\begin{align*} 
\lim_{t \to \infty} t \{ 1- C_1(\bm 1- \bm x / t)\} = L(\bm x), \qquad \bm x \in [0,\infty)^d,
\end{align*}
where the copula $C_1$ is naturally extended to a c.d.f.\ on $\R^d$.

Assumption~\ref{assump:copula_convergence} does not contain any information about the rate of convergence of $C_m$ to $C_\infty$. In many cases, more precise statements about this rate can be made, and it is even possible to write down higher order expansions for the difference $C_m - C_\infty$. For some of the material in the paper, we will assume the validity of such expansions. Recall that a function $\ca$ defined on the integers is regularly varying if $t\mapsto \ca(\ang{t})$ is regularly varying as a function $(0,\infty) \to \R$.

\begin{Assump}[Second order condition]\label{assump:high_order}~
There exists a regularly varying function
 $\ca: \N \to (0,\infty)$ with coefficient of regular variation $\rho_\ca < 0$ and a (necessarily continuous) non-null function $S$ on $[0,1]^d$ such that 
\begin{align*} 
C_{m}(\bu) - C_\infty(\bu) = \ca(m)S(\bu) + o(\ca(m)) \qquad (m \to \infty),
\end{align*}
uniformly in $\bu \in [0,1]^d$.
\end{Assump}

We refer to the accompanying paper \cite{BucVolZou19} for a detailed account on second order conditions in the i.i.d.\ case. In particular, the latter paper shows that the block maxima second order condition above follows from the more common second order condition imposed on a POT-type convergence to $L$ under fairly general assumptions, see also Equation (6) in \cite{FouDehMer15}. 
It was further shown in \cite{BucVolZou19}  that, in the i.i.d.\ case,  the function $\ca$ in the condition above must be regularly varying (the part can hence be removed from the assumption), that the function $S$ has certain homogeneity properties and that local uniform convergence on $[\delta,1]^d$ is sufficient for uniform convergence on $[0,1]^d$. Specific examples in the i.i.d. and time series case are discussed in more detail in Section~\ref{subsec:examples}.

\subsection{Estimation in the case of known marginal distributions.}\label{sec:fix}

We begin by estimating $C_\infty$ in the case of known marginal c.d.f.s $F_{1,1}, \dots, F_{1,d}$, which, on the level of proofs, is a necessary intermediate step when considering the realistic case of unknown marginal c.d.f.s in the subsequent section. For block size $m' \in \{1, \dots, n\}$, let
\begin{align} \label{eq:cnknown}
\hC_{n,m'}^{\circ}(\bu) &=\frac{1}{n-m'+1}\sum_{i=1}^{n-m'+1} \ind(\bU_{m',i}\leq \bu), \quad \bu \in [0,1]^d,
\end{align}
denote the empirical c.d.f.\ of the  sample of standardized sliding block maxima $\bm U_{m',1}, \dots, \bm U_{m',n-m'+1}$.
Subsequently, we will consider block sizes of the form $m'=\ang{ma}$ with scaling parameter $a>0$. The respective centred empirical process we are interested in is
\begin{align*}
\bbC_{n,m}^{\lozenge}(\bu,a)
&=
{\sqrt{n/m} \{ \hC_{n,\ang{ma}}^{\circ}(\bu) - C_{\ang{ma}}(\bu) \} } \\
&=
{\sqrt{n/m}\frac{1}{b_{a}}\sum_{i=1}^{b_{a}}\Big\{  \ind(\bU_{\ang{ma},i}\leq \bu)-\Prob(\bU_{\ang{ma},i}\leq \bu)\Big \},}
\end{align*}
where $b_a= n-\ang{ma}+1$. For the functional weak convergence results to follow, we consider $\bbC_{n,m}^{\lozenge}$ as an element of $(\ell^\infty([0,1]^d\times A), \| \cdot \|_\infty)$, the space of bounded function on $[0,1]^d\times A$ equipped with the supremum norm, where $A = [a_\wedge,a_\vee] \subset (0,\infty)$ is a fixed interval, {the case $a_\wedge=a_\vee$ being explicitly allowed}.  We impose the following assumptions on the block length parameter $m=m_n$ and the serial dependence of the time series.

\begin{Assump}\label{assump:mixing}
Denote by $\alpha(\cdot)$ and $\beta(\cdot)$ the $\alpha$ and $\beta$ mixing coefficients of the process $\{\bX_{t}\}_{t\in \bbZ}$, respectively. Assume \smallskip
\begin{compactenum}[\rm (i)] 
\item $m=m_{n}\to \infty, \ n/m\to \infty,$
\item $ \alpha(h)=o(h^{-(1+\varrho)})$ as $h\to\infty$, for some $\varrho>0$,
\item $ \beta(m)(n/m)^{1/2}\to 0,$
\item $ \alpha(m)(n/m)^{1/2+\zeta}\to 0,
\ \text{for some} \ \zeta \in (0,1/2).$
\end{compactenum}
\end{Assump}

Condition (i) is a typical condition in extreme value statistics, and in fact a necessary condition to allow for consistent estimation of $C_\infty$.
Condition (ii) is a short-range dependence condition that we introduce merely for technical reasons associated with our method of proof. At the cost of more sophisticated proofs, the condition may possibly be relaxed. However, since the condition is known to be satisfied for many common time series model, we feel that such a relaxation is not necessarily needed. Assumptions (iii) and (iv) relate the block length parameter to the serial dependence and allow for obtaining central limit theorems (alpha-mixing) and proofs of tightness based on coupling arguments (beta-mixing).

\begin{Theo}\label{theo:known}
 Under Assumptions \ref{assump:copula_convergence} and \ref{assump:mixing},
\begin{equation*}
    \bbC_{n,m}^{\lozenge}\dto\bbC^{\lozenge} \quad \text{ in } \quad  \ell^\infty([0,1]^d\times A),
\end{equation*}
where $\bbC^{\lozenge}$ denotes a tight centred Gaussian process on $[0,1]^{d}\times A$ with continuous sample paths and covariance function 
\begin{align*}
&\Cov\big(\bbC^{\lozenge}(\bu,a),\bbC^{\lozenge}(\bv,c)\big) \\
&= 
\int_{-a}^{0}(C_{\infty}(\bu^{1/a}))^{-\xi}(C_{\infty}(\bv^{1/c}\wedge\bu^{1/a}))^{\xi+a}(C_{\infty}(\bv^{1/c}))^{c-\xi-a} \diff \xi 
\\
&\mathph{\to a^{-1/2}\bigg[}+\int_{0}^{c-a}(C_{\infty}(\bv^{1/c}))^{c-a}(C_{\infty}(\bv^{1/c}\wedge\bu^{1/a}))^{a} \diff \xi
\\
&\mathph{\to a^{-1/2}\bigg[}+\int_{c-a}^{c}(C_{\infty}(\bv^{1/c}))^{\xi}(C_{\infty}(\bv^{1/c}\wedge\bu^{1/a}))^{c-\xi}(C_{\infty}(\bu^{1/a}))^{\xi+a-c} \diff \xi
\\
&\mathph{\to a^{-1/2}\bigg[}-(c+a)C_{\infty}(\bv)C_{\infty}(\bu)  
\\
&=:\gamma(\bv,\bu,c,a), \qquad \qquad (a_\wedge\leq a\leq c\leq a_\vee, \bm u, \bm v \in [0,1]^d).
\end{align*}
\end{Theo}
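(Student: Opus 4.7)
I follow the standard two-step scheme for weak convergence in $\ell^\infty$ (see, e.g., Theorem~1.5.4 in \citealp{VanWel96}): establish (a) convergence of finite-dimensional distributions of $\bbC_{n,m}^{\lozenge}$ to those of $\bbC^{\lozenge}$, and (b) asymptotic equicontinuity of $\bbC_{n,m}^{\lozenge}$ in $\ell^\infty([0,1]^d \times A)$.

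\textbf{Finite-dimensional convergence.} For finitely many points $(\bu_k, a_k)_{k=1}^K$ in $[0,1]^d \times A$, the Cram\'er--Wold device reduces the problem to a central limit theorem for the stationary, uniformly bounded, $\alpha$-mixing triangular array
$$
Z_{n,i} = \sum_{k=1}^K t_k \big\{ \ind(\bU_{\ang{m a_k}, i} \le \bu_k) - C_{\ang{m a_k}}(\bu_k) \big\}.
$$
Under Assumption~\ref{assump:mixing}(ii), (iv) and boundedness of the summands, a classical CLT for stationary $\alpha$-mixing sequences yields asymptotic normality provided the variance converges. The bulk of the work is therefore to verify that the covariance of $\bbC_{n,m}^{\lozenge}(\bu,a)$ and $\bbC_{n,m}^{\lozenge}(\bv,c)$ (for $a \le c$, by symmetry) converges to $\gamma(\bv, \bu, c, a)$. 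I would write this covariance as a double sum over block indices $(i,j)$, change variables to $\xi = (j-i)/m$, and discard the off-diagonal terms with $|j-i| \gg m$, which are negligible by mixing. This yields a Riemann-sum approximation whose integrand is identified using Assumption~\ref{assump:copula_convergence} together with the extreme-value scaling $\Prob(\max$ over a partial block of size $\beta m \le \bF_{\ang{ma}}^{\leftarrow}(\bu)) \to C_\infty(\bu)^{\beta/a} = C_\infty(\bu^{1/a})^{\beta}$ (the last equality by max-stability). The three integrals in the statement correspond to the three geometric overlap regimes of blocks of sizes $\ang{ma}$ and $\ang{mc}$ as $\xi$ ranges over $[-a,0]$, $[0, c-a]$, and $[c-a, c]$, while the term $-(c+a) C_\infty(\bv) C_\infty(\bu)$ reflects the cross-product of the centering terms, whose Riemann sum has effective length $c + a$.

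\textbf{Tightness and the main obstacle.} For asymptotic equicontinuity I would employ a big-block/small-block decomposition combined with Berbee's coupling, exploiting the $\beta$-mixing rate in Assumption~\ref{assump:mixing}(iii). Partition $\{1, \dots, n\}$ into alternating big blocks of length $r_n$ and small blocks of length $s_n$ chosen with $m \ll r_n \ll n$ and $\beta(s_n)(n/r_n) \to 0$; Berbee's lemma then allows the big-block contributions to be coupled to independent copies at negligible total-variation cost. The coupled process is handled by a chaining argument over the class indexed by $(\bu, a) \in [0,1]^d \times A$: the coordinate $\bu$ is controlled by a standard VC/bracketing argument exploiting monotonicity of $\bu \mapsto \ind(\cdot \le \bu)$, while for $a$ I would use that changing $a$ to $a'$ modifies each summand only when $\ang{ma} \ne \ang{ma'}$, yielding an $L^2$-Lipschitz bound of the form $\Var\{\bbC_{n,m}^{\lozenge}(\bu, a) - \bbC_{n,m}^{\lozenge}(\bu, a')\} \lesssim |a-a'|$. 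The main obstacle is precisely this joint equicontinuity, since $a$ varies on the same scale $1/m$ on which the sliding-block serial dependence is active, so the chaining must be fine enough to resolve the discrete jumps of $\ang{ma}$ yet coarse enough not to inflate variance bounds by $\sqrt{m}$. I expect this to be overcome by viewing the increment $\bbC_{n,m}^{\lozenge}(\cdot, a) - \bbC_{n,m}^{\lozenge}(\cdot, a')$ itself as a rescaled empirical process of the same type, with effective sample size proportional to $|a-a'|\cdot (n/m)$, to which the same maximal inequality used for the $\bu$-chaining applies uniformly.
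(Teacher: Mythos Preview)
Your finite-dimensional argument is essentially the paper's: the covariance is computed exactly as you describe (three overlap regimes yielding the three integrals, with the $-(c+a)C_\infty(\bv)C_\infty(\bu)$ term arising from the centering), and the CLT is obtained via a big-blocks/small-blocks reduction followed by Lyapunov. One caveat: the summands $Z_{n,i}$ form a \emph{triangular} array (they depend on $m=m_n$), so ``a classical CLT for stationary $\alpha$-mixing sequences'' is not literally applicable; the paper makes the big-block/small-block structure explicit here and then applies Lyapunov's criterion to the (asymptotically independent) big-block sums.

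For tightness, your approach diverges from the paper's and has a gap as written. The paper does \emph{not} take big blocks of length $r_n\gg m$; it takes blocks of length $\ell_m=(a_\vee+1)m$, i.e., at the same scale as $m$. The crucial device---which you do not have---is to regard each block as a \emph{single} array-valued observation $\uutbM^{(k)}\in\R^{\ell_m\times(1+\ang{ma_\vee}-\ang{ma_\wedge})\times d}$ that records the block maxima for \emph{all} block lengths in $A$ simultaneously. After Berbee coupling these arrays are i.i.d., so the process becomes a genuine i.i.d.\ empirical process $\bbG_\cK f_{\bu,a}$ over a function class $\cF_m=\{f_{\bu,a}\}$, and Theorem~2.14.2 of \cite{VanWel96} applies directly. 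Equicontinuity in $a$ then reduces to a bracketing-number bound for $\cF_m$, which the paper obtains from explicit H\"older-type estimates $\|\bbu_{\bv,a,c}-\bbl_{\bu,a,c}\|_{P_m,2}\le K(\|\bu-\bv\|_\infty^{1/2}\vee|c-a|^\eta)$; the proof of these estimates is the real technical core and requires careful block-maxima probability computations (separating the regimes $|c-a|>m^{-1/2}$ and $|c-a|\le m^{-1/2}$).

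Your alternative---larger blocks plus a direct variance bound $\Var\{\bbC_{n,m}^{\lozenge}(\bu,a)-\bbC_{n,m}^{\lozenge}(\bu,a')\}\lesssim|a-a'|$---is not obviously doomed, but it is not justified: after coupling at scale $r_n\gg m$, each big-block contribution is still a sum of $\sim r_n$ strongly $m$-dependent indicators, and you would need uniform second-moment control of its increments in $(\bu,a)$. That amounts to proving the same H\"older bounds the paper proves, and the linear-in-$|a-a'|$ rate you assert is not what one gets (the paper obtains only $|c-a|^\eta$ for some $\eta>0$, with a delicate two-scale argument). Your final ``rescaled empirical process of effective sample size $|a-a'|(n/m)$'' heuristic does not resolve this: the increment is not an empirical process of the same type, since changing $a$ alters every summand (it changes the block length, hence the underlying random variable $\bU_{\ang{ma},i}$), not merely a fraction of them. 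The missing idea is the array-valued blocking that turns the $a$-direction into an ordinary function-class coordinate.
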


Perhaps surprisingly, the limiting covariance does not depend on the serial dependence of the original time series, except through $C_\infty$ itself. In the univariate case this was also observed in \cite{BucSeg18b}.

\begin{Remark}  \label{rem:known}
Under a slightly weaker version of Assumption~\ref{assump:mixing}, \cite{BucSeg14}, Theorem 3.1, investigated the corresponding empirical process based on disjoint block maxima with $a=c=1$, that is, the process in $\ell^\infty([0,1]^d)$ defined by
\[
\bm u \mapsto \sqrt{n/m} \Big\{ \frac{1}{\ang{m/n}} \sum_{i=1}^{\ang{m/n}} \ind(\bm U_{m,1+m(i-1)} \le \bm u) - C_m(\bm u) \Big\},
\]
and with tight centred Gaussian limit denoted by $\bbC^{D}(\bu)$.
The covariance function of the limiting process is given by
\[
\gamma^{D}(\bu,\bv)= \Cov(\bbC^{D}(\bu), \bbC^{D}(\bv)) = C_{\infty}(\bu\wedge\bv)-C_{\infty}(\bu)C_{\infty}(\bv).
\]
A comparison between the covariance functionals $\gamma$ and $\gamma^D$ is worked out in Section~\ref{sec:comp} below, c.f. Section~\ref{sec:asyvar} in the supplementary material~\cite{ZouVolBuc19supp} for an alternative expression for $\gamma$.
\end{Remark}

\begin{proof}[Proof of Theorem~\ref{theo:known}]
Recall $b_{a}=n-\ang{ma}+1$, let $b=b_1=n-m+1$ and define 
\begin{equation*}
\bbC_{n,m}^{\lozenge,b}(\bu,a)
=
\sqrt{n/m}\frac{1}{b}\sum_{i=1}^{b} \Big(\ind(\bU_{\ang{ma},i}\leq \bu)-\Prob(\bU_{\ang{ma},i}\leq \bu)\Big).
\end{equation*}
The proof consists of several steps, which are explicitly taken care of in the supplementary material \cite{ZouVolBuc19supp}:
\begin{compactenum}
\item[(i)] In Lemma~\ref{lemma:number_of_block} we prove that $\|\bbC_{n,m}^{\lozenge}-\bbC_{n,m}^{\lozenge,b}\|_\infty \pto 0$. Hence it suffices to prove weak convergence of $\bbC_{n,m}^{\lozenge,b}$.
\item[(ii)] In Lemma~\ref{lem:tight} we show that $\bbC_{n,m}^{\lozenge,b}$ is asymptotically uniformly equicontinuous in probability with respect to the $\|\cdot\|_\infty$-norm on $[0,1]^d\times A$.
\item[(iii)] In Lemma~\ref{lemma:fidi_convergence} we  prove that the finite-dimensional distributions of $\bbC_{n,m}^{\lozenge,b}$ converge weakly to those of $\bbC^{\lozenge}$.
\end{compactenum}
Weak convergence of $\bbC_{n,m}^{\lozenge}$ then follows by combining (i)-(iii). 
\end{proof}

The proofs of Step~(ii) and Step~(iii) are quite lengthy and technical, but it is instructive to present the main ideas within the next two remarks.

\begin{Remark}[Proving fidi-convergence]\label{rem:fidi} 
The main steps for proving weak convergence of the finite-dimensional distributions (see Lemma~\ref{lemma:fidi_convergence} for details) are as follows:
\begin{asparaenum}[(i)]
\item \textbf{Calculation of the limiting covariance functional $\gamma$.}  This is treated in Lemma~\ref{lemma:cov}, and bears similarities with common long run variance calculations in classical time series analysis. The integrals in $\gamma$ are due to the fact that some of the sliding blocks are overlapping, with the integration variable $\xi$ controlling the relative position of two overlapping blocks, and with each of the three integrals corresponding to one of three possibilities for two blocks to overlap: (1) a block of length $a$ starts before a block of length $c$ and ends inside, (2) a block of length $a$ lies completely within a block of length $c$, or (3) a block of length $a$ starts inside a block of length $c$ and ends outside. Consider for instance the latter case, which would correspond to $0 < c-a < \xi < c$ and amounts to consideration of the event
$
\{ \bm M_{1:\ang{mc}} \le \bm x, \bm M_{\ang{m\xi}+1: \ang{m\xi}+\ang{ma}} \le \bm y\}.
$
The main idea consist of rewriting this event as
\[
\{\bm M_{1:\ang{m\xi}} \le \bm x\} \cap \{ \bm M_{\ang{m\xi}+1: \ang{mc}} \le \bm x \wedge \bm y \} \cap \{\bm M_{\ang{mc}+1: \ang{m\xi}+\ang{ma}} \le \bm y\}.
\]
We then use alpha mixing to show that the three events are asymptotically independent; this eventually gives rise to the three-fold product in the third integral in the definition of $\gamma$ with each of the factors corresponding to the probability of one of the events above.
\item \textbf{Big-Blocks-Small-Blocks technique.} The summands of the estimator of interest are collected in successive blocks of (block maxima) observations, with a `big block' followed by a `small block'  followed by a `big block' etc.  The small blocks are then shown to be negligible, while the big blocks are shown to be asymptotically independent (via alpha mixing). Weak convergence of the sum corresponding to big blocks can finally be shown by an application of the Lyapunov Central Limit Theorem.
\end{asparaenum}
\end{Remark}

\begin{Remark}[Proving asymptotic tightness] The main steps for proving the tightness part (see Lem\-ma~\ref{lem:tight}) are  as follows:
\begin{asparaenum}[(i)]
\item \textbf{Getting rid of serial dependence.} 
Based on a coupling lemma for beta mixing sequences by \cite{Ber79} and a blocking argument, proving tightness of $\bbC_{n,m}^{\lozenge,b}$ may be reduced to proving tightness of two empirical processes based on row-wise i.i.d.\ observations. In contrast to classical time series settings where blocks are based on the original observations, we consider blocks of collections of block maxima corresponding to all block sizes considered. Blocking vectors of block maxima is needed to deal with the additional block length parameter in our setting.
\item \textbf{Proving tightness via a moment bound.}
After the reduction in step (i), we now deal with row-wise i.i.d. observations and the results in \cite{VanWel96} can be applied. Here, each `observation' corresponds to a block of collections of block maxima mentioned in the previous step. The moment bound in Theorem 2.14.2 in the latter book allows to deduce tightness of the corresponding processes from controlling the bracketing numbers of certain function classes which map collections of block maxima to pieces in the sum defining $\bbC_{n,m}^{\lozenge}(\bu,a)$. 
\item 
\textbf{Bounding a certain bracketing number.} The last step is based on some explicit lengthy calculations, which take the precise definition of the triangular arrays into account, and in particular the fact that the `observations' are block maxima (with arguments similar to the one given in Remark~\ref{rem:fidi} for the calculation of the limiting covariance).
\end{asparaenum}
\end{Remark}

\subsection{Estimation in the case of unknown marginal c.d.f.s} \label{sec:estmar}

The results in Section~\ref{sec:fix} are based on the assumption that the marginal c.d.f.s are known. In practice, this is not realistic and marginals are typically standardized by taking component-wise ranks of observed block maxima.  For $x\in\R, j=1,\dots, d$ and block size $m'$, let 
\begin{align*}
&\hF_{n,m',j}(x)=\frac{1}{n-m'+1}\sum_{i=1}^{n-m'+1}\ind(M_{m',i,j}\leq x)
\end{align*}
and consider observable pseudo-observations from $C_{m'}$ defined as
\begin{align*}
\hbU_{n,m',i}=(\hU_{n,m',i,1},\dots,\hU_{n,m',i,d}), \qquad \hU_{n,m',i,j}=\hF_{n,m',j}(M_{m',i,j})
\end{align*}
The observable analog of the estimator $\hC_{n,m'}^{\circ}$ in \eqref{eq:cnknown} is then given by 
\[
\hC_{n,m'}(\bu)=\frac{1}{n-m'+1}\sum_{i=1}^{n-m'+1}\ind(\hbU_{n,m',i}\leq \bu),
\]
and we are interested in the asymptotic behavior of the associated empirical copula process, indexed by $\bm u \in [0,1]^d$ and  block length scaling parameter $a \in A$, defined as
\begin{equation*}
\widehat \bbC_{n,m}^{\lozenge}(\bu,a)
= \sqrt{n/m}\{ \hC_{n,\ang{ma}}(\bu)-C_{\ang{ma}}(\bu) \}.
\end{equation*}
Subsequently, the process will be called \textit{extended empirical copula process based on sliding block maxima}. Additional assumptions are needed for a corresponding weak convergence result.

\begin{Assump}\label{assump:pd}
For any $j=1,\dots,d$, the $j$th first order partial derivative $\dC_{\infty,j}(\bu)=\partial C_{\infty}(\bu)/\partial u_{j}$ of $C_\infty$ exists and is continuous on $\{\bu\in [0,1]^{d}: u_{j}\in (0,1)\}$. 
\end{Assump}

Recall that such an assumption is even needed for weak convergence of the classical empirical copula process based on i.i.d.\ observations from $C_\infty$ \citep{Seg12}. For completeness, define $\dC_{\infty,j}(\bu)=0$ if $u_j \in \{0,1\}$.
Following~\cite{BucSeg14}, we do not need differentiability of $C_m$ for finite $m$. Instead, we will work  with the functions
\[
\dot{C}_{m,j}(\bv) := \limsup_{h \downarrow 0} h^{-1} \{C_m(\bv + h\be_j) - C_m(\bv)\},
\]
where $j=1,\dots ,d, m \in \N, \bv \in [0,1]^d$ and $\be_{j}$ denotes the $j$th canonical unit vector in $\bbR^{d}$. Note that $\dot{C}_{m,j}$ is always defined and satisfies $0 \leq \dot{C}_{m,j} \leq 1$. 

For the upcoming main theorem of this paper, we will need an additional assumption on the quality of convergence of $C_m$ to $C_\infty$, which will eventually allow us to move from the known margins to the unknown margins case. Any of the following three conditions will be sufficient; the first two assumptions have also been considered in \cite{BucSeg14} (with $k_n=m_n$ in Part (a)), while the third part (a more refined version of (a)) is included specifically for the bias corrections worked out in Section~\ref{sec:bcknown}, where (a) is typically not met.

\begin{Assump}[Quality of convergence of $C_m$ to $C_\infty$]\label{assump:BuSe3.4_new}~ \smallskip
\begin{compactenum}
\item[(a)] 
A sequence $(k_n)_{n\in \N}$ of natural numbers with $k_n \to \infty$ is said to satisfy $\SC_1(k_n)$ if $\sqrt{n/k_n}(C_{k_n} - C_\infty)$ is relatively compact in $\cC([0,1]^d)$ (the space of continuous, real-valued functions on $[0,1]^d$). 
\item[(b)] 
For every $\delta \in (0,1/2)$, letting $S_{j,\delta} := [0,1]^{j-1}\times [\delta,1-\delta] \times [0,1]^{d-j}$,
\[
\lim_{m\to\infty} \max_{j=1,\dots,d}\sup_{\bu \in S_{j,\delta}} |\dot{C}_{m,j}(\bu) - \dot{C}_{\infty,j}(\bu)|=0.
\]

\item[(c)] A sequence $(k_n)_{n\in  \N}$ of natural numbers with $k_n \to \infty$ is said to satisfy $\SC_2(k_n)$ if Assumption~\ref{assump:high_order} holds, $S$ is uniformly H{\"o}lder-continuous of order $\delta \in (0,1]$, $(n/k_n)^{(1-\delta)/2}\ca(k_n) = o(1)$ as $n\to\infty$ and $n \mapsto \sqrt{n/k_n}\{ C_{k_n}-C_{\infty}-\ca(k_n)S]$ is relatively compact in $\cC([0,1]^d)$.
\end{compactenum}
\end{Assump}  

We are now ready to state the main result of this section.

\begin{Theo}[Functional weak convergence of the extended empirical copula process based on sliding block maxima]\label{theo:estmar}
Let Assumptions \ref{assump:copula_convergence}, \ref{assump:mixing} and \ref{assump:pd} hold.
If either $\SC_1(\ang{m_n a_n})$ from Assumption~\ref{assump:BuSe3.4_new}(a) holds for every converging sequence $(a_n)_{n\in\N}$ in $A$, or if Assumption~\ref{assump:BuSe3.4_new}(b) holds, or if $\SC_2(\ang{m_n a_n})$ from Assumption~\ref{assump:BuSe3.4_new}(c) holds for every converging sequence $(a_n)_{n\in\N}$ in $A$, then
\[
\widehat \bbC^{\lozenge}_{n,m} \dto \widehat \bbC^{\lozenge} \quad \mbox{ in } \ell^\infty([0,1]^d\times A),
\]
where, letting $\bu^{(j)}=(1,\dots,1,u_{j},1,\dots,1)$ with $u_{j}$ at the $j$th coordinate,
\begin{align} \label{eq:estmarlim}
\widehat\bbC^{\lozenge}(\bu,a)=  \bbC^{\lozenge}(\bu,a)-\sum_{j=1}^{d}\dC_{\infty,j}(\bu)\bbC^{\lozenge}(\bu^{(j)},a).
\end{align}
\end{Theo}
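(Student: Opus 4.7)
The plan is to follow the classical program for empirical copula processes, extended to handle uniformity in the block-size parameter $a \in A$ and the use of sliding-block maxima. I would build on Theorem~\ref{theo:known}, use a functional delta-method style expansion to reduce $\widehat\bbC_{n,m}^{\lozenge}$ to a continuous functional of $\bbC_{n,m}^{\lozenge}$, and finally replace the partial derivatives of $C_{\ang{ma}}$ by those of $C_\infty$ under each of the three alternative conditions of Assumption~\ref{assump:BuSe3.4_new}.

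First I would establish, in the spirit of Proposition~3.1 of \cite{Seg12} and Section~3.3 of \cite{BucSeg14}, a decomposition of the form
\begin{align*}
\widehat\bbC^{\lozenge}_{n,m}(\bu,a) = \bbC^{\lozenge}_{n,m}(\bu,a) - \sum_{j=1}^{d}\dC_{\ang{ma},j}(\bu)\,\bbC^{\lozenge}_{n,m}(\bu^{(j)},a) + R_{n,m}(\bu,a),
\end{align*}
where the remainder $R_{n,m}$ captures the deviation of $\hat C_{n,\ang{ma}}$ from its first-order Taylor expansion around the known-margins estimator. The starting point is the identity $\hat C_{n,m'}(\bu) = \hat C_{n,m'}^{\circ}(\hF_{n,m',1}^{\leftarrow}(u_{1}),\dots,\hF_{n,m',d}^{\leftarrow}(u_{d}))$ (up to ties of order $1/(n-m'+1)$) together with the approximation $\sqrt{n/m}\{\hF_{n,m',j}(F_{m',j}^{\leftarrow}(u)) - u\} \approx \bbC_{n,m}^{\lozenge}(\bu^{(j)},a)$. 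Using the asymptotic uniform equicontinuity of $\bbC_{n,m}^{\lozenge}$ in $(\bu,a)$ delivered by Lemma~\ref{lem:tight}, together with the monotonicity of $\dC_{m',j}$ and the mean-value-type expansion in~$\bu$, one then obtains $\|R_{n,m}\|_\infty = \op{1}$, uniformly over $(\bu,a) \in [0,1]^d \times A$.

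Next I would transfer the partial derivatives $\dC_{\ang{ma},j}$ to $\dC_{\infty,j}$ uniformly in $(\bu,a)$. Under Assumption~\ref{assump:BuSe3.4_new}(b) this is immediate. Under $\SC_1$ or $\SC_2$, the argument proceeds via a subsequence/compactness argument in the spirit of Proposition~A.1 of \cite{BucSeg14}: relative compactness of $\sqrt{n/\ang{m a_n}}(C_{\ang{m a_n}} - C_\infty)$ in $\cC([0,1]^d)$ (respectively of its bias-corrected version when only $\SC_2$ holds) transfers, via a convexity argument, to local uniform convergence of $\dC_{\ang{ma_n},j}$ to $\dC_{\infty,j}$ on $\{u_j \in (0,1)\}$. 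Combined with the trivial bound $0 \le \dC_{m,j} \le 1$ and the continuity of the limit $\bbC^{\lozenge}$ at the boundary (needed to kill contributions from a shrinking neighbourhood of $\{u_j \in \{0,1\}\}$), this allows the replacement to be made uniformly. The continuous mapping theorem applied to the bounded linear map $f \mapsto f(\bu,a) - \sum_j \dC_{\infty,j}(\bu)\,f(\bu^{(j)},a)$ on $\ell^\infty([0,1]^d \times A)$, together with Theorem~\ref{theo:known}, then yields the claimed weak convergence with limit as in~\eqref{eq:estmarlim}.

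The hard part will be ensuring the replacement in step three holds uniformly in $a \in A = [a_\wedge,a_\vee]$, and in particular under Assumption~\ref{assump:BuSe3.4_new}(c). In that case the sequence $\sqrt{n/k_n}(C_{k_n}-C_\infty)$ need not be relatively compact since it contains the deterministic term $\sqrt{n/k_n}\,\ca(k_n)\,S$, which is of order $(n/k_n)^{1/2}\ca(k_n)$ and may diverge. One must then exploit the uniform H\"older continuity of $S$ together with the rate condition $(n/k_n)^{(1-\delta)/2}\ca(k_n) = o(1)$ to show that, after taking discrete derivatives, the contribution of $\ca(k_n)\,S$ to $\dC_{k_n,j} - \dC_{\infty,j}$ is $o(1)$, while the residual piece retains the relative compactness needed for the convexity argument. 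Doing this with bounds that are genuinely uniform over the compact range $A$ of scaling parameters—consistent with the joint-index formulation used throughout Theorem~\ref{theo:known}—is the main technical burden.
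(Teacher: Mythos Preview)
Your overall plan is close to the paper's, but the route diverges at a key technical point and, as written, has a gap under $\SC_1$ and $\SC_2$.

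The paper does not pass through a decomposition involving $\dC_{\ang{ma},j}$. Instead it packages the passage from known to unknown margins into a sequence of maps $\widetilde g_{m_n,n}:\widetilde\cD_{m_n,n}\to\ell^\infty([0,1]^d\times A)$ together with a limit map $\widetilde g$ built directly from $\dC_{\infty,j}$, and then applies the extended continuous mapping theorem to $\widetilde g_{m_n,n}(\bbC_{n,m}^{\lozenge})$. The required convergence $\widetilde g_{m_n,n}(\alpha_n)\to\widetilde g(\alpha)$ uniformly in $a$ is obtained by a contradiction/subsequence argument that reduces to the fixed-sequence result $g_{k_n,n}(\alpha_n)\to g(\alpha)$ of \cite{BucSeg14} with $k_n=\ang{m_n a_n}$. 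Crucially, under $\SC_1$ (and analogously under $\SC_2$) the latter is \emph{not} proved via convergence of $\dC_{k_n,j}$ to $\dC_{\infty,j}$. Rather one writes
\[
C_{k_n}(\bI_n(\bu))-C_{k_n}(\bu)=\big[C_\infty(\bI_n(\bu))-C_\infty(\bu)\big]+\big[\Delta_n(\bI_n(\bu))-\Delta_n(\bu)\big],
\]
with $\Delta_n=\sqrt{n/k_n}(C_{k_n}-C_\infty)$, handles the first bracket by Hadamard differentiability of $C_\infty$ (Assumption~\ref{assump:pd}), and the second by the uniform equicontinuity coming from relative compactness of $\Delta_n$; under $\SC_2$ an additional term $\sqrt{n/k_n}\,\ca(k_n)\{S(\bI_n)-S\}$ is controlled via the H\"older condition and the rate assumption.

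The gap in your proposal is the claim that relative compactness of $\sqrt{n/k_n}(C_{k_n}-C_\infty)$ in $\cC([0,1]^d)$ ``transfers, via a convexity argument, to local uniform convergence of $\dC_{\ang{ma_n},j}$ to $\dC_{\infty,j}$''. Relative compactness in the sup norm gives equicontinuity and boundedness of $\Delta_n$, but says nothing about derivatives of $C_{k_n}$; copulas are componentwise monotone, not convex, and monotone convergence does not force convergence of (limsup-)derivatives. So your two-step scheme ``first decompose with $\dC_{\ang{ma},j}$, then replace'' breaks down under $\SC_1/\SC_2$ at precisely the replacement step. The fix is to bypass $\dC_{\ang{ma},j}$ altogether and decompose through $C_\infty$ as above, which is exactly what the paper (following \cite{BucSeg14}) does; your treatment under Assumption~\ref{assump:BuSe3.4_new}(b), where derivative convergence is assumed, is fine.
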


If additionally Assumption~\ref{assump:high_order} is met, then Theorem~\ref{theo:estmar} shows that the uniform convergence rate of $\hat C_{n,m}$ to $C_\infty$ is given by $O_\Prob(\sqrt{m/n} + \ca(m))$, where $\sqrt{m/n}$ corresponds to the stochastic part, while $\ca(m)$ is due to the deterministic difference between $C_m$ and $C_\infty$. Assuming for simplicity that Assumption~\ref{assump:high_order} holds with $\ca(m) = m^{\rho_\ca}$ we find that the best possible convergence rate of $\hat C_{n,m}$ is obtained by setting $m \asymp n^{1/(1-2\rho_\ca)}$. In Section~\ref{sec:app} we will show that this rate can in fact be improved by combining estimators $\hat C_{n,\ang{ma}}$ for several values of $a$. Establishing the asymptotic properties of those estimators will require the full power of Theorem~\ref{theo:estmar}, including the process convergence uniformly over the block length parameter $a$.

\begin{Remark} 
If Assumption~\ref{assump:high_order} is met and if $\sqrt{n/m} \ca(m) =O(1)$, then it is easy to show (using regular variation of $\ca$) that  $\SC_1(\ang{m_n a_n})$ from Assumption~\ref{assump:BuSe3.4_new}(a) holds for every converging sequence $(a_n)_{n\in\N}$ in $A$. Similarly, under Assumption~\ref{assump:high_order3}  below and if $\sqrt{n/m} \ca(m)\cb(m) =O(1)$, then
$\SC_2(\ang{m_n a_n})$ holds for every converging sequence $(a_n)_{n\in\N}$ in $A$. \end{Remark}

\subsection{A comparison of the asymptotic variances 
based on disjoint and sliding block maxima} \label{sec:comp}

The asymptotic variance of the sliding blocks version of the empirical copula with known and estimated margins will be shown to be less than or equal to the asymptotic variance of the corresponding disjoint blocks versions. Since the asymptotic bias of both approaches is the same, this suggests that the sliding blocks estimator, when available, should always be used instead of the disjoint blocks estimator. 

\begin{Theo}\label{theo:varcomp}
Suppose $C_\infty$ is an extreme value copula satisfying Assumption~\ref{assump:pd}. Let $\widehat\bbC^{\lozenge}(\bu,1)$ denote the weak limit of the empirical copula process based on sliding block maxima defined in \eqref{eq:estmarlim}. Similarly, recall $\bbC^{D}(\bu)$ as defined in Remark~\ref{rem:known} and let
\[
 \textstyle \widehat\bbC^{D}(\bu)= \bbC^{D}(\bu)-\sum_{j=1}^{d}\dC_{\infty,j}(\bu)\bbC^{D}(\bu^{(j)})
\]
denote the weak limit of the corresponding disjoint blocks version (Theorem 3.1 in \citealp{BucSeg14}). Then, for any $\bm u_1, \dots, \bm u_k \in [0,1]^d,k\in\N$,
\[
\Cov\Big( \widehat\bbC^{\lozenge}(\bu_1,1), \dots, \widehat\bbC^{\lozenge}(\bu_k,1) \Big) \le_L \Cov\Big(\widehat\bbC^{D}(\bu_1), \dots, \widehat\bbC^{D}(\bu_k)\Big)
\]
and
\begin{equation}\label{eq:comknownmarg}
\Cov\Big(\bbC^{\lozenge}(\bu_1,1), \dots, \bbC^{\lozenge}(\bu_k,1) \Big) \le_L \Cov\Big(\bbC^{D}(\bu_1), \dots, \bbC^{D}(\bu_k)\Big),
\end{equation}
where $\le_L$ denotes the Loewner-ordering between symmetric matrices.
\end{Theo}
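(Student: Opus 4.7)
The strategy is to prove the Loewner inequality at the level of the known-margins sliding-blocks process first, and then transfer it to the estimated-margins case via the linearity of the functional in~\eqref{eq:estmarlim}. The starting point is the observation that every $i \in \{1,\dots,n-m+1\}$ admits a unique representation $i = s+(j-1)m$ with $s \in \{1,\dots,m\}$ and $j \geq 1$; letting $b_{s}$ denote the number of admissible $j$ for each $s$, we have $\sum_{s=1}^{m} b_{s} = n-m+1$, and grouping the summands of $\bbC^{\lozenge}_{n,m}(\bu,1)$ by shift yields the finite-sample convex-combination \emph{identity}
\begin{equation*}
\bbC^{\lozenge}_{n,m}(\bu,1) = \sum_{s=1}^{m} w_{n,m,s}\, \bbC^{D}_{n,m,s}(\bu), \qquad w_{n,m,s}=\frac{b_{s}}{n-m+1},\quad \sum_{s=1}^{m} w_{n,m,s}=1,
\end{equation*}
where $\bbC^{D}_{n,m,s}(\bu) = \sqrt{n/m}\, b_{s}^{-1} \sum_{j=1}^{b_{s}}\{\ind(\bU_{m,s+(j-1)m}\leq \bu) - C_{m}(\bu)\}$ is a shifted disjoint-blocks empirical process with $b_{s}$ blocks, starting at position $s$.

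Fix $a_{1},\dots,a_{k}\in\R$ and set $Z^{(s)}_{n,m} := \sum_{i=1}^{k} a_{i}\,\bbC^{D}_{n,m,s}(\bu_{i})$, which is centred. Applied to the probability weights $\{w_{n,m,s}\}_{s=1}^m$, Jensen's inequality (equivalently, Cauchy--Schwarz) yields $\Var\bigl(\sum_{s} w_{n,m,s}\, Z^{(s)}_{n,m}\bigr)\leq \sum_{s} w_{n,m,s}\,\Var(Z^{(s)}_{n,m})$. Strict stationarity together with $|b_{s}-b_{1}| \leq 1$ forces $\Var(Z^{(s)}_{n,m}) = \Var(Z^{(1)}_{n,m})+o(1)$ uniformly in $s$, so the right-hand side equals $\Var(Z^{(1)}_{n,m})+o(1)$, which converges to $\bm a^{\top}[\gamma^{D}(\bu_{i},\bu_{j})]_{i,j}\bm a$ by Theorem~3.1 of \cite{BucSeg14}. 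Theorem~\ref{theo:known} applied at $a=1$ gives convergence of the left-hand side to $\bm a^{\top}[\gamma(\bu_{i},\bu_{j},1,1)]_{i,j}\bm a$. Varying $\bm a \in \R^{k}$ establishes~\eqref{eq:comknownmarg}.

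The estimated-margins case follows by linearity. The maps $\bbC^{\lozenge}(\cdot,1)\mapsto\widehat\bbC^{\lozenge}(\cdot,1)$ and $\bbC^{D}\mapsto\widehat\bbC^{D}$ are the \emph{same} linear functional $T\colon g \mapsto g(\cdot)-\sum_{j=1}^{d}\dC_{\infty,j}(\cdot)\, g(\cdot^{(j)})$, only evaluated at different Gaussian limits. Hence the covariance matrices of $(\widehat\bbC^{\lozenge}(\bu_{i},1))_{i=1}^{k}$ and $(\widehat\bbC^{D}(\bu_{i}))_{i=1}^{k}$ take the form $A\Sigma^{\lozenge}A^{\top}$ and $A\Sigma^{D}A^{\top}$ for the same matrix $A$, where $\Sigma^{\lozenge}, \Sigma^{D}$ are the joint covariances of $\bbC^{\lozenge}(\cdot,1)$ and $\bbC^{D}$ on the enlarged argument set $\{\bu_{i}\}_{i}\cup\{\bu_{i}^{(j)}\}_{i,j}$. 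The inequality from the previous step applied on this enlarged set propagates under $\Sigma \mapsto A\Sigma A^{\top}$ (which preserves $\leq_{L}$), completing the proof.

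\emph{Main obstacle.} The delicate point is the justification of convergence of \emph{second moments}---not merely of distributions---and doing so uniformly in the shift $s$. Uniformity is a consequence of $|b_{s}-b_{1}|\leq 1$; convergence of variances is taken directly from the explicit long-run-variance calculations developed in Lemma~\ref{lemma:cov} (and its disjoint-blocks analogue in \cite{BucSeg14}), which underpin Theorem~\ref{theo:known} itself and are available without additional work.
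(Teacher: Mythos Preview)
Your proof is correct and reaches the result by a route that differs in presentation from the paper's, though the underlying inequality is the same. The paper isolates a general auxiliary result (Lemma~\ref{lem:varcomp}) comparing sliding and disjoint variances for stationary arrays; that lemma is proved by introducing a uniform phase $U_n$ on $\{0,\dots,m-1\}$ and coefficients $\phi_{n,t}$, then using $\Var\bigl(n^{-1/2}\sum_t \phi_{n,t}X_{n,t}\bigr)\geq 0$ together with some autocovariance bookkeeping. Your convex-combination-plus-Jensen argument is this inequality made transparent: writing the shifted disjoint sums as $Y^{(s)}=\sum_{t\equiv s}X_{n,t}$ and $\bar Y = m^{-1}\sum_s Y^{(s)}$, one checks that $\sum_t \phi_{n,t}X_{n,t} = \sqrt{m}\,(Y^{(U_n)} - \bar Y)$, so the non-negativity exploited in the paper is precisely $\E\bigl[\sum_s (Y^{(s)}-\bar Y)^2\bigr]\geq 0$, i.e., your Jensen step. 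Your version bypasses the randomization device and the long-run-variance manipulations entirely; the paper's version has the advantage that Lemma~\ref{lem:varcomp} is stated for general stationary arrays with summable tail autocovariances and could be reused elsewhere.

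One point you leave implicit should be made explicit: the theorem is a statement about the \emph{limit} covariances $\gamma$ and $\gamma^{D}$, which depend on $C_\infty$ alone, so you must choose some data-generating process before the finite-sample identity and the invocations of Lemma~\ref{lemma:cov} and Theorem~3.1 of \cite{BucSeg14} make sense. The paper takes $(\bm X_t)_t$ i.i.d.\ with cdf $C_\infty$, so that Assumptions~\ref{assump:copula_convergence}, \ref{assump:mixing} and \ref{assump:BuSe3.4_new} are trivial and $C_m=C_\infty$ by max-stability; you should state this choice. Under it the disjoint block maxima are exactly i.i.d., which also makes your uniformity claim $\Var(Z^{(s)}_{n,m}) = \Var(Z^{(1)}_{n,m})+o(1)$ immediate, since the variance then depends on $s$ only through $b_s$ and $n/(mb_s) = 1 + O(m/n)$ uniformly in $s$. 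Your linear-algebra reduction for the estimated-margins case, applying~\eqref{eq:comknownmarg} on the enlarged index set $\{\bu_i\}\cup\{\bu_i^{(j)}\}$ and using that $\Sigma\mapsto A\Sigma A^{\top}$ preserves $\leq_L$, is equivalent to the paper's handling via $V_{n,i}(\bu)=W(\bu)^\top c_{n,i}(\bu)$.
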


The proof is given in Section~\ref{sec:proofcomp}. In Figure~\ref{fig:compvarth} we depict $\Var(\widehat\bbC^{\lozenge}(\bu,1))$ and $\Var(\widehat\bbC^{D}(\bu))$, for $\bu = (u,u)$ with $u \in [0,1]$, 
for the Gumbel--Hougaard Copula in \eqref{eqn:Gumbel-Hougaard} with shape parameter $\beta=1$ and $\beta=\ln 2/\ln(3/2)$. Note that when $\beta=1$, the Gumbel--Hougaard Copula degenerates to the independence copula on $[0,1]^2$, i.e., $C_\infty(u_1,u_2)=u_1u_2$ while $\beta = \ln 2/\ln(3/2)$ results in a tail dependence coefficient of $1/2$. The analytical expressions of $\Var(\widehat\bbC^{\lozenge}(\bu,1))$ and $\Var(\widehat\bbC^{D}(\bu))$ for the Gumbel--Hougaard Copula are presented in Section~\ref{sec:asyvar} in the supplementary material \cite{ZouVolBuc19supp}. The difference between $\Var(\widehat\bbC^{\lozenge}(\bu,1))$ and $\Var(\widehat\bbC^{D}(\bu))$ is seen to be substantial, in particular for small values of $u$.

\begin{figure}[h]
\begin{center}
\vspace{-.3cm}
\mbox{\includegraphics[width=4.3cm]{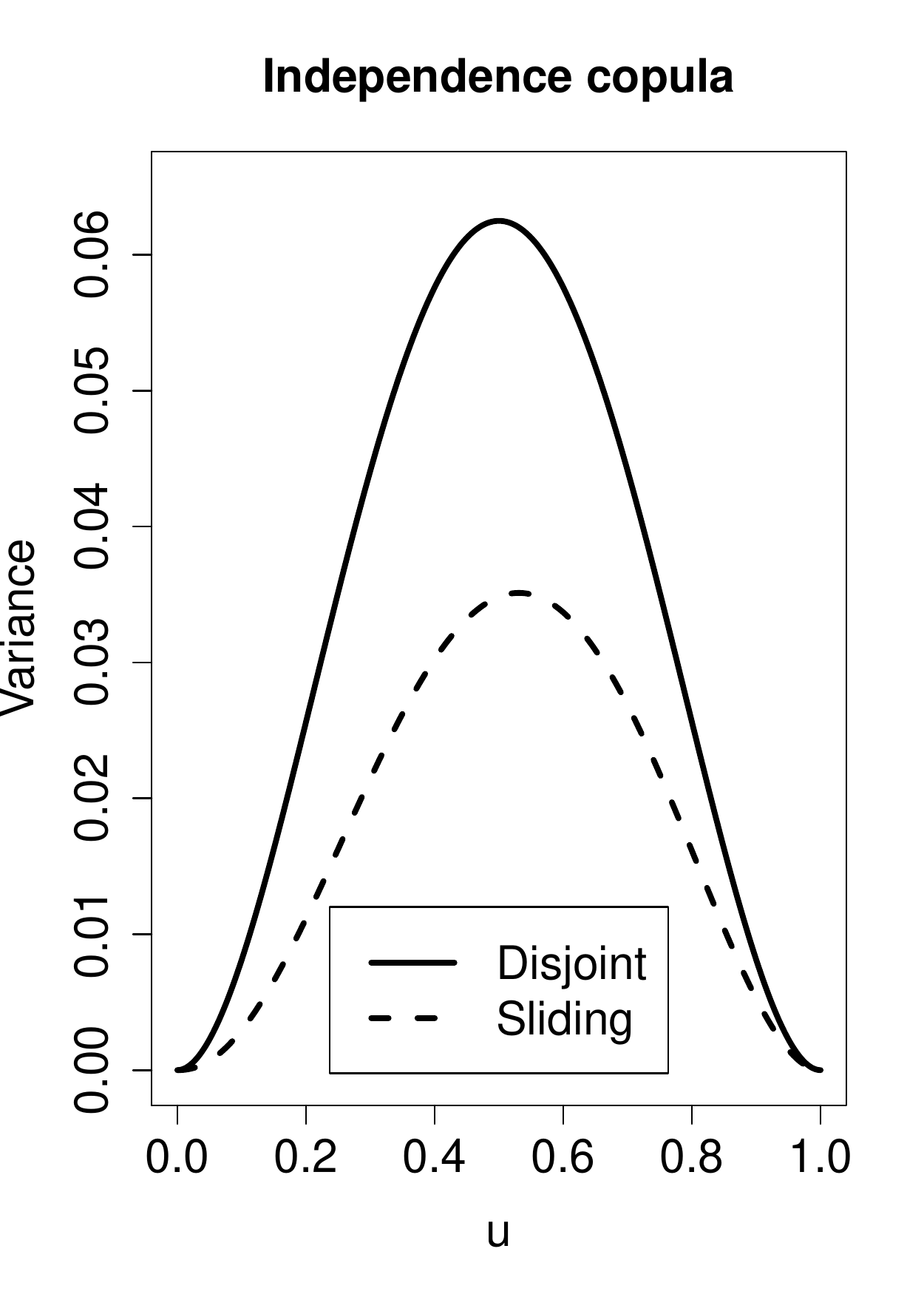} \hspace{-.2cm}
\includegraphics[width=4.3cm]{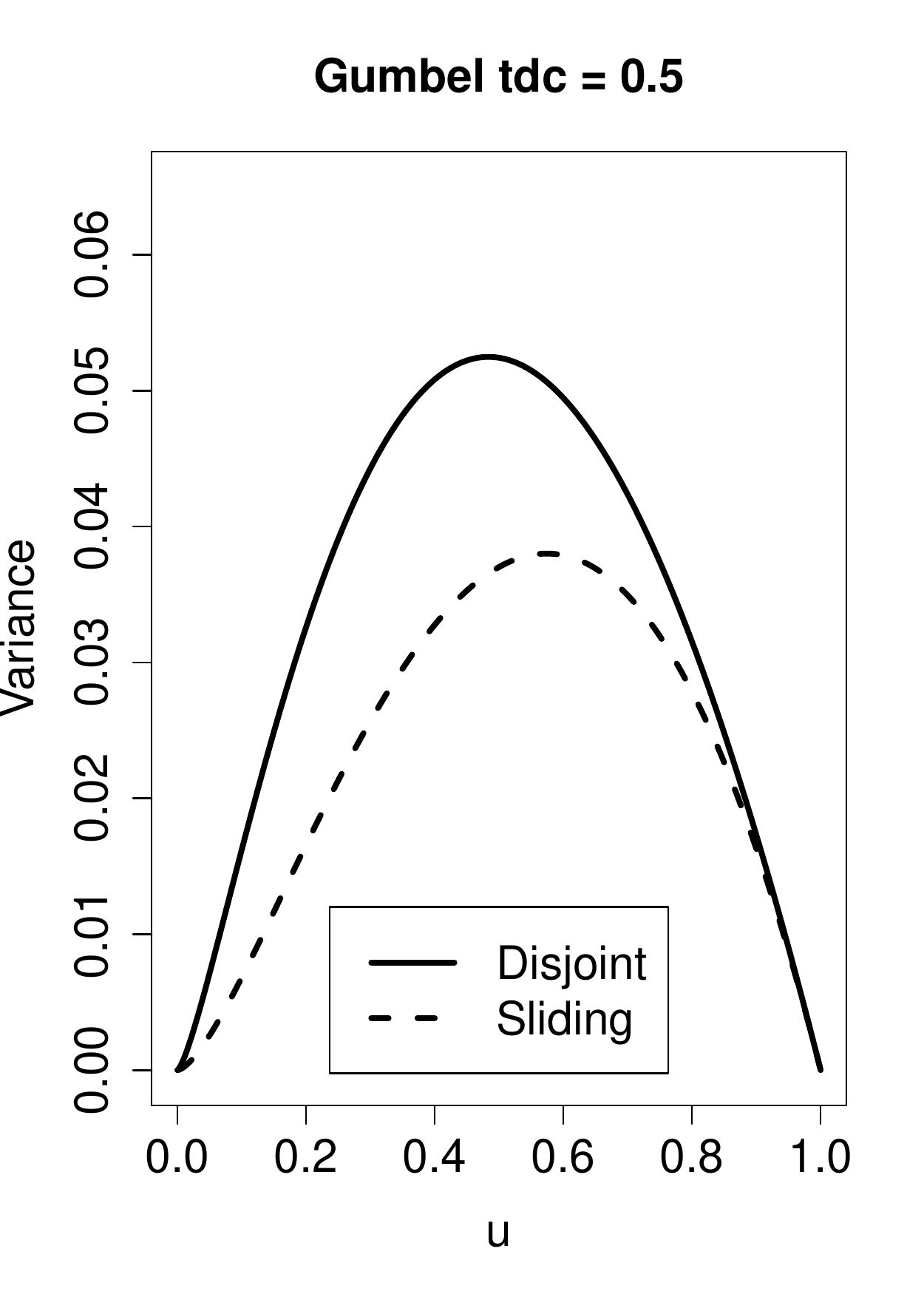}\hspace{-.2cm}
\includegraphics[width=4.3cm]{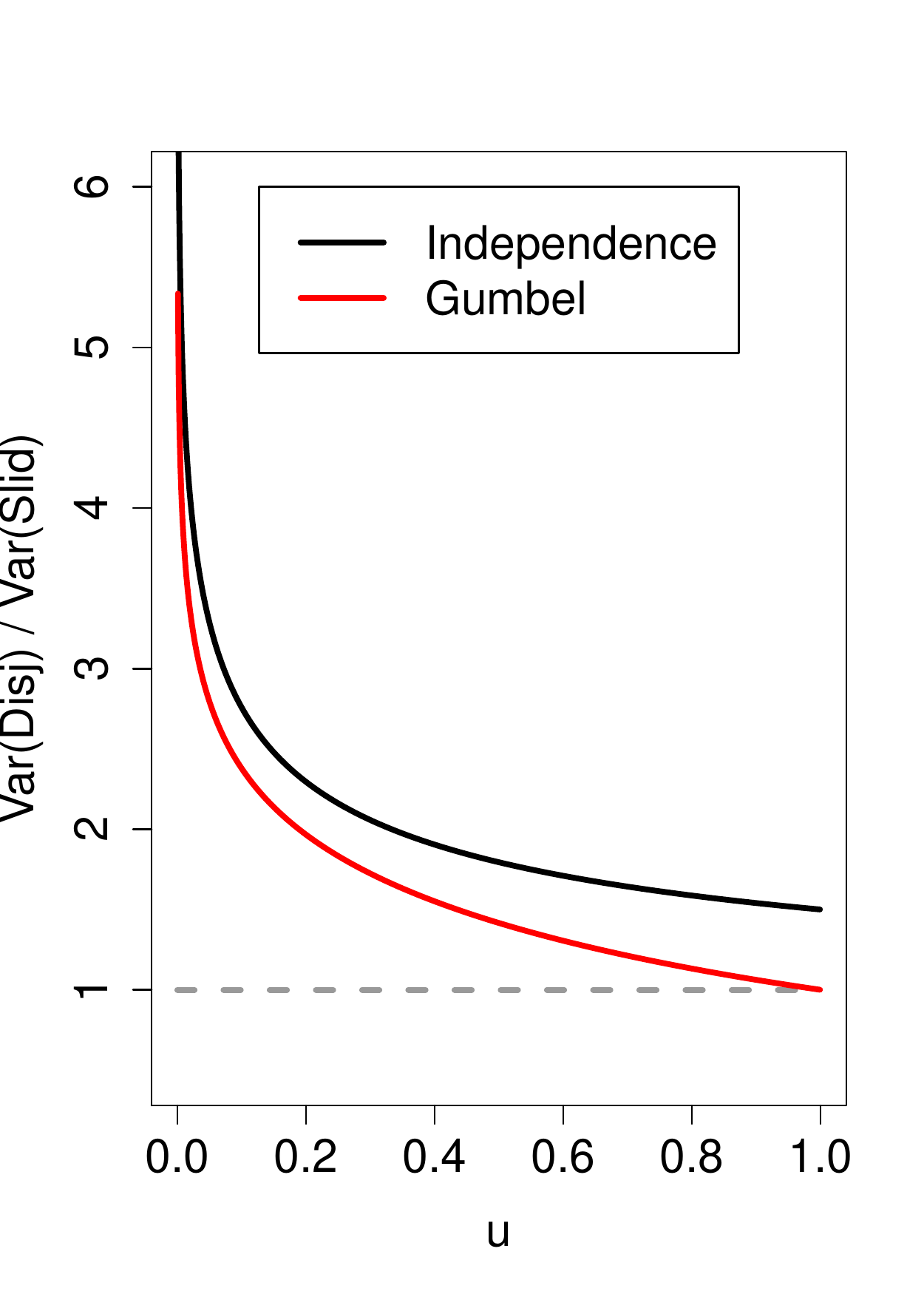}}
\vspace{-.8cm}
\end{center}
\caption{Left plot: $\Var(\widehat\bbC^{\lozenge}(u,u,1))$ (dashed line) and $\Var(\widehat\bbC^{D}(u,u))$ (solid line) as a function of $u \in [0,1]$ for $C_\infty(u,v) = uv$. Middle plot: same with Gumbel--Hougaard copula with tail dependence coefficient $1/2$. Right plot: $\Var(\widehat\bbC^{D}(u,u))/\Var(\widehat\bbC^{\lozenge}(u,u,1))$ as a function of $u \in [0,1]$.} \label{fig:compvarth}
\vspace{-0.3cm}
\end{figure}

As a consequence of the previous result, whenever $T$ is a continuous and linear (real-valued) functional on the space of continuous functions on $[0,1]^d$ (e.g., the Hadamard derivative of a functional $\Phi:\ell^\infty(T) \to \R$ at $C_\infty$, tangentially to the subspace of continuous functions), then 
\[
\Var(T(\widehat\bbC^{\lozenge}(\cdot, 1))) \le  \Var(T(\widehat\bbC^{D})).
\]
Indeed, by the Riesz representation theorem (\citealp{Dud02}, Theorem 7.4.1), $T(\bbC) = \int_{[0,1]^d} \bbC \diff \mu$ for some finite signed Borel measure $\mu$ on $[0,1]^d$, whence 
\[
\Var(T(\bbC))= \int_{[0,1]^d}  \int_{[0,1]^d} \Cov(\bbC(\bm u), \bbC(\bm v)) \diff \mu(\bm u) \diff \mu(\bm v).
\]
The claim then follows by measure-theoretic induction. Examples of interesting functionals $T$ can for instance be found in \cite{GenSeg10}, Section 3, which comprise Blomqvist's beta, Spearman's footrule, Spearman's  rho and Gini's gamma.

\section{Applications of the functional weak convergence} 
\label{sec:app}

The functional weak convergence result in Theorem~\ref{theo:estmar} can be applied to large variety of statistical problems. Classical applications include the derivation of the asymptotic behavior of estimators for the Pickands dependence function, see, e.g., Section 3.3 in \cite{BucSeg14}. Throughout this section, we discuss applications that explicitly make use of the fact that we allow for various block sizes, allowing one to aggregate over those block sizes, to derive bias reduced estimators or to even estimate second order characteristics. 

Despite not being necessary for the bias correction to \textit{work}, many of the results in this section can be formulated in a convenient explicit way under the assumption of a third order condition.

\begin{Assump}[Third order condition]\label{assump:high_order3}~
Assumption~\ref{assump:high_order} holds and there exists a regularly varying function $\cb: \N \to (0,\infty)$ with coefficient of regular variation $\rho_\cb < 0$ and a (necessarily continuous) non-null function $T$ on $[0,1]^d$, not a multiple of $S$, such that, uniformly in $\bu \in [0,1]^d$,
\begin{align} \label{eqn:thirdor}
\lim_{k\to\infty} \frac{1}{\cb(k)} \bigg\{\frac{C_{k}(\bu)-C_\infty(\bu)}{\ca(k)}  - S(\bu) \bigg\} = T(\bu).
\end{align}
\end{Assump}

Under the additional assumption that $(\bX_t)_{t \in \Z}$  is an i.i.d.\ sequence, it can be proved that $\cb$ in the above condition must be regularly varying under mild additional assumptions (it can hence be removed from the assumption).

\begin{Lemma} \label{lem:regvarab} Assume that the time series $(\bX_t)_{t \in \Z}$ is an i.i.d.\ sequence. If Assumption~\ref{assump:high_order} holds and additionally there exists a function $\cb: \N \to (0,\infty)$ with $\cb(k) = o(1)~(k \to \infty)$ and a non-null function $T$ such that~\eqref{eqn:thirdor} holds uniformly in $\bu \in [0,1]^d$, and if the functions $S, S^2/C_\infty$ and $T$ are linearly independent, then $\cb$ is regularly varying of order $\rho_\cb\le 0$.
\end{Lemma}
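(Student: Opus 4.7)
The plan is to exploit the i.i.d.\ composition identity
\[
C_{km}(\bu) = C_m(\bu^{1/k})^k \qquad (k,m\in\N),
\]
valid because the block maximum of $km$ i.i.d.\ observations equals the maximum of $k$ independent block maxima of size $m$. Substituting the third-order expansion from Assumption~\ref{assump:high_order3} into both sides, expanding the $k$-th power on the right up to second order in $\ca(m)$, using that $C_\infty$ is an extreme value copula (so $C_\infty(\bu^{1/k})^k = C_\infty(\bu)$), and invoking the homogeneity $S(\bu^{1/k}) = k^{\rho_\ca-1} C_\infty(\bu^{1/k}) S(\bu)/C_\infty(\bu)$ (which drops out by matching the leading $\ca(m)$-terms; cf.\ \cite{BucVolZou19}) yields the key identity
\begin{align*}
&\bigl[\ca(km) - k^{\rho_\ca}\ca(m)\bigr]\, S(\bu) + \ca(km)\cb(km)\, T(\bu) - k\, \ca(m)\cb(m)\, \tilde T_k(\bu) - c_k'\, \ca(m)^2\, \frac{S(\bu)^2}{C_\infty(\bu)} \\
&\qquad = o\bigl(\ca(m)^2 + \ca(m)\cb(m) + \ca(km)\cb(km)\bigr),
\end{align*}
uniformly in $\bu\in[0,1]^d$ as $m\to\infty$, for every fixed $k\in\N$; here $\tilde T_k(\bu) := C_\infty(\bu) T(\bu^{1/k})/C_\infty(\bu^{1/k})$ and $c_k' := \tfrac{k-1}{2k}\,k^{2\rho_\ca}$.

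By the assumed linear independence of $S$, $S^2/C_\infty$ and $T$, there exist three points $\bu_1,\bu_2,\bu_3\in[0,1]^d$ such that the $3\times 3$ matrix $M$ with columns $(S(\bu_i))_i$, $(S^2(\bu_i)/C_\infty(\bu_i))_i$ and $(T(\bu_i))_i$ is invertible. Writing
\[
A := \ca(km) - k^{\rho_\ca}\ca(m),\quad B := \ca(km)\cb(km),\quad C := \ca(m)\cb(m),\quad D := \ca(m)^2,
\]
evaluating the key identity at each $\bu_i$ and inverting $M$ produces three scalar relations
\[
A = kC\,\alpha_1(k) + o(B+C+D), \quad -c_k'\, D = kC\,\alpha_2(k) + o(B+C+D), \quad B = kC\,\alpha_3(k) + o(B+C+D),
\]
where $(\alpha_i(k))_{i=1}^3 := M^{-1}(\tilde T_k(\bu_1),\tilde T_k(\bu_2),\tilde T_k(\bu_3))^\top$ is a constant vector depending only on $k$.

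The main obstacle is to rule out blow-up of the ratios $p_m := B/C$ and $q_m := D/C = \ca(m)/\cb(m)$; once both are bounded, the errors $o(1+p_m+q_m)$ reduce to $o(1)$ and the relations directly give $p_m\to k\alpha_3(k)$ and $q_m\to -k\alpha_2(k)/c_k'$. I argue by contradiction: if $\max(p_m,q_m)\to\infty$ along a subsequence, a case analysis passing to further subsequences so that the ratio $p_m/q_m$ has a limit in $[0,\infty]$ leads, in every case, to either $1 = o(1)$ (from the relation for $B$ divided by $p_m$) or $c_k' = o(1)$ (from the relation for $D$ divided by $q_m$); both are impossible, since $c_k' = \tfrac{k-1}{2k}k^{2\rho_\ca} > 0$ for any integer $k\ge 2$ (recall $\rho_\ca<0$). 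Hence $p_m$ and $q_m$ are bounded, $p_m\to k\alpha_3(k)$, and together with $\ca(km)/\ca(m)\to k^{\rho_\ca}$ this yields
\[
\frac{\cb(km)}{\cb(m)} = \frac{\ca(m)}{\ca(km)}\cdot p_m \longrightarrow k^{1-\rho_\ca}\alpha_3(k) =: \lambda(k)
\]
for every integer $k\ge 2$; trivially $\lambda(1) = 1$.

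Finally, the multiplicative semigroup identity $\lambda(k_1 k_2) = \lambda(k_1)\lambda(k_2)$, obtained by decomposing $\cb(k_1 k_2 m)/\cb(m) = \bigl[\cb(k_1 k_2 m)/\cb(k_2 m)\bigr]\cdot\bigl[\cb(k_2 m)/\cb(m)\bigr]$ and passing to the limit, forces $\lambda(k) = k^{\rho_\cb}$ for some $\rho_\cb\in\R$; hence $\cb$ is regularly varying on $\N$ of index $\rho_\cb$. The hypothesis $\cb(k) = o(1)$ then immediately yields $\rho_\cb \le 0$, since a positive regularly-varying sequence with strictly positive index necessarily diverges. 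The extension from integer to real scaling factor required by the paper's convention follows from standard arguments using the continuous dependence of $\alpha_3(k)$ on $k$ through $\tilde T_k$.
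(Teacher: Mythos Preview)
Your expansion of the i.i.d.\ composition identity $C_{km}(\bu)=C_m(\bu^{1/k})^k$, the resulting key identity, and the contradiction argument showing that $p_m$ and $q_m$ stay bounded are all correct, and this is essentially the same computational core as the paper's argument. The genuine gap is in the last step: complete multiplicativity $\lambda(k_1k_2)=\lambda(k_1)\lambda(k_2)$ on the integers does \emph{not} force $\lambda(k)=k^{\rho_\cb}$. Any choice $\lambda(p)=p^{\rho_p}$ at the primes with prime-dependent exponents $\rho_p$ extends to a completely multiplicative function that is not a power unless all $\rho_p$ agree. Your closing sentence does not close the gap either: the fact that the \emph{formula} $k\mapsto k^{1-\rho_\ca}\alpha_3(k)$ extends continuously to real $k$ says nothing about whether the \emph{ratio} $\cb(\ang{mx})/\cb(m)$ actually converges to it for non-integer $x$; that requires the key identity itself to be re-derived with a real exponent, which is not ``standard'' but is the substance of the matter.

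The paper avoids this by working from the outset with a real scaling factor $x$ in an open interval around $1$, replacing your integer $k$ by $x_m:=\ang{mx}/m$ in the exact identity $C_{\ang{mx}}(\bu)=C_m(\bu^{1/x_m})^{x_m}$. The same linear-algebra step (the matrix now depends continuously on $x$; at $x=1$ it is built from $S,\,S^2/C_\infty,\,T$ and is invertible by the linear independence hypothesis, hence invertible nearby) shows directly that $\cb(m)/\cb(\ang{mx})$ converges to a continuous function $w(x)$ with $w(1)=1$, so $w(x)>0$ on a neighbourhood of $1$. Regular variation of $t\mapsto\cb(\ang t)$ then follows from the Characterisation Theorem (Theorem~B.1.3 in \cite{DehFer06}), which only needs the ratio limit to exist and be positive on a set of positive measure. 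Working near $x=1$ is exactly what makes positivity automatic; your integers $k\ge 2$ give no comparable control, and this is why the multiplicativity route stalls.
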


Next we discuss an additional property of the function $\ca$ from Assumption~\ref{assump:high_order} which allows to quantify the speed of convergence of 
\begin{align} \label{eq:ra}
r_x(k) =  \Big(\frac{\ang{xk}}k\Big)^{\rho_\ca} -  \frac{\ca(\ang{xk})}{\ca(k)} 
\end{align} 
(note that convergence to zero of this difference follows from regular variation of $\ca$). This difference will be important in later parts of the manuscript as it will appear in several bounds that are related to bias correction.

\begin{Lemma} \label{lem:bounddiffa} Assume that $\cX \subset (0,\infty)$ is compact and that there exists a non-negative function $\delta:\N \to [0,\infty)$ with $\lim_{k\to\infty} \delta(k) = 0$ such that, uniformly in $x \in \cX$,
\begin{equation}\label{eq:unifCk}
C_{\ang{xk}}(\bu) = C_k(\bu^{1/x_k})^{x_k} + O(\delta(k)), \qquad (k \to \infty),
\end{equation}
for any $\bu \in (0,1)^d$, where  $x_k := \ang{xk}/k$. Under  Assumption~\ref{assump:high_order3} we have, uniformly in $x \in \cX$, 
\[
r_x(k) =  \Big(\frac{\ang{xk}}{k}\Big)^{\rho_{\ca}}  - \frac{\ca(\ang{xk})}{\ca(k)} = O\Big(\ca(k) + \cb(k) + \delta(k)/\ca(k) \Big) \quad (k \to \infty).
\]  
\end{Lemma}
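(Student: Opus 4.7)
The plan is to derive two asymptotic expansions of $C_{\ang{xk}}(\bu)$ and match them. Applying Assumption~\ref{assump:high_order3} with $m=\ang{xk}$ yields, uniformly in $\bu\in[0,1]^d$,
\begin{align*}
C_{\ang{xk}}(\bu)
= C_\infty(\bu) + \ca(\ang{xk})S(\bu) + \ca(\ang{xk})\cb(\ang{xk})T(\bu) + o(\ca(\ang{xk})\cb(\ang{xk})).
\end{align*}
Applying the same assumption at block size $k$ and argument $\bu^{1/x_k}$, writing $\mu_k := \ca(k)S(\bu^{1/x_k}) + \ca(k)\cb(k)T(\bu^{1/x_k}) + o(\ca(k)\cb(k)) = O(\ca(k))$, and combining the max-stability identity $C_\infty(\bu^{1/x_k})^{x_k} = C_\infty(\bu)$ with a second-order Taylor expansion of $t\mapsto(C_\infty(\bu^{1/x_k})+t)^{x_k}$ around $t=0$ gives
\begin{align*}
C_k(\bu^{1/x_k})^{x_k}
= C_\infty(\bu) + x_k C_\infty(\bu)^{1-1/x_k}\mu_k + O(\ca(k)^2),
\end{align*}
with the $O(\ca(k)^2)$ uniform in $x\in\cX$ because $x_k$ stays bounded and $C_\infty(\bu^{1/x_k})$ is bounded away from $0$ for $\bu$ in any compact subset of $(0,1)^d$. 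Substituting into \eqref{eq:unifCk} and equating the two expansions leaves, for such $\bu$,
\begin{align*}
\ca(\ang{xk})S(\bu) - x_k C_\infty(\bu)^{1-1/x_k}\ca(k)S(\bu^{1/x_k})
= O\bigl(\ca(k)\cb(k) + \ca(k)^2 + \delta(k)\bigr),
\end{align*}
uniformly in $x\in\cX$, where the $T$-terms on both sides are absorbed into the $O(\ca(k)\cb(k))$ remainder via regular variation of $\ca$ and $\cb$.

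The second key ingredient is a homogeneity identity for $S$:
\begin{align*}
x\,C_\infty(\bu)^{1-1/x} S(\bu^{1/x}) = x^{\rho_\ca} S(\bu), \qquad x > 0,\ \bu\in(0,1)^d.
\end{align*}
This identity is obtained by repeating the above comparison with only the second-order expansion from Assumption~\ref{assump:high_order}, dividing by $\ca(k)$, and passing to $k\to\infty$: regular variation of $\ca$ produces the factor $x^{\rho_\ca}$, while continuity of $S$ and $C_\infty$ yields the right-hand side. The analogous identity is derived in the companion paper \cite{BucVolZou19} for the i.i.d.\ case. If $\delta(k)/\ca(k)\not\to 0$ the bound of the lemma is vacuous because $r_x(k)\to 0$ by regular variation of $\ca$, so one may restrict attention to the case $\delta(k)/\ca(k)\to 0$, in which the limiting passage giving the homogeneity is clean.

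Finally, writing $\ca(\ang{xk}) = x_k^{\rho_\ca}\ca(k) - r_x(k)\ca(k)$ in the displayed equation and applying the homogeneity identity at $x = x_k$ makes the two leading $\ca(k)$-terms cancel, leaving
\begin{align*}
-r_x(k)\ca(k) S(\bu) = O\bigl(\ca(k)\cb(k) + \ca(k)^2 + \delta(k)\bigr)
\end{align*}
uniformly in $x\in\cX$. Since $S$ is continuous and non-null, there is $\bu_\star\in(0,1)^d$ with $S(\bu_\star)\ne 0$; dividing by $\ca(k)S(\bu_\star)$ yields the claimed bound $r_x(k)=O(\ca(k)+\cb(k)+\delta(k)/\ca(k))$. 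The main technical obstacle is the rigorous justification of the homogeneity identity for $S$ together with the careful uniform tracking of the Taylor remainders and the $o(\cdot)$ terms as $x$ ranges over the compact set $\cX$; once those are in place, the rest is a matching of leading coefficients.
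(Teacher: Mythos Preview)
Your proposal is correct and follows essentially the same route as the paper: Taylor-expand $C_k(\bu^{1/x_k})^{x_k}$, compare with the direct third-order expansion of $C_{\ang{xk}}(\bu)$ through \eqref{eq:unifCk}, and use the homogeneity identity for $S$ (recorded in the paper as \eqref{eq:homS} and attributed to \cite{BucVolZou19}) to cancel the leading $\ca(k)$-terms, leaving $r_x(k)\ca(k)S(\bu)$ equal to a remainder of the asserted order. The only cosmetic difference is that you re-derive the homogeneity identity inline via a limiting argument (and handle the case $\delta(k)/\ca(k)\not\to 0$ by a subsequence remark), whereas the paper states it upfront before the expansion.
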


In the iid case, Equation~\eqref{eq:unifCk} obviously holds with $\delta \equiv 0$. The next result provides a bound on the difference in~\eqref{eq:unifCk} under mixing conditions.

\begin{Lemma}\label{rem:raiid}
Let  Assumption~\ref{assump:copula_convergence} hold with an extreme-value copula $C_\infty$. Further, let $(\bX_t)_{t \in \Z}$ be $\alpha$-mixing with mixing coefficients $\alpha(k) = O(k^{-(1+\varrho)})$ for some $\varrho>0$. Then~\eqref{eq:unifCk} holds with $\delta(k) = O(k^{-(1+\varrho)/(2+\varrho)}\log k )$.  
\end{Lemma}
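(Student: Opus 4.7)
My strategy is to apply the standard big-blocks/small-blocks decomposition of a block of size $m=\ang{xk}$ into $q\approx x_k$ sub-blocks of size $k$, use the $\alpha$-mixing covariance inequality to decouple the joint distribution of the sub-block maxima up to controllable error, and then translate the resulting joint-cdf identity back to the copula level.

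First, I would reduce the claim~\eqref{eq:unifCk} to a statement about joint distribution functions. Using $C_m(\bu)=F_m(\bm F_m^{\leftarrow}(\bu))$ and $C_k(\bu^{1/x_k})^{x_k}=F_k(\bm F_k^{\leftarrow}(\bu^{1/x_k}))^{x_k}$, it suffices to prove
\[
F_m(\bx)=F_k(\bx)^{x_k}+O(\delta(k))
\]
uniformly in $\bx$ in the tail region $\{\bx:\Prob(\bm X_1\nleq\bx)=O(1/m)\}$, together with the analogous marginal statement needed to match the quantile arguments. Continuity of $C_\infty$ (guaranteed by its extreme-value structure) then yields the copula-level conclusion through routine changes of variables.

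For the joint-cdf bound, I would write $m=qk+r$ with $q=\lfloor m/k\rfloor$ and $r\in[0,k)$; since $x\in\cX$ is compact, $q$ is uniformly bounded. Partition $[1,m+1)\cap\Z$ into $q$ consecutive sub-blocks of size $k$ followed by a remainder of size $r$, and shorten each sub-block by $g$ positions on the right to create gaps of size $g$ between consecutive blocks. Applying the $\alpha$-mixing covariance inequality to the $q$ big blocks (of size $k-g$, separated by gaps of $g$) together with union bounds for the gap and remainder events yields
\[
F_m(\bx)=F_{k-g}(\bx)^{q}F_r(\bx)+O\!\big(q\alpha(g)\big)+O\!\big((qg+r)\Prob(\bm X_1\nleq\bx)\big).
\]
For $\bx=\bm F_m^{\leftarrow}(\bu)$, max-stability of $C_\infty$ combined with $C_m\to C_\infty$ yields $\Prob(\bm X_1\nleq\bx)=O(1/m)$, so the probability error reduces to $O(g/k)$.

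The main technical obstacle is the \emph{fractional-exponent} issue: matching $F_r(\bx)$ against $F_k(\bx)^{r/k}$, whose exponent is in general not an integer. In the i.i.d.\ case the two quantities coincide, but under mixing they differ. I would resolve this by iterating the mixing decomposition at progressively finer scales, expressing both $F_r(\bx)$ and $F_k(\bx)^{r/k}$ in terms of a common sub-block cdf so that the leading order terms cancel and only mixing errors remain. Each iteration contributes an error of order $\alpha(g)+g/k$, and $O(\log k)$ iterations suffice to push the residual fractional part below the target tolerance. Balancing $g=k^{1/(2+\varrho)}$ so that $\alpha(g)\asymp g/k$ gives a per-iteration rate of $O(k^{-(1+\varrho)/(2+\varrho)})$, and combining with the $O(\log k)$ step count delivers $\delta(k)=O(k^{-(1+\varrho)/(2+\varrho)}\log k)$ as claimed.
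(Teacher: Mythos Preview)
Your proposal is correct and takes essentially the same route as the paper: big-blocks/small-blocks with gap size $g\asymp k^{1/(2+\varrho)}$ chosen to balance the mixing error $\alpha(g)$ against the truncation error $g/k$, combined with an iteration over $O(\log k)$ scales to resolve the fractional-exponent issue. The paper structures the last step as an explicit dyadic expansion of $x$ (with separate auxiliary lemmas for integer multipliers, reciprocal-integer multipliers, and then dyadic rationals), and it handles the tail bound you write as $\Prob(\bm X_1\nleq\bx)=O(1/m)$ via a dedicated lemma that applies the same blocking argument to single coordinates; but the substance and the resulting rate are exactly what you outline.
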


\subsection{Improved estimation by aggregation over block lengths}\label{sec:simpagg}

Since the functional weak convergence result in Theorem~\ref{theo:estmar} involves a scaling parameter for the block length, we may easily analyse estimators for $C_\infty$ which are based on aggregating over several blocks. 
More formally, we consider the following general construction: for a set $M=M_n \subset \{1,\dots,n\}$ of block length parameters and a set $w=\{w_{n,k}: k \in M_n\}$  of weights satisfying $\sum_{k \in M} w_{n,k} = 1$ for all $n \in \N$, let
\[
\hat C_{n,(M,w)}^{\mathrm{agg}}(\bu) = \sum_{k \in M_n} w_{n,k}\hat C_{n,k}(\bu), \qquad \bm u \in [0,1]^d.
\]
To derive the asymptotic distribution of this weighted aggregated estimator, we make the following assumption on the tuple $(M,w)$.

\begin{Assump}\label{assump:wM}
Let $m=m_n$ denote the sequence from Assumption~\ref{assump:mixing}.
For some closed interval $A = [a_\wedge, a_\vee] \subset (0,\infty)$ of positive length, we have 
\[
M=M_n = \{k \in \N : k / m \in A \}
\] 
and the weights $w_{n,k}$ satisfy $\lim_{n\to\infty} m w_{n,\ang{ma}} = f(a)$ uniformly over $A$ for some continuous $f$ on $A$ with $\int_A f(a) \diff a = 1$. 
\end{Assump}

For instance, given a continuous function $f$ on $A$ that integrates to unity, we may choose the weights $w_{n,k}= f(k/m) / \{ \sum_{\ell \in M_n} f(\ell/m) \}$.

\begin{Prop}\label{prop:simpleagg} Let any of the sufficient conditions in Theorem~\ref{theo:estmar} be met and assume that additionally Assumption~\ref{assump:wM} is true. Then, in $\ell^\infty([0,1]^d)$, 
\[
\sqrt{\frac{n}{m}}\Big( \hat C_{n,(M,w)}^{\mathrm{agg}}(\cdot) - C_\infty(\cdot) - \sum_{k \in M_n} w_{n,k} \{C_{k}(\cdot) - C_\infty(\cdot)\}\Big) 
\dto 
\int_A f(a) \widehat\bbC^{\lozenge}(\cdot,a) \diff a.  
\]
\end{Prop}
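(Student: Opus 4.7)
The plan is to express the centred quantity on the left as a linear functional of the extended empirical copula process $\widehat\bbC_{n,m}^{\lozenge}$, whose functional weak convergence is granted by Theorem~\ref{theo:estmar}, and to conclude via an extended continuous mapping argument. Since $\sum_{k \in M_n} w_{n,k} = 1$, linearity combined with the identity $\widehat\bbC_{n,m}^{\lozenge}(\bu, k/m) = \sqrt{n/m}\{\hat C_{n,k}(\bu) - C_k(\bu)\}$, valid because $\ang{k} = k$ for every integer $k$, rewrites the quantity of interest as $\Phi_n(\widehat\bbC_{n,m}^{\lozenge})(\bu)$, where
\[
\Phi_n(g)(\bu) := \sum_{k \in M_n} w_{n,k}\, g(\bu, k/m), \qquad g \in \ell^\infty([0,1]^d \times A).
\]
The candidate limit functional is $\Phi(g)(\bu) := \int_A f(a)\, g(\bu, a) \diff a$, well defined on $C([0,1]^d \times A)$, and the limit process $\widehat\bbC^{\lozenge}$ has continuous sample paths, by the continuity of those of $\bbC^{\lozenge}$ from Theorem~\ref{theo:known} combined with the representation~\eqref{eq:estmarlim}.

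I would then invoke the extended continuous mapping theorem (e.g.\ Theorem~1.11.1 in \citealp{VanWel96}), for which the hypothesis to verify is that $\|\Phi_n(g_n) - \Phi(g)\|_\infty \to 0$ whenever $\|g_n - g\|_\infty \to 0$ with $g \in C([0,1]^d \times A)$. To see this, split $\Phi_n(g_n) - \Phi(g) = \Phi_n(g_n - g) + \{\Phi_n(g) - \Phi(g)\}$. The first summand is bounded by $\|g_n - g\|_\infty \sum_{k \in M_n} w_{n,k} = \|g_n - g\|_\infty \to 0$. For the second, Assumption~\ref{assump:wM} yields $mw_{n,\ang{ma}} \to f(a)$ uniformly in $a \in A$, so $\Phi_n(g)(\bu)$ is a perturbed Riemann sum for $\Phi(g)(\bu)$; combined with the uniform continuity of $f$ on $A$ and of $g$ on the compact set $[0,1]^d \times A$, and absorbing the $O(1/m)$ contribution of the at-most-two boundary subintervals of $A$ (recall $|M_n| = O(m)$), this gives $\|\Phi_n(g) - \Phi(g)\|_\infty \to 0$. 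The extended continuous mapping theorem therefore yields $\Phi_n(\widehat\bbC_{n,m}^{\lozenge}) \dto \Phi(\widehat\bbC^{\lozenge})$ in $\ell^\infty([0,1]^d)$, which is the claim.

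The only genuine technical content is the Riemann-sum estimate for $\Phi_n(g) - \Phi(g)$ on continuous integrands; the heavy lifting (tightness and finite-dimensional convergence of $\widehat\bbC_{n,m}^{\lozenge}$, uniformly over the block length parameter) is entirely absorbed into Theorem~\ref{theo:estmar}, so no substantive additional obstacle is expected.
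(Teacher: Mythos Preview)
Your proof is correct and follows essentially the same approach as the paper: both rewrite the centred aggregated estimator as a weighted sum of $\widehat\bbC_{n,m}^{\lozenge}(\bu,k/m)$, approximate this Riemann-type sum by $\int_A f(a)\widehat\bbC_{n,m}^{\lozenge}(\bu,a)\diff a$, and conclude via continuous mapping. The paper carries out the Riemann-sum replacement directly (first replacing $w_{n,k}$ by $f(k/m)/m$ and then invoking asymptotic equicontinuity of the process), whereas you package the same two-step approximation inside a single application of the extended continuous mapping theorem; one small slip is that your bound on $\Phi_n(g_n-g)$ should use $\sum_{k\in M_n}|w_{n,k}|$ rather than $\sum_{k\in M_n}w_{n,k}$, but this sum is $O(1)$ by Assumption~\ref{assump:wM} so the argument goes through unchanged.
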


Note that the asymptotic results in Theorem~\ref{theo:estmar} imply that the asymptotic variance of $\hat C_{n,\ang{ma}}(\bu)$ is proportional to $ma/n$. For simplicity ignoring the dependence between $\hat C_{n,k}(\bu)$ for different $k$, this motivates the choice $w_{n,k} = k^{-1} / ( \sum_{\ell \in M_n} \ell^{-1}) $, which is in fact the solution to the minimization problem `minimize $\sum_k (k/n) w_{n,k}^2$ over $w_{n,k}$ with $\sum_k w_{n,k} = 1$'. The corresponding function $f$ is  $f(a) = c/a$, with $c$ a normalizing constant such that the integral over $f$ is one. Despite this being a crude approximation since $\hat C_{n,k}(\bu)$ will be strongly dependent for different values of $k$, it performs reasonably well in simulations where we will see that in many cases it leads to an improvement in MSE. An alternative approach to choosing $w_{n,k}$ would consist of estimating the entire variance-covariance matrix of $\{\hat C_{n,k}(\bu): k \in M_n\}$ and minimize a corresponding quadratic form of $w_{n,k}$. We leave a detailed investigation of this question to future research.

Finally, note that if the second order condition from Assumption~\ref{assump:high_order} holds, then the deterministic bias term (see also the discussion in the next section) in Proposition~\ref{prop:simpleagg} can be further decomposed as
\begin{align*}
B_{n,(M,w)}^{\mathrm{agg}}(\bm u) 
&\equiv 
\sum_{k \in M_n} w_{n,k} \{C_{k}(\bu) - C_\infty(\bu)\} \\
&= 
\ca(m) S(\bu)\sum_{k \in M_n} w_{n,m(k/m)} \{ (k/m)^{\rho_{\ca}}+ o(1)\} 
\\
&= \ca(m) S(\bu) \int_A f(a)a^{\rho_\ca}\diff a + o(\ca(m)).
\end{align*}  
Note in particular that the asymptotic bias vanishes if $\ca(m) \sqrt{n/m} = o(1)$. 

\subsection{Bias correction} 
\label{sec:bcknown}

Before discussing the general methodology in this section, we comment on the notion of bias of $\hat C_{n,\ang{ma}}(\bu)$ as an estimator for the attractor copula $C_\infty(\bm u)$. The difference $\hat C_{n,\ang{ma}}(\bu) - C_\infty(\bm u)$ can be naturally decomposed into two terms
\[
D_{n,m}^\lozenge(\bu,a) = \hat C_{n,\ang{ma}}(\bu) - C_{\ang{ma}}(\bu), 
\quad 
B_{n,m}^{\lozenge}(\bm u,a) = C_{\ang{ma}}(\bu)-C_\infty(\bu).
\]

The first term captures the stochastic part of $\hat C_{n,\ang{ma}}(\bu) - C_\infty(\bm u)$  and may be rewritten as
\[
D_{n,m}^\lozenge(\bu,a) = \sqrt{\frac{m}n} \widehat\bbC^{\lozenge}_{n,m}(\bu,a) = O_\Prob\Big( \sqrt{\frac{m}n}\Big).
\]
Recall that, by Theorem~\ref{theo:estmar}, $\widehat\bbC^{\lozenge}_{n,m}(\bu,a)$ converges to a \textit{centered} Gaussian process. For this reason, throughout the remaining part of this paper, when discussing the bias of an estimator, we mostly concentrate on (versions of) the deterministic sequence $B_{n,m}^{\lozenge}$, which might in fact be of larger order than $O((m/n)^{1/2})$ and which we will call the approximation part of the bias. Note that this is a slight abuse of terminology as we never prove results about $\Exp[D_{n,m}^\lozenge(\bu,a)]$; however, a similar approach has also been taken in \cite{FouDehMer15}.

Regarding the approximation part of the bias, note that the fundamental Assumption~\ref{assump:copula_convergence} only guarantees that $B_{n,m}^{\lozenge} = o(1)$. Under the second order condition from Assumption~\ref{assump:high_order} however, we obtain a hold on both the size and the direction of the bias:
\begin{align} \label{eq:pb}
B_{n,m}^{\lozenge}(\bm u,a)  &= \ca(\ang{ma}) S(\bm u) + o(\ca(\ang{ma})) = \ca(m) a^{\rho_\ca} S(\bm u) + o(\ca(m)) \\
&= O(\ca(m)) \nonumber.
\end{align}
It is the main purpose of this section to exploit the generality of Theorem~\ref{theo:estmar} to construct estimators for $C_\infty$ with a smaller order approximation bias.

More precisely, in the current Section~\ref{sec:bcknown}, we present three approaches on how to reduce the bias under either the preliminary assumption that the second order coefficient $\rho_{\ca}$ is known, or that an estimate $\hat \rho_\ca$ is available. In the next section, we will then discuss how to obtain such an estimate. 
For the remaining parts of Section~\ref{sec:app}, suppose that the third order condition from Assumption~\ref{assump:high_order3} is met, which implies the expansion
\begin{equation}\label{eq:cmexp}
C_m(\bu) - C_\infty(\bu) = \ca(m)S(\bu) + \ca(m)\cb(m)T(\bu) + o(\ca(m)\cb(m)), 
\end{equation} 
$m\to\infty,$ for the approximation part of the bias of $\hat C_{n,m} - C_\infty$.

\subsubsection{Naive bias-corrected estimator} \label{sec:bcnai}
The expansion in \eqref{eq:cmexp} implies that, assuming $ma \in \N$ for simplicity for the moment,
\begin{align*}
C_{ma}(\bu)-C_{m}(\bu) 
&=  \{\ca(ma) - \ca(m)  \} S(\bu) + O(\ca(m)\cb(m))  \\
&= 
(a^{\rho_\ca}-1) \ca(m) S(\bu) + O(\ca(m)\cb(m)).
\end{align*}
This suggests that the leading bias term $\ca(m)S(\bu)$ in Expansion~\eqref{eq:cmexp} can be estimated by the plug-in version $\{\hat C_{m',n}(\bu) - \hat C_{m,n}(\bu)\}/ \{ (m'/m)^{\rho_\ca}-1 \}$ where $m' \neq m$ is an integer and we set $a = m'/m$ in the expansion above. Subtracting this estimated bias from the estimator $\hat C_{n,m}$ naturally leads to the following \textit{naive bias-corrected estimator} 
\[
\hat C_{n,(m,m')}^{\mathrm{bc,nai}}(\bu) = \hat C_{n,m}(\bu) - \frac{\hat C_{n,m'}(\bu) - \hat C_{n,m}(\bu)}{(m'/m)^{\rho_\ca} - 1}.
\]
Note that this estimator is infeasible in practice since $\rho_\ca$ is unknown. A feasible estimator  denoted by $\check C_{n,(m,m')}^{\mathrm{bc,nai}}$, can be obtained by replacing $\rho_\ca$ with an estimator $\hat \rho_\ca$. In the result below we quantify the impact of such a replacement under the mild condition $\hat \rho_\ca = \rho_\ca + \op{1}$, estimators satisfying this assumption will be presented in Section~\ref{sec:estrho} below. Furthermore, it is worthwhile to mention that $\hat C_{n,(m,m')}^{\mathrm{bc,nai}} = \hat C_{n,(m',m)}^{\mathrm{bc,nai}}$ as can be verified by a simple calculation.

Assuming that $m' = \ang{ma}$ for some fixed value $a \in (0,\infty), a \neq 1$, the asymptotic distribution of this estimator is as follows. 

\begin{Prop}\label{prop:bcnai} Let any of the sufficient conditions in Theorem~\ref{theo:estmar} be met. Additionally, suppose that Assumption~\ref{assump:high_order3} is met and assume that $m' = \ang{ma}$ for some fixed constant $0 < a \neq 1$. Then, in $\ell^\infty([0,1]^d)$, 
\begin{multline*}
\sqrt{\frac{n}{m}}\Big( \hat C_{n,(m,m')}^{\mathrm{bc,nai}}(\cdot) - C_\infty(\cdot) - B_{n,(m,m')}^{\mathrm{bc,nai}}(\cdot) \Big) \\
\dto \widehat\bbC^{\lozenge}_{\mathrm{bc,nai}}(\cdot,a) := \widehat\bbC^{\lozenge}(\cdot,1) - \frac{\widehat\bbC^{\lozenge}(\cdot,a) - \widehat\bbC^{\lozenge}(\cdot,1)}{a^{\rho_\ca}-1} ,
\end{multline*}
where the bias term $B_{n,(m,m')}^{\mathrm{bc,nai}}$ admits the expansion
\begin{multline*}
B_{n,(m,m')}^{\mathrm{bc,nai}}(\bu) 
= \Big\{\ca(m) r_a(m) \frac{S(\bm u)}{a^{\rho_\ca}-1} + \ca(m) \cb(m) \frac{1-a^{\rho_\cb}}{1-a^{-\rho_\ca}} T(\bm u) \Big\} \\
+ \ca(m)o\Big(\cb(m)+|r_a(m)|\Big).
\end{multline*}
with $r_a(m) =  (\ang{ma}/m)^{\rho_\ca} - \ca(\ang{ma})/\ca(m) =o(1)$ as in \eqref{eq:ra}.
In particular, we have
\[
\sup_{\bu \in [0,1]^d} |B_{n,(m,m')}^{\mathrm{bc,nai}}(\bu)| = \ca(m)O\Big(\cb(m) + |r_a(m)|\Big).  
\]
If moreover $\hat \rho_\ca$ satisfies $\hat \rho_\ca = \rho_\ca + \op{1}$, then, uniformly in $\bu \in [0,1]^d$ 
\[
\check{C}_{n,(m,m')}^{\mathrm{bc,nai}}(\bu) = \hat C_{n,(m,m')}^{\mathrm{bc,nai}}(\bu) + O_\Prob\Big(|\hat\rho_\ca - \rho_\ca|\{\ca(m) + \sqrt{m/n}\} \Big).
\]
\end{Prop}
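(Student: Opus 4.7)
The plan is to decompose $\sqrt{n/m}\{\hat C_{n,(m,m')}^{\mathrm{bc,nai}} - C_\infty - B_{n,(m,m')}^{\mathrm{bc,nai}}\}$ into a stochastic piece, governed by Theorem~\ref{theo:estmar}, and a purely deterministic bias piece, governed by the third order condition~\eqref{eq:cmexp}; the plug-in replacement of $\rho_\ca$ by $\hat\rho_\ca$ is then handled separately via a mean value argument. For the stochastic piece, writing $a_m := m'/m$ and $B_{n,(m,m')}^{\mathrm{bc,nai}}(\bu) := \{C_m(\bu)-C_\infty(\bu)\} - \{C_{m'}(\bu)-C_m(\bu)\}/\{(m'/m)^{\rho_\ca}-1\}$, a direct rearrangement (using that $\ang{m a_m}=m'$) yields
\begin{align*}
&\sqrt{n/m}\bigl\{\hat C_{n,(m,m')}^{\mathrm{bc,nai}}(\bu) - C_\infty(\bu) - B_{n,(m,m')}^{\mathrm{bc,nai}}(\bu)\bigr\} \\
&\qquad = \widehat\bbC_{n,m}^{\lozenge}(\bu,1) - \frac{\widehat\bbC_{n,m}^{\lozenge}(\bu,a_m) - \widehat\bbC_{n,m}^{\lozenge}(\bu,1)}{(m'/m)^{\rho_\ca}-1}.
\end{align*}
Picking a compact interval $A\subset(0,\infty)$ containing $\{1,a\}$, Theorem~\ref{theo:estmar} yields $\widehat\bbC_{n,m}^{\lozenge}\dto\widehat\bbC^{\lozenge}$ in $\ell^\infty([0,1]^d\times A)$ with sample-path continuous limit. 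Since $a_m\to a$ and $(m'/m)^{\rho_\ca}\to a^{\rho_\ca}\neq 1$, the continuous mapping theorem (combined with almost-sure uniform continuity of $a\mapsto\widehat\bbC^{\lozenge}(\cdot,a)$ to pass from $a_m$ to $a$) delivers the stated weak convergence to $\widehat\bbC_{\mathrm{bc,nai}}^{\lozenge}(\cdot,a)$ in $\ell^\infty([0,1]^d)$.

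For the bias expansion, I would apply~\eqref{eq:cmexp} at $m$ and at $m'=\ang{ma}$. By the definition of $r_a$, $\ca(m')=\ca(m)\{(m'/m)^{\rho_\ca}-r_a(m)\}$, and regular variation of $\ca\cb$ (with index $\rho_\ca+\rho_\cb$) gives $\ca(m')\cb(m')=\ca(m)\cb(m)\{a^{\rho_\ca+\rho_\cb}+o(1)\}$. Substituting these expansions into $B_{n,(m,m')}^{\mathrm{bc,nai}}$ and simplifying, the $S(\bu)$-coefficient collapses to $\ca(m)r_a(m)/\{(m'/m)^{\rho_\ca}-1\}=\ca(m)r_a(m)/(a^{\rho_\ca}-1)+\ca(m)|r_a(m)|o(1)$, while the $T(\bu)$-coefficient collapses to $\ca(m)\cb(m)\,a^{\rho_\ca}(1-a^{\rho_\cb})/(a^{\rho_\ca}-1)+\ca(m)\cb(m)o(1)$; multiplying numerator and denominator of the latter by $a^{-\rho_\ca}$ matches the stated form $\ca(m)\cb(m)(1-a^{\rho_\cb})/(1-a^{-\rho_\ca})$. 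The remainder terms of~\eqref{eq:cmexp} contribute $\ca(m)\,o(\cb(m)+|r_a(m)|)$ uniformly in $\bu$, since $S,T$ are continuous hence bounded on $[0,1]^d$, and the sup-norm bound on the bias follows at once.

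For the plug-in step, write
\begin{align*}
\check C_{n,(m,m')}^{\mathrm{bc,nai}}(\bu) - \hat C_{n,(m,m')}^{\mathrm{bc,nai}}(\bu) = \{\hat C_{n,m'}(\bu)-\hat C_{n,m}(\bu)\}\bigl\{g_m(\rho_\ca)-g_m(\hat\rho_\ca)\bigr\},
\end{align*}
where $g_m(r):=1/\{(m'/m)^r-1\}$. The first factor is $\Op{\sqrt{m/n}+\ca(m)}$ uniformly in $\bu$, by Theorem~\ref{theo:estmar} for the stochastic part and Assumption~\ref{assump:high_order} for the approximation part $C_{m'}-C_m=O(\ca(m))$. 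Since $m'/m\to a\neq 1$, $g_m'$ is bounded on a fixed neighborhood of $\rho_\ca$ uniformly in all sufficiently large $n$, so the mean value theorem gives $g_m(\rho_\ca)-g_m(\hat\rho_\ca)=\Op{|\hat\rho_\ca-\rho_\ca|}$; multiplication yields the claimed rate. The main technical obstacle sits in the bias expansion: one must separately track the $r_a(m)$-type remainders (arising from both the discretization $\ang{ma}\neq ma$ and the deviation of $\ca$ from a pure power) and the $\cb(m)$-type remainders from the third order condition, which may be of different orders, and verify that all cross-terms are absorbed into the single $o(\cb(m)+|r_a(m)|)$ factor claimed.
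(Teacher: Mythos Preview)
Your proposal is correct and follows essentially the same approach as the paper: decompose into a stochastic part handled by Theorem~\ref{theo:estmar} plus the continuous mapping theorem, a deterministic bias part expanded via the third order condition and regular variation of $\ca,\cb$, and the plug-in step controlled by bounding $\hat C_{n,m'}-\hat C_{n,m}$ and the derivative of $r\mapsto\{(m'/m)^r-1\}^{-1}$. The paper writes the plug-in difference out explicitly as $\{(m'/m)^{\hat\rho_\ca}-(m'/m)^{\rho_\ca}\}/\{((m'/m)^{\rho_\ca}-1)((m'/m)^{\hat\rho_\ca}-1)\}$ rather than invoking the mean value theorem, but this is only a cosmetic difference.
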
 

Note that the bias term $B_{n,(m,m')}^{\mathrm{bc,nai}}$ is of smaller order than the bias term $B_{n,m}^{\lozenge}$ of the plain empirical copula based on sliding block maxima, see \eqref{eq:pb}. Moreover, in the i.i.d.\ case, we can further bound $|r_a(m)|$ by $O(\ca(m) + \cb(m))$, see Lemma~\ref{lem:bounddiffa} and Remark~\ref{rem:raiid}.
 
\subsubsection{Improving the naive bias-corrected estimator by aggregation}
The naive bias-corrected estimator is fairly simple since it only considers two block length parameters $m$ and $m'=\ang{am}$. One way to improve this estimator is to consider aggregation over different block lengths; an approach that was shown to work well in~\cite{FouDehMer15} for estimating the stable tail dependence function. Many kinds of aggregation are possible, but for the sake of brevity we will restrict our attention to the following version inspired by Section~\ref{sec:simpagg} (which works well in finite-sample settings as demonstrated in Section~\ref{sec:fin})
\[
\hat C_{n,(m,M,w)}^{\mathrm{bc,agg}}(\bu)  = \sum_{k \in M_n} w_{n,k}\hat C_{n,(m,k)}^{\mathrm{bc,nai}}(\bu).
\]
Here $M=M_n\subset\{1, \dots n\} \setminus \{m_n\}$ and $\{w_{n,k}: k \in M_n\}$ are assumed to satisfy Assumption~\ref{assump:wM}. Similarly to the discussion in Section~\ref{sec:bcnai}, let $\check C_{n,(m,M,w)}^{\mathrm{bc,agg}}$ denote a feasible version of $\hat C_{n,(m,M,w)}^{\mathrm{bc,agg}}$, with $\rho_\ca$ replaced  by $\hat \rho_\ca$.

\begin{Prop}\label{prop:bcagg}Let any of the sufficient conditions in Theorem~\ref{theo:estmar} be met. Additionally, {suppose that Assumption~\ref{assump:high_order3} is met} and that $(M_n,\{w_{n,k}: k \in M_n\})$ satisfies Assumption~\ref{assump:wM} and $1 \notin A$. Then, in $\ell^\infty([0,1]^d)$, 
\[
\sqrt{\frac{n}{m}}\Big( \hat C_{n,(m,M,w)}^{\mathrm{bc,agg}}(\cdot) - C_\infty(\cdot) - B_{n,(m,M,w)}^{\mathrm{bc,agg}}(\cdot) \Big) 
\dto 
\int_A f(a) \widehat\bbC^{\lozenge}_{\mathrm{bc,nai}}(\cdot,a) \diff a,
\]
where the bias term $B_{n,(m,M,w)}^{\mathrm{bc,agg}}$ satisfies 
\begin{multline*}
B_{n,(m,M,w)}^{\mathrm{bc,agg}}(\bu) 
\\
= \int_A f(a)\Big\{\ca(m) r_a(m) \frac{S(\bm u)}{a^{\rho_\ca}-1} + \ca(m) \cb(m) \frac{(1-a^{\rho_\cb})T(\bm u)}{1-a^{-\rho_\ca}} \Big\} \diff a + o(r(m)),
\end{multline*}
where, recalling $r_a(m)$ from \eqref{eq:ra},
\begin{align} \label{eq:rmn}
r(m) = \ca(m)\Big(\cb(m) + \sup_{a \in A}\big|r_a{(m)}\big|\Big).
\end{align}
In particular
\[
\sup_{\bu \in [0,1]^d} |B_{n,(m,M,w)}^{\mathrm{bc,agg}}(\bu)| = O(r({m})).
\]
If moreover $\hat \rho_\ca = \rho_\ca + \op{1}$ then we have, uniformly in $\bu \in [0,1]^d$ 
\[
\check C_{n,(m,M,w)}^{\mathrm{bc,agg}}(\bu) = \hat C_{n,(m,M,w)}^{\mathrm{bc,agg}}(\bu) + O_\Prob\Big(|\hat\rho_\ca - \rho_\ca|\{\ca(m) + \sqrt{m/n}\} \Big).
\] 
\end{Prop}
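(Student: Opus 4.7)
The plan is to combine the one-block bias expansion from Proposition~\ref{prop:bcnai} with the Riemann-sum aggregation argument underlying Proposition~\ref{prop:simpleagg}, followed by a short Taylor argument to handle the plug-in of $\hat\rho_\ca$. A key structural fact I would exploit throughout is that $1 \notin A$ keeps the factor $(a^{\rho_\ca}-1)^{-1}$ bounded and continuous on $A$.

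First I would decompose
$$\sqrt{n/m}\Big(\hat C_{n,(m,M,w)}^{\mathrm{bc,agg}} - C_\infty - B_{n,(m,M,w)}^{\mathrm{bc,agg}}\Big)(\bu) = \sum_{k \in M_n} w_{n,k}\,\sqrt{n/m}\Big\{\hat C_{n,(m,k)}^{\mathrm{bc,nai}}(\bu) - C_\infty(\bu) - B_{n,(m,k)}^{\mathrm{bc,nai}}(\bu)\Big\}$$
after defining $B_{n,(m,M,w)}^{\mathrm{bc,agg}}(\bu) := \sum_{k\in M_n} w_{n,k}B_{n,(m,k)}^{\mathrm{bc,nai}}(\bu)$. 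For the stochastic summands I would introduce the bias-corrected process
$$\widehat\bbC_{\mathrm{bc,nai},n,m}^{\lozenge}(\bu,a) := \widehat\bbC_{n,m}^{\lozenge}(\bu,1) - \frac{\widehat\bbC_{n,m}^{\lozenge}(\bu,a)-\widehat\bbC_{n,m}^{\lozenge}(\bu,1)}{a^{\rho_\ca}-1},$$
so that the $k$th stochastic term is $w_{n,k}\widehat\bbC_{\mathrm{bc,nai},n,m}^{\lozenge}(\bu,k/m)$. By Theorem~\ref{theo:estmar} and continuous mapping, $\widehat\bbC_{\mathrm{bc,nai},n,m}^{\lozenge} \dto \widehat\bbC_{\mathrm{bc,nai}}^{\lozenge}$ in $\ell^\infty([0,1]^d\times A)$. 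Rewriting the sum as $m^{-1}\sum_k (mw_{n,k})\widehat\bbC_{\mathrm{bc,nai},n,m}^{\lozenge}(\bu,k/m)$, the uniform convergence $mw_{n,\ang{ma}} \to f(a)$ from Assumption~\ref{assump:wM} combined with the joint asymptotic equicontinuity in $(\bu,a)$ identifies this Riemann sum with the integral $\int_A f(a)\widehat\bbC_{\mathrm{bc,nai}}^{\lozenge}(\cdot,a)\diff a$ in $\ell^\infty([0,1]^d)$.

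Next, I would treat the deterministic bias by plugging in the expansion from Proposition~\ref{prop:bcnai} with $a=k/m$ and again passing from the weighted sum to a Riemann integral. The two explicit leading terms aggregate to the displayed integral; for the remainder $\ca(m)o(\cb(m)+|r_{k/m}(m)|)$, compactness of $A$ together with $\sup_{a \in A}|r_a(m)|\to 0$ (Lemma~\ref{lem:bounddiffa}) controls each summand by $o(r(m))$ uniformly in $k$, and since $\sum_{k\in M_n} w_{n,k}= 1$, the aggregated remainder is still $o(r(m))$, uniformly in $\bu$.

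Finally, I would handle the plug-in of $\hat\rho_\ca$ by observing that $\check C$ and $\hat C$ differ only through the coefficients $((k/m)^{\rho_\ca}-1)^{-1}$ and $((k/m)^{\hat\rho_\ca}-1)^{-1}$. A mean-value expansion, whose derivative is uniformly bounded on $A$ since $1 \notin A$, contributes a multiplicative factor of $\Op{|\hat\rho_\ca - \rho_\ca|}$ per term. Combining this with the uniform bound $\hat C_{n,k}(\bu)-\hat C_{n,m}(\bu) = \Op{\ca(m)+\sqrt{m/n}}$ (from Theorem~\ref{theo:estmar} together with the second-order condition) and with the weights summing to one yields the stated bound. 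The main obstacle is the uniform-in-$\bu$ Riemann approximation for the stochastic part, which requires carefully combining the joint tightness of $\widehat\bbC^{\lozenge}_{n,m}$ in $(\bu,a)$ from Theorem~\ref{theo:estmar} with the uniform weight approximation from Assumption~\ref{assump:wM}; once this uniformity is in place, the remaining bias and plug-in calculations proceed routinely.
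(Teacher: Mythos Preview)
Your proposal is correct and follows essentially the same approach as the paper: establish process convergence of the bias-corrected empirical process uniformly in $a\in A$ via the continuous mapping theorem applied to Theorem~\ref{theo:estmar}, then aggregate by the Riemann-sum argument of Proposition~\ref{prop:simpleagg}, and finally handle $\check C$ by noting that the plug-in bound from Proposition~\ref{prop:bcnai} holds uniformly over $m'\in M_n$. The paper's proof is in fact terser than yours (it simply writes ``very similar to the proof of Proposition~\ref{prop:simpleagg}; details are omitted''), so your write-up supplies precisely the details the authors chose to suppress.
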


\subsubsection{Regression-based bias correction}  \label{sec:regbias}

A more sophisticated, regression-based estimator (inspired by \citealp{BeiEscGoeGui16}, where the POT-case is tackled) can be motivated by the following consequence of the expansion in~\eqref{eq:cmexp} and the regular variation of $\ca(\cdot)$: 
\begin{equation}\label{eq:cmexpalt}
C_{\ang{ma}}(\bu) = C_\infty(\bu) + a^{\rho_\ca} \ca(m) S(\bu) + r_{m}(\bu), \qquad m \to \infty,
\end{equation}
for all $a>0$, where $r_{m}(\bu) = o(\ca(m))$. Letting $y_{i,n} := \hat C_{n, k_{i}}(\bu)$ for suitable values $k_{i}$ (to be determined below) we find that
\begin{equation}\label{eq:cmexpreg}
y_{i,n} = C_\infty(\bu) + (k_{i}/m)^{\rho_\ca} \ca(m) S(\bu) + \varepsilon_{i,n}
\end{equation}
where the remainder $\varepsilon_{i,n}$ contains both the stochastic error {$\hat C_{n,k_{i}}(\bu) - C_{k_{i}}(\bu)$} and the deterministic error from expansion~\eqref{eq:cmexpalt}. This motivates the following weighted least square estimator for $C_\infty(\bu)$ and {$B_m(\bm u)= \ca(m)S(\bu)$}: 
\begin{multline}
(\hat C^{\mathrm{bc,reg}}_{n,(M,w)}(\bu), \hat B^{\mathrm{bc,reg}}_{n,(m,M,w)}(\bu))  \\
\in
\argmin_{(b,c)\in\R^2} \sum_{k \in M_n} w_{n,k} \{ \hat C_{n,k}(\bu) - b - (k/m)^{\rho_\ca} c \}^2, \label{eq:defbcreg}
\end{multline}
where $w_{n,k}$ and $M = M_n \subset \{1,\dots ,n\}$ are as in Section~\ref{sec:simpagg} with the additional assumption that the weights $w_{n,k}$ are non-negative. Note that, since the parameter $\rho_\ca$ is fixed in the above minimization problem, the value of $\hat C^{\mathrm{bc,reg}}_{n,(M,w)}(\bu)$ does in fact not depend on $m$ and hence we do not need to consider $m$ as an index in $\hat C^{\mathrm{bc,reg}}_{n,(M,w)}(\bu)$.
Similarly to the discussion in Section~\ref{sec:bcnai}, let $(\check C^{\mathrm{bc,reg}}_{n,(M,w)}(\bu), {\check B^{\mathrm{bc,reg}}_{n,(m,M,w)}(\bu)})$ denote a feasible version of $(\hat C^{\mathrm{bc,reg}}_{n,(M,w)}(\bu), {\hat B^{\mathrm{bc,reg}}_{n,(m,M,w)}(\bu)})$, where $\rho_\ca$ is replaced  by $\hat \rho_\ca$.

Assuming that $M_n$ contains sufficiently many elements so that the inverse matrix in the next display exists, the minimization problem above has the unique closed-form solution 
\begin{multline*}
\left(
\begin{array}{c} 
\hat C^{\mathrm{bc,reg}}_{n,(M,w)}(\bu) \\ 
\hat B^{\mathrm{bc, reg}}_{n,(m,M,w)}(\bu)
\end{array}
\right) 
= \left(
\begin{array}{cc} 
\mu_{0,n} & \mu_{1,n} \\ 
\mu_{1,n} & \mu_{2,n}  
\end{array}
\right)^{-1}
\left(
\begin{array}{c} 
\sum_{k \in M_n} w_{n,k}\hat C_{n,k}(\bu) \\ 
\sum_{k \in M_n} w_{n,k}(k/m)^{\rho_\ca}\hat C_{n,k}(\bu)  
\end{array}
\right) 
\end{multline*}
where we defined $\mu_{v,n} := \sum_{k \in M_n} w_{n,k} (k/m)^{v\rho_\ca}, v= 0,1,2$. 
To state the asymptotics of this estimator, define
\[
\kappa_{v} := \int_A f(a) a^{v \rho_{\ca}} \diff a, \qquad T_v(\bu) := \int_A f(a)a^{v \rho_{\ca}} \widehat\bbC^{\lozenge}(\bu,a) \diff a,
\]
and
\[
\cT_{m,v}(\bu) := \int_{A}f(a)a^{v \rho_{\ca}}\Big\{
a^{\rho_{\ca}+\rho_{\cb}}\ca(m)\cb(m)T(\bu) - \ca(m)r_{a}(m)S(\bu)\Big\}\diff a.
\]
\begin{Prop}\label{prop:bcreg}
Let any of the sufficient conditions in Theorem~\ref{theo:estmar} be met. Additionally, {suppose that Assumption~\ref{assump:high_order3} is met} and that $(M_n,\{w_{n,k}: k \in M_n\})$ satisfies Assumption~\ref{assump:wM}. 
Then, in $\ell^\infty([0,1]^d)$, 
\[
\sqrt{\frac{n}{m}}\Big( \hat C_{n,(M,w)}^{\mathrm{bc,reg}}(\cdot) - C_\infty(\cdot) - B_{n,(m,M,w)}^{\mathrm{bc,reg}}(\cdot) \Big) 
\dto 
\frac{\kappa_2 T_0(\cdot) - \kappa_1 T_1(\cdot)}{\kappa_2 \kappa_0 - \kappa_1^2},
\]
where the bias term $B_{n,(m,M,w)}^{\mathrm{bc,reg}}$ satisfies 

\[
B_{n,(m,M,w)}^{\mathrm{bc,reg}}(\bu) = \frac{\kappa_2 \cT_{m,0}(\bu) - \kappa_1 \cT_{m,1}(\bu)}{\kappa_2 \kappa_0 - \kappa_1^2}+o(r(m)) = O(r(m)),
\]
with $r(m)$ as defined in \eqref{eq:rmn}. 
Moreover,
\[
\sqrt{\frac{n}{m}}\Big(\hat B^{\mathrm{bc, reg}}_{n,(m,M,w)}(\cdot) - \ca(m)S(\cdot) - \Gamma_{n,(m,M,w)}^{B}(\cdot) \Big) 
\dto 
\frac{\kappa_0 T_1(\cdot) - \kappa_1 T_0(\cdot)}{\kappa_2 \kappa_0 - \kappa_1^2}
\]
in $\ell^\infty([0,1]^d)$, 
where the bias term $\Gamma_{n,(m,M,w)}^{B}$ satisfies 
\[
\Gamma_{n,(m,M,w)}^{B}(\bu)=\frac{\kappa_0 \cT_{m,1}(\bu) - \kappa_1 \cT_{m,0}(\bu)}{\kappa_2 \kappa_0 - \kappa_1^2}+o(r(m)) = O(r(m)),
\]
and the processes involving $\hat C_{n,(M,w)}^{\mathrm{bc,reg}},\hat B_{n,(m,M,w)}^{\mathrm{bc,reg}}$ converge jointly. 
If moreover $\hat \rho_\ca = \rho_\ca + \op{1}$, then  we have, uniformly in $\bu \in [0,1]^d$ 
\[
\check C_{n,(M,w)}^{\mathrm{bc,reg}}(\bu) 
= \hat C_{n,(M,w)}^{\mathrm{bc,reg}}(\bu) + O_\Prob\Big(r(m) + |\hat\rho_\ca - \rho_\ca|\{\ca(m) + \sqrt{m/n}\} \Big).
\]  
\end{Prop}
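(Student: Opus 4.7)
The plan is to work directly from the explicit closed-form expression for $(\hat C^{\mathrm{bc,reg}}_{n,(M,w)}, \hat B^{\mathrm{bc,reg}}_{n,(m,M,w)})$ displayed right before the statement and to isolate the deterministic ``signal'' part, the deterministic higher-order bias, and the stochastic part. The key algebraic identity is that for any weights $\{w_{n,k}: k \in M_n\}$,
\[
\sum_{k\in M_n} w_{n,k}\{C_\infty(\bu)+(k/m)^{\rho_\ca}\ca(m)S(\bu)\}\cdot (1, (k/m)^{\rho_\ca})^\top
= \begin{pmatrix} \mu_{0,n} & \mu_{1,n} \\ \mu_{1,n} & \mu_{2,n}\end{pmatrix}\begin{pmatrix} C_\infty(\bu) \\ \ca(m) S(\bu)\end{pmatrix},
\]
so that after inverting the matrix the contribution of the exactly-linear signal reproduces $(C_\infty(\bu),\ca(m)S(\bu))$ on the nose. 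Consequently the task reduces to analysing the residual $e_{n,k}(\bu) := \hat C_{n,k}(\bu) - C_\infty(\bu) - (k/m)^{\rho_\ca}\ca(m)S(\bu)$, summed with weights $w_{n,k}$ and $w_{n,k}(k/m)^{\rho_\ca}$.

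Next I would expand $e_{n,k}(\bu)$ using Assumption~\ref{assump:high_order3}: writing $k=\ang{ma}$ and using the defining identity $\ca(\ang{ma}) = \ca(m)\{(\ang{ma}/m)^{\rho_\ca} - r_a(m)\}$ together with the third order condition gives
\[
e_{n,k}(\bu) = -\ca(m)\,r_a(m)\,S(\bu) \;+\; \ca(\ang{ma})\cb(\ang{ma})\,T(\bu) \;+\; o(\ca(\ang{ma})\cb(\ang{ma})) \;+\; \{\hat C_{n,k}(\bu) - C_{\ang{ma}}(\bu)\}.
\]
The four terms contribute respectively: (a) the $r_a(m)$ part of the deterministic bias in $\cT_{m,v}$; (b) after using $\ca(\ang{ma})\cb(\ang{ma})/\{\ca(m)\cb(m)\} \to a^{\rho_\ca+\rho_\cb}$ uniformly in $a\in A$ (regular variation), the $a^{\rho_\ca+\rho_\cb}$ part of $\cT_{m,v}$; (c) a $o(\ca(m)\cb(m))=o(r(m))$ remainder; (d) the stochastic part, which after multiplication by $\sqrt{n/m}$ is $\widehat\bbC^\lozenge_{n,m}(\bu,a)$. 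Under Assumption~\ref{assump:wM} the weighted sums behave as Riemann sums: $\mu_{v,n} = \sum_{k\in M_n} w_{n,k}(k/m)^{v\rho_\ca} \to \kappa_v$ and, more generally, $\sum_{k\in M_n} w_{n,k}(k/m)^{v\rho_\ca} g_n(k/m) \to \int_A f(a)a^{v\rho_\ca} g(a)\diff a$ whenever $g_n\to g$ uniformly on $A$. Combining with the uniform convergence $\sup_{a\in A}|r_a(m)|\to 0$ (from regular variation of $\ca$) and the uniform convergence of the continuous functions $a\mapsto a^{\rho_\ca+\rho_\cb}$, the deterministic pieces yield exactly $\cT_{m,0}(\bu)$ and $\cT_{m,1}(\bu)$ up to $o(r(m))$.

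For the stochastic part I would use the functional weak convergence from Theorem~\ref{theo:estmar}, $\widehat\bbC^\lozenge_{n,m}\dto \widehat\bbC^\lozenge$ in $\ell^\infty([0,1]^d\times A)$, together with the continuous mapping theorem applied to the map sending a function $\phi(\bu,a)$ to its weighted integral $\int_A f(a)a^{v\rho_\ca}\phi(\cdot,a)\diff a$, which is a bounded linear functional on the image space and therefore continuous. A standard uniform Riemann-sum estimate (combined with equicontinuity of the limit process on compacts) gives
\[
\sqrt{\tfrac{n}{m}}\sum_{k\in M_n} w_{n,k}(k/m)^{v\rho_\ca}\{\hat C_{n,k}(\cdot) - C_k(\cdot)\} \dto T_v(\cdot), \qquad v=0,1,
\]
jointly, in $\ell^\infty([0,1]^d)$. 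Substituting into the closed-form expression, noting $\mu_{v,n} \to \kappa_v$, and applying Slutsky's lemma yields both the weak convergence statement for $\hat C^{\mathrm{bc,reg}}_{n,(M,w)}$ with bias $B^{\mathrm{bc,reg}}_{n,(m,M,w)} = (\kappa_2\cT_{m,0}-\kappa_1\cT_{m,1})/(\kappa_2\kappa_0-\kappa_1^2)+o(r(m))$ and, symmetrically, the statement for $\hat B^{\mathrm{bc,reg}}_{n,(m,M,w)}$ with bias $\Gamma^B_{n,(m,M,w)} = (\kappa_0\cT_{m,1}-\kappa_1\cT_{m,0})/(\kappa_2\kappa_0-\kappa_1^2)+o(r(m))$; joint convergence is automatic since both are continuous linear functionals of the same limit process.

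The remaining assertion concerns the feasible estimator $\check C^{\mathrm{bc,reg}}_{n,(M,w)}$ in which $\rho_\ca$ is replaced by $\hat\rho_\ca = \rho_\ca + \op{1}$. The plan here is a perturbation argument: on the event $\{|\hat\rho_\ca-\rho_\ca|\le\eta\}$ for small $\eta$, the map $\rho\mapsto (k/m)^{\rho}$ is uniformly $C^1$ in $\rho$ on a neighbourhood of $\rho_\ca$ with derivative $(k/m)^\rho\log(k/m)$ bounded on $A$. Differentiating the closed-form expression with respect to $\rho_\ca$, one obtains a formula for $\check C^{\mathrm{bc,reg}}_{n,(M,w)}-\hat C^{\mathrm{bc,reg}}_{n,(M,w)}$ which is $(\hat\rho_\ca-\rho_\ca)$ times a quantity that, by the previously established expansions, equals $O_\Prob(\ca(m)+\sqrt{m/n}+r(m))$ uniformly in $\bu$. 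Since $r(m) = O(\ca(m))$, this yields the stated bound. I expect the main technical obstacle to be a clean uniform treatment (in $\bu$ and $a$) of the mixed Riemann-sum-plus-weak-convergence step for the stochastic part, and the bookkeeping to ensure all remainder terms in the deterministic expansion really are $o(r(m))$ uniformly; both should reduce to the uniform regularity properties of $\ca$, $\cb$ and the limit process already encoded in the preceding assumptions.
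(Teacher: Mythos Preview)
Your proposal is correct and follows essentially the same route as the paper: both decompose the closed-form solution into the exact signal $(C_\infty,\ca(m)S)$ via the matrix identity, the higher-order deterministic bias via the third-order expansion and regular variation (yielding $\cT_{m,v}$), and the stochastic part via Theorem~\ref{theo:estmar} combined with a Riemann-sum/continuous-mapping argument as in the proof of Proposition~\ref{prop:simpleagg}. The only notable difference is the treatment of $\check C^{\mathrm{bc,reg}}_{n,(M,w)}$: the paper re-runs the entire expansion with $\hat\rho_\ca$ in place of $\rho_\ca$ and subtracts, whereas you differentiate the closed-form expression in $\rho$ and use the mean-value theorem, exploiting that the constant part $C_\infty$ has zero derivative (since a constant response regresses to $(C_\infty,0)$ for every $\rho$). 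Your route is slightly cleaner and in fact yields the marginally sharper bound $O_\Prob(|\hat\rho_\ca-\rho_\ca|\{\ca(m)+\sqrt{m/n}\})$ without the additive $r(m)$ term; just make sure to note that the derivative bound must hold uniformly in $\rho$ on a neighbourhood of $\rho_\ca$ (needed because the mean-value point is random), which follows from the uniform boundedness of $(k/m)^\rho\log(k/m)$ over $k\in M_n$ and $\rho$ in a compact set.
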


\subsection{Estimating the second order parameter} \label{sec:estrho}
Estimators for $\rho_{\ca}$ can be obtained by considering the expansion in~\eqref{eq:cmexpalt}. A simple estimator  can be based on the observation that, for any $\bu$ with $S(\bu) \neq 0$ and any $a\ne 1$, 
\[
\frac{C_{\ang{m a^2}}(\bu) - C_{m}(\bu)}{C_{\ang{m a}}(\bu) - C_{m}(\bu)} = \frac{a^{2\rho_\ca}-1}{a^{\rho_\ca}-1 } + o(1) = a^{\rho_\ca} + 1 + o(1) , \qquad m \to \infty.
\]
Letting $m_\rho=m_\rho(n)$ denote a block length parameter (typically chosen of smaller order than the block length $m$ used for estimating $C_\infty$, whence the different notation here), this suggests the following naive estimator for $\rho_\ca$:
\[
\hat\rho_{\ca}^{\mathrm{nai}}(a,\bu) = \log_a\Big(\frac{\hat C_{n,\ang{m_\rho a^2}}(\bu) - \hat C_{n,m_\rho}(\bu)}{\hat C_{n,\ang{m_\rho a}}(\bu) - \hat C_{n,m_\rho}(\bu)} - 1\Big).
\]

\begin{Prop} \label{prop:hatrhonai} Let Assumption~\ref{assump:high_order3} be met and let $m_\rho=m_\rho(n)$ be an increasing sequence of integers such that any of the sufficient conditions in Theorem~\ref{theo:estmar} is met for that sequence. Further assume that $(m_\rho/n)^{1/2} = o(\ca(m_\rho))$. Then, for any $\bu \in [0,1]^d$ with $S(\bu) \neq 0$ and any $a\ne 1$, we have
\begin{multline*}
\ca(m_\rho) \sqrt{\frac{n}{m_\rho}} \Big( \hat\rho_{\ca}^{\mathrm{nai}}(a,\bu) - \rho_\ca - \Gamma_{n, m_\rho}^{\rho, \mathrm{nai}}(\bu, a) \Big)  \\
\dto  
{\frac{\widehat \bbC^{\lozenge}(\bu,a^2) - \widehat \bbC^{\lozenge}(\bu,1) - (a^{\rho_\ca}+1)\{\widehat \bbC^{\lozenge}(\bu,a) - \widehat \bbC^{\lozenge}(\bu,1)\}}{S(\bu)a^{\rho_\ca}(a^{\rho_\ca}-1)\log a} },
\end{multline*}
where
\begin{multline*}
\Gamma_{n, m_\rho}^{\rho, \mathrm{nai}}(\bu, a) = \cb(m_\rho)\frac{T(\bu)}{S(\bu)}\frac{(a^{\rho_\ca + \rho_\cb}-1)(a^{\rho_\cb}-1)}{(a^{\rho_\ca}-1)\log a} \\
+ O(r_{a^2}(m_\rho) + r_{a}(m_\rho)+m_\rho^{-1}) + o(\cb(m_\rho)).
\end{multline*}
In particular, we have
\[
\hat\rho_{\ca}^{\mathrm{nai}}(a,\bu) - \rho_{\ca} = O_\Prob\Big(\frac{1}{\ca(m_\rho)}\sqrt{\frac{m_\rho}{n}}\Big) + O(m_\rho^{-1} + r_{a^2}(m_\rho) + r_{a}(m_\rho) + \cb(m_\rho)).
\]
\end{Prop}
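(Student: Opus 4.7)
The plan is to reduce the analysis of $\hat\rho_{\ca}^{\mathrm{nai}}(a,\bu) - \rho_\ca$ to two Taylor expansions: one for the ratio appearing inside the definition of $\hat\rho_{\ca}^{\mathrm{nai}}$, and one for the function $\log_a(x)$ at the point $x=a^{\rho_\ca}$. To this end, for each $b \in \{1,a,a^2\}$, I would split
\[
\hat C_{n,\ang{m_\rho b}}(\bu) - C_\infty(\bu) = \{C_{\ang{m_\rho b}}(\bu) - C_\infty(\bu)\} + \sqrt{m_\rho/n}\,\widehat\bbC^{\lozenge}_{n,m_\rho}(\bu,b).
\]
The deterministic part is controlled by Assumption~\ref{assump:high_order3}: combining the third-order expansion of $C_k$ with the definition of $r_b(m_\rho)$ and the trivial estimate $\ang{m_\rho b}/m_\rho = b + O(m_\rho^{-1})$, together with regular variation of $\ca$ and of $\ca \cdot \cb$, yields
\[
C_{\ang{m_\rho b}}(\bu) - C_\infty(\bu) = \ca(m_\rho) b^{\rho_\ca} S(\bu) + \ca(m_\rho)\cb(m_\rho) b^{\rho_\ca+\rho_\cb}T(\bu) + \ca(m_\rho)\{O(r_b(m_\rho)) + O(m_\rho^{-1})\}S(\bu) + o(\ca(m_\rho)\cb(m_\rho)).
\]
The stochastic part is handled by Theorem~\ref{theo:estmar}, and the assumption $\sqrt{m_\rho/n} = o(\ca(m_\rho))$ renders it of smaller order than the leading $\ca(m_\rho)$-term.

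Next, I would form $N_n(\bu) := \hat C_{n,\ang{m_\rho a^2}}(\bu) - \hat C_{n,m_\rho}(\bu)$ and $D_n(\bu) := \hat C_{n,\ang{m_\rho a}}(\bu) - \hat C_{n,m_\rho}(\bu)$. Using the expansions above together with $S(\bu) \neq 0$ and $a \neq 1$, each of these factors as $\ca(m_\rho) S(\bu)(b^{\rho_\ca}-1)\{1+\text{small}\}$ with $b\in\{a,a^2\}$. The standard expansions $(1+\epsilon_1)/(1+\epsilon_2) = 1 + \epsilon_1 - \epsilon_2 + O(\|\epsilon\|^2)$ and $\log_a(a^{\rho_\ca} + \delta) = \rho_\ca + \delta/(a^{\rho_\ca}\ln a) + O(\delta^2)$ then combine to yield
\[
\hat\rho_\ca^{\mathrm{nai}}(a,\bu) - \rho_\ca = \frac{R_N(\bu) - (a^{\rho_\ca}+1)R_D(\bu)}{a^{\rho_\ca} S(\bu)(a^{\rho_\ca}-1)\ln a} + O_\Prob(\|R\|^2),
\]
where $R_N, R_D$ collect the non-leading parts of $N_n/\ca(m_\rho)$ and $D_n/\ca(m_\rho)$, respectively.

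Finally, I would plug in the explicit forms of $R_N$ and $R_D$. The $T(\bu)$-contributions in the numerator simplify via the algebraic identity
\[
[a^{2(\rho_\ca+\rho_\cb)}-1] - (a^{\rho_\ca}+1)[a^{\rho_\ca+\rho_\cb}-1] = a^{\rho_\ca}(a^{\rho_\cb}-1)(a^{\rho_\ca+\rho_\cb}-1),
\]
which, after division by the denominator above, produces precisely the first summand of the stated bias $\Gamma_{n,m_\rho}^{\rho,\mathrm{nai}}(\bu,a)$; the remaining $O(r_{a^2}(m_\rho)+r_a(m_\rho)+m_\rho^{-1})$ and $o(\cb(m_\rho))$ contributions come directly from the expansions of $R_N,R_D$. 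The stochastic remainders, after multiplication by $\ca(m_\rho)\sqrt{n/m_\rho}$, take the form
\[
\frac{\{\widehat\bbC^{\lozenge}_{n,m_\rho}(\bu,a^2)-\widehat\bbC^{\lozenge}_{n,m_\rho}(\bu,1)\} - (a^{\rho_\ca}+1)\{\widehat\bbC^{\lozenge}_{n,m_\rho}(\bu,a) - \widehat\bbC^{\lozenge}_{n,m_\rho}(\bu,1)\}}{a^{\rho_\ca} S(\bu)(a^{\rho_\ca}-1)\ln a},
\]
which converges weakly to the asserted Gaussian limit by Theorem~\ref{theo:estmar} and the continuous mapping theorem. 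The concluding rate statement follows by dominating each of the individual error orders. The main obstacles are purely bookkeeping: keeping the four distinct error orders $O(\cb(m_\rho))$, $O(r_{a^k}(m_\rho))$, $O(m_\rho^{-1})$, and $O_\Prob(\sqrt{m_\rho/n}/\ca(m_\rho))$ separate through the two nested Taylor expansions, and verifying that the hypothesis $S(\bu)\neq 0$ together with $a\neq 1$ keep all denominators uniformly bounded away from zero so that these expansions are legal.
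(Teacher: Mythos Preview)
Your proposal is correct and follows essentially the same route as the paper's proof: both split $\hat C_{n,\ang{m_\rho b}}(\bu)-C_\infty(\bu)$ into a deterministic part expanded via Assumption~\ref{assump:high_order3} (together with the definition of $r_b$ and the $O(m_\rho^{-1})$ correction from $\ang{m_\rho b}/m_\rho$) and a stochastic part controlled by Theorem~\ref{theo:estmar}, then perform the ratio expansion followed by a Taylor expansion of $\log_a$ at $a^{\rho_\ca}$, and finally invoke the continuous mapping theorem. Your explicit verification of the algebraic identity simplifying the $T(\bu)$-contribution to the stated bias term is a detail the paper omits (it merely says ``after rearranging terms''), but otherwise the arguments coincide.
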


While the estimator $\hat\rho_{\ca}^{\mathrm{nai}}(a,\bu)$ defined above is easy to motivate and analyze theoretically, we found in simulations that it does not work well when the sample size $n$ is small or even moderate (up to $n = 5000$). This motivated us to consider alternative estimators by treating $\rho_\ca$ in equation~\eqref{eq:cmexpreg} as unknown. Specifically, we considered estimators of the form
\begin{equation}\label{eq:rhoreg}
(\hat b_0, \hat b_1, \hat \rho_\ca^{\mathrm{reg}}) 
\in
\argmin_{b_0, b_1, \rho < 0} \sum_{k \in M_n} w_{n,k}\Big(\hat C_{n,k}(\bu) - b_0 - b_1 (k/m_\rho)^{\rho}\Big)^2,
\end{equation}
where $w_{n,k}$ and $M = M_n \subset \{1,\dots ,n\}$ are as in Section~\ref{sec:simpagg} with the additional assumption that the weights $w_{n,k}$ are non-negative. This lead to some improvement in performance compared to using $\hat\rho_{\ca}^{\mathrm{nai}}$, but still did not lead to very satisfactory results, prompting us to refine the estimator even further.

To gain an intuitive understanding of the shortcomings of $\hat \rho_\ca^{\mathrm{nai}},\hat \rho_\ca^{\mathrm{reg}}$ as plug-in estimators for bias correction, we take a closer look  at the properties of the quantity
\[
\widetilde C_{n,(m,\ang{ma})}^{\mathrm{nai}}(\bu;\gamma) := \hat C_{n,m}(\bu) - \frac{\hat C_{n,\ang{ma}}(\bu) - \hat C_{n,m}(\bu)}{(\ang{ma}/m)^{\gamma} - 1} ,
\]
which is simply the naive bias-corrected estimator from Section~\ref{sec:bcnai} but with $\gamma<0$ plugged in instead of the true $\rho_\ca$. We next take a close look at the bias and variance of this `estimator' as a function of $\gamma$ under the third order condition from Assumption~\ref{assump:high_order3}. The leading part of the bias is approximately given by
\[
\ca(m)S(\bu)\Big(1 - \frac{a^{\rho_\ca} - 1}{a^\gamma-1} \Big) = \ca(m)S(\bu)\frac{a^\gamma - a^{\rho_\ca}}{a^\gamma-1}.
\] 
A close analysis reveals that $\gamma \mapsto g(\gamma) := |a^\gamma - a^{\rho_\ca}|/|a^\gamma-1|$ is decreasing on $(-\infty,\rho_\ca)$ with $\lim_{\gamma\to - \infty} g(\gamma) = a^{\rho_\ca}$ if $a > 1$ and $\lim_{\gamma\to - \infty} g(\gamma) = 1$ if $a < 1$ and increasing on $(\rho_\ca,0)$ with $\lim_{\gamma\uparrow 0} g(\gamma) = \infty$ for $a \in (0,\infty)\backslash\{1\}$, see Figure~\ref{fig:func_g} for a picture of the graph for two specific choices of $a,\rho_\ca$. Hence the leading bias will never be increased compared to the original estimator if $\gamma$ is smaller than $\rho_\ca$, but can increase dramatically if $\gamma > \rho_\ca$, especially if $\gamma$ gets close to zero. Similarly, the asymptotic variance of the `bias correction part' $\{ \hat C_{n,\ang{ma}}(\bu) - \hat C_{n,m}(\bu) \} / \{ (\ang{ma}/m)^{\gamma} - 1\}$ can be found to be a strictly increasing function of $\gamma$.

\begin{figure}[t]
\begin{center}
\vspace{-.7cm}
\includegraphics[width=0.69\textwidth]{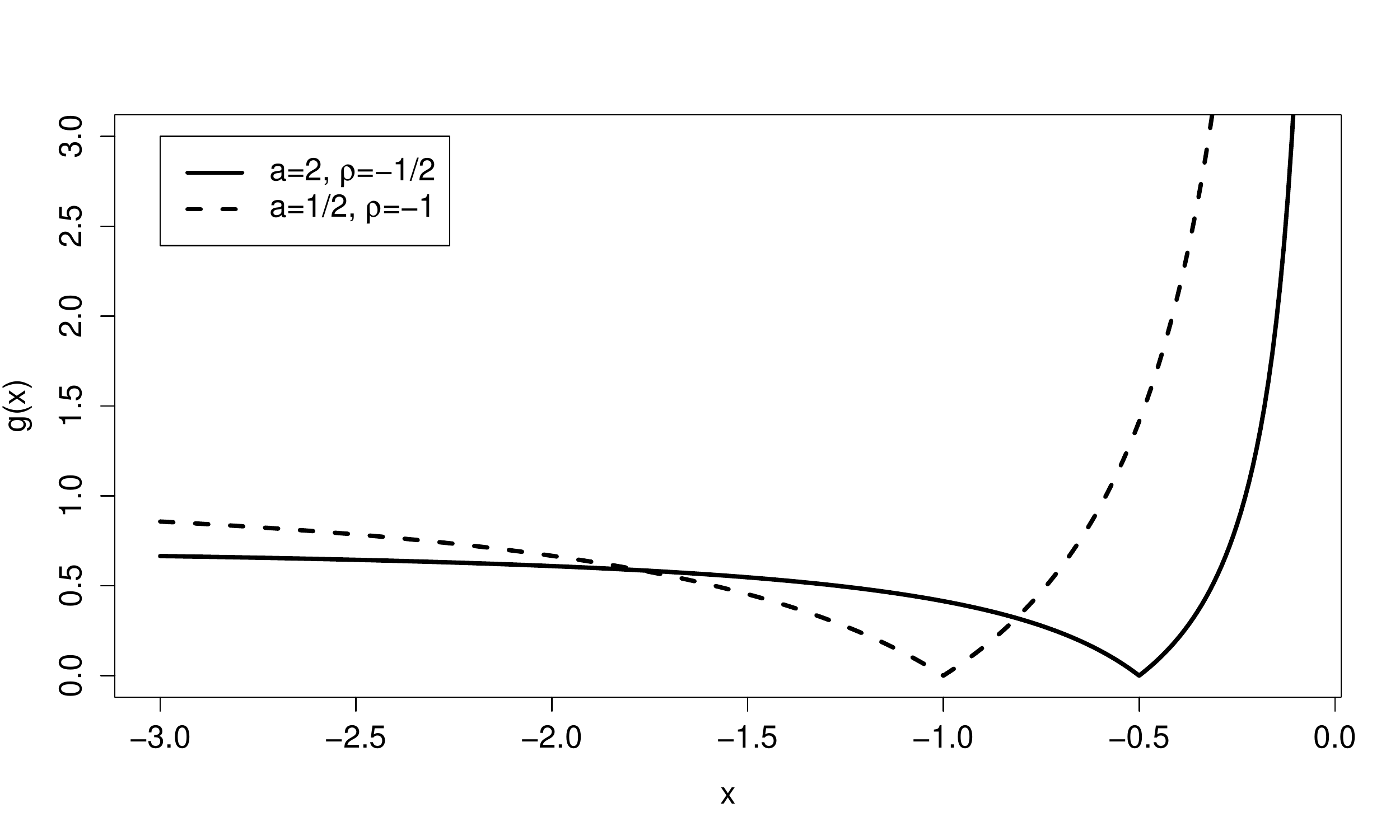}
\vspace{-.7cm}
\end{center}
\caption{Function $g$ for two choices of $(a,\rho_\ca)$.} \label{fig:func_g}
\vspace{-.3cm}
\end{figure}

In summary, the above findings suggest a very asymmetric behavior in the performance of the naive bias corrected estimator with respect to values of $\gamma$ that are too large or too small relative to the true parameter $\rho_\ca$. This apparent asymmetry is not taken into account in the minimization problem~\eqref{eq:rhoreg}. It thus seems natural to introduce an additional penalty term which discourages the estimator of $\rho_\ca$ from being too close to $0$. We hence consider the estimator 
\[
(\hat b_0(\bu), \hat b_1(\bu), \hat \rho_\ca^{\mathrm{pen}}(\bu)) 
\in \argmin_{\rho \in [K',K''],b_0,b_1 \in \R} \hRSS_\eta(b_{0},b_{1},\rho;\bu),
\] 
where $K'<K''<0$ are fixed constants (in the simulations, we choose $K' = -2$ and $K''= -0.1$), $\eta\ge 0$ denotes a penalty parameter, and 
\begin{align*}
\hRSS_\eta(b_{0},b_{1},\rho;\bu)
&=
\tRSS(b_{0},b_{1},\rho;\bu) + \frac{\eta}{|\rho|} {\min_{a_{0}, a_{1} \in \R, K'\leq \kappa\leq K''}\tRSS(a_{0},a_{1},\kappa;\bu)}, \\
\tRSS(b_{0},b_{1},\rho;\bu)
&=
\sum_{k \in M_{m}}w_{n,k} \{ \hC_{n,k}(\bu)-b_{0}-b_{1}(k/m_\rho)^{\rho} \}^{2}.
\end{align*} 
To motivate the factor ${\min_{a_{0}, a_{1} \in \R, K'\leq \kappa\leq K''}\tRSS(a_{0},a_{1},\kappa;\bu)}$ in the penalty, note that, provided this factor is non-zero, an equivalent representation for the corresponding minimization problem is to minimize
\[
\frac{\tRSS(b_{0},b_{1},\rho;\bu)}{\min_{a_{0}, a_{1} \in \R, K'\leq \kappa\leq K''}\tRSS(a_{0},a_{1},\kappa;\bu)} + \frac{\eta}{|\rho|}.
\]
Since the minimal achievable value of the ratio equals $1$, this automatically provides a scaling for the penalty part $\frac{\eta}{|\rho|}$ and makes this choice attractive in practice. 
Finally, observe that the procedure described above produces an estimator of $\rho_\ca$ for each value of $\bu$. We hence propose to further aggregate estimators $\hat \rho_\ca^{\mathrm{pen}}(\bu)$ across different values of $\bu \in U$ for some finite set $U \subset (0,1)^d$ to obtain the aggregated estimator
\[
\hat \rho_{\ca,U}^{\mathrm{pen,agg}} := \frac{1}{|U|} \sum_{\bu \in U} \hat \rho_\ca^{\mathrm{pen}}(\bu).
\]  
Next we prove consistency of the estimators defined above.

\begin{Prop}\label{prop:reg_est_rho} 
Suppose that Assumption~\ref{assump:high_order} is met with $\rho_\ca \in [K', K'']$ and let $m_\rho=m_\rho(n)$ be an increasing sequence of integers such that any of the sufficient conditions in Theorem~\ref{theo:estmar} is met for that sequence. Further, assume that $\sqrt{n/m_\rho}\ca(m_\rho)\to \infty$, that Assumption~\ref{assump:wM} is met with $m_\rho$ instead of $m$, and that $w_{n,k} > 0$ for all $k,n$. Then, for any compact $U \subset \{\bu\in [0,1]^{d}:S(\bu)\neq 0\}$ and any fixed $\eta \geq 0$,
\begin{align*}
\sup_{\bu\in U}|\rho_\ca^{\mathrm{pen}}(\bm u)-\rho_\ca|=\op{1}.
\end{align*}
Also, $\hat \rho_{\ca,U}^{\mathrm{pen,agg}} = \rho_\ca + \op{1}$ for any finite set $U \subset \{\bu\in [0,1]^{d}:S(\bu)\neq 0\}$.\end{Prop}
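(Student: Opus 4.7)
The strategy is to reparametrize so that the deterministic signal of order $\ca(m_\rho)$ dominates the stochastic fluctuations, which are of order $(m_\rho/n)^{1/2}=o(\ca(m_\rho))$ by the rate assumption. Introduce the rescaled parameters $\tilde b_0 := (b_0 - C_\infty(\bu))/\ca(m_\rho)$ and $\tilde b_1 := b_1/\ca(m_\rho)$. Theorem~\ref{theo:estmar} yields $\sup_{a \in A,\,\bu \in [0,1]^d}|\hat C_{n,\ang{m_\rho a}}(\bu)-C_{\ang{m_\rho a}}(\bu)|=\Op{(m_\rho/n)^{1/2}}$. Combining this with the uniform-in-$\bu$ expansion from Assumption~\ref{assump:high_order} and the uniform-in-$a\in A$ convergence $\ca(\ang{m_\rho a})/\ca(m_\rho)\to a^{\rho_\ca}$ (uniform convergence theorem for regularly varying sequences) yields the key expansion
\[
\sup_{a \in A,\,\bu \in U}\Big|\frac{\hat C_{n,\ang{m_\rho a}}(\bu)-C_\infty(\bu)}{\ca(m_\rho)} - a^{\rho_\ca}S(\bu)\Big|=\op{1}.
\]

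\textbf{Uniform convergence of the rescaled criterion.} Writing $a_k:=k/m_\rho$, a direct algebraic manipulation gives
\[
\tRSS(b_0,b_1,\rho;\bu)/\ca(m_\rho)^2 = \sum_{k \in M_n} w_{n,k}\Big\{\frac{\hat C_{n,k}(\bu)-C_\infty(\bu)}{\ca(m_\rho)} - \tilde b_0 - \tilde b_1 a_k^\rho\Big\}^2.
\]
Inserting the expansion from the previous step, controlling the cross terms by Cauchy--Schwarz together with $\sup_k|\varepsilon_{n,k}(\bu)|=\op{1}$, and invoking the Riemann-sum approximation $m_\rho w_{n,\ang{m_\rho a}}\to f(a)$ uniformly in $a\in A$ from Assumption~\ref{assump:wM}, one obtains, for every compact $K\subset \R^2 \times [K',K'']$,
\[
\sup_{(\tilde b_0,\tilde b_1,\rho)\in K,\,\bu\in U}\Big|\tRSS(b_0,b_1,\rho;\bu)/\ca(m_\rho)^2 - Q(\tilde b_0,\tilde b_1,\rho;\bu)\Big|=\op{1},
\]
with limiting criterion $Q(\tilde b_0,\tilde b_1,\rho;\bu):=\int_A f(a)[a^{\rho_\ca}S(\bu)-\tilde b_0-\tilde b_1 a^\rho]^2\diff a$. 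Uniformity in $\bu\in U$ is inherited from continuity of $S$ on the compact $U$.

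\textbf{Identification, coercivity, and conclusion.} For each $\rho\in [K',K'']$, the Hessian $M(\rho):=\int_A f(a)(1,a^\rho)^\top (1,a^\rho)\diff a$ of $Q$ in $(\tilde b_0,\tilde b_1)$ is positive definite (Cauchy--Schwarz, using that $a\mapsto a^\rho$ is non-constant on $A$) and continuous in $\rho$; its smallest eigenvalue is therefore bounded away from zero on $[K',K'']$, which yields uniform coercivity and prevents escape of the minimizer to infinity. The profiled limit $Q^*(\rho;\bu):=\min_{\tilde b_0,\tilde b_1} Q\geq 0$ vanishes if and only if $\rho=\rho_\ca$: this uses $S(\bu)\neq 0$ on $U$ together with the linear independence of $\{1,a^\rho,a^{\rho_\ca}\}$ on any positive-length subinterval of $A$ for $\rho\neq\rho_\ca$ (an analytic-functions argument, noting $\rho,\rho_\ca\in[K',K'']$ are both nonzero). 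The penalty contribution satisfies $(\eta/|\rho|)\min_{a_0,a_1,\kappa}\tRSS(a_0,a_1,\kappa;\bu)/\ca(m_\rho)^2\leq (\eta/|K''|)\op{1}=\op{1}$ uniformly in $\rho\in[K',K'']$ and therefore does not perturb the limiting argmin. A standard argmin consistency theorem (cf.\ \citealp{VanWel96}) then delivers $\sup_{\bu\in U}|\hat\rho_\ca^{\mathrm{pen}}(\bu)-\rho_\ca|=\op{1}$, and the claim for $\hat \rho_{\ca,U}^{\mathrm{pen,agg}}$ follows immediately by averaging over the finite set $U$.

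The principal technical difficulty lies in the first step: the stochastic rate $(m_\rho/n)^{1/2}$ from Theorem~\ref{theo:estmar} is only asymptotically negligible relative to the signal $\ca(m_\rho)$ under the precise balance $\sqrt{n/m_\rho}\ca(m_\rho)\to\infty$, and this identifiability-of-signal condition is exactly what drives the entire argument; once the rescaled problem is in place, everything else is fairly standard M-estimator machinery.
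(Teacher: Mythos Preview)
Your approach is correct and reaches the same conclusion as the paper, but the route differs in one instructive way. The paper profiles out the linear parameters $(b_0,b_1)$ explicitly using the closed-form weighted least-squares solution, which reduces the problem to maximizing a squared correlation coefficient $\tcL_n(\rho;\bu)\in[0,1]$; this object is automatically bounded, so no compactness or coercivity argument in the linear parameters is needed, and identification reduces to the Cauchy--Schwarz equality condition (the functions $a\mapsto a^\rho$ and $a\mapsto a^{\rho_\ca}$ are affinely dependent on $A$ iff $\rho=\rho_\ca$). The penalty is then handled by a two-step application of a custom argmax lemma: first for the unpenalized minimizer $\trho$, giving $\tcL_n(\trho(\bu);\bu)\to 1$ uniformly and hence that the penalty factor $1-\tcL_n(\trho(\bu);\bu)$ vanishes; then a second application for the penalized $\hat\rho$. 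Your approach instead keeps the full three-parameter criterion, relying on uniform coercivity via the Hessian $M(\rho)$ to confine the minimizer, and on the linear independence of $\{1,a^\rho,a^{\rho_\ca}\}$ for identification; the penalty is controlled directly by bounding the rescaled minimum RSS from above by its value at the ``true'' parameters. The paper's profiling is a bit slicker because it sidesteps the non-compact linear parameter space entirely; your route is closer to off-the-shelf M-estimation arguments and is equally valid. One small point to tighten: your coercivity claim is stated for the limit $M(\rho)$, but what the argmin argument actually needs is that the \emph{finite-sample} Hessian $\sum_k w_{n,k}(1,a_k^\rho)^\top(1,a_k^\rho)$ is eventually uniformly positive definite on $[K',K'']$---this follows from Assumption~\ref{assump:wM} and the positivity of the weights, but deserves an explicit sentence.
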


\section{Examples and finite-sample properties} \label{sec:fin}

The proposed estimators will be compared in a simulation study. We begin by providing some details on several examples that will be used in the simulations. For the sake of simplicity, we only consider the case $d=2$ below. For a generic $d\geq2$, see Examples \ref{example:thighdim} and \ref{example:outerhighdim} in the supplementary material \cite{ZouVolBuc19supp}.

\subsection{Examples} \label{subsec:examples}

\begin{example}[$t$-Copula, iid case]
For degrees of freedom $\nu\in \N$ and correlation $\theta\in(-1,1)$, the $t$-copula is defined, for $(u,v)\in [0,1]^{2}$, as
\[
{\small D(u,v;\nu, \theta) 
=\int_{-\infty}^{t_{\nu}^{-1}(u)} \int_{-\infty}^{t_{\nu}^{-1}(v)}\frac{\Gamma\Big(\frac{\nu+2}{2}\Big)}{\Gamma\big(\frac{\nu}{2}\big)\pi\nu |P|^{1/2}}\bigg(1+\frac{\bx'P^{-1}\bx}{\nu}\bigg)^{-\frac{\nu+2}{2}}\diff x_2 \diff x_1, }
\]
where $\bx=(x_{1},x_{2})'$, $P$ is a $2\times 2$ correlation matrix with off-diagonal element $\theta$, and $t_{\nu}$ is the cumulative distribution function of a standard univariate $t$-distribution with degrees of freedom $\nu$. Let $L$ and $M$ be the first-order and the second-order POT-type limits associated to $D$. More specifically, 
\[
L(x,y)=yt_{v+1}\bigg(\frac{(y/x)^{1/\nu}-\theta}{\sqrt{1-\theta^2}}\sqrt{\nu+1}\bigg)+
xt_{v+1}\bigg(\frac{(x/y)^{1/\nu}-\theta}{\sqrt{1-\theta^2}}\sqrt{\nu+1}\bigg),
\]
and $M=M(x,y)$ is defined in Section 4 and 4.1 of \cite{FouDehMer15}. Recall that 
$
D_{\infty}(e^{-x},e^{-y})=e^{-L( x,y)}.
$
Let 
\[\Gamma_{2}(x,y)=x^2(\partial L/\partial x)(x,y)+y^2(\partial L/\partial y)(x,y).
\]
By Theorem 2.6 of \cite{BucVolZou19}, Assumption \ref{assump:high_order} holds for $(D_{m})_{m\in\N}$ with $D_m(u,v)=D(u^{1/m},v^{1/m})^m$. Specifically, when $\nu=1$, we have $\rho_\ca=-1$, $\ca(m)=(2m)^{-1}$, and 
\[
S(e^{-x},e^{-y})=D_{\infty}(e^{-x},e^{-y})(\Gamma_{2}(x,y)-L^{2}(x,y));
\]
when $\nu=2$, we have $\rho_\ca =-1$, $\ca(m)=(2m/3)^{-1}$, and 
\[
S(e^{-x},e^{-y})=D_{\infty}(e^{-x},e^{-y})\big[(1/3)(\Gamma_{2}(x,y)-L^{2}(x,y))-(2/3)M(x,y)\big];
\]
when $\nu=3,4,\dots$, we have $\rho_\ca=-2\nu^{-1}$, $\ca(m)=m^{\rho_\ca}$, and
\[
S(e^{-x},e^{-y})=-D_{\infty}(e^{-x},e^{-y})M(x,y).
\]
\end{example}
 
\begin{example}[Outer-power transformation of Clayton Copula, iid case]
For $\theta>0$ and $\beta\geq 1$, the outer-power transformation of a Clayton Copula is defined as 
\[
D(u,v;\theta,\beta)=\Big[1+\big\{(u^{-\theta}-1)^{\beta}+(v^{-\theta}-1)^{\beta}\big\}^{1/\beta}\Big]^{-1/\theta},\quad (u,v)\in [0,1]^{2}
\]
which is to be interpreted as zero if $\min(u,v)=0$. By Theorem 4.1 in \cite{Cha09}, $D$ is in the copula domain of attraction of the Gumbel--Hougaard Copula with shape parameter $\beta$, defined by
\begin{equation}\label{eqn:Gumbel-Hougaard}
D_{\infty}(u,v)=D(u,v;\beta)\coloneqq \exp\Big[-\big\{(-\log u)^{\beta}+(-\log v )^{\beta}\big\}^{1/\beta}\Big], \quad (u,v)\in [0,1]^{2},    
\end{equation}
which is again to be interpreted as zero if $\min(u,v)=0$. Further, by Proposition 4.3 of \cite{BucSeg14}, Assumption \ref{assump:high_order} is met with $\rho_\ca=-1$, $\ca(m)=(2m)^{-1}$, and 
\[
S(u,v)=\theta \Lambda(u,v;\beta),
\]
where, letting $x=-\log u$ and $y=-\log v$,
\[
\Lambda(u,v;\beta)=D(u,v;\beta)\Big\{\big(x^{\beta}+y^{\beta}\big)^{2/\beta}-\big(x^{\beta}+y^{\beta}\big)^{1/\beta-1}\big(x^{\beta+1}+y^{\beta+1}\big)\Big\}.
\]
\end{example} 

\begin{example}[Moving-Maximum-Process]
\label{ex:movmax}
Let $D$ denote a copula and let $(\bm W_t)_{t\in\Z}$ denote an iid sequence from $D$. Fix $p\in\N$ and let $a_{ij}$ $(i=0, \dots, p; j=1, \dots, d)$ denote nonnegative constants satisfying 
\[
\sum_{i=0}^p a_{ij} = 1 \quad (j=1, \dots, d).
\]
The moving maximum process $(\bm U_t)_{t \in \Z}$ of order $p$ is defined as
\[
U_{tj} = \max_{i=0, \dots, p} W_{t-i,j}^{1/a_{ij}}, \qquad (t \in \Z; j=1, \dots, d),
\]
with the convention that $w^{1/0} =0$ for $w\in(0,1)$. As suggested by the notation, the random variables $U_{tj}$ are uniformly distributed on $(0,1)$, whence a model with arbitrary continuous margins can easily be obtained by considering $X_{tj}=\eta_j(U_{tj})$ for some strictly increasing (quantile) function $\eta_j:(0,1) \to \R$.

Assume that the copula $D$ is in the (iid) copula domain of attraction of an extreme-value copula $D_\infty$, that is, for any $\bu \in [0,1]^d$,
\[
D_m(\bu) = \{ D(\bu^{1/m}) \}^m \longrightarrow D_\infty(\bm u) \qquad (k\to\infty).
\]
Note that $D_m$ is the copula of the componentwise block maximum of size $m$, based on the sequence $(\bm W_t)_{t\in\N}$. 

As a consequence of Proposition 4.1 in \cite{BucSeg14}, if $C_m$ denotes the copula of the componentwise block maximum of size $m$ based on the sequence $(\bm U_t)_{t \in \N}$, then
\[
\lim_{m\to\infty} C_m(\bu) = D_\infty(u), \quad \bu\in[0,1]^d
\]
as well, i.e., Assumption~\ref{assump:copula_convergence} is met. We prove in the Appendix that if Assumption~\ref{assump:high_order} is met for $(D_m)_m$ (denote the auxiliary function by $\ca_D$ and $S_D$), then it is also met for $(C_m)_m$ provided that $1/m = o(\ca_D(m))$, with the same auxiliary functions. In case $1/m \not= o(\ca_D(m))$ additional technical assumptions are needed and the functions $\ca_D,S_D$ and $\ca,S$ might differ. Details in the general case are omitted for the sake of brevity.
\end{example}

\subsection{Finite-sample properties}\label{sec:finiteprop}

In this section we compare the estimators for $C_\infty$ introduced in the previous section by means of Monte-Carlo simulations. We focus on the case $d=2$ below; respective results in higher dimensions are quite similar and do not reveal additional deep insights, see the cases $d = 4,8$ treated in Section \ref{sec:highdim} in the supplementary material \cite{ZouVolBuc19supp}. Results for all estimators are reported as follows: each estimator is computed for all values $\bu \in \mathcal{U} := \{.1,.2,\dots,.9\}^2$ and block size $m \in \{1,\dots,20\}$ (except for the aggregated versions, for which we specify the set of block length parameters below). Squared bias, variance and MSE of each estimator and in each point $\bu \in \mathcal{U}$ for sample size $n = 1000$ was estimated based on $1000$ Monte Carlo replications. For the sake of brevity we only report summary results which correspond to taking averages of the squared bias, MSE and variance over all values $\bu \in \mathcal{U}$. We present results on the following models.
\begin{enumerate}
\item[(M1)] iid realizations from an Outer Power Clayton Copula with $d = 2, \theta = 1, \beta = \log(2)/\log(2-0.25)$. 
\item[(M2)] A moving maximum process based on the outer Power Clayton Copula with $d=2, \theta = 1, \beta = \log(2)/\log(2-0.25)$ and $a_{11} = 0.25, a_{12} = 0.5$. 
\item[(M3)] iid realizations from a $t$-Copula with $d = 2, \nu = 5, \theta = 0.5$.  
\item[(M4)] A moving maximum process based on a $t$-Copula with $d = 2, \nu = 5, \theta = 0.5$ and $a_{11} = 0.25, a_{12} = 0.5$. 
\item[(M5)] A moving maximum process based on a $t$-Copula with $d = 2, \nu = 3, \theta = 0.25$ and $a_{11} = 0.25, a_{12} = 0.5$. 
\end{enumerate}
For the sake of brevity, we do not include an iid version of Model (M5) because the findings are very similar to the time series case. Further note that we also investigated other parameter combinations, but chose to only present results for the above models as they provide, to a large extent, a representative subset of the results.

Following the heuristics after Proposition~\ref{prop:simpleagg}, weights $w=\{w_{n,k}: k \in M\}$ are always chosen as
\begin{align} \label{eq:wc}
\textstyle w_{n,k} = k^{-1}\big(\sum_{\ell \in M} \ell^{-1}\big)^{-1},
\end{align}
with block length sets $M=M_n$ as specified below, possibly depending on the specific estimator.

\subsubsection{Comparison of estimators without bias correction}

We first focus on the performance of three estimators that do not involve any bias correction: 
\smallskip
\begin{compactitem}
\item the disjoint blocks estimator $\hat C_{n,m}^D$ from~\cite{BucSeg14}, see also Section~\ref{sec:comp};
\item the sliding blocks estimator $\hat C_{n,m}$ from Section~\ref{sec:estmar};
\item the aggregated sliding blocks estimator $\hat C_{n,(M,w)}^{\mathrm{agg}}$ from Section~\ref{sec:simpagg}, with block length set $M= \{m,m+1,\dots,m+9\}$ and weights as in \eqref{eq:wc}. 
\end{compactitem} 
\smallskip
The respective results corresponding to Models (M1)-(M5) are shown in Figure~\ref{fig:nbc2}. As predicted by the theory, the variance curves are linear in $m$, with the disjoints blocks estimator always exhibiting the largest variance, while the variances of the aggregated and vanilla version of the sliding blocks estimator are both smaller and similar to each other. In terms of bias, the disjoint and vanilla sliding blocks estimators $\hat C_{n,m}^{D}$ and $\hat C_{n,m}$ show a very similar behavior, with only some smaller deviations (in particular visible for larger block sizes) which may possibly be explained by the fact that the disjoint blocks estimator does not make use of all observations in case the block length $m$ is not a divisor of the sample size $n=1000$. The aggregated sliding blocks estimator typically has the smallest bias among the three competitors. Finally, in terms of MSE, the aggregated sliding blocks estimator again shows the uniformly best performance. Except for Model (M5), the global minimum of the MSE-curve for $\hat C_{n,(M,w)}^{\mathrm{agg}}$ is substantially smaller than the minima for the other two estimators.

When comparing the five models, we observe a qualitatively similar behavior for models (M1)-(M4), with the bias typically being larger in the iid case than in the time series setting.
Model (M5) however exhibits little to no bias for all block sizes under consideration, even for $m=1$. As a consequence, at their minimal MSE, the three estimators yield comparably good results. The observant reader might also note that the bias in the serially dependent models seems to be smaller than in the iid case. Intuitively, this is due to the fact that realizations from moving maximum processes are already based on maxima and thus it can be expected that their dependence structure is closer to that of a `limiting' max-stable model described by $C_\infty$.

\begin{figure}[t!]
\begin{center}
\includegraphics[width=0.98\textwidth]{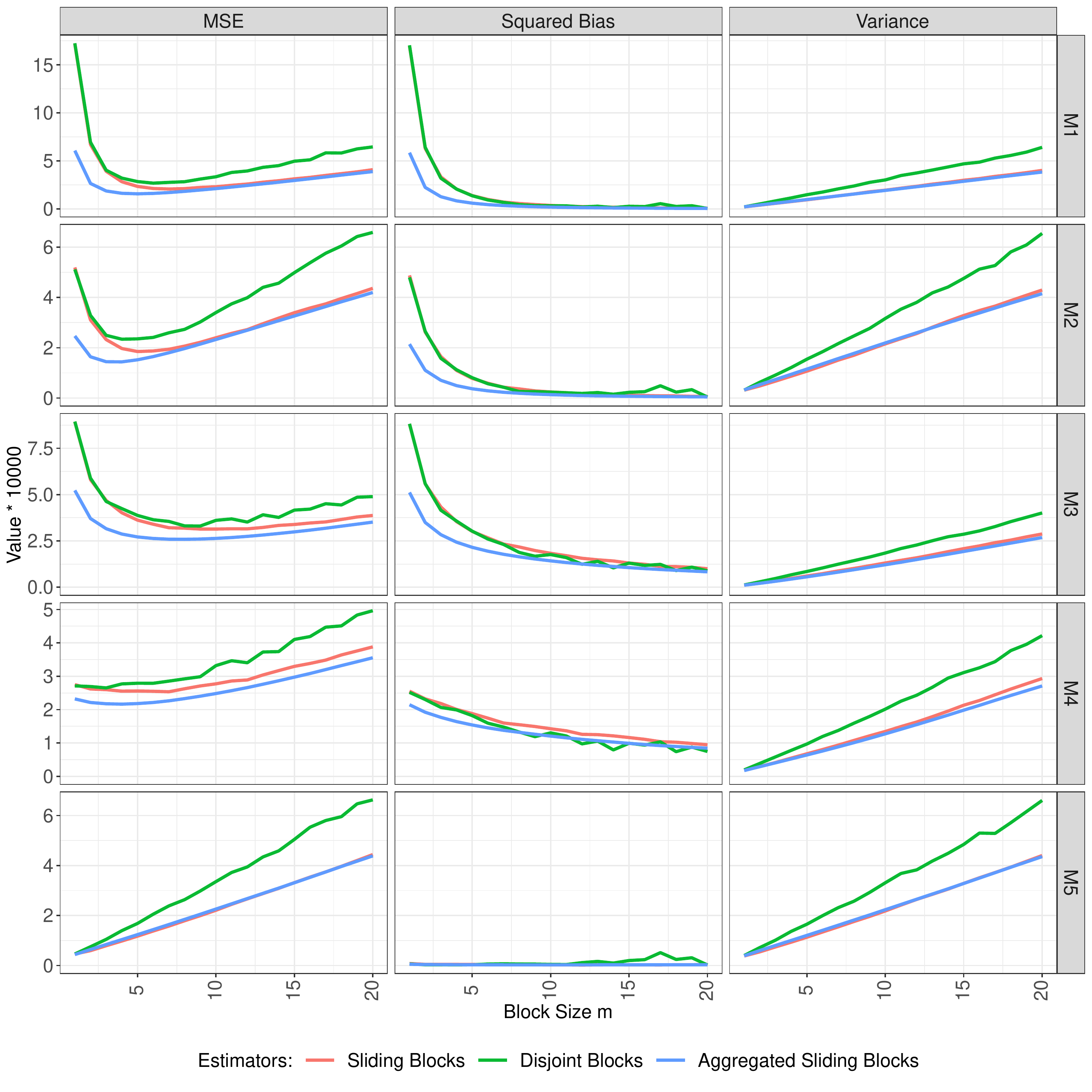}
\vspace{-.6cm}
\end{center}
\caption{\label{fig:nbc2} 
$10^{4} \times $ average MSE, average squared bias and average variance of sliding blocks estimator,  disjoint blocks estimator, and aggregated sliding blocks estimator.}
\vspace{-.3cm}
\end{figure}

\subsubsection{Comparison of bias corrected estimators}

In this section, three bias corrected estimators for the vanilla sliding blocks estimator $\hat C_{n,m}$  are compared  with  $\hat C_{n,m}$ itself. In all cases, the second order parameter $\rho_\ca$ is estimated through $\hat \rho_{\ca} = \hat \rho_{\ca,U}^{\rm pen,agg}$, with the  parameters of that estimator set to $K' = -2, K'' = -0.1, \eta = 1/2, U = \{(.1,.1),(.11,.11),\ldots,(.5,.5)\}, M = \{2,\ldots,50\}$ and weights as in \eqref{eq:wc}.
We consider the following estimators:
\smallskip
\begin{compactitem}
\item The naive bias corrected estimator $\check C_{n,(m,m')}^{\mathrm{bc,nai}}$ with $m' = 1$ and $m \geq 2$. 
\item The aggregated naive bias corrected estimator $\check C^{\mathrm{bc, agg}}_{n,(m',M,w)}$ with $(m',M)=(1,\{m,\dots,m+9\})$ (where $m \geq 2$ is on the x-axis) and with weights as in \eqref{eq:wc}. 
\item The regression-based bias corrected estimator $\check C_{n,(M,w)}^{\mathrm{bc,reg}}$ with $M=\{1,m,m+1,\dots,m+9\}$ (where $m \geq 2$ is on the x-axis) and with weights as in \eqref{eq:wc} (recall from the discussion right after~\eqref{eq:defbcreg} that $\check C_{n,(M,w)}^{\mathrm{bc,reg}}$ does not depend on the parameter $m$ in that equation). 
\end{compactitem} 
\smallskip

The choice of small block sizes for the bias correction, in particular $m'=1$, is motivated by the fact that this choice leads to the best performance in the simulations we tried. Similar observations were made in~\cite{FouDehMer15} who recommend using a very large value for the threshold $k$ in the POT setting.

\begin{figure}[t!]
\begin{center}
\includegraphics[width=0.98\textwidth]{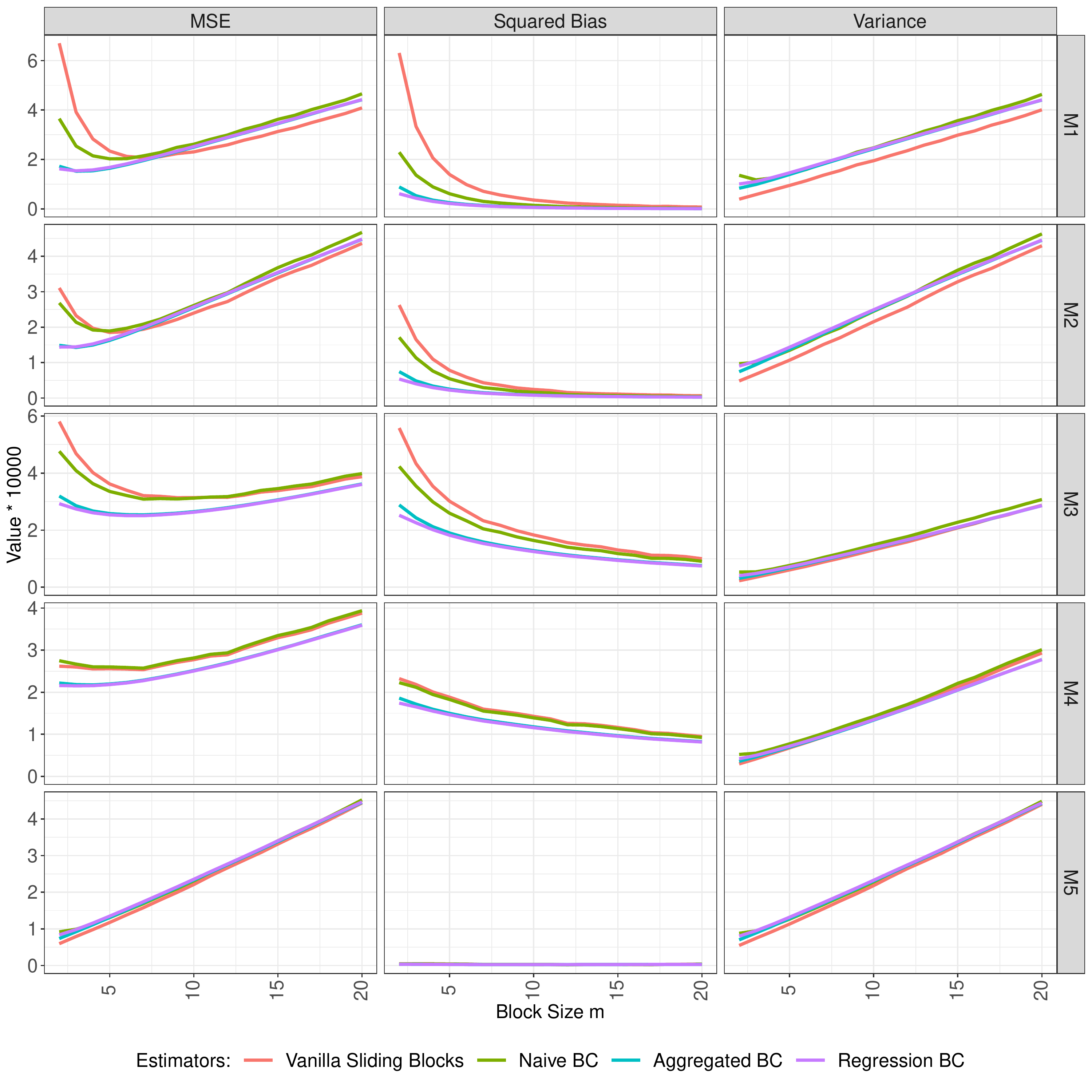}
\vspace{-.6cm}
\end{center}
\caption{\label{fig:bc} 
$10^{4} \times $ average MSE, average squared bias and average variance of sliding blocks estimator,  naive bias corrected estimator, and aggregated naive bias corrected estimator.} 
\vspace{-.3cm}
\end{figure}
The results are presented in Figure~\ref{fig:bc}. We observe that the naive bias corrected estimator exhibits, at each fixed block size, a slightly larger variance and a slightly smaller squared bias than the plain sliding blocks empirical copula. In terms of MSE, no universal statement regarding the ordering between the two estimators can be made. Their minimal MSEs (for each separate model, over all block length parameters) are however quite similar. We further find that aggregating the naive bias-corrected estimator leads to substantial improvements for small values of $m$ and no major impact for larger values of $m$. This is similar to the findings in the previous section. Compared with the vanilla sliding block estimator, the aggregated bias corrected estimator shows much less sensitivity to the parameter $m$ in Model~(M1)-(M3) where there is a substantial bias. In Model~(M5), where the bias is negligible compared to the variance, attempts to correct the bias introduce a bit of variance leading to a slight increase in MSE for all block sizes. Finally, the aggregated naive and regression-based bias corrected estimators show very similar performance.

Based on the simulation results, we would recommend using the aggregated bias corrected estimator among all bias corrected estimators since it leads to better results than the naive estimator, is reasonably fast to compute (see Section \ref{sec:comptime}), and is simpler to implement than the regression-based estimator. At the same time, it is less sensitive to the choice of the block size parameter compared to the estimator without bias correction. 

\section*{Acknowledgments}
The authors would like to thank Sebastian Engelke and Chen Zhou for fruitful discussions. We are also grateful to the Associate Editor and three anonymous Referees for detailed feedback which helped to improve the presentation of our results.

This research has been supported by the Collaborative Research Center ``Statistical modeling of nonlinear dynamic processes'' (SFB 823) of the German Research Foundation and by a Discovery Grant from the Natural
Sciences and Engineering Research Council of Canada, which is gratefully acknowledged. Parts of this paper were written when A.\ B\"ucher was a post doctoral researcher at Ruhr-Universität Bochum.

\begin{supplement}[id=suppA]
\stitle{Supplement to: ``Multiple block sizes and overlapping blocks for multivariate time series extremes''}
\slink[doi]{COMPLETED BY THE TYPESETTER}
\sdatatype{.pdf}
  \sdescription{The supplement contains the proofs for the results in this paper. 
}
\end{supplement}

\bibliographystyle{apalike}
\bibliography{oevc}

\newpage

\thispagestyle{empty}

\newgeometry{total={6.8in, 8in}}

\begin{center}

{\bfseries SUPPLEMENT TO THE PAPER:  \\ [0mm] ``MULTIPLE BLOCK SIZES AND OVERLAPPING BLOCKS FOR \\ [0mm] MULTIVARIATE TIME SERIES EXTREMES''}
\vspace{.5cm}

{\textsc{By Nan Zou, Stanislav Volgushev and Axel Bücher}}

\vspace{.28cm}

{\textit{University of Toronto and Heinrich-Heine-Universit\"at D\"usseldorf}}

\vspace{.28cm}

\begin{center}
\begin{minipage}{.6\textwidth}
{\small \hspace{.5cm} Proofs from the main paper as well as additional simulations are provided. Appendix~\ref{sec:proof1} contains proofs for Section~\ref{sec:master}, Appendix~\ref{sec:proof2} those for Section~\ref{sec:app}, and Appendix~\ref{sec:proof3} those for Section~\ref{subsec:examples}. Additional simulation results are presented in Appendix~\ref{sec:addsim}.}
\end{minipage}
\end{center}
\end{center}

\vspace{.5cm}

\newcommand{\cS}{\mathcal{S}}

\appendix

\section{Proofs for Section~2} 
\label{sec:proof1}

\subsection{Proofs for Section~\ref{sec:fix}}

To keep things self-contained, we begin by repeating the proof of Theorem~\ref{theo:known} from the main text. 

\begin{proof}[Proof of Theorem~\ref{theo:known}]
Recall $b_{a}=n-\ang{ma}+1$, $b=b_1=n-m+1$ and
\begin{align*}
\bbC_{n,m}^{\lozenge}(\bu,a)
&=
\sqrt{n/m}\frac{1}{b_{a}}\sum_{i=1}^{b_{a}}\Big( \ind(\bU_{\ang{ma},i}\leq \bu)-\Prob(\bU_{\ang{ma},i}\leq \bu)\Big), \\
\bbC_{n,m}^{\lozenge,b}(\bu,a)
&=
\sqrt{n/m}\frac{1}{b}\sum_{i=1}^{b} \Big(\ind(\bU_{\ang{ma},i}\leq \bu)-\Prob(\bU_{\ang{ma},i}\leq \bu)\Big).
\end{align*}
where  $(\bu,a) \in [0,1]^{d}\times A]$ with $A=[a_\wedge, a_\vee]$. 
Below we provide technical details for the following steps:
\begin{compactenum}
\item[(i)] In Lemma~\ref{lemma:number_of_block}, we will prove that $\|\bbC_{n,m}^{\lozenge}-\bbC_{n,m}^{\lozenge,b}\|_\infty \pto 0$. Hence it suffices to prove weak convergence of $\bbC_{n,m}^{\lozenge,b}$.
\item[(ii)] In Lemma~\ref{lem:tight} we will show that $\bbC_{n,m}^{\lozenge,b}$ is asymptotically uniformly equicontinuous in probability with respect to the $\|\cdot\|_\infty$-norm on $[0,1]^d\times A$.
\item[(iii)] We will prove in Lemma~\ref{lemma:fidi_convergence} that the finite-dimensional distributions of $\bbC_{n,m}^{\lozenge,b}$ converge weakly to those of $\bbC^{\lozenge}$.
\end{compactenum}
Weak convergence of $\bbC_{n,m}^{\lozenge}$ and hence the theorem then follows by combining (i)-(iii). 
\end{proof}

\subsubsection{Proof of Step (i)}

\begin{Lemma}\label{lemma:number_of_block}
Under Assumption \ref{assump:mixing},
\begin{equation*}
    \sup_{(\bu,a)\in[0,1]^{d} \times A}|\bbC_{n,m}^{\lozenge}(\bu,a)-\bbC_{n,m}^{\lozenge,b}(\bu,a)|\pto 0. 
\end{equation*}
\end{Lemma}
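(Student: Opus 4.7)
The plan is to show that the supremum of $|\bbC_{n,m}^{\lozenge} - \bbC_{n,m}^{\lozenge,b}|$ over $[0,1]^d \times A$ is bounded \emph{deterministically} by a quantity that tends to zero. Since the two processes differ only in their normalization ($1/b_a$ vs.\ $1/b$) and their range of summation ($1,\dots,b_a$ vs.\ $1,\dots,b$), and since each summand
\[
Z_i^{(a)}(\bu) := \ind(\bU_{\ang{ma},i}\leq \bu)-\Prob(\bU_{\ang{ma},i}\leq \bu)
\]
satisfies $|Z_i^{(a)}(\bu)|\le 1$, no probabilistic machinery (mixing, concentration, etc.) should be required. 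This is really just a bookkeeping exercise.

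First I would apply the algebraic identity
\[
\frac{1}{b_a}\sum_{i=1}^{b_a} Z_i^{(a)}(\bu) - \frac{1}{b}\sum_{i=1}^{b} Z_i^{(a)}(\bu) = \Big(\frac{1}{b_a}-\frac{1}{b}\Big)\sum_{i=1}^{b} Z_i^{(a)}(\bu) + \frac{1}{b_a}\sum_{i=b\wedge b_a+1}^{b \vee b_a} \pm Z_i^{(a)}(\bu),
\]
where the sign in the last sum depends on whether $b_a \gtrless b$. Using $|Z_i^{(a)}(\bu)|\le 1$ termwise, the first term is bounded by $|b-b_a|/b_a$ and the second by $|b-b_a|/b_a$ as well, uniformly in $\bu$.

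Next I would control $|b-b_a|$ and $b_a$ uniformly in $a\in A$. Since $|b-b_a|=|m-\ang{ma}|\le m(1+a_\vee)$ and $b_a = n-\ang{ma}+1 \ge n - m a_\vee \ge n/2$ for all $n$ large enough (by Assumption~\ref{assump:mixing}(i), $n/m\to\infty$), one gets $|b-b_a|/b_a = O(m/n)$ uniformly in $a\in A$. Multiplying by the normalizing factor $\sqrt{n/m}$ yields
\[
\sup_{(\bu,a)\in[0,1]^d\times A}\bigl|\bbC_{n,m}^{\lozenge}(\bu,a)-\bbC_{n,m}^{\lozenge,b}(\bu,a)\bigr| = O\!\left(\sqrt{m/n}\right) \to 0,
\]
deterministically, which is strictly stronger than the claimed convergence in probability.

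The main (and only) obstacle is the careful bookkeeping of the inequalities uniformly in $a\in A=[a_\wedge,a_\vee]$; there is no genuine analytic difficulty. I would expect the authors' proof to be only a few lines, essentially the calculation sketched above.
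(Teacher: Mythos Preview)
Your proposal is correct and follows essentially the same approach as the paper: the authors split the difference into the same two pieces (the change in normalization applied to the full sum over $1,\dots,b$, and the contribution of the ``extra'' summands between $b$ and $b_a$), bound each summand by $1$, and use $|b-b_a|=|\ang{ma}-m|=O(m)$ together with $b_a\asymp n$ to obtain the deterministic bound $O(\sqrt{m/n})$. Your observation that the convergence is in fact deterministic (not merely in probability) matches what the paper actually proves.
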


\begin{proof}[Proof of Lemma \ref{lemma:number_of_block}]
Decompose
$\bbC_{n,m}^{\lozenge}-\bbC_{n,m}^{\lozenge,b}=B_{1}+B_{2},$
where
\begin{align*}
B_{1}(\bu,a)
&=
\sqrt{n/m}\frac{1}{b_{a}}\sum_{i=1}^{b_{a}}\Big( \ind(\bU_{\ang{ma},i}\leq \bu)-\Prob(\bU_{\ang{ma},i}\leq \bu)\Big)\\
&\mathph{=} \qquad
-\sqrt{n/m}\frac{1}{b_{a}}\sumb\Big( \ind(\bU_{\ang{ma},i}\leq \bu)-\Prob(\bU_{\ang{ma},i}\leq \bu)\Big)\\
B_{2}(\bu,a)
&=
\sqrt{n/m} \Big( \frac{1}{b_{a}} - \frac{1}{b} \Big) \sumb\Big( \ind(\bU_{\ang{ma},i}\leq \bu)-\Prob(\bU_{\ang{ma},i}\leq \bu)\Big).
\end{align*}
By Assumption \ref{assump:mixing}, we have
\begin{align*}
    \sup_{(\bu,a)\in[0,1]^{d} \times A}|B_{1}(\bu,a)|&\leq 
    \sup_{a\in A }\sqrt{n/(mb_{a}^2)}|b_{a}-b|=O(\sqrt{m/n})=o(1).
\end{align*}
Similarly, 
\begin{align*}
    \sup_{(\bu,a)\in[0,1]^{d} \times A }|B_{2}(\bu,a)|&\leq 
    \sup_{a\in A} \sqrt{n/m} \Big| \frac{b-b_a}{bb_a} \Big| b  =O(\sqrt{m/n})=o(1). 
\end{align*}
which implies the assertion.
\end{proof}

\subsubsection{Proof of Step (ii): asymptotic equicontinuity.}

\begin{Lemma}~\label{lem:tight}
Under the conditions of Theorem~\ref{theo:known}, $\bbC_{n,m}^{\lozenge,b}$ is asymptotically uniformly equicontinuous in probability with respect to the $\|\cdot\|_\infty$-norm on $[0,1]^d\times A$.
\end{Lemma}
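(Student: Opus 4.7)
The plan is to follow the three-step strategy outlined in the remark after Theorem~\ref{theo:known}: a beta-mixing coupling reduces the problem to a row-wise i.i.d.\ empirical process, a bracketing moment bound reduces it further to a bracketing-entropy estimate, and the bracketing numbers are controlled directly.

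\textbf{Step 1 (Coupling via beta-mixing).} First I would partition the starting indices $\{1, \dots, b\}$ into alternating ``big'' and ``small'' groups of consecutive integers of sizes $r_n$ and $s_n$, with $r_n, s_n/m \to \infty$ and the relative sizes tuned so that both the cumulative coupling error $(b/(r_n+s_n))\beta(s_n-\ang{m a_\vee})$ and the maximal uniform contribution of the small groups to $\bbC_{n,m}^{\lozenge,b}$ are $o(1)$; this is feasible under Assumption~\ref{assump:mixing}(iii). For each big group, form the super-observation consisting of \emph{all} $\bU_{\ang{ma},i}$ for every $i$ in that group and every $a \in A$. Blocking the full vector of block maxima (rather than blocking the raw $\bm X_t$'s) is crucial here, since tightness must be uniform in the block length parameter $a$. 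Berbee's coupling lemma applied to the sequence of super-observations then produces an i.i.d.\ copy at negligible total-variation cost, reducing the task to an empirical-process statement for a row-wise i.i.d.\ triangular array.

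\textbf{Step 2 (Bracketing moment bound).} After coupling, the relevant index class is
\[
\cF_n = \bigl\{ f_{\bu,a}: (\bu,a) \in [0,1]^d \times A \bigr\},
\]
where $f_{\bu,a}$ sends a super-observation to the partial sum of centered indicators $\ind(\bU_{\ang{ma},i}\le \bu) - \Prob(\bU_{\ang{ma},i}\le \bu)$ over the indices $i$ in that big group. Theorem~2.14.2 of \cite{VanWel96} then reduces asymptotic equicontinuity to a uniform-in-$n$ control of the bracketing integral
\[
\int_0^\delta \sqrt{\log N_{[\,]}(\epsilon, \cF_n, L_2(\Prob))} \, \diff \epsilon \longrightarrow 0 \qquad (\delta \downarrow 0).
\]

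\textbf{Step 3 (Bracketing number bound).} For fixed $a$, the sub-class $\{f_{\bu,a}: \bu \in [0,1]^d\}$ is a shifted indicator-type class and admits a standard bracketing of cardinality $O(\epsilon^{-2d})$. To handle joint continuity in $(\bu,a)$, I would bound the $L_2$-distance between $f_{\bu,a}$ and $f_{\bu,a'}$ by controlling the probability of the symmetric difference of the events $\{\bU_{\ang{ma},i} \le \bu\}$ and $\{\bU_{\ang{ma'},i} \le \bu\}$. For $|a-a'|$ small, this symmetric difference can be decomposed into contributions coming from the $|\ang{ma'}-\ang{ma}|$ extra raw observations in the longer block and from the small change in the marginal quantiles $F^{\leftarrow}_{\ang{ma},j}$. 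Both contributions should be of order $|a-a'|$ uniformly in $(m,\bu)$ via Assumption~\ref{assump:copula_convergence} and continuity of $C_\infty$, yielding $N_{[\,]}(\epsilon, \cF_n, L_2) = O(\epsilon^{-c})$ for some $c = c(d)$; this is amply sufficient for the bracketing integral to tend to zero.

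\textbf{Main obstacle.} The hard part is the \emph{uniform-in-$m$} control in Step~3: since both the marginals $F_{\ang{ma},j}$ and the copula $C_{\ang{ma}}$ depend on $n$ through $m$, the symmetric-difference probabilities must be bounded by constants independent of $n$. This calls for a mixing-based decomposition of maxima over disjoint sub-blocks of length $\ang{ma'}-\ang{ma}$, analogous in spirit to the overlap decomposition sketched in Remark~\ref{rem:fidi}, which then combines with the uniform convergence $C_m \to C_\infty$ and standard quantile-continuity arguments to deliver the required uniform-in-$m$ bounds.
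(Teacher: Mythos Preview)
Your three-step strategy is essentially the paper's, and the identification of Step~3 as the crux is correct. Two points deserve comment.

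\textbf{Blocking (Step 1).} The paper does not use a big--small scheme. It partitions $\{1,\dots,b\}$ into alternating blocks $\cA_k,\cB_k$ of \emph{equal} length $\ell_m=(a_\vee+1)m$, couples the $\cA$- and $\cB$-sequences of super-observations separately via Berbee's lemma, and writes the coupled process as $\bbW_{n,m}+\bbV_{n,m}$ with $\bbW_{n,m}\stackrel{d}{=}\bbV_{n,m}$; treating one summand then suffices. This sidesteps your small-block-negligibility step, which in your scheme forces the extra constraint $s_n/r_n=o(\sqrt{m/n})$ (from the crude uniform bound $\sqrt{n/m}\,s_n/(r_n+s_n)$ on the small-block contribution). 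Your version can be tuned to work, but the symmetric blocking is cleaner.

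\textbf{Entropy bound (Step 3).} The assertion that the symmetric-difference probability is ``of order $|a-a'|$ uniformly in $(m,\bu)$ via Assumption~\ref{assump:copula_convergence} and continuity of $C_\infty$'' is the gap. Assumption~\ref{assump:copula_convergence} is pointwise convergence $C_m\to C_\infty$ with no rate and therefore cannot deliver anything uniform in $m$. The paper instead obtains $K(\|\bu-\bv\|_\infty^{1/2}\vee|c-a|^\eta)$ with $\eta=\eta(\varrho)>0$ determined by the polynomial $\alpha$-mixing rate in Assumption~\ref{assump:mixing}(ii), via a genuine case analysis: when a coordinate satisfies $u_j\le(c-a)^{\varrho/2}$ one uses the trivial bound $\Prob(M_{ma,1,j}\le F^{\leftarrow}_{ma,j}(u_j))=u_j$; when $u_j>(c-a)^{\varrho/2}$ one sub-blocks and uses $\alpha$-mixing to compare $F_{mc,j}$ with a power of $F_{m(c-a),j}$, the error being controlled by $\alpha(m(c-a))=o((m(c-a))^{-(1+\varrho)})$. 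Your ``Main obstacle'' paragraph is on the right track, but the operative input is Assumption~\ref{assump:mixing}(ii), not the domain-of-attraction condition, and the exponent on $|c-a|$ will in general be strictly less than~$1$.
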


\noindent \textit{Proof.}
The proof is based on a  blocking technique. For $i\in \N$, let $\ubM_i \in \R^{(1+ \ang{m a_{\vee}} - \ang{ma_{\wedge}} ) \times d}$ be defined by its entries 
\[
(\ubM_i)_{s,j} = M_{\ang{ma_{\wedge}}+s-1,i,j}, \quad s=1,\dots ,1+\ang{m a_{\vee}} - \ang{ma_{\wedge}} , j=1,\dots,d.
\]
{In words, $(\ubM_i)_{s,j}$ denotes the block maximum of the observations starting at time $i$ with block length $\ang{ma_{\wedge}}+s-1$, in the $j$th coordinate.}
Let $\ell_m = (a_{\vee}+1)m$ (to be interpreted as a maximal block size) and let $\cK=b/(2\ell_m)=O(n/m)$. For simplicity, we shall assume that $\cK$ and $\ell_m$ are integers. For $k=1, \dots , \cK$, let
\begin{align*}
\cA_{k}&=\Big\{2(k-1)\ell_m+1, \dots ,2(k-1)\ell_m + \ell_m\Big\},
\\
\cB_{k}&=\Big\{(2k-1)\ell_m+1, \dots ,(2k-1)\ell_m + \ell_m\Big\},
\end{align*}
such that $\cA_{1}, \cB_{1} \dots , \cA_{\cK} , \cB_{\cK}$ is a partition of $\{1, \dots, b\}$. By the coupling lemma in \cite{Ber79} and \cite{Dou95}, we can construct inductively a triangular array $\{\utbM_{i}\}_{i = 1,\dots,b}$, such that 
\begin{equation}\label{eq:coupling}
\begin{aligned}
&\text{(i)} \ \big\{\utbM_{i}:i\in \cA_{k}\big\}\stackrel{d}{=}\big\{\ubM_{i}:i\in \cA_{k}\big\} \text{ and } \big\{\utbM_{i}:i\in \cB_{k}\big\}\stackrel{d}{=}\big\{\ubM_{i}:i\in \cB_{k}\big\} \text{ for any $k=1, \dots, \cK$} 
\\
&\text{(ii)} \ \Prob\Big(\big\{\utbM_{i}:i\in \cA_{k} \big\}\neq \big\{\ubM_{i}:i\in \cA_{k} \big\}\Big)\leq\beta(m) \text{ and }\Prob\Big(\big\{\utbM_{i}:i\in \cB_{k} \big\}\neq \big\{\ubM_{i}:i\in \cB_{k} \big\}\Big)\leq\beta(m)
\\
&\text{(iii)} \ \big\{\utbM_{i}:i\in \cA_{k}\big\}_{k=1,\dots,\cK}\ \text{and} \
\big\{\utbM_{i}:i\in \cB_{k}\big\}_{k=1,\dots,\cK} \text{ are row-wise independent triangular arrays.}
\end{aligned}
\end{equation}

For $ a\in A, i=1, \dots, b$ and $j=1, \dots, d$, let $\tM_{\ang{ma},i,j}$ denote the $(1+\ang{ma}-\ang{ma_{\wedge}},j)$'th entry of $\utbM_{i}$; note  that $\tilde M_{\ang{ma}, i, j} =_d M_{\ang{ma},i,j}$.
Further, let $\tbM_{\ang{ma},i} \in \R^d$ denote the $1+\ang{ma}-\ang{ma_{\wedge}}$'th column of $\utbM_{i}$, and let
\begin{align*}
\tU_{\ang{ma},i,j}&=F_{\ang{ma},j}(\tM_{\ang{ma},i,j}),
\\
\tbU_{\ang{ma},i}&=(\tU_{\ang{ma},i,1},\dots,\tU_{\ang{ma},i,d})'.
\end{align*}
Finally, for $(\bu,a) \in [0,1]^d\times A$, let
\[
\tbbC_{n,m}^{\lozenge,b}(\bu,a) = \sqrt{n/m}\frac{1}{b}\sumb \Big(\ind(\tbU_{\ang{ma},i}\leq \bu)-\Prob(\tbU_{\ang{ma},i}\leq \bu)\Big).
\]
Later we will show that part (ii) of \eqref{eq:coupling} implies
\begin{equation}\label{eq:tightness_difference}
\sup_{(\bu,a)\in[0,1]^{d} \times A}|\tbbC_{n,m}^{\lozenge,b}(\bu,a)-\bbC_{n,m}^{\lozenge,b}(\bu,a)|=\op{1}.
\end{equation}
Hence it suffices to prove asymptotic equicontinuity of $\tbbC_{n,m}^{\lozenge,b}$. To this end observe the representation
\begin{align}\label{eq:tightness_decomposition}
\sqrt{a}\tbbC_{n,m}^{\lozenge,b}(\bu,a) = \bbW_{n,m}(\bu,a) + \bbV_{n,m}(\bu,a),
\end{align}
where $\bbW_{n,m}$ and $\bbV_{n,m}$ are stochastic processes on $[0,1]^{d}\times A$ defined by
\begin{align*}
\bbW_{n,m}(\bu,a)&=\frac{1}{\sqrt{\cK}}\sum_{k=1}^{\cK}(W_{n,k}(\bu,a)-\Exp W_{n,k}(\bu,a)),\\
\bbV_{n,m}(\bu,a)&=\frac{1}{\sqrt{\cK}} \sum_{k=1}^{\cK}(V_{n,k}(\bu,a)-\Exp V_{n,k}(\bu,a)),
\end{align*}
where
\begin{align*}
W_{n,k}(\bu,a)&= {\sqrt{\frac{n\cK}{b^2m}}} \sumb\ind
(\tbM_{\ang{ma},i}\leq\bF_{\ang{ma}}^{\leftarrow}(\bu))\ind(i\in \cA_{k}),\\
V_{n,k}(\bu,a)&= \sqrt{\frac{n\cK}{b^2m}}\sumb\ind
(\tbM_{\ang{ma},i}\leq\bF_{\ang{ma}}^{\leftarrow}(\bu))\ind(i\in \cB_{k}).
\end{align*}
To prove asymptotic equicontinuity of $\tbbC_{n,m}^{\lozenge,b}$ it suffices to prove asymptotic equicontinuity and boundedness in probability of $\bbW_{n,m}$ and $\bbV_{n,m}$ (note that by assumption the set $A$ is  bounded and bounded away from zero). Since the distribution of both terms is the same, we will focus on $\bbW_{n,m}$. For $k=1, \dots, \cK$ and $a\in A$, let $\uutbM^{(k)} \in \bbR^{\ell_m\times(1+\ang{ma_{\vee}}-\ang{ma_{\wedge}})\times d}$ be defined as
\[
(\uutbM^{(k)})_{i,s,j}= (\utbM_{2(k-1)\ell_m + i})_{s,j}, 
\]
By (i) and (iii) of \eqref{eq:coupling} and stationarity, $\{\uutbM^{(k)}\}_{k=1,\dots,\cK}$ is a row-wise i.i.d. triangular array. Let $\bbG_{\cK}$ denote the empirical process corresponding to those observations. Then
\[
\bbW_{n,m}(\bu,a) = \frac{1}{\sqrt{\cK}}\sum_{k=1}^{\cK}(W_{n,k}(\bu,a)-\Exp W_{n,k}(\bu,a)) =\frac{1}{\sqrt{\cK}}\sum_{k=1}^{\cK}  \Big(f_{\bu,a}(\uutbM^{(k)})-\Exp f_{\bu,a}(\uutbM^{(k)})\Big)=\bbG_{\cK}f_{u,a}
\]
where $f_{\bu,a}: \bbR^{\ell_m\times (1+\ang{ma_{\vee}}-\ang{ma_{\wedge}})  \times d} \to \bbR$ is defined by
\begin{align} \label{eq:fua}
f_{\bu,a}(\uubx) 
= 
\sqrt{\frac{n\cK}{b^2m}} \sum_{i=1}^{\ell_m} \ind\Big(x_{i,1+\ang{ma}-\ang{ma_{\wedge}},\cdot} \leq \bF_{\ang{ma}}^{\leftarrow}(\bu)\Big).
\end{align}

For $\delta>0$, consider the sequences of functions classes
\begin{align*}
\cF_m &= \Big\{ f_{\bu,a} \Big| \bu \in [0,1]^d, a \in (\bbZ/m)\cap [a_\wedge/2,a_\vee] \Big\}, \\
\cF_{m,\delta} &:= \Big\{ f_{\bu,a} - f_{\bv,c} \Big| \bu, \bv \in [0,1]^d, a,c \in (\bbZ/m)\cap [a_\wedge/2,a_\vee] , \|\bu-\bv\|_{\infty} \vee |c-a| \leq \delta \Big\}.
\end{align*}
Since for all $\bu,\bv\in [0,1]^{d}$ and $a,c\in A$, 
\begin{equation*}
\begin{aligned}
&\mathph{=}\bbW_{n,m}(\bu,a)-\bbW_{n,m}(\bv,c)=\bbW_{n,m}(\bu,\ang{ma}/m)-\bbW_{n,m}(\bv,\ang{mc}/m)\\
&=\frac{1}{\sqrt{\cK}}\sum_{k=1}^{\cK}  \Big((f_{\bu,\ang{ma}/m}-f_{\bv,\ang{mc}/m})(\uutbM^{(k)})-\Exp (f_{\bu,\ang{ma}/m}-f_{\bv,\ang{mc}/m})(\uutbM^{(k)})\Big)\\
&=\bbG_{\cK}(f_{\bu,\ang{ma}/m}-f_{\bv,\ang{mc}/m}),
\end{aligned}    
\end{equation*}
we have, for $n$ sufficiently large so that $\ang{m a_\wedge} > a_\wedge/2$,
\begin{equation*}
\sup_{\|\bu-\bv\|_{\infty} \vee |c-a| \leq \delta} |\bbW_{n,m}(\bu,a)-\bbW_{n,m}(\bv,c)| \leq 
\|\bbG_{\cK}\|_{\cF_{m,\delta+1/m}}
\end{equation*}
and similarly
\[
\sup_{\bu \in [0,1]^d, a \in A} |\bbW_{n,m}(\bu,a)| \leq \|\bbG_{\cK}\|_{\cF_{m}}.
\]
Hence, the equicontinuity of $\bbW_{n,m}$ will follow if we can prove that 
\begin{equation}\label{eq:equicont_G}
\lim_{\delta \downarrow 0} \limsup_{n \to \infty} \Exp\Big[\|\bbG_{\cK}\|_{\cF_{m,\delta+1/m}} \Big] = 0,
\end{equation}
while the corresponding boundedness in probability will follow from
\begin{equation}\label{eq:unifbound_G}
\limsup_{n \to \infty} \Exp\Big[\|\bbG_{\cK}\|_{\cF_{m}} \Big] < \infty.
\end{equation}
To prove~\eqref{eq:equicont_G} and~\eqref{eq:unifbound_G} we shall apply Theorem 2.14.2 in \cite{VanWel96} to the function classes $\cF_{m,\delta+1/m}$, $\cF_m$ and the empirical process $\bbG_{\cK}$. Let $\|\cdot\|_{P_m,2}$ denote the norm
\[
\|f\|_{P_m,2} = \Big\{\Exp\Big[\Big(f(\uutbM^{(1)})\Big)^2\Big]\Big\}^{1/2}.
\]
Let $E$ be an envelope function for $\cF_{m}$ and note that $2E$ is an envelope function for $\cF_{m,\delta+1/m}$. Define
\[
\ba(x)=x\|2E\|_{P_m,2}/\sqrt{1+\log N_{[~]}(x\|2E\|_{P_m,2}, \cF_{m,\delta+1/m},\|\cdot\|_{P_m,2})}, \quad x > 0,
\]
where $N_{[~]}$ denotes the bracketing number, see Definition 2.1.6 in \cite{VanWel96}. Note that
\begin{equation}\label{eq:bracket_connect}
N_{[~]}(\ep,\cF_{m,\delta+1/m},\|\cdot\|_{P_m,2})\leq N_{[~]}(\ep,\cF_{m}-\cF_{m},\|\cdot\|_{P_m,2})\leq \Big(N_{[~]}(\ep/2,\cF_{m},\|\cdot\|_{P_m,2})\Big)^{2}.
\end{equation}
By the middle part of Theorem 2.14.2 in \cite{VanWel96}, 
if there exists $\kappa>0$ such that for every $f\in \cF_{m,\delta+1/m}$,
\begin{align*}
\|f\|_{P_m,2}< \kappa \|2E\|_{P_m,2}, 
\end{align*}
then 
\begin{multline}
\Exp\Big|\|\bbG_{\cK}\|_{\cF_{m,\delta+1/m}} \Big| 
\\
\lesssim \|2E\|_{P_m,2}\int_{0}^{\kappa}\sqrt{1+\log N_{[~]}(\ep \|2E\|_{P_m,2}, \cF_{m,\delta+1/m}, \|\cdot\|_{P_m,2})} \diff \ep +\sqrt{\cK}\Exp\Big[2E\ind\{2E>\sqrt{\cK}\ba(\kappa)\}\Big]. \label{eq:Fmdelta_b1}
\end{multline}
We begin by observing that, for any $f_{\bu,a}$ as defined in \eqref{eq:fua}, {we have
\[
\|f_{\bu,a}\|_{\infty}
= \sup_{\uubx \in  \R^{\ell_m \times (1+\ang{ma_{\vee}}-\ang{ma_{\wedge}}) \times d}} |f_{\bu, a}(\uubx)|
 \leq \ell_m \sqrt{\frac{n\cK}{b^2m}} \le \sqrt{{a_{\vee}+1}}.
\]
for all sufficiently large $n$ (using that $b \ge n/2$, eventually). 
Hence, we can choose $E=\sqrt{a_{\vee}+1}$} as an envelope function of $\cF_{m}$. Later we shall prove that there exist $\eta_1>0,\eta_2>0,\ep_0 > 0$ such that 
\begin{equation}\label{eq:bracketing_number_bound}
N_{[~]}(\ep \|E\|_{P_m,2},\cF_{m},\|\cdot\|_{P_m,2}) \leq \ep^{-\eta_{1}} \qquad \forall \ep\in(0,\ep_0),
\end{equation} 
and, for all $\delta \in (0, \ep_0/2)$ and all sufficiently large $n$ (such that $\delta+1/m < \ep_0$), 
\begin{equation}\label{eq:secmomWn}
\sup_{f \in \cF_{m,\delta+1/m}} \|f\|_{P_m,2} \leq (\delta+1/m)^{\eta_{2}} \|2E\|_{P_m,2}.
\end{equation}
Now~\eqref{eq:Fmdelta_b1} together with some simple computations utilizing~\eqref{eq:bracket_connect},~\eqref{eq:bracketing_number_bound} and~\eqref{eq:secmomWn} shows that for $\ep_0>\delta+1/m>0$
\begin{equation*}\label{eq:theorem_2_14_2}
\begin{aligned}
\Exp\Big[\|\bbG_{\cK}\|_{\cF_{m,\delta+1/m}}\Big] \lesssim &~ \int_{0}^{(\delta+1/m)^{\eta_{2}}} \sqrt{1+2\eta_{1}|\log \ep|} \diff \ep
+ \sqrt{\cK}\Prob\Big(2E \geq \sqrt{\cK}\ba((\delta+1/m)^{\eta_{2}})\Big). 
\end{aligned}
\end{equation*}
For fixed $\delta > 0$ the term $\ba((\delta+1/m)^{\eta_{2}})$ is bounded away from $0$ uniformly in $m$ while $\cK = \cK_n \to \infty$ as $n \to \infty$. This implies \eqref{eq:equicont_G}.  

The bound in~\eqref{eq:unifbound_G} follows by similar but simpler arguments utilizing the last part of Theorem 2.14.2 in \cite{VanWel96}.
\hfill $\Box$ 

\medskip

\noindent\textbf{Proof of (\ref{eq:tightness_difference}).} 
By (i) of \eqref{eq:coupling}, we have $\Prob(\tbU_{\ang{ma},i}\leq \bu)=\Prob(\bU_{\ang{ma},i}\leq \bu)$. Hence,
\begin{align*}
\tbbC_{n,m}^{\lozenge,b}(\bu,a)-\bbC_{n,m}^{\lozenge,b}(\bu,a)
&=
{\sqrt{\frac{n}{b^2m}}} \sumb(\ind(\tbU_{\ang{ma},i}\leq \bu)-\ind(\bU_{\ang{ma},i}\leq \bu))\\
&=
\sqrt{\frac{n}{b^2m}} \sum_{k=1}^{\cK}\sumb(\ind(\tbU_{\ang{ma},i}\leq \bu)-\ind(\bU_{\ang{ma},i}\leq \bu))(\ind(i\in \cA_{k})+\ind(i \in \cB_{k})).
\end{align*}
Now, for fixed $k\in\{1, \dots, \cK\}$, since $|\cA_k| = \ell_m = (a_{\vee}+1) m$, we have
\begin{align*}
    \mathph{\leq}\sup_{(\bu,a)\in[0,1]^{d} \times A}&\Big| \sumb(\ind(\tbU_{\ang{ma},i}\leq \bu)-\ind(\bU_{\ang{ma},i}\leq \bu))\ind(i\in \cA_{k})\Big|\\
    &\leq (a_{\vee}+1)m\sup_{a\in A} \ind \Big(\{\tbU_{\ang{ma},i}:i \in \cA_{k}\}\neq\{\bU_{\ang{ma},i}:i \in \cA_{k}\} \Big) \\
    &\leq (a_{\vee}+1)m\ind \Big(\{\utbM_{m,i}:i \in \cA_{k}\}\neq\{\ubM_{m,i}:i \in \cA_{k}\} \Big).
\end{align*}
Similarly,
\begin{align*}
    \mathph{\leq}\sup_{(\bu,a)\in[0,1]^{d} \times A}&\Big|\sumb(\ind(\tbU_{\ang{ma},i}\leq \bu)-\ind(\bU_{\ang{ma},i}\leq \bu))\ind(i\in \cB_{k})\Big|\\
    &\leq (a_{\vee}+1)m\ind \Big(\{\utbM_{m,i}:i \in \cB_{k}\}\neq\{\ubM_{m,i}:i \in \cB_{k}\} \Big).
\end{align*}
By (ii) of \eqref{eq:coupling} and Assumption \ref{assump:mixing}~(iii),
\[
\Exp\Big[\sup_{(\bu,a)\in[0,1]^{d} \times A}|\tbbC_{n,m}^{\lozenge,b}(\bu,a)-\bbC_{n,m}^{\lozenge,b}(\bu,a)|\Big]\leq \beta(m)\sqrt{\frac{n}{m}}\to 0.
\]
The result follows by Markov's Inequality. \hfill $\Box$ 

\medskip
\noindent\textbf{Proof of (\ref{eq:bracketing_number_bound})} 
 Consider the functions $\bbl_{\bu,a,c}, \bbu_{\bu,a,c}: \bbR^{\ell_m\times(1+\ang{ma_{\vee}}-\ang{ma_{\wedge}})\times d} \to \bbR$ defined by
\begin{align}
\bbl_{\bu,a,c}(\uubx)
&=
{\sqrt{\frac{n\cK}{b^2m}}} \sum_{i=1}^{\ell_m} \ind\Big(x_{i,1+\ang{mc}-\ang{ma_{\wedge}},\cdot} \leq \bF_{\ang{ma}}^{\leftarrow}(\bu)\Big), \label{eq:defl}
\\
\bbu_{\bu,a,c}(\uubx)
&=
\sqrt{\frac{n\cK}{b^2m}} \sum_{i=1}^{\ell_m} \ind\Big(x_{i,1+\ang{ma}-\ang{ma_{\wedge}},\cdot} \leq \bF_{\ang{mc}}^{\leftarrow}(\bu)\Big), \label{eq:defu}
\end{align}
and note that $\bbl_{\bu,a,c} = \bbu_{\bu,c,a}$.

Further,  $\bbl_{\bu,a,c}$ is increasing in $\bu$ (coordinate-wise) and $a$ and decreasing in $c$. Likewise,  $\bbu_{\bu,a,c}$ is increasing in $\bu$ (coordinate-wise) and $c$ and decreasing in $a$. {Subsequently, let $A'=[a_\wedge /2, a_\vee +2]$.}
Lemma~\ref{lemma:Lipschitz_1} implies that there exist $\eta, K \in (0,\infty)$ such that, for all {sufficiently large $n$}, all $\bu,\bv \in [0,1]^d$ and  all $a,c \in A'\cap (\bbZ/m)$ with $|c-a| \geq m^{-1/2}$,
\[
\Big\|\bbl_{\bu,a,c} - \bbu_{\bv,a,c}\Big\|_{P_m,2} \leq K(\|\bu-\bv\|_\infty^{1/2} \vee |c-a|^\eta),
\]
Let $\tilde \eta := \min\{\eta,1/2\}/2 \in (0,1/4]$ and $\epsilon_0 := 1 \wedge K^{-1/\tilde\eta} \in (0,1]$. Then 
we have, for all $\|\bu-\bv\|_\infty\leq \ep_0$ and $m^{-1/2} \leq |c-a| \leq \ep_0$, 
\begin{equation}\label{eq:luboundsimple}
\Big\|\bbl_{\bu,a,c} - \bbu_{\bv,a,c}\Big\|_{P_m,2} \leq K(\|\bu-\bv\|_\infty \vee |c-a|)^{2\tilde\eta} \leq (\|\bu-\bv\|_\infty \vee |c-a|)^{\tilde\eta}. 
\end{equation}
To simplify notation,  we subsequently write $\eta = \tilde \eta$. Begin by considering the case $\ep > 2 m^{-\eta/2}$. For $\bh=(h_{1},\dots,h_{d+1}) \in \bbN^{d+1}$ and $\kappa = \ang{m\ep^{1/\eta}}/m$ define
\begin{align*}
\cD_{\ep,\bh} 
=
 \Big[(h_{1}-1)\kappa,  {h_{1}\kappa\wedge 1} \Big]
\times \dots & \times 
\Big[(h_{d}-1)\kappa, { h_{d} \kappa \wedge 1} \Big]  \\
&  \times 
\Big[(h_{d+1}-1)\kappa +  {\frac{\ang{ma_\wedge}}{m} }, \big(h_{d+1} \kappa + {\frac{\ang{ma_\wedge}}{m}}\big){\wedge \big(a_{\vee}+2\big)} \Big] 
\end{align*}
Then we have, for sufficiently large $n$,
\begin{align}\label{eq:tightness_partition}
[0,1]^{d}\times[a_{\wedge},a_{\vee}] 
&\subset 
\cup_{\bh\in\{1,2, \dots ,\ceil{1/\kappa}\}^{d}
\times\{{  1,2,\dots,\ceil{(a_{\vee}-a_{\wedge} + 1)/\kappa} \}} } \cD_{\ep,\bh} \nonumber \\
&\subset {[0,1]^d \times [a_\wedge/2, a_\vee+2]}
\end{align}
Let $u_{\bh,j}=(h_{j}-1)\kappa $, ${v_{\bh,j}=h_{j}\kappa \wedge 1}$, $a_{\bh}=(h_{d+1}-1)\kappa +  {\ang{ma_\wedge}/{m} }$, $c_{\bh}=\big(h_{d+1}\kappa + {{\ang{ma_\wedge}}/{m}}\big)\wedge \big(a_{\vee}+2\big)$, and
\begin{equation*}
\bu_{\bh}=\Big(u_{\bh,1}, \dots ,u_{\bh,d}\Big), \quad \bv_{\bh}=\Big(v_{\bh,1}, \dots,v_{\bh,d}\Big).
\end{equation*}
Then $(\bu_{\bh},a_{\bh})$ and $(\bv_{\bh},c_{\bh})$ are  the corners of the cuboid $\cD_{\ep,\bh}$ in $[0,1]^{d}\times {[a_{\wedge}/2,a_{\vee}+2}]$. For all $f_{\bu,a}\in \cF_{m}$, by \eqref{eq:tightness_partition}, there exists $\bh\in\N^{d+1}$ such that $(\bu,a)\in \cD_{\ep,\bh}$. For such $f_{\bu,a}$ and $\bh$, we have 
$\bbl_{\bu_{\bh},a_{\bh},c_{\bh}} 
\le 
\bbl_{\bu,a,a} 
=  
f_{\bu,a}
=
\bbu_{\bu,a,a} 
\leq \bbu_{\bv_{\bh},a_{\bh},c_{\bh}}$ by the monotonicity properties of $\bbl$ and $\bbu$. Therefore, $\cF_{m}$ is covered by the collection of brackets 
\begin{equation}\label{eq:brackets_1}
\Big\{[\bbl_{\bu_{\bh},a_{\bh},c_{\bh}},\bbu_{\bv_{\bh},a_{\bh},c_{\bh}}]: \bh\in\{1,2, \dots ,\ceil{1/\kappa}\}^{d}\times \{{  1,2,\dots,\ceil{(a_{\vee}-a_{\wedge} + 1)/\kappa}} \} \Big\}.
\end{equation}
By construction and by~\eqref{eq:luboundsimple} {[note that $\cD_{\ep,\bh}\subset [0,1]^d \times A'$]} we have, for any $(\bu_{\bh},a_{\bh}), (\bv_{\bh},c_{\bh})$,
\[
\Big\|\bbl_{\bu_{\bh},a_{\bh},c_{\bh}} - \bbu_{\bv_{\bh},a_{\bh},c_{\bh}}\Big\|_{P_m,2} \leq \kappa^\eta \leq \ep, 
\]  
i.e., the collection in~\eqref{eq:brackets_1} provides a cover of $\cF_m$ by $\ep$ brackets. This implies
\begin{equation}\label{eq:bracket_bound_1}
N_{[~]}(\ep,\cF_{m},\|\cdot\|_{P_m,2}) \leq 2^{d+1}(a_{\vee} - a_{\wedge} + 1) \kappa^{-(d+1)} \leq 4^{d+1}(a_{\vee} - a_{\wedge} + 1) \ep^{-(d+1)/\eta}. 
\end{equation}

Next consider the case $\ep \leq 2 m^{-\eta/2}$. For the constant $K$ from Lemma~\ref{lemma:Lipschitz_2} let $\kappa := K^{-2}\ep^2$. For $\bh=(h_{1},\dots,h_{d+1})$, let $u_{\bh,j}=(h_{j}-1)\kappa$, $v_{\bh,j}={h_{j}\kappa \wedge 1}$, and $a_{\bh}=(h_{d+1}-1)/m + \floor{ma_{\wedge}}/m$. Then $\cF_{m}$ is covered by the collection of brackets 
\begin{equation}\label{eq:brackets_2}
\Big\{[\bbl_{\bu_{\bh},a_{\bh},a_{\bh}},\bbu_{\bv_{\bh},a_{\bh},a_{\bh}}]: \bh\in\{1,2, \dots , \ceil{1/\kappa} \}^{d}\times\{1,2,\dots, \ceil{m(a_{\vee}-a_{\wedge}){+2}}\}\Big\},
\end{equation}
By Lemma \ref{lemma:Lipschitz_2}, for sufficiently large $m$,
\begin{align*}
\Big\|\bbu_{\bv_{\bh},a_{\bh},a_{\bh}}-\bbl_{\bu_{\bh},a_{\bh},a_{\bh}}\Big\|_{P_m,2} \leq K\|\bu_{\bh}-\bv_{\bh}\|_{\infty}^{1/2} = \ep.
\end{align*}
Hence, \eqref{eq:brackets_2} is a collection of $\ep$-brackets that covers $\cF_{m}$. Notice the number of brackets in the collection \eqref{eq:brackets_2} is bounded by $2^{d+1} \kappa^{-d}m(a_{\vee}-a_{\wedge})\le 2^{d+1+\eta/2} K^{2d} \ep^{-2d-2/\eta}$, {for sufficiently large $m$}. Combining this with~\eqref{eq:bracket_bound_1} we have proved that for  constants $\xi, \ep_0$ depending on $A,\rho,\alpha(\cdot),d$ only we have, for any $0 \leq \ep \leq \ep_0$ (note that for $\ep_0 < 1$ constants can be absorbed into powers of $\ep^{-1}$ by changing $\xi$),
\begin{equation*}
N_{[~]}(\ep,\cF_{m},\|\cdot\|_{P_m,2})\leq \ep^{-\xi}.    
\end{equation*}
Combining this with~\eqref{eq:bracket_connect} completes the proof of~\eqref{eq:bracketing_number_bound}. \hfill $\Box$

\medskip
\noindent\textbf{Proof of (\ref{eq:secmomWn}).} 
Suppose $a,c \in (\bbZ/m)\cap [a_{\wedge},a_{\vee}]$ and $a\leq c$. Then either $|c-a|>m^{-1/2}$ or $m^{-1} \leq |c-a|\leq m^{-1/2}$ or $c-a=0$. Now we discuss case by case. Begin by observing that
\[
\bbl_{\bu\wedge\bv,a,c} \leq f_{\bu,a}\wedge f_{\bv,c} \leq f_{\bu,a}\vee f_{\bv,c} \leq \bbu_{\bu\vee\bv,a,c}.
\]
As a consequence, 
\begin{align} \label{eq:L_2all}
\|f_{\bu,a} - f_{\bv,c}\|_{P_m,2}
& = 
\|f_{\bu,a}\wedge f_{\bv,c} - f_{\bu,a}\vee f_{\bv,c}\|_{P_m,2} \nonumber \\
&\leq 
\|\bbu_{\bu\vee\bv,a,c} - \bbl_{\bu\wedge\bv,a,c}\|_{P_m,2}  
\\
&= \|\bbu_{\bu\vee\bv,\ang{ma}/m,\ang{mc}/m} - \bbl_{\bu\wedge\bv,\ang{ma}/m,\ang{mc}/m}\|_{P_m,2} \nonumber
\end{align}
By Lemma \ref{lemma:Lipschitz_1}, when $c-a>m^{-1/2}$, we obtain the upper bound
\begin{align}\label{eq:L_2_1} 
\|f_{\bu,a} - f_{\bv,c}\|_{P_m,2}
& \leq K\big(\|\bu-\bv\|_{\infty}^{1/2}\vee|c-a|^{\eta} \big) .
\end{align}
When $m^{-1}\leq c-a\leq m^{-1/2}$, select $\ta,\tc\in [a_{\wedge},a_{\vee}]$ such that $2 m^{-1/2} \geq \tc-\ta \geq m^{-1/2}, \tc,\ta \in \bbZ/m$ and $\ta\leq a\leq c\leq \tc$. By \eqref{eq:L_2all} and Lemma \ref{lemma:Lipschitz_1},
\begin{align}\label{eq:L_2_2}
\|f_{\bu,a} - f_{\bv,c}\|_{P_m,2}
&\leq 
\|\bbu_{\bu\vee\bv,a,c} - \bbl_{\bu\wedge\bv,a,c}\|_{P_m,2}  \nonumber\\
&\leq 
\|\bbu_{\bu\vee\bv,\ta,\tc} - \bbl_{\bu\wedge\bv,\ta,\tc}\|_{P_m,2}  \nonumber\\
&\leq K\big(\|\bu-\bv\|_{\infty}^{1/2}\vee|\tc-\ta|^{\eta} \big)  \nonumber\\ 
&\leq 2^\eta K\big(\|\bu-\bv\|_{\infty}^{1/2}\vee m^{-\eta/2} \big) 
\leq 2^\eta K\big(\|\bu-\bv\|_{\infty}^{1/2}\vee |c-a|^{\eta/2} \big) 
\end{align}
where the last inequality uses the fact that $|c-a| \geq 1/m$.
Finally, by Lemma~\ref{lemma:Lipschitz_2}, when $c-a=0$, 
\begin{equation}\label{eq:L_2_3}
\|f_{\bu,a} - f_{\bv,c}\|_{P_m,2} 
 =  \|\bbu_{\bv,a,a} - \bbl_{\bu,a,a}\|_{P_m,2} \leq K\|\bu-\bv\|_{\infty}^{1/2}. 
\end{equation}
A combination of \eqref{eq:L_2_1}, \eqref{eq:L_2_2}, and \eqref{eq:L_2_3} gives \eqref{eq:secmomWn} where the constant $K$ can be dropped at the cost of changing the power of $\|\bu-\bv\|\wedge |c-a|$
\hfill $\Box$

\subsubsection{Proof of Step (iii): fidi convergence}

We begin by stating and proving two technical results.

\begin{Lemma}\label{lemma:scale}
Suppose that Assumption~\ref{assump:mixing}(i) and (ii) and Assumption~\ref{assump:copula_convergence} are met. Further, let $\ck=\ck_n$ be a sequence of positive integers such that 
\begin{equation}\label{eqn:scale_assump}
\ck/m\to \Xi\in(0,1]. 
\end{equation}
Then, for all $\bm u \in [0,1]^d$ and as $n\to\infty$,
\begin{align*}
\text{(i)}\quad &\bF_{\ck}(\bF_{m}^{\leftarrow}(\bu))\to\bu^{\Xi},\\
\text{(ii)}\quad &F_{\ck}(\bF_{m}^{\leftarrow}(\bu))\to(C_{\infty}(\bu))^{\Xi},\\
\text{(iii)}\quad &(C_{\infty}(\bu))^{\Xi}=C_{\infty}(\bu^{\Xi}).
\end{align*}
\end{Lemma}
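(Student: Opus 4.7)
My plan is to prove (i), (ii), (iii) in that order, all based on a single mixing-based block decomposition. Fix $\bu \in (0,1)^d$ (the boundary is trivial) and set $\bx_n := \bF_m^{\leftarrow}(\bu)$, so that $F_{m,j}(x_{n,j}) = u_j$ for each $j$. The central estimate to establish is
\[
F_m(\bx_n) - F_\ck(\bx_n)^{m/\ck} \to 0, \qquad F_{m,j}(x_{n,j}) - F_{\ck,j}(x_{n,j})^{m/\ck} \to 0 \ \forall j.
\]
Once this is in hand, (ii) follows because Sklar and Assumption~\ref{assump:copula_convergence} give $F_m(\bx_n) = C_m(\bu) \to C_\infty(\bu)$, so $F_\ck(\bx_n)^{m/\ck} \to C_\infty(\bu)$ and hence $F_\ck(\bx_n) \to C_\infty(\bu)^\Xi$ by taking $\ck/m$-th powers. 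Similarly (i) is the univariate analogue: $u_j = F_{m,j}(x_{n,j}) \approx F_{\ck,j}(x_{n,j})^{m/\ck}$ gives $F_{\ck,j}(x_{n,j}) \to u_j^\Xi$. Finally, (iii) follows by writing $F_\ck(\bx_n) = C_\ck(\bF_\ck(\bx_n))$, observing $\bF_\ck(\bx_n) \to \bu^\Xi$ by (i), invoking the uniform convergence $C_\ck \to C_\infty$ on $[0,1]^d$ (pointwise by Assumption~\ref{assump:copula_convergence}, upgraded to uniform by the coordinate-wise $1$-Lipschitz property of copulas on the compact cube), and identifying the limit $C_\infty(\bu^\Xi)$ with $C_\infty(\bu)^\Xi$ obtained from (ii).

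For the mixing estimate I plan to pick a gap sequence $\ell_n\to\infty$ with $\ell_n/\ck\to 0$ and partition $\{1,\ldots,m\}$ into $r_n$ big blocks of length $\ck$ separated by gaps of length $\ell_n$, plus a leftover of length $s_n\le \ck+\ell_n$, where $r_n$ is chosen so that $r_n \to 1/\Xi$. Writing $A_i:=\{\bM_{\ck,t_i}\le\bx_n\}$ for the starting point $t_i$ of the $i$-th big block and $A_{r_n+1}$ for the analogous event on the leftover, the standard sandwich
\[
0\le \Prob\bigl(\bigcap_i A_i\bigr) - F_m(\bx_n) \le r_n \ell_n \sum_j \bigl(1-F_{1,j}(x_{n,j})\bigr),
\]
combined with iterated applications of $|\Prob(A\cap B)-\Prob(A)\Prob(B)|\le \alpha(\ell_n)$, yields
\[
F_m(\bx_n) = F_\ck(\bx_n)^{r_n} F_{s_n}(\bx_n) + O\bigl(r_n\alpha(\ell_n)\bigr) + O\!\Bigl(r_n\ell_n \textstyle\sum_j(1-F_{1,j}(x_{n,j}))\Bigr).
\]
Since $r_n$ is bounded (as $m/\ck\to 1/\Xi$) and $\alpha(\ell_n)\to 0$, the mixing error is $o(1)$. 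A further application of the same decomposition to $F_{s_n}(\bx_n)$, now with $s_n/\ck$ in the bounded range $[0,1+o(1)]$, delivers $F_{s_n}(\bx_n) = F_\ck(\bx_n)^{s_n/\ck} + o(1)$, whence $F_m(\bx_n) = F_\ck(\bx_n)^{r_n + s_n/\ck} + o(1) = F_\ck(\bx_n)^{m/\ck} + o(1)$.

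The main technical obstacle is controlling the gap contribution, which reduces to showing $m\bigl(1-F_{1,j}(x_{n,j})\bigr) = O(1)$. Since $F_{m,j}(x_{n,j}) = u_j\in(0,1)$ forces the marginal block-maximum law at level $x_{n,j}$ to stay nondegenerate, and under the $\alpha$-mixing rate of Assumption~\ref{assump:mixing}(ii), this boundedness is a classical consequence of extreme-value theory for weakly dependent stationary sequences (existence of positive extremal indices; cf.\ Leadbetter-Lindgren-Rootz\'en, 1983). With this bound in place, $r_n\ell_n\bigl(1-F_{1,j}(x_{n,j})\bigr) = O(\ell_n/\ck) = o(1)$, closing the argument.
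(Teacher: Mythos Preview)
Your high-level strategy is the same as the paper's: establish the factorization $F_m(\bx_n)=F_\ck(\bx_n)^{m/\ck}+o(1)$ (and its coordinate-wise version), then read off (i) and (ii), and obtain (iii) by computing $F_\ck(\bx_n)=C_\ck(\bF_\ck(\bx_n))\to C_\infty(\bu^\Xi)$ and comparing with (ii). The deduction of (i)--(iii) from the factorization matches the paper exactly. The difference is that the paper does not reprove the factorization: it invokes Lemma~2.1 of Leadbetter (1983) (multivariate extension: Hsing 1989, Lemma~4.1), which gives it for \emph{arbitrary} level sequences $v_m$ under the sole condition that some $\ell_n=o(m)$ has $\alpha(\ell_n)\to 0$; this is verified with $\ell_n=\ang{m^{2/3}}$ via Assumption~\ref{assump:mixing}(i),(ii).

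Your hands-on blocking argument has a genuine gap at the step ``$m\bigl(1-F_{1,j}(x_{n,j})\bigr)=O(1)$''. This is a statement about the \emph{marginal} tail, and nothing in the hypotheses controls it: Assumption~\ref{assump:copula_convergence} concerns only the copula, and polynomial $\alpha$-mixing does not by itself force marginal extremal indices to exist or to be positive. If some coordinate has extremal index zero, then $F_{m,j}(x_{n,j})=u_j\in(0,1)$ actually entails $m\bigl(1-F_{1,j}(x_{n,j})\bigr)\to\infty$, and your bound $r_n\ell_n\bigl(1-F_{1,j}(x_{n,j})\bigr)=O(\ell_n/\ck)$ collapses. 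The appeal to LLR for ``positive extremal indices'' is not a proof under the present assumptions.

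Two clean fixes. Either cite Leadbetter's lemma directly, as the paper does. Or, if you want to keep the explicit blocking, do not union-bound the gap over single observations; bound it by $1-F_{\ell_n,j}(x_{n,j})$ instead, and show this tends to $0$ by a separate blocking step: partition $\{1,\dots,m\}$ into $q_n\sim m/(2\ell_n)$ blocks of length $\ell_n$ that are $\ell_n$-separated, so that
\[
u_j = F_{m,j}(x_{n,j}) \le F_{\ell_n,j}(x_{n,j})^{q_n} + q_n\,\alpha(\ell_n).
\]
With $\ell_n=\ang{m^{2/3}}$, Assumption~\ref{assump:mixing}(ii) gives $q_n\alpha(\ell_n)\to 0$; since $u_j>0$ and $q_n\to\infty$, this forces $F_{\ell_n,j}(x_{n,j})\to 1$. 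This is essentially the mechanism behind Leadbetter's level-free factorization and replaces your unverified tail bound.
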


\begin{proof}[Proof of Lemma \ref{lemma:scale}]
Let $v_{m}$ be an arbitrary sequence in $[0,1]$, and $\bv_{m}$ be an arbitrary sequence in $[0,1]^{d}$. Lemma 2.1 of \cite{Lea83}, together with a straightforward extension to multivariate time series, shows that if there exists a sequence $\cl=\cl_{n}$ such that 
\begin{equation}\label{eqn:scale_assump_2}
\cl=o(m) \ \text{and} \ \alpha(\cl)\to 0,
\end{equation}
then we have
\begin{equation}\label{eqn:scale_lemma_0}
\big|\big(\Prob(M_{m/(m/\ck),0,j}\leq v_{m})\big)^{m/\ck}-\Prob(M_{m,0,j}\leq v_{m})\big|\to 0,
\end{equation}
and
\begin{equation}\label{eqn:scale_lemma}
\big|\big(\Prob(\bM_{m/(m/\ck),0}\leq \bv_{m})\big)^{m/\ck}-\Prob(\bM_{m,0}\leq\bv_{m})\big|\to 0.
\end{equation}
See also Lemma 4.1 of \cite{Hsi89}. Let $\cl=\cl_n=\ang{m^{2/3}}$. By \eqref{eqn:scale_assump} and Assumption \ref{assump:mixing}(i),(ii), \eqref{eqn:scale_assump_2} holds. 
Now plug in $v_{m}=F_{m,j}^{\leftarrow}(u_{j})$. 
By \eqref{eqn:scale_assump} and \eqref{eqn:scale_lemma_0},
\begin{align*}
\bF_{\ck}(\bF_{m}^{\leftarrow}(\bu))&=\Big\{\Prob(M_{\ck,0,1}\leq F^{\leftarrow}_{m,1}(u_{1}),\dots,\Prob(M_{\ck,0,d}\leq F^{\leftarrow}_{m,d}(u_{1}))\Big\}\\
&=\Big\{(\Prob(M_{m/(m/\ck),0,1}\leq F^{\leftarrow}_{m,1}(u_{1})))^{m/\ck},\dots,(\Prob(M_{m/(m/\ck),0,d}\leq F^{\leftarrow}_{m,d}(u_{1})))^{m/\ck}\Big\}^{\ck/m}\\
&=\Big\{\Prob(M_{m,0,1}\leq F^{\leftarrow}_{m,1}(u_{1})),\dots,\Prob(M_{m,0,d}\leq F^{\leftarrow}_{m,d}(u_{1}))\Big\}^{\ck/m}+o(1)\\
&=\Big\{\Prob(U_{m,0,1}\leq u_{1}),\dots,\Prob(U_{m,0,d}\leq u_{d})\Big\}^{\ck/m}+o(1)\\
&=\bu^{\Xi}+o(1).
\end{align*}
Then plug in $\bv_{m}=\bF_{m}^{\leftarrow}(\bu)$. 
By \eqref{eqn:scale_assump} and \eqref{eqn:scale_lemma},
\begin{align*}
\mathph{=}F_{\ck}(\bF_{m}^{\leftarrow}(\bu))
&=
\Prob(\bM_{\ck,0}\leq \bF_{m}^{\leftarrow}(\bu))=\Big\{\big(\Prob(\bM_{m/(m/\ck),0}\leq \bF_{m}^{\leftarrow}(\bu))\big)^{m/\ck}\Big \}^{\ck/m}\\
&=
\Big\{\Prob(\bM_{m,0}\leq \bF_{m}^{\leftarrow}(\bu))\Big\}^{\Xi} + o(1)
=
\Big\{\Prob(\bF_{m}(\bM_{m,0})\leq \bu)\Big\}^{\Xi} + o(1)\\
&=
\Big\{C_{\infty}(\bu)\Big\}^{\Xi} + o(1).
\end{align*}
Hence we have shown (i) and (ii) of Lemma \ref{lemma:scale}. Finally, by part (i) and Assumption \ref{assump:copula_convergence}, 
\begin{align} \label{eq:ffi}
F_{\ck}(\bF_{m}^{\leftarrow}(\bu)) 
&= 
\Prob(\bM_{\ck,0}\leq\bF_{m}^{\leftarrow}(\bu))
=
\Prob(\bF_{\ck}^{\leftarrow}(\bF_{\ck}(\bM_{\ck,0})) \leq\bF_{m}^{\leftarrow}(\bu))
\\
&= \nonumber
\Prob(\bU_{\ck,0}\leq \bF_{\ck}(\bF_{m}^{\leftarrow}(\bu)))=C_{\ck}(\bF_{\ck}(\bF_{m}^{\leftarrow}(\bu)))\\
&\to \nonumber
C_{\infty}(\bu^{\Xi}).
\end{align}
and, on the other hand, by part (ii),
\[
F_{\ck}(\bF_{m}^{\leftarrow}(\bu))\to(C_{\infty}(\bu))^{\Xi}.
\]
This implies (iii).
\end{proof}

\begin{Lemma}\label{lemma:cov}
Recall the definition of $\bbC_{n,m}^{\lozenge,b}$ in \eqref{eq:number_of_block}. If  Assumption~\ref{assump:copula_convergence} and Assumptions~\ref{assump:mixing}(i),(ii)  are met, then, for any $\bm u, \bm v \in [0,1]^d$ and any $a_{\wedge}\leq a\leq c\leq a_{\vee}$, we have
\[
\Cov(\bbC_{n,m}^{\lozenge,b}(\bu,a),\bbC_{n,m}^{\lozenge,b}(\bv,c))\to \gamma(\bv,\bu,c,a),
\]
where $\gamma(\bv,\bu,c,a)$ is defined in Theorem~\ref{theo:known}.
\end{Lemma}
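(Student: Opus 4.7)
The plan is to reduce the double sum defining the covariance to a single Riemann sum over a lag parameter $h$, use $\alpha$-mixing to discard the non-overlap contribution, and in the overlap regime factorise the joint probability into three probabilities that can each be identified via Lemma~\ref{lemma:scale}. By stationarity and $n/b\to 1$,
\[
\Cov\bigl(\bbC_{n,m}^{\lozenge,b}(\bu,a),\bbC_{n,m}^{\lozenge,b}(\bv,c)\bigr)
=\frac{1}{m}\sum_{h=-(b-1)}^{b-1}\Bigl(1-\tfrac{|h|}{b}\Bigr)g(h)+o(1),
\]
with $g(h):=\Cov(\ind(\bU_{\ang{ma},1}\le\bu),\ind(\bU_{\ang{mc},1+h}\le\bv))$. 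The $a$- and $c$-blocks are disjoint exactly when $h\ge\ang{ma}$ or $h\le-\ang{mc}$, and on that range the standard $\alpha$-mixing bound for indicator covariances gives $|g(h)|\le 4\alpha(h-\ang{ma})$ for $h\ge\ang{ma}$ and $|g(h)|\le 4\alpha(-h-\ang{mc})$ for $h\le-\ang{mc}$; Assumption~\ref{assump:mixing}(ii) makes these bounds summable, so the no-overlap contribution is $O(1/m)=o(1)$. On the overlap range $-\ang{mc}<h<\ang{ma}$ we have $|h|/b=o(1)$ uniformly, which allows replacing the weight $(1-|h|/b)$ by $1$ at an $o(1)$ cost. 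It therefore suffices to compute the limit of $(1/m)\sum_{h\in\text{overlap}}g(h)$.

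With the change of variables $\xi=-h/m$, the overlap range maps to a $1/m$-grid in $(-a,c)$, which partitions into the three sub-intervals $[-a,0]$, $[0,c-a]$, $[c-a,c]$ corresponding to the three cases of Remark~\ref{rem:fidi}. In each case the joint event $\{\bm M_{a\text{-block}}\le\bF_{\ang{ma}}^{\leftarrow}(\bu),\ \bm M_{c\text{-block}}\le\bF_{\ang{mc}}^{\leftarrow}(\bv)\}$ decomposes as the intersection of three events on three contiguous time segments whose lengths are affine in $\ang{ma},\ang{mc},h$: an ``only-$a$'' (or ``only-$c$'') segment, an overlap segment with joint threshold $\bF_{\ang{ma}}^{\leftarrow}(\bu)\wedge\bF_{\ang{mc}}^{\leftarrow}(\bv)$, and a final ``only-$c$'' (or ``only-$a$'') segment. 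Inserting between adjacent segments a buffer of length $\ell_m\to\infty$ with $\ell_m=o(m)$ and applying $\alpha$-mixing twice factorises the joint probability, uniformly in $h$, into a product of three probabilities, with a per-lag error $O(\alpha(\ell_m))+O(1-F_{\ell_m}(\bF_{\ang{ma}}^{\leftarrow}(\bu)))+O(1-F_{\ell_m}(\bF_{\ang{mc}}^{\leftarrow}(\bv)))$, all of which tend to $0$ by Assumption~\ref{assump:mixing}(ii) and Lemma~\ref{lemma:scale}(ii) applied with limit scale $\Xi=0$. Rewriting $\bF_{\ang{ma}}^{\leftarrow}(\bu)\wedge\bF_{\ang{mc}}^{\leftarrow}(\bv)=\bF_{\ang{mc}}^{\leftarrow}(\bu'\wedge\bv)$ with $u_j':=F_{\ang{mc},j}(F_{\ang{ma},j}^{\leftarrow}(u_j))\to u_j^{c/a}$ (Lemma~\ref{lemma:scale}(i)), and then applying Lemma~\ref{lemma:scale}(ii) and (iii) to each of the three factors, identifies the pointwise limit of the product as a three-fold product of powers of $C_\infty$ evaluated at $\bu^{1/a}$, $\bv^{1/c}$, or their coordinatewise minimum, with exponents equal to the rescaled segment lengths. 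For case~(1) this yields the integrand $C_\infty(\bu^{1/a})^{-\xi}\,C_\infty(\bu^{1/a}\wedge\bv^{1/c})^{\xi+a}\,C_\infty(\bv^{1/c})^{c-\xi-a}$, and analogously for cases~(2) and~(3), reproducing precisely the three integrands in the definition of $\gamma$. Since the $\xi$-grid has spacing $1/m$, each $(1/m)\sum_{h\in\text{case }(k)}$ is a Riemann sum for the $k$-th integral in $\gamma$. Finally, the $-C_{\ang{ma}}(\bu)C_{\ang{mc}}(\bv)$ part of $g(h)$, which is constant in $h$, contributes $-\tfrac{\ang{ma}+\ang{mc}-1}{m}\,C_{\ang{ma}}(\bu)C_{\ang{mc}}(\bv)\to-(c+a)C_\infty(\bu)C_\infty(\bv)$, matching the last term in $\gamma$.

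The main obstacle is controlling the mixing-based factorisation uniformly over the $O(m)$ lags in the overlap regime: the per-lag factorisation error must remain $o(1)$ after summing and dividing by $m$, which requires both $\alpha(\ell_m)\to 0$ and $1-F_{\ell_m}(\bF_{\ang{ma}}^{\leftarrow}(\bu))\to 0$ (and the analogous statement for $\bv,c$); these can be arranged simultaneously under Assumption~\ref{assump:mixing}(ii) by taking, for instance, $\ell_m=\ang{m^{1/2}}$. A secondary technicality is that Lemma~\ref{lemma:scale} is stated pointwise in the limit scale $\Xi$, while the Riemann-sum argument requires convergence to be uniform as $\Xi$ ranges through the relevant Riemann partition; this is available because the Leadbetter-type approximation used in the proof of that lemma is uniform on compact subsets of $(0,1]$ and $C_\infty$ is continuous, while near the endpoints of the three sub-intervals (where some $\Xi$'s tend to $0$) the corresponding factors tend to $1$ and can be handled by a separate diagonal-$\delta$ argument. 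Assembling the three overlap cases with the $-(c+a)C_\infty(\bu)C_\infty(\bv)$ contribution yields the claimed limit $\gamma(\bv,\bu,c,a)$.
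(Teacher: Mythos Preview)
Your proposal is correct and follows essentially the same route as the paper: reduce to a lag sum, discard the non-overlap part via summable $\alpha$-coefficients, factorise the overlap contribution with a clipping/buffer argument, and identify each factor through Lemma~\ref{lemma:scale}. Two small points where the paper is cleaner: it avoids your uniformity concern by writing the overlap sum \emph{exactly} as $m$ times an integral in $\xi$ (since $\ang{m\xi}$ is constant on each $[h/m,(h+1)/m)$) and then applying dominated convergence pointwise; and Lemma~\ref{lemma:scale} is stated only for $\Xi\in(0,1]$, so the buffer error is controlled by comparing the clipped and unclipped events directly (both converge to the same $(C_\infty(\bu))^{|\xi|/a}$) rather than by invoking the lemma at $\Xi=0$.
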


\begin{proof}[Proof of Lemma \ref{lemma:cov}]
For $i\in\N$ let 
\[
D_i(\bm u, a) = \ind(\bU_{\ang{ma},i}\leq\bv)-\Prob(\bU_{\ang{ma},1}\leq \bv),
\]
such that
\begin{equation}\label{eqn:integral_decomposition}
    \Cov\big(\bbC_{n,m}^{\lozenge,b}(\bu,a),\bbC_{n,m}^{\lozenge,b}(\bv,c)\big))=A_{1}+A_{2}+A_{3},
\end{equation}
where  $A_j=A_j(\bm u,\bm v, a, c)$ is defined as
\begin{align*}
&A_{1}= {\frac{n}{mb^2}} \sum_{h=-\infty}^{\infty}\ind(-b<h<0)(b-|h|)\Cov(D_{0}(\bv,c),D_{h}(\bu,a)),\\
&A_{2}= {\frac{n}{mb^2}} \sum_{h=-\infty}^{\infty}\ind(0\leq h<\ang{mc}-\ang{ma})(b-|h|)\Cov(D_{0}(\bv,c),D_{h}(\bu,a)),\\
&A_{3}= {\frac{n}{mb^2}} \sum_{h=-\infty}^{\infty}\ind(\ang{mc}-\ang{ma}\leq h<b)(b-|h|)\Cov(D_{0}(\bv,c),D_{h}(\bu,a)).
\end{align*}
Now, for sufficiently large $n$,
\begin{align*}
A_{1}
&=
{\frac{n}{mb} } \sum_{h=-\infty}^{\infty}\ind(-\ang{ma}<h<0)(1-|h|b^{-1})\Cov(D_{0}(\bv,c),D_{h}(\bu,a))\\
&\mathph{=}  \qquad
+ {\frac{n}{mb} } \sum_{h=-\infty}^{\infty}\ind(-b<h\leq -\ang{ma})(1-|h|b^{-1})\Cov(D_{0}(\bv,c),D_{h}(\bu,a))\\
&=
 {\frac{n}{b} } \int_{-a}^{0}(1-|\ang{m\xi}|b^{-1})\Cov(D_{0}(\bv,c),D_{\ang{m\xi}}(\bu, a)) \diff \xi\\
&\mathph{=}\qquad
- {\frac{n}{mb} } \Cov(D_{0}(\bv,c),D_{0}(\bu, a)) \\
&\mathph{=}\qquad
-{\frac{n}{b} } \int_{-a}^{-\ang{ma}/m}(1-|\ang{m\xi}|b^{-1})\Cov(D_{0}(\bv,c),D_{\ang{m\xi}}(\bu, a)) \diff \xi \\
&\mathph{=}\qquad
+{\frac{n}{mb} } \sum_{h=-\infty}^{\infty}\ind(-b<h\leq-\ang{ma})(1-|h|b^{-1})\Cov(D_{0}(\bv,c),D_{h}(\bu,a)).
\end{align*}
All but the first term on the right hand side of the previous equation vanish. Indeed
\begin{align*}
& |m^{-1}\Cov(D_{0}(\bv,c),D_{0}(\bu, a))|\leq m^{-1} \to 0, \\
& \Big|\int_{-a}^{-\ang{ma}/m}(1-|\ang{m\xi}|b^{-1})\Cov(D_{0}(\bv,c),D_{\ang{m\xi}}(\bu, a)) \diff \xi\Big|\leq a-\ang{ma}/m\to0,
\end{align*}
and, by Assumption \ref{assump:mixing}(ii), 
,
\begin{multline*}
\mathph{\leq}\Big|m^{-1}\sum_{h=-\infty}^{\infty}\ind(-b< h\leq-\ang{ma})(1-|h|b^{-1})\Cov(D_{0}(\bv,c),D_{h}(\bu,a))\Big|\\
\leq m^{-1}\Big(1+\sum_{h>\ang{ma}}\alpha(h-\ang{ma})\Big)\leq m^{-1}\Big(1+\sum_{h=1}^{\infty}\alpha(h)\Big) \to 0.
\end{multline*}
As a result, 
\begin{equation}\label{eqn:integral_1}
A_{1}={(1+o(1))} \int_{-a}^{0}(1-|\ang{m\xi}|b^{-1})\Cov(D_{0}(\bv,c),D_{\ang{m\xi}}(\bu, a)) \diff \xi.
\end{equation}
Similarly, 
\begin{equation}\label{eqn:integral_2}
A_{2}={(1+o(1))} \int_{0}^{c-a}(1-|\ang{m\xi}|b^{-1})\Cov(D_{0}(\bv,c),D_{\ang{m\xi}}(\bu, a)) \diff \xi,
\end{equation}
and
\begin{equation}\label{eqn:integral_3}
A_{3}={(1+o(1))} \int_{c-a}^{c}(1-|\ang{m\xi}|b^{-1})\Cov(D_{0}(\bv,c),D_{\ang{m\xi}}(\bu, a)) \diff \xi.
\end{equation}
Suppose $\xi \in (-a,0)$. Notice 
\begin{equation}\label{eqn:Cov_D}
\Cov(D_{0}(\bv,c),D_{\ang{m\xi}}(\bu, a))=\Prob(\bU_{\ang{mc},0}\leq \bv, \bU_{\ang{ma},\ang{m\xi}}\leq \bu)-\Prob(\bU_{\ang{mc},0}\leq \bv)\Prob(\bU_{\ang{ma},\ang{m\xi}}\leq \bu).
\end{equation}
For $k\in\N$ and $i\in\Z$, let $\bM_{i:(i+k-1)}=\bM_{k,i}$. Since $\bU_{\ang{ma},i}=\bF_{\ang{ma}}(\bM_{\ang{ma},i}),$ we can write
\begin{equation}\label{eqn:joint_probability_1}
\begin{aligned}
&\mathph{=}\Prob(\bU_{\ang{mc},0}\leq \bv, \bU_{\ang{ma},\ang{m\xi}}\leq \bu)=\Prob(\bM_{0:(\ang{mc}-1)}\leq \bF_{\ang{mc}}^{\leftarrow}(\bv),\bM_{\ang{m\xi}:(\ang{m\xi}+\ang{ma}-1)}\leq \bF_{\ang{ma}}^{\leftarrow}(\bu))\\
&=\Prob(\cV_{1} \cap \cV_{2} \cap \cV_{3}),
\end{aligned} 
\end{equation}
where
\begin{align*}
\cV_{1}&=\{\bM_{\ang{m\xi}:(-1)}\leq\bF_{\ang{ma}}^{\leftarrow}(\bu)\}\\
\cV_{2}&=\{\bM_{0:(\ang{m\xi}+\ang{ma}-1)}\leq\bF_{\ang{mc}}^{\leftarrow}(\bv)\wedge\bF_{\ang{ma}}^{\leftarrow}(\bu)\}\\
\cV_{3}&=\{\bM_{(\ang{m\xi}+\ang{ma}):(\ang{mc}-1)}\leq\bF_{\ang{mc}}^{\leftarrow}(\bv)\}.  
\end{align*}
Now we seek to approximate $\Prob(\cV_{1} \cap \cV_{2} \cap \cV_{3})$ by $\Prob(\cV_{1})\Prob(\cV_{2})\Prob(\cV_{3})$ with a clipping technique; see also the proof of Lemma 5.1 of \cite{BucSeg18b}. Let $\cl=\cl_n=\ang{m^{2/3}}$ and define
\begin{align*}
\cW_{1}&=\{\bM_{\ang{m\xi}:(-\cl-1)}\leq\bF_{\ang{ma}}^{\leftarrow}(\bu)\}\\
\cW_{2}&=\cV_{2}\\
\cW_{3}&=\{\bM_{(\ang{m\xi}+\ang{ma}+\cl):(\ang{mc}-1)}\leq\bF_{\ang{mc}}^{\leftarrow}(\bv)\}
\end{align*}
be the clipped events; {note that $\cV_j \subset \cW_j$}. First we show that clipping `does not hurt'. Applying (ii) of Lemma \ref{lemma:scale} twice gives
\begin{equation}\label{eqn:joint_probability_2}
\begin{aligned}
\Prob(\cW_{1})-\Prob(\cV_{1})
&=\Prob(\bM_{|\ang{m\xi}|-\cl ,0}\leq\bF_{\ang{ma}}^{\leftarrow}(\bu))-\Prob(\bM_{|\ang{m\xi}| ,0}\leq\bF_{\ang{ma}}^{\leftarrow}(\bu))\\
&=F_{|\ang{m\xi}|-\cl}(\bF_{\ang{ma}}^{\leftarrow}(\bu))-F_{|\ang{m\xi}|}(\bF_{\ang{ma}}^{\leftarrow}(\bu))\\
&=(C_{\infty}(\bu))^{|\xi|/a}-(C_{\infty}(\bu))^{|\xi|/a}+o(1)=o(1).
\end{aligned}
\end{equation}
Similarly,
\begin{equation}\label{eqn:joint_probability_3}
\Prob(\cW_{3})-\Prob(\cV_{3})=o(1).
\end{equation}
Now we apply the clipping technique. First, by \eqref{eqn:joint_probability_2} and \eqref{eqn:joint_probability_3},
\begin{equation}\label{eqn:joint_probability_5}
\begin{aligned}
|\Prob(\cV_{1}\cap\cV_{2}\cap\cV_{3})-\Prob(\cW_{1}\cap\cW_{2}\cap\cW_{3})|
&=\Prob(\cW_{1}\cap\cW_{2}\cap\cW_{3}\cap(\cV_{1}\cap\cV_{2}\cap\cV_{3})^{c})\\&\leq \Prob (\cW_{1}\cap \cV_{1}^{c})+\Prob (\cW_{2}\cap \cV_{2}^{c})+\Prob (\cW_{3}\cap \cV_{3}^{c})\\
&=\Prob(\cW_{1})-\Prob(\cV_{1})+\Prob(\cW_{3})-\Prob(\cV_{3})=o(1).
\end{aligned}
\end{equation}
Second, by \eqref{eqn:joint_probability_2}, \eqref{eqn:joint_probability_3}, and since $\alpha(\cl)=o(1)$ by {Assumption \ref{assump:mixing}(ii}),
\begin{equation}\label{eqn:joint_probability_4}
\Prob(\cW_{1} \cap \cW_{2} \cap \cW_{3})=\Prob(\cW_{1})\Prob(\cW_{2})\Prob(\cW_{3})+o(1)=\Prob(\cV_{1})\Prob(\cV_{2})\Prob(\cV_{3})+o(1)
\end{equation}
Next, similarly as in \eqref{eq:ffi}, by (i) and (iii) of Lemma \ref{lemma:scale}, Assumption \ref{assump:copula_convergence} and continuity of $C_\infty$,
\begin{equation}\label{eqn:marginal_scale_2}
\begin{aligned}
\Prob(\cV_{2})&=\Prob\bigg(\bM_{\ang{m\xi}+\ang{ma},0}\leq\bF_{\ang{mc}}^{\leftarrow}(\bv)\wedge\bF_{\ang{ma}}^{\leftarrow}(\bu)\bigg)\\
&=C_{\ang{m\xi}+\ang{ma}}\bigg(\bF_{\ang{m\xi}+\ang{ma}}\big(\bF_{\ang{mc}}^{\leftarrow}(\bv)\wedge\bF_{\ang{ma}}^{\leftarrow}(\bu)\big)\bigg)\\
&=C_{\ang{m\xi}+\ang{ma}}\bigg(\bF_{\ang{m\xi}+\ang{ma}}\big(\bF_{\ang{mc}}^{\leftarrow}(\bv)\big)\wedge \bF_{\ang{m\xi}+\ang{ma}}\big(\bF_{\ang{ma}}^{\leftarrow}(\bu)\big)\bigg)\\
&\to C_{\infty}(\bv^{(\xi+a)/c}\wedge\bu^{(\xi+a)/a})=\bigg(C_{\infty}(\bv^{1/c}\wedge\bu^{1/a})\bigg)^{\xi+a}.
\end{aligned}
\end{equation}
Similarly,
\begin{equation}\label{eqn:marginal_scale_3}
\Prob(\cV_{1})\to (C_{\infty}(\bu^{1/a}))^{-\xi},\quad
\Prob(\cV_{3})\to (C_{\infty}(\bv^{1/c}))^{c-\xi-a}.
\end{equation}
By \eqref{eqn:joint_probability_1}, \eqref{eqn:joint_probability_5},  \eqref{eqn:joint_probability_4}, \eqref{eqn:marginal_scale_2}, and \eqref{eqn:marginal_scale_3}, for $\xi \in (-a,0)$,
\begin{equation}\label{eqn:cov_u_1}
\Prob(\bU_{\ang{mc},0}\leq \bv, \bU_{\ang{ma},\ang{m\xi}}\leq \bu)\to (C_{\infty}(\bu^{1/a}))^{-\xi}(C_{\infty}(\bv^{1/c}\wedge\bu^{1/a}))^{\xi+a}(C_{\infty}(\bv^{1/c}))^{c-\xi-a}.
\end{equation}
Similarly, for $\xi \in (0,c-a)$,
\begin{equation}\label{eqn:cov_u_2}
\begin{aligned}
\Prob(\bU_{\ang{mc},0}\leq \bv, \bU_{\ang{ma},\ang{m\xi}}\leq \bu)&=
\Prob(\bM_{0:(\ang{mc}-1)}\leq \bF_{\ang{mc}}^{\leftarrow}(\bv),\bM_{\ang{m\xi}:(\ang{m\xi}+\ang{ma}-1)}\leq \bF_{\ang{ma}}^{\leftarrow}(\bu)) \\
&=
\Prob(\bM_{0:(\ang{m\xi}-1)}\leq\bF_{\ang{mc}}^{\leftarrow}(\bv))\\
&\mathph{=}\quad \times\Prob(\bM_{\ang{m\xi}:(\ang{m\xi}+\ang{ma}-1)}\leq\bF_{\ang{mc}}^{\leftarrow}(\bv)\wedge\bF_{\ang{ma}}^{\leftarrow}(\bu))\\
&\mathph{=}\quad \times
\Prob(\bM_{(\ang{m\xi}+\ang{ma}):(\ang{mc}-1)}\leq\bF_{\ang{mc}}^{\leftarrow}(\bv))+o(1)\\
&=(C_{\infty}(\bv^{1/c}))^{\xi}(C_{\infty}(\bv^{1/c}\wedge\bu^{1/a}))^{a}(C_{\infty}(\bv^{1/c}))^{c-\xi-a}+o(1)\\
&=(C_{\infty}(\bv^{1/c}))^{c-a}(C_{\infty}(\bv^{1/c}\wedge\bu^{1/a}))^{a}+o(1),
\end{aligned}
\end{equation}
and for $\xi \in (c-a,c)$,
\begin{equation}\label{eqn:cov_u_3}
\begin{aligned}
\Prob(\bU_{\ang{mc},0}\leq \bv, \bU_{\ang{ma},\ang{m\xi}}\leq \bu)
&= \Prob(\bM_{0:(\ang{mc}-1)}\leq \bF_{\ang{mc}}^{\leftarrow}(\bv),\bM_{\ang{m\xi}:(\ang{m\xi}+\ang{ma}-1)}\leq \bF_{\ang{ma}}^{\leftarrow}(\bu)) \\
&=
\Prob(\bM_{0:(\ang{m\xi}-1)}\leq\bF_{\ang{mc}}^{\leftarrow}(\bv))\\
&\mathph{=}\quad \times\Prob(\bM_{\ang{m\xi}:(\ang{mc}-1)}\leq\bF_{\ang{mc}}^{\leftarrow}(\bv)\wedge\bF_{\ang{ma}}^{\leftarrow}(\bu))\\
&\mathph{=}\quad \times
\Prob(\bM_{\ang{mc}:(\ang{m\xi}+\ang{ma}-1)}\leq\bF_{\ang{ma}}^{\leftarrow}(\bu))+o(1)\\
&=(C_{\infty}(\bv^{1/c}))^{\xi}(C_{\infty}(\bv^{1/c}\wedge\bu^{1/a}))^{c-\xi}(C_{\infty}(\bu^{1/a}))^{\xi+a-c}+o(1).
\end{aligned}
\end{equation}
By Assumption \ref{assump:copula_convergence} and the stationarity of $(\bX_{t})_t$, 
\begin{equation}\label{eqn:cov_u_4}
\Prob(\bU_{\ang{mc},0}\leq\bv)\to C_{\infty}(\bv), \ \ \Prob(\bU_{\ang{ma},\ang{m\xi}}\leq\bu)\to C_{\infty}(\bu).
\end{equation}
Recall \eqref{eqn:integral_decomposition}. Now apply the Dominated Convergence Theorem to the right hand side of \eqref{eqn:integral_1}, \eqref{eqn:integral_2}, and \eqref{eqn:integral_3}. By \eqref{eqn:Cov_D}, \eqref{eqn:cov_u_1}, \eqref{eqn:cov_u_2},
\eqref{eqn:cov_u_3}, 
\eqref{eqn:cov_u_4}, and (iii) of Lemma \ref{lemma:scale}, we get 
\begin{align*}
    \Cov\big(\bbC_{n,m}^{\lozenge,b}(\bu,a),\bbC_{n,m}^{\lozenge,b}(\bv,c)\big))
&\to \gamma(\bv,\bu,c,a)
\end{align*}
with $\gamma(\bv,\bu,c,a)$ is defined in Theorem~\ref{theo:known}.
\end{proof}

\begin{Lemma}\label{lemma:fidi_convergence}
Under Assumption \ref{assump:copula_convergence} and \ref{assump:mixing} , the finite dimensional distributions of $\bbC_{n,m}^{\lozenge,b}$ converge weakly to the finite dimensional distributions of $\bbC^{\lozenge}$.
\end{Lemma}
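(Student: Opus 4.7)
By the Cramér--Wold device, it suffices to prove a one--dimensional CLT for an arbitrary finite linear combination. Fix $(\bu_1, a_1), \ldots, (\bu_K, a_K) \in [0,1]^d \times A$ and $\lambda_1, \ldots, \lambda_K \in \R$, and write
\[
T_n := \sum_{k=1}^K \lambda_k \bbC_{n,m}^{\lozenge,b}(\bu_k, a_k) = \sqrt{n/m}\,\frac{1}{b}\sum_{i=1}^b Y_i,
\]
where $Y_i := \sum_k \lambda_k\{\ind(\bU_{\ang{m a_k}, i} \le \bu_k) - \Prob(\bU_{\ang{m a_k}, 1} \le \bu_k)\}$ is centred and bounded by $C_\lambda := \sum_k |\lambda_k|$. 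Bilinearity of covariance, Lemma~\ref{lemma:cov} and the symmetry $\Cov(X,Y)=\Cov(Y,X)$ (used to handle unordered pairs $(a_k,a_l)$) yield $\Var(T_n) \to \sigma_\lambda^2$, the quadratic form in $\gamma$ associated with the $\lambda_k$'s. It therefore suffices to show $T_n \dto \cN(0, \sigma_\lambda^2)$; if $\sigma_\lambda^2 = 0$ the claim follows from Chebyshev, so assume $\sigma_\lambda^2 > 0$.

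Next, I would perform a big-blocks/small-blocks decomposition with
\[
p_n = \ceil{n^{1/2 - \zeta/2}\, m^{1/2 + \zeta/2}}, \qquad q_n = \ceil{n^{1/4 - \zeta/4}\, m^{3/4 + \zeta/4}}, \qquad r_n = \floor{b/(p_n + q_n)}.
\]
Using $n/m \to \infty$ and $\zeta \in (0, 1/2)$ one readily checks $p_n/m \to \infty$, $p_n = o(\sqrt{nm})$, $q_n/m \to \infty$, and $q_n/p_n \to 0$. Partition $\{1, \ldots, b\}$ into alternating big blocks of length $p_n$ and small blocks of length $q_n$ (plus a negligible remainder), let $B_j,s_j$ denote the sums of $Y_i$ over the $j$th big, resp.\ small, block, and split $T_n = T_n^{\mathrm{big}} + T_n^{\mathrm{sml}}$. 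A covariance computation completely analogous to the proof of Lemma~\ref{lemma:cov} -- relying on stationarity, the boundedness $|Y_i|\le C_\lambda$, the absolute summability of $\alpha(\cdot)$ from Assumption~\ref{assump:mixing}(ii), and $q_n/m\to\infty$ -- gives $\Var(\sum_j s_j) = O(r_n q_n m)$, so that $\Var(T_n^{\mathrm{sml}}) = O(q_n/p_n) \to 0$ and $T_n^{\mathrm{sml}} \pto 0$.

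For $T_n^{\mathrm{big}}$, I would first decouple the big blocks by a standard characteristic-function telescoping based on the Volkonskii--Rozanov covariance inequality: since consecutive big blocks are measurable with respect to $\sigma$-fields of $(\bX_t)_t$ separated by at least $q_n - \ang{m a_\vee}$ time units,
\[
\Bigl|\Exp e^{i t T_n^{\mathrm{big}}} - \prod_{j=1}^{r_n} \Exp e^{i t \sqrt{n/m}\, B_j / b}\Bigr| \le 4(r_n - 1)\,\alpha\bigl(q_n - \ang{m a_\vee}\bigr) \le 4\,\alpha(m)\,(n/m)^{1/2 + \zeta/2},
\]
which tends to $0$ by Assumption~\ref{assump:mixing}(iv) since $\alpha(m)(n/m)^{1/2 + \zeta/2} = \alpha(m)(n/m)^{1/2 + \zeta}\,(m/n)^{\zeta/2} \to 0$. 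It therefore suffices to prove the CLT for the independent sum of copies $\tilde B_j \stackrel{d}{=} B_j$. Repeating the covariance computation of Lemma~\ref{lemma:cov} for a single window of length $p_n$ (which is valid because $p_n/m \to \infty$) yields $\Var(B_j) = p_n m\, \sigma_\lambda^2 (1 + o(1))$, so $\sum_j \Var(\tilde B_j) \sim r_n p_n m\, \sigma_\lambda^2 \sim n m\, \sigma_\lambda^2$, consistent with $\Var(T_n) \to \sigma_\lambda^2$. The bound $|\tilde B_j| \le C_\lambda p_n$ then gives the Lyapunov ratio with $\delta = 1$,
\[
\frac{\sum_j \Exp|\tilde B_j|^3}{\bigl(\sum_j \Var(\tilde B_j)\bigr)^{3/2}} \;\le\; \frac{r_n \cdot C_\lambda p_n \cdot p_n m}{(n m\, \sigma_\lambda^2)^{3/2}} \;\asymp\; \frac{p_n}{\sqrt{n m}\, \sigma_\lambda^3} \;\to\; 0,
\]
since $p_n = o(\sqrt{nm})$. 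Hence Lyapunov's CLT delivers $T_n^{\mathrm{big}} \dto \cN(0, \sigma_\lambda^2)$, and combining with $T_n^{\mathrm{sml}} \pto 0$ finishes the proof.

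The main obstacle is the simultaneous balancing of conditions on $p_n$ and $q_n$: one needs $p_n/m \to \infty$ (so that $\Var(B_j) \sim p_n m\, \sigma_\lambda^2$), $p_n = o(\sqrt{nm})$ (for Lyapunov), $q_n/m \to \infty$ (so that consecutive big blocks are well-separated relative to the longest block-maximum window $\ang{m a_\vee}$ for the mixing bound to bite), $q_n/p_n \to 0$ (so that small blocks are negligible), and $r_n\,\alpha(q_n) \to 0$ (for decoupling). The strength $\zeta \in (0, 1/2)$ in Assumption~\ref{assump:mixing}(iv) -- slightly more than the bare $\alpha(m)\sqrt{n/m} \to 0$ -- is what provides exactly the slack needed to satisfy all these constraints at once.
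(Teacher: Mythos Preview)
Your proof is correct and follows the same overall strategy as the paper: Cram\'er--Wold reduction, a big-blocks/small-blocks decomposition, negligibility of the small-block part via a variance bound, decoupling of big blocks through a characteristic-function (Volkonskii--Rozanov) argument, and a Lyapunov CLT for the decoupled big-block sum.

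The only noteworthy differences are cosmetic. First, the paper does not fix explicit formulas for the block lengths but instead chooses an integer sequence $\lambda_n\to\infty$ with $\lambda_n=o((b/m)^{1/2-\zeta})$ and $\alpha(m)(b/m)=o(\lambda_n)$, then takes big blocks of length $(\lambda_n-a_\vee-1)m$ and small blocks of \emph{fixed} length $(a_\vee+1)m$; your $p_n,q_n$ both grow in $n$. The paper's choice makes the small-block variance bound especially short (each small block is trivially bounded by $(a_\vee+1)\sqrt{mn}/b$), whereas you need to invoke the covariance structure once more---but your route works just as well. Second, the paper applies Lyapunov with exponent $\delta=(1-2\zeta)/(2\zeta)$, while you use $\delta=1$; both succeed because in either scheme the big blocks are short relative to $\sqrt{nm}$. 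Your closing remark about the role of $\zeta$ in Assumption~\ref{assump:mixing}(iv) is exactly the point: that extra slack is what lets one thread all the constraints on $p_n,q_n$ (or on $\lambda_n$) simultaneously.
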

\begin{proof}[Proof of Lemma \ref{lemma:fidi_convergence}]
We apply a big-blocks-small-blocks technique. Assume without loss of generality that $a_{\vee}$ is an integer (otherwise enlarge $a_{\vee}$, this does not change any of the arguments). For $\lambda>a_{\vee}+1$ chosen below, the block size of the big blocks {will be proportional to $(\lambda-a_{\vee}-1)m$, while the small blocks will have size proportional to $(a_{\vee}+1)m$}. By Assumption~\ref{assump:mixing}(i) and (iv), we have $b/m\to \infty$ and $\alpha(m)(b/m)=o\big((b/m)^{1/2-\zeta}\big)$ for some $\zeta\in (0,1/2)$. Hence, there exists $\lambda=\lambda_n$, which can be chosen to be integer-valued, such that
\begin{equation}\label{eqn:lambda_1}
\lambda=o\big((b/m)^{1/2-\zeta}\big)
\end{equation}
\begin{equation}\label{eqn:lambda_0}
\lambda\to \infty,
\end{equation}
\begin{equation}\label{eqn:lambda_2}
\alpha(m)(b/m)=o(\lambda).
\end{equation} 
Let $K=b/(\lambda m)$ and for simplicity assume that $K$ is an integer. For $k=1,\dots,K,$ consider big blocks
$$I_{k}=\{(k-1)\lambda m+1,\dots,(k\lambda-a_{\vee}-1)m\},$$
and small blocks
\[
J_{k}
=\{(k\lambda-a_{\vee}-1)m+1,\dots,k\lambda m\}.
\]
For $a \in [a_{\wedge},a_{\vee}]$, let 
\[
Y_{n,m}(\bu,a)={\sqrt{\frac{n}{mb^2}}} \sumb D_i(\bm u, a) (\bu)\ind (i\in \cup_{k=1}^{K}J_{k}).
\]
We may then write
\[
\bbC_{n,m}^{\lozenge,b}(\bu,a)
=
{\sqrt{\frac{n}{mb^2}}} \sumb  D_i(\bm u, a) 
=
{\sqrt{\frac{n}{mb^2}}} \sumb  D_i(\bm u, a) \ind (i\in \cup_{k=1}^{K}I_{k})+Y_{n,m}(\bu,a).
\]
We will now show that $Y_{n,m}$ is negligible. Indeed, let
\[
Y_{n,m,k}(\bu,a)
=
{\sqrt{\frac{n}{mb^2}}} \sumb D_i(\bm u, a)  \ind(i\in J_{k}),
\]
then, {by stationarity},
\begin{align*}
\Var(Y_{n,m}) 
&= 
\Var\Big(\sum_{k=1}^{K}Y_{n,m,k}\Big)=\sum_{|h|<K} (K-|h|)\Cov(Y_{n,m,1},Y_{n,m,|h|+1})\\
&\leq 
3K\Var(Y_{n,m,1})+\sum_{2\leq|h|<K}(K-|h|)\Cov(Y_{n,m,1},Y_{n,m,|h|+1}).
\end{align*}
Notice that {$|Y_{n,m,k}(\bu,a)|\leq (a_{\vee}+1)(mn/b^2)^{1/2}$}. Hence, since $\lambda=\lambda_n \to \infty$ by \eqref{eqn:lambda_0},
\[
{K \Var(Y_{n,m,1})\leq (a_{\vee}+1)^{2}Kmnb^{-2}=(a_{\vee}+1)^{2} \lambda^{-1}(1+o(1)) = o(1). }
\]
Moreover, for $|h|\geq 2$, by Lemma 3.9 in \cite{Deh02}, 
\begin{align*}
\Big|\Cov(Y_{n,m,1},Y_{n,m,|h|+1})\Big|
&\leq   
{4\Big((a_{\vee}+1)(mnb^{-2})^{1/2}\Big)^{2}} \alpha\Big(\sigma(Y_{n,m,1}),\sigma(Y_{n,m,|h|+1})\Big),\\
&{\leq 4(a_{\vee}+1)^{2}mnb^{-2}} \alpha((|h|-1)\lambda m).
\end{align*}
Hence, since $\lambda=\lambda_n \to \infty$ by \eqref{eqn:lambda_0}  and  by Assumption \ref{assump:mixing}~(ii),
\begin{align*}
\mathph{=}\sum_{2\leq|h|<K} (K-|h|)\Cov(Y_{n,m,1},Y_{n,m,|h|+1})
&\leq 4(a_{\vee}+1)^{2}{Kmnb^{-2}}\sum_{2\leq|h|<K} \alpha((|h|-1)\lambda m)\\
&=4(a_{\vee}+1)^{2} \lambda^{-1}(1+o(1))\sum_{2\leq|h|<K} \alpha((|h|-1)\lambda m)=o(1).
\end{align*}
Therefore,  
\begin{equation}\label{eqn:small_block}
\lim_{n\to\infty} \Var(Y_{n,m})= 0,
\end{equation}
and since $\Exp[Y_{n,m}]=0$, we obtain that $Y_{n,m}=\op{1}$ as asserted.

Next, we show finite-dimensional convergence of $\bbC_{n,m}^{\lozenge,b}$, i.e., for all $\bu_{1},\dots,\bu_{q}$ in $[0,1]^{d}$ and all $a_{1},\dots,a_{q}$ in $[a_{\wedge},a_{\vee}]$, 
\[
\{\bbC_{n,m}^{\lozenge,b}(\bu_{1},a_{1}),\dots,\bbC_{n,m}^{\lozenge,b}(\bu_{q},a_{q})\}
\dto 
\{\bbC^{\lozenge}(\bu_{1},a_{1}),\dots,\bbC^{\lozenge}(\bu_{q},a_{q})\}.
\]
By the Cram\'er-Wold device, it suffices to show that for all $(\theta_{1},\dots,\theta_{q}) \in \bbR^q$,
\[
 \sum_{j=1}^{q}\theta_{j}\bbC_{n,m}^{\lozenge,b}(\bu_{j},a_{j})\dto \sum_{j=1}^{q}\theta_{j}\bbC^{\lozenge,b}(\bu_{j},a_{j}).
\]
Recall
\begin{equation}\label{eqn:finite_dimension_1}
\sum_{j=1}^{q}\theta_{j}\bbC_{n,m}^{\lozenge,b}(\bu_{j},a_{j})
=
\sum_{j=1}^{q}\theta_{j} {\sqrt{\frac{n}{mb^2}}} \sumb D_{i}(\bu_{j}, a_j). 
\end{equation}
Let 
\begin{align*}
Z_{n}
&=
\sum_{j=1}^{q}\theta_{j}{\sqrt{\frac{n}{mb^2}}} \sumb D_{i}(\bu_{j}, a_j)\ind (i\in \cup_{k=1}^{K}I_{k}), \qquad
Z
=
\sum_{j=1}^{q}\theta_{j}\bbC^{\lozenge}(\bu_{j},a_{j}).
\end{align*}
By \eqref{eqn:small_block} and \eqref{eqn:finite_dimension_1}, to show the finite dimensional convergence, it suffices to show  that
\begin{equation}\label{eqn:finite_dimension_2}
Z_{n}\dto Z.
\end{equation}
Let 
\[
Z_{n,k}
=\sum_{j=1}^{q}\theta_{j}{\sqrt{\frac{n}{mb^2}}}\sumb D_{i}(\bu_{j}, a_j)\ind (i\in I_{k})
\]
and note that 
$
Z_{n}=\sum_{k=1}^{K}Z_{n,k}.
$
In addition, for $k\neq k'$, $Z_{n,k}$ and $Z_{n,k'}$ are based on observations that are at least $m$ observations apart. As a consequence, $Z_{n,k}$ and $Z_{n,k'}$ are asymptotically  independent. Next, let $\psi_{n}(\cdot)$ and $\psi_{n,k}(\cdot)$ denote the characteristic functions of $Z_{n}$ and $Z_{n,k}$, respectively. By the reasoning in p.\ 515 of \cite{BucSeg14} and \eqref{eqn:lambda_2}, for any fixed $t \in \bbR$,
\begin{equation}\label{eqn:char}
\Big|\psi_{n}(t)-\prod_{k=1}^{K}\psi_{n,k}(t)\Big| \leq K\alpha(m)=b/(\lambda m) \times \alpha(m) =o(1). 
\end{equation} 
Let $ \{\tZ_{n,k}\}_{k=1}^{K}$ denote row-wise independent random variables with $\tZ_{n,k}$ having the same distribution as $Z_{n,k}$ for $k=1,\dots,K$. Then $\prod_{k=1}^{K}\psi_{n,k}(t)$ is the characteristic function of $\sum_{k=1}^K \tZ_{n,k}$. If we can prove that $\tilde Z_n = \sum_{k=1}^K \tZ_{n,k} \dto Z$, then $\prod_{k=1}^{K}\psi_{n,k}(t)$ will converge the the characteristic function of $Z$, and then \eqref{eqn:char} will imply \eqref{eqn:finite_dimension_2}. 

Now we apply the Lyapunov Central Limit Theorem to $\{\tZ_{n,k}\}$. {By Lemma~\ref{lemma:cov}, applied with $n$ and $m$ replaced by $n' = (\lambda-a_{\vee})m-1$ and $m'=m$, respectively (note that $n'/m' \to \infty$ since $\lambda \to \infty$ and that $b' = n'-m'+1=(\lambda-a_\vee-1)m$) is the length of a big block)}, we obtain that
\begin{equation}\label{eqn:Lyapunov}
\begin{aligned}
s_n^2 = \Var(\tZ_n) &= \sum_{k=1}^{K}\Var(\tZ_{n,k}) =\sum_{k=1}^{K}\Var(Z_{n,k})=K\Var(Z_{n,1})
\\
&={K \frac{n}{mb^2} \frac{m'b'^2}{n'}} \Var\bigg(\sum_{j=1}^{q}\theta_{j}\bbC_{(\lambda-a_{\vee})m-1,m}^{\lozenge,b}(\bu_{j},a_{j})\bigg)
\\
&={(1+o(1))} \Var\bigg(\sum_{j=1}^{q}\theta_{j}\bbC_{(\lambda-a_{\vee})m-1,m}^{\lozenge,b}(\bu_{j},a_{j})\bigg) \to \Var(Z).
\end{aligned}    
\end{equation}
Now, if $\Var(Z)=0$, then $\tZ_{n}\dto 0=Z$ and we are left with the case $s_{n}^{-1} = O(1)$. Let $\delta=(1-2\zeta)/2\zeta$ with $\zeta \in (0,1/2)$ from Assumption~\ref{assump:mixing} (iv). By \eqref{eqn:lambda_1}, $\lambda^{1+\delta}(b/m)^{-\delta/2}\to 0$. By stationarity,
\begin{align*}
s_{n}^{-2-\delta}\sum_{k=1}^{K}\Exp\{|\tZ_{n,k}|^{2+\delta}\}
&=
s_{n}^{-2-\delta}K\Exp\{|\tZ_{n,1}|^{2+\delta}\}
\\
&\leq
s_{n}^{-2-\delta} K { \big\{ q \max_{j=1,\dots,q}|\theta_{j}| \sqrt{n/(mb^2)} \lambda m \big\}^{2+\delta}}
\\
&= 
s_{n}^{-2-\delta} q^{2+\delta} \max_{j=1,\dots,q}|\theta_{j}|^{2+\delta}
\lambda^{1+\delta}(b/m)^{-\delta/2} (n/b)^{1+\delta/2}\to 0.
\end{align*}
By the Lyapunov Central Limit Theorem, 
$$s_{n}^{-1}\sum_{k=1}^{K}\tZ_{n,k}\dto N(0,1).$$
Since $s_{n}^{2} \to \Var(Z)$ by \eqref{eqn:Lyapunov},  we obtain \eqref{eqn:finite_dimension_2} and the proof is finished.
\end{proof}


\subsubsection{Further technical Lemmas for the proof of Theorem~\ref{theo:known}}

Throughout this section assume that the conditions of Theorem~\ref{theo:known} hold. Recall that the functions $\bbl,\bbu$ are defined in~\eqref{eq:defl}, \eqref{eq:defu}.

\begin{Lemma}\label{lemma:Lipschitz_1}
{Let $A'=[a_\wedge', a_\vee']$ denote an arbitrary closed interval in $(0,\infty)$.} 
There exist $m_0, K < \infty $ and $\eta > 0$, depending on the mixing coefficients $\alpha(\cdot)$,  $\varrho$, the interval $A'$ and the dimension $d$ only, such that, for  all $m\ge m_0$, all $a,c\in A'$ with $ma,mc \in \bbN, 1 \geq |c-a| \geq m^{-1/2}$ and all $\bu,\bv\in [0,1]^d$, we have
\[
\Big\|\bbu_{\bv,a,c}-\bbl_{\bu,a,c}\Big\|_{P_m,2} \leq K \Big(\|\bu-\bv\|_{\infty}^{1/2}\vee|c-a|^{\eta}\Big).
\]
\end{Lemma}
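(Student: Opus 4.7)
Without loss of generality I assume $a\leq c$ and $\bu\leq\bv$, because the pointwise bound $|\bbu_{\bv,a,c}-\bbl_{\bu,a,c}|\leq \bbu_{\bu\vee\bv,a,c}-\bbl_{\bu\wedge\bv,a,c}$ reduces the general case to this one. In the reduced setting one has $(\bbu_{\bv,a,c}-\bbl_{\bu,a,c})(\uutbM^{(1)})= c_{n,m}\sum_{i=1}^{\ell_m}Z_i$ with $c_{n,m}:=\sqrt{n\cK/(b^2m)}\asymp m^{-1}$ and
\[
Z_i:=\ind\big(\tilde M_{\ang{ma},i,\cdot}\leq\bF_{\ang{mc}}^{\leftarrow}(\bv)\big)-\ind\big(\tilde M_{\ang{mc},i,\cdot}\leq\bF_{\ang{ma}}^{\leftarrow}(\bu)\big).
\]
Since $\tilde M_{\ang{ma},i,j}\leq\tilde M_{\ang{mc},i,j}$ and $k\mapsto F_{k,j}^{\leftarrow}$ is non-decreasing (as $k\mapsto F_{k,j}$ is non-increasing), every $Z_i$ is a $\{0,1\}$-valued indicator.

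Next I would estimate $\Exp[(\sum_i Z_i)^2]=\sum_{i,j}\Exp[Z_iZ_j]$ by splitting on $|i-j|$. Decomposing $\Exp[Z_iZ_j]=\Cov(Z_i,Z_j)+\Exp[Z_i]\Exp[Z_j]$, the product-of-means part yields $\ell_m^2(\Exp[Z_0])^2$. For $|i-j|\leq\ang{mc}$ one uses the crude estimate $|\Cov(Z_i,Z_j)|\leq\Var(Z_0)\leq\Exp[Z_0]$ together with $O(m\ell_m)=O(m^2)$ such pairs; for $|i-j|>\ang{mc}$ the variables $Z_i,Z_j$ depend on observations separated in time by at least $|i-j|-\ang{mc}$ positions, hence $|\Cov(Z_i,Z_j)|\leq 4\alpha(|i-j|-\ang{mc})$ by Davydov's inequality, and the summability of $\alpha$ under Assumption~\ref{assump:mixing}(ii) yields an $O(\ell_m)=O(m)$ total contribution. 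Combining with $c_{n,m}^{2}\asymp m^{-2}$, $(\Exp[Z_0])^2\leq\Exp[Z_0]$, and $|c-a|\geq m^{-1/2}$ gives
\[
\|\bbu_{\bv,a,c}-\bbl_{\bu,a,c}\|_{P_m,2}^2 \;\lesssim\; \Exp[Z_0]+m^{-1} \;\lesssim\; \Exp[Z_0]+|c-a|^2.
\]

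It remains to bound $\Exp[Z_0]=F_{\ang{ma}}(\bF_{\ang{mc}}^{\leftarrow}(\bv))-F_{\ang{mc}}(\bF_{\ang{ma}}^{\leftarrow}(\bu))$ by a constant multiple of $\|\bu-\bv\|_\infty+|c-a|$, up to a small remainder. To this end I would carry out a quantitative version of the Leadbetter-type blocking argument behind Lemma~\ref{lemma:scale} with an explicit small-block length $\cl_m\asymp m^{1/(2+\varrho)}$, chosen to balance the mixing error $(m/\ck)\alpha(\cl_m)$ against a small-block error of order $\cl_m/m$ (the latter coming from $\Prob(X_{1,j}>F_{\ang{ma},j}^{\leftarrow}(u_j))\lesssim 1/m$); this yields
\[
F_{\ck}(\bF_m^{\leftarrow}(\bu))=C_\infty(\bu)^{\ck/m}+O\!\big(m^{-(1+\varrho)/(2+\varrho)}\big)
\]
uniformly in $\bu$ and $\ck/m\in A'$. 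Combined with the max-stability identity $C_\infty(\bu)^t=C_\infty(\bu^t)$ from Lemma~\ref{lemma:scale}(iii), the Lipschitz property of copulas, and the uniform boundedness of $\partial_tC_\infty(\bu^t)=\sum_j\dC_{\infty,j}(\bu^t)u_j^t\log u_j$ on $A'$ (since $u|\log u|\lesssim 1$ on $[0,1]$), this leads to $\Exp[Z_0]\lesssim\|\bu-\bv\|_\infty+|c-a|+m^{-(1+\varrho)/(2+\varrho)}$; as $(1+\varrho)/(2+\varrho)>1/2$, the remainder is absorbed into $|c-a|$ via $m^{-1/2}\leq|c-a|$. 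Taking square roots produces the lemma with $\eta=1/2$. The principal obstacle is the quantitative refinement of Lemma~\ref{lemma:scale} with an explicit polynomial error that is uniform both in the rescaling parameter and in $\bu\in[0,1]^d$; in particular, coordinates $u_j$ close to $\{0,1\}$ require a separate truncation argument, exploiting $Z_i\leq 1$ together with the continuity of $C_\infty$ at the boundary.
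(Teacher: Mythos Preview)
Your reduction to bounding $\Exp[Z_0]$ is sound, though your covariance-splitting argument is more work than needed: since each $Z_i\in\{0,1\}$ and there are $\ell_m$ of them, the paper simply uses $(\sum_i Z_i)^2\le\ell_m\sum_i Z_i$, which gives $\|\bbu_{\bv,a,c}-\bbl_{\bu,a,c}\|_{P_m,2}^2\lesssim\Exp[Z_0]$ directly without any mixing bound.

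The real gap is in your bound for $\Exp[Z_0]$. A Leadbetter-type blocking argument yields $F_{\ck}(\bF_m^{\leftarrow}(\bu))\approx [F_m(\bF_m^{\leftarrow}(\bu))]^{\ck/m}=[C_m(\bu)]^{\ck/m}$, not $C_\infty(\bu)^{\ck/m}$. Passing from $C_m$ to $C_\infty$ at a polynomial rate would require a quantitative version of Assumption~\ref{assump:copula_convergence}, which the lemma does not assume (it is used in Theorem~\ref{theo:known}, where only Assumptions~\ref{assump:copula_convergence} and~\ref{assump:mixing} are in force). Once $C_\infty$ is unavailable, the max-stability identity you invoke is gone, and the comparison $[C_{m'}(\bv)]^{a/c}$ versus $C_{m''}(\bu^{c/a})$ no longer collapses to a Lipschitz estimate in $(c-a,\bu-\bv)$. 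Relatedly, your small-block error ``$\Prob(X_{1,j}>F_{\ang{ma},j}^{\leftarrow}(u_j))\lesssim 1/m$'' is not uniform in $u_j$: it fails as $u_j\downarrow 0$, so the advertised quantitative Leadbetter does not hold uniformly over $[0,1]^d$. You flag this boundary issue as the ``principal obstacle'', but resolving it is essentially the whole content of the lemma.

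The paper sidesteps $C_\infty$ entirely. After $\|\cdot\|_{P_m,2}^2\lesssim\Exp[Z_0]$, it writes $\Exp[Z_0]=H_1+H_2+H_3$ with $H_2\le d\|\bu-\bv\|_\infty$ trivially, and bounds $H_1,H_3$ coordinate-wise by $\Prob(M_{ma,1,j}\le y_j,\,M_{mc,1,j}>y_j)$ for suitable $y_j$. Each such probability is handled by a case split: when $v_j>(c-a)^{\varrho/2}$ one bounds by $\Prob(M_{m(c-a),1,j}>F^{\leftarrow}_{\cdot,j}(v_j))$ and uses a blocking of length $\asymp m(c-a)$ (this is where $|c-a|\ge m^{-1/2}$ enters to control the mixing error); when $v_j\le(c-a)^{\varrho/2}$ one bounds by $\Prob(M_{ma,1,j}\le\cdot)$ and uses a second blocking to push this below a power of $c-a$. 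This direct, purely finite-$m$ argument is what replaces your quantitative Lemma~\ref{lemma:scale}.
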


\begin{proof}[Proof of Lemma \ref{lemma:Lipschitz_1}]
{Write $a_\wedge = a_\wedge', a_\vee = a_\vee'$ and $A=A'$.} 
It is sufficient to consider the case $a\le c$. Recall that  $\cK= b/(2\ell_m)$ and $\ell_m = (a_{\vee}+1)m$. Then, since $(a,c) \mapsto \ind(\tbM_{ma,i}\leq \bF_{mc}^{\leftarrow}(\bu))$ is decreasing in $a$ and increasing in $c$ {and since $b \ge n/2$ for sufficiently large $n$}, we have \begin{equation}\label{eq:Lipschitz}
\begin{aligned}
\Big\|\bbu_{\bv,a,c}-\bbl_{\bu,a,c}\Big\|_{P_m,2}^2
&= { \frac{n\cK}{b^2m}} \Exp\Bigg[\bigg\{
\sum_{i=1}^{\ell_{m}}\ind\Big((\uutbM_{i}^{(1)})_{1+ma-\ang{ma_{\wedge}},\cdot} \leq \bF_{mc}^{\leftarrow}(\bv)\Big) \\
& \hspace{3cm}-  \ind\Big((\uutbM_{i}^{(1)})_{1+mc-\ang{ma_{\wedge}},\cdot} \leq \bF_{ma}^{\leftarrow}(\bu)\Big)
\bigg\}^{2}\Bigg]
\\
&=\frac{n\cK}{b^2m} \Exp\Bigg[\bigg(\sum_{i=1}^{\ell_{m}}\ind(\tbM_{ma,i}\leq \bF_{mc}^{\leftarrow}(\bv)) -\ind(\tbM_{mc,i}\leq \bF_{ma}^{\leftarrow}(\bu)) \bigg)^{2}\Bigg]
\\
&\leq \frac{n}{2bm}\Exp\Bigg[\sum_{i=1}^{\ell_{m}} \ind( \tbM_{ma,i}\leq \bF_{mc}^{\leftarrow}(\bv)) -\ind(\tbM_{mc,i}\leq \bF_{ma}^{\leftarrow}(\bu))\Bigg]
\\
&\le (a_{\vee}+1) \Big(\Prob(\tbM_{ma,1}\leq \bF_{mc}^{\leftarrow}(\bv)) -\Prob(\tbM_{mc,1}\leq \bF_{ma}^{\leftarrow}(\bu))\Big)
\\
&=(a_{\vee}+1)(H_{1}+H_{2}+H_{3}),
\end{aligned}
\end{equation}
where 
\begin{align*}
    H_{1}&=\Prob(\tbM_{ma,1}\leq\bF_{mc}^{\leftarrow}(\bv))-\Prob(\tbM_{ma,1}\leq\bF_{ma}^{\leftarrow}(\bv)),\\
    H_{2}&=\Prob(\tbM_{ma,1}\leq\bF_{ma}^{\leftarrow}(\bv))-\Prob(\tbM_{ma,1}\leq \bF_{ma}^{\leftarrow}(\bu)),\\
    H_{3}&=\Prob(\tbM_{ma,1}\leq\bF_{ma}^{\leftarrow}(\bu))-\Prob(\tbM_{mc,1}\leq\bF_{ma}^{\leftarrow}(\bu)).
\end{align*}
For $H_{1}$ note that, by~\eqref{eq:coupling}~(i),
\begin{equation}\label{eq:Lipschitz_1_preliminary_1}
\begin{aligned}
H_{1}&=\Prob(\bM_{ma,i}\leq\bF_{mc}^{\leftarrow}(\bv), \bM_{ma,i}\nleq \bF_{ma}^{\leftarrow}(\bv))\\
&\le \sum_{j=1}^{d}[\Prob(M_{ma,1,j}\leq F_{mc,j}^{\leftarrow}(v_{j}))-\Prob(M_{ma,1,j} \leq F_{ma,j}^{\leftarrow}(v_{j}))]\\
&= \sum_{j=1}^{d}[\Prob(M_{ma,1,j}\leq F_{mc,j}^{\leftarrow}(v_{j}))-v_{j}]\\
&= \sum_{j=1}^{d}[\Prob(M_{ma,1,j}\leq F_{mc,j}^{\leftarrow}(v_{j}))-\Prob(M_{mc,1,j} \leq F_{mc,j}^{\leftarrow}(v_{j}))]\\
&= \sum_{j=1}^{d}[\Prob(M_{ma,1,j}\leq F_{mc,j}^{\leftarrow}(v_{j}), M_{mc,1,j}>F_{mc,j}^{\leftarrow}(v_{j}))]
\end{aligned}    
\end{equation}
In what follows, define $M_{i:i+k-1,j} := M_{k,i,j}$. Then
\begin{equation}\label{eq:Lipschitz_1_preliminary_2} 
\begin{aligned}
&\mathph{\leq}\Prob\bigg(M_{ma,1,j}\leq F_{mc,j}^{\leftarrow}(v_{j}), M_{mc,1,j}> F_{mc,j}^{\leftarrow}(v_{j})\bigg)\\
&=\Prob\bigg(M_{0:(ma-1),j}\leq F_{mc,j}^{\leftarrow}(v_{j}), \max(M_{0:(ma-1),j},M_{ma:(mc-1),j})> F_{mc,j}^{\leftarrow}(v_{j})\bigg)\\
&=\Prob\bigg(M_{0:(ma-1),j}\leq F_{mc,j}^{\leftarrow}(v_{j}),M_{ma:(mc-1),j}> F_{mc,j}^{\leftarrow}(v_{j})\bigg)
\end{aligned}
\end{equation}
Recall $\varrho>0$ defined in Assumption \ref{assump:mixing} (ii) and note that the assumption continues to hold with $\varrho\wedge(1/2)$ instead of $\varrho$. Throughout the remaining proof we can thus  assume without loss of generality that $\varrho \in (0,1/2)$. For each $j=1,\dots,d$, consider the three cases
\begin{align}
&v_{j}\in \Big((c-a)^{\varrho/2},1\Big]& & \text{and} &  &c-a\in  \Big(a_{\wedge}/8,a_{\vee}-a_{\wedge}\Big] \cap[0,1],\label{eq:very_large_c-a}\\
&v_{j}\in \Big((c-a)^{\varrho/2},1\Big]& & \text{and} & &c-a\in\Big[ m^{-1/2},a_{\wedge}/8\Big) \cap[0,1] , \label{eq:large_c-a}\\
&v_{j}\in \Big[0, (c-a)^{\varrho/2}\Big] . \label{eq:small_v}
\end{align}
Given \eqref{eq:very_large_c-a} or \eqref{eq:large_c-a},
\begin{equation}\label{eq:large_v}
\begin{aligned}
\mathph{\leq}\Prob\bigg(M_{0:(ma-1),j}\leq F_{mc,j}^{\leftarrow}(v_{j}),M_{ma:(mc-1),j}> F_{mc,j}^{\leftarrow}(v_{j})\bigg)& \leq \Prob\bigg(M_{ma:(mc-1),j}> F_{mc,j}^{\leftarrow}(v_{j})\bigg)\\
&=\Prob\bigg(M_{mc-ma,1,j}> F_{mc,j}^{\leftarrow}(v_{j})\bigg).    
\end{aligned}
\end{equation}
Now bound the right hand side of \eqref{eq:large_v}. First, focus on \eqref{eq:very_large_c-a}. Then 
\begin{equation}\label{eq:result_very_large}
\Prob\bigg(M_{mc-ma,1,j}> F_{mc,j}^{\leftarrow}(v_{j})\bigg)\leq 1 \leq (8/a_{\wedge})(c-a).
\end{equation}
Second, assume \eqref{eq:large_c-a}. Let $\cK_{1}=\ang{c/\{2(c-a)\}}$. Then
\begin{equation}\label{eq:blocking_large_c-a_large_u}
\begin{aligned}
F_{mc,j}(x)&=\Prob(\max_{t=1,\dots,mc}X_{t,j}\leq x) \\
&\leq\Prob(\max_{t=2(k-1)(mc-ma)+1,\dots,(2k-1)(mc-ma)}X_{t,j}\leq x,\ k=1,2,\dots,\cK_{1})\\
&\leq\prod_{k=1}^{\cK_{1}}\Prob(\max_{t=2(k-1)(mc-ma)+1,\dots,(2k-1)(mc-ma)}X_{t,j}\leq x)+\cK_{1}\alpha(m(c-a))\\
&\leq \Big(F_{mc-ma,j}(x)\Big)^{\cK_{1}}+\frac{a_{\vee}}{2(c-a)}\alpha(m(c-a)).
\end{aligned}
\end{equation}
By \eqref{eq:large_c-a} and Assumption \ref{assump:mixing} we have, for all sufficiently large $m$, say $m\geq m_{0,1}(\alpha(\cdot), \varrho, A, d)$, 
\begin{equation}\label{eq:error_large}
\begin{aligned}
\frac{a_{\vee}}{2(c-a)}\alpha(m(c-a))
&\leq \frac{a_{\vee}}{2(c-a)} \alpha(\ang{m^{1/2}})\leq (8/a_{\wedge})^{\varrho/2} \frac{1}{2(c-a)}m^{-(1+\varrho)/2}\\
&\leq (8/a_{\wedge})^{\varrho/2}\frac{1}{2}(c-a)^{\varrho}\leq \frac{1}{2}(c-a)^{\varrho/2}\leq \frac{v_{j}}{2},
\end{aligned}
\end{equation}
where the second inequality follows from $\alpha(\ang{m^{1/2}}) = o(m^{-(1+\varrho)/2})$ by Assumption~\ref{assump:mixing}(ii), which makes possible the values of the constants in this inequality.

By \eqref{eq:large_c-a}, \eqref{eq:blocking_large_c-a_large_u}, and \eqref{eq:error_large}, when $m \geq m_{0,1}(\alpha(\cdot), \varrho, A, d)$
\begin{equation}\label{eq:result_large}
\begin{aligned}
\Prob\bigg(M_{mc-ma,1,j}> F_{mc,j}^{\leftarrow}(v_{j})\bigg)&=\Prob\bigg(F_{mc,j}\big(M_{mc-ma,1,j}\big)> v_{j}\bigg)\\
&\leq \Prob\bigg(F_{mc-ma,j}\big(M_{mc-ma,1,j}\big)>(v_{j}/2)^{1/\cK_{1}}\bigg)\\
&=1-(v_{j}/2)^{1/\cK_{1}} \\
&\leq (1/\cK_{1})(-\log (v_{j}/2))\\
&\leq (4/a_{\wedge})(-\log (v_{j}/2))(c-a)\\
&\leq (4/a_{\wedge})(-\log ( (c-a)^{\varrho/2} /2))(c-a)\\
&\leq (4/a_{\wedge})(\varrho/2+\log 2)(c-a)^{1/2},
\end{aligned}
\end{equation} 
{where we used that $-x \log(x) \le x^{1/2}$ for $x>0$ in the last inequality.} Third, assume \eqref{eq:small_v}. Then, {the right-hand side of  \eqref{eq:Lipschitz_1_preliminary_2} can be bounded as follows:}
\begin{align} \label{eq:small_v_m}
\mathph{\leq}\Prob\Big(M_{0:(ma-1),j}\leq F_{mc,j}^{\leftarrow}(v_{j}),M_{ma:(mc-1),j}> F_{mc,j}^{\leftarrow}(v_{j})\Big)
&\leq \Prob(M_{0:(ma-1),j}\leq F_{mc,j}^{\leftarrow}(v_{j})) \nonumber \\
&=
\Prob(M_{ma,1,j}\leq F_{mc,j}^{\leftarrow}(v_{j})) \nonumber \\
&=
\Prob(F_{mc,j} (M_{ma,1,j}) \leq v_j)   \nonumber\\
&\le
\Prob(F_{mc,j} (M_{ma,1,j}) \leq (c-a)^{\varrho/2})  
\end{align}
Since $1 \ge c-a\geq m^{-1/2}$ and $0<\varrho<1$ we see that for
\[
\cK_{2} := \Big\langle \frac{1}{2} \min\Big\{m^{(1+\varrho)/(2+\varrho)}(c-a)^{1/4}, m(c-a)^{\varrho/(2+2\varrho)}\Big\}\Big\rangle
\] 
there exists $m_{0,2}(\alpha(\cdot), \varrho, A) < \infty$ such that for all $m \geq m_{0,2}$ and all $c-a\geq m^{-1/2}$
\begin{equation}\label{eq:cK_2}
\frac{1}{2}\ceil{a_{\vee}/a_{\wedge}} | \log(c-a) | (c-a)^{-\varrho/2}<\cK_{2}<\min\{m^{(1+\varrho)/(2+\varrho)}(c-a)^{1/4}, m(c-a)^{\varrho/(2+2\varrho)}\}.     
\end{equation}
{To see that the first inequality holds for sufficiently large $m$,  one may use that  $c-a \geq m^{-1/2}$ and the fact that $\varrho \in (0,1)$ to obtain the bound
\[
m^{(1+\varrho)/(2+\varrho)}(c-a)^{1/4} \geq m^{(1+\varrho)/(2+\varrho) -1/8} \geq m^{(1+\varrho) / 3  -1/8} \geq m^{\varrho/4+7/24}  \ge (c-a)^{-\varrho/2} m^{7/24}. 
\]
The lower bound for the first expression in the definition of $\cK_2$ then follows from $|\log(c-a)| \leq (\log m)/2$, which may be upper bounded by any constant multiple of $m^{7/24}$ for sufficiently large $m$.}
A similar argument can be used to bound $m(c-a)^{\varrho/(2+2\varrho)}$ from below for sufficiently large $m$ since
\[
m(c-a)^{\varrho/(2+2\varrho)} \geq m^{1-\varrho/(4+4\varrho)} \geq m^{1 - \varrho/4} \geq m^{3/4} \geq m^{\varrho/4} m^{1/2}  \ge (c-a)^{-\varrho/2} m^{1/2}. 
\]

Note that by construction $\inf_{|c-a|\geq m^{-1/2}} \cK_{2} \to \infty, m/\sup_{|c-a|\geq m^{-1/2}}\cK_{2}\to\infty $ as $m \to \infty$, so that we can assume (at the cost of potentially changing constants) that $\cK_{2}$ and $ma/\cK_{2}$ are integers. Observe that for any $l\in\bbN$ with $l \geq c/a$ {and any $x\in\R$}
\begin{equation*}
\begin{aligned}
F_{mc,j}(x) \geq F_{mla,j}(x) & = \Prob\Big(\forall k=1,\dots, l : \max_{t=(k-1)ma+1,\dots,kma}X_{t,j}\leq x\Big)
\\
&= \Prob\Big(\forall k=1,\dots, l: \max_{t=(k-1)ma+1,\dots,kma-ma/\cK_{2}}X_{t,j}\leq x\Big)
\\
&\hspace{1cm}-\Prob\Big(\exists k=1,\dots, l: \max_{t=kma-ma/\cK_{2}+1,\dots,kma}X_{t,j}> x\Big)
\\
&\geq \Big(F_{ma-ma/\cK_{2},j}(x)\Big)^{l}-l\alpha(ma/\cK_{2})-l(1-F_{ma/\cK_{2},j}(x))
\\
&\geq \Big(F_{ma,j}(x)\Big)^{l}-l\alpha(ma/\cK_{2})-l(1-F_{ma/\cK_{2},j}(x)).
\end{aligned}
\end{equation*}
Hence, {for the particular choice $l = \lceil c/a \rceil$}
\begin{equation}\label{eq:blocking_large_c-a_small_u_1}
F_{mc,j}(M_{ma,1,j})\geq \Big(F_{ma,j}(M_{ma,1,j})\Big)^{\lceil c/a \rceil}- \lceil c/a \rceil \alpha(ma/\cK_{2})- \lceil c/a \rceil \Big(1-F_{ma/\cK_{2},j}(M_{ma,1,j})\Big). 
\end{equation}
Now we bound the second and the third term on the right hand side of \eqref{eq:blocking_large_c-a_small_u_1}. By \eqref{eq:cK_2} and Assumption \ref{assump:mixing}(ii) we have, for $m \geq m_{0,3}(\alpha(\cdot),A,\varrho)$,\begin{equation}\label{eq:error_large_c-a_small_u_1}
\begin{aligned}
\lceil c/a \rceil \alpha(ma/\cK_{2})&\leq (m/\cK_{2})^{-(1+\varrho)}\leq  m^{-(1+\varrho)}\Big(m(c-a)^{\varrho/(2+2\varrho)}\Big)^{1+\varrho}=(c-a)^{\varrho/2}.
\end{aligned}
\end{equation}
By blocking arguments similar to \eqref{eq:blocking_large_c-a_large_u},  {for any $x\in\R$},
\begin{equation}\label{eq:blocking_large_c-a_small_u_2}
    F_{ma,j}(x)\leq \Big(F_{ma/\cK_{2},j}(x)\Big)^{\cK_{2}/2}+(\cK_{2}/2)\alpha(ma/\cK_{2}). 
\end{equation}
By \eqref{eq:blocking_large_c-a_small_u_2}, Assumption \ref{assump:mixing}(ii) and \eqref{eq:cK_2}  we have, for $m \geq m_{0,4}(\alpha(\cdot),A,\varrho)$,
\begin{equation}\label{eq:error_large_c-a_small_u_2}
\begin{aligned}
\Prob\Big(\lceil c/a \rceil \Big(1-F_{ma/\cK_{2},j}(M_{ma,1,j})\Big)>(c-a)^{\varrho/2}\Big) 
&= 
F_{ma,j}\Big(F_{ma/\cK_{2},j}^{\leftarrow}\big(1-\ceil{c/a}^{-1}(c-a)^{\varrho/2}\big)\Big)
\\
&\leq\big(1-\ceil{c/a}^{-1}(c-a)^{\varrho/2}\big)^{\cK_{2}/2}+(\cK_{2}/2)\alpha(ma/\cK_{2}) \\
&\leq 
e^{-(\cK_{2}/2)\ceil{c/a}^{-1}(c-a)^{\varrho/2}}
+\cK_{2}^{2+\varrho}m^{-(1+\varrho)}
\\
&\leq 2(c-a)^{1/4}.
\end{aligned}
\end{equation}
By \eqref{eq:blocking_large_c-a_small_u_1}, \eqref{eq:error_large_c-a_small_u_1}, and \eqref{eq:error_large_c-a_small_u_2} {we can further bound the right-hand side of \eqref{eq:small_v_m}, for sufficiently large $m \geq m_{0,5}(\alpha(\cdot),A,\varrho)$, }
\begin{equation}\label{eq:result_large_c-a_small_u}
\begin{aligned}
&\mathph{\leq}\Prob\bigg(F_{mc,j}(M_{ma,1,j})\leq (c-a)^{\varrho/2}\bigg)
\\
&\leq \Prob\bigg(\Big(F_{ma,j}(M_{ma,1,j})\Big)^{\lceil c/a \rceil}-\lceil c/a \rceil\alpha(m/\cK_{2})-\lceil c/a \rceil\Big(1-F_{m/\cK_{2},j}(M_{ma,1,j})\Big)\leq (c-a)^{\varrho/2}\bigg)
\\
&\leq \Prob\bigg(\Big(F_{ma,j}(M_{ma,1,j})\Big)^{\lceil c/a \rceil}-\lceil c/a \rceil\Big(1-F_{m/\cK_{2},j}(M_{ma,1,j})\Big)\leq 2(c-a)^{\varrho/2}\bigg)
\\
&\leq \Prob\bigg(\Big(F_{ma,j}(M_{ma,1,j})\Big)^{\lceil c/a \rceil}\leq 3(c-a)^{\varrho/2}\bigg)+\Prob\bigg(\lceil c/a \rceil\Big(1-F_{m/\cK_{2},j}(M_{ma,1,j})\Big)>(c-a)^{\varrho/2}\bigg)
\\
&\leq { 3^{1/\lceil a_{\vee}/a_{\wedge}\rceil}(c-a)^{(a_{\vee}/a_{\wedge})\varrho/2} } +2(c-a)^{1/4},
\end{aligned}
\end{equation}
{where we used that $c-a \le 1$ for the last inequality.}
By \eqref{eq:Lipschitz_1_preliminary_1}, \eqref{eq:Lipschitz_1_preliminary_2},
\eqref{eq:result_very_large}, \eqref{eq:result_large}, and \eqref{eq:result_large_c-a_small_u} there exist $K_1 = K_1(\alpha(\cdot),A,\varrho,d) < \infty, \eta_1 = \eta_1(\alpha(\cdot),A,\varrho) > 0$ such that, for $m \geq m_{0,6}(\alpha(\cdot),A,\varrho,d)$, 
\begin{equation}\label{eq:Lipschitz_1_bound}
H_{1} \leq K_1(c-a)^{\eta_1}.
\end{equation}
For $H_{2}$, note that
\begin{equation}\label{eq:Lipschitz_2_bound}
\begin{aligned}
H_{2}&=\Prob(\tbU_{ma,1}\leq \bv, \tbU_{ma,1}\nleq \bu)\leq \sum_{j=1}^{d}\Prob(u_{j}<\tU_{ma,1,j}\leq v_{j})=\sum_{j=1}^{d}\Big(v_{j}-u_{j}\Big)\leq d \|\bu-\bv\|_{\infty}.
\end{aligned}
\end{equation}
For $H_{3}$, by \eqref{eq:coupling},
\begin{equation}\label{eq:Lipschitz_3_preliminary}
\begin{aligned}
H_{3}&=\Prob(\bM_{ma,1}\leq\bF_{ma}^{\leftarrow}(\bu), \bM_{mc,1}\nleq \bF_{ma}^{\leftarrow}(\bu))\\ 
&\leq \sum_{j=1}^{d}\Prob(M_{ma,1,j}\leq F_{ma,j}^{\leftarrow}(u_{j}), M_{mc,1,j}> F_{ma,j}^{\leftarrow}(u_{j})).
\end{aligned}
\end{equation}
To bound this term, consider the cases
\begin{align}
&u_{j}\in \Big((c-a)^{\varrho/2},1\Big]& & \text{and} &  &c-a\in  \Big(a_{\wedge}/8,a_{\vee}-a_{\wedge}\Big]  \cap[0,1] ,\label{eq:vlarge_very_large_c-a}\\
&u_{j}\in \Big((c-a)^{\varrho/2},1\Big]& & \text{and} &  &c-a\in\Big[ m^{-1/2},a_{\wedge}/8\Big)\cap[0,1] , \label{eq:vlarge_large_c-a}\\
&u_{j}\in \Big[0, (c-a)^{\varrho/2}\Big]. \label{eq:vsmall}
\end{align}
For the case~\eqref{eq:vsmall} we have
\[
\Prob\Big(M_{ma,1,j}\leq F_{ma,j}^{\leftarrow}(u_{j}), M_{mc,1,j}> F_{ma,j}^{\leftarrow}(u_{j})\Big) \leq \Prob\Big(M_{ma,1,j}\leq F_{ma,j}^{\leftarrow}(u_{j})\Big) = u_j \leq (c-a)^{\varrho/2}.
\]
In cases \eqref{eq:vlarge_very_large_c-a} and \eqref{eq:vlarge_large_c-a}, we have similarly to~\eqref{eq:large_v}
\[
\Prob\Big(M_{ma,1,j}\leq F_{ma,j}^{\leftarrow}(u_{j}), M_{mc,1,j}> F_{ma,j}^{\leftarrow}(u_{j})\Big) 
\leq 
\Prob\Big(M_{mc-ma,1,j}> F_{ma,j}^{\leftarrow}(u_{j})\Big),
\]
and the right-hand side above can be bounded exactly as before.
The right hand of \eqref{eq:Lipschitz_3_preliminary} has the same form as the right hand of \eqref{eq:Lipschitz_1_preliminary_1}. Hence there exist $K_2 = K_2(\alpha(\cdot),A,\varrho,d) < \infty, \eta_2 = \eta_2(\alpha(\cdot),A,\varrho) > 0$ such that for $m \geq m_{0,7}(\alpha(\cdot),A,\varrho,d)$ 
\begin{equation}\label{eq:Lipschitz_3_bound}
H_{3}\leq K_2(c-a)^{\eta_2}.
\end{equation}
Lemma \ref{lemma:Lipschitz_1} follows from \eqref{eq:Lipschitz}, \eqref{eq:Lipschitz_1_bound}, \eqref{eq:Lipschitz_2_bound}, and \eqref{eq:Lipschitz_3_bound}.
\end{proof}

\begin{Lemma}\label{lemma:Lipschitz_2}
{Let $A'=[a_\wedge', a_\vee']$ denote an arbitrary closed interval in $(0,\infty)$.} There exist $m_0 = m_0(\alpha(\cdot),d,A',\varrho) < \infty$ and $K= K(\alpha(\cdot),d,A',\varrho)$ such that, for all $m\ge m_0$, all $\bu,\bv\in [0,1]^d$ and all $a\in (\bbZ/m)\cap A'$, we have 
\[
\Big\|\bbu_{\bv,a,a}-\bbl_{\bu,a,a}\Big\|_{P_m,2} \leq K \|\bu-\bv\|_{\infty}^{1/2}.
\]
\end{Lemma}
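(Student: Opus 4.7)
The plan is to recycle the machinery from the proof of Lemma~\ref{lemma:Lipschitz_1}, exploiting the dramatic simplification that occurs when $c=a$. Indeed, inspecting the definitions~\eqref{eq:defl}--\eqref{eq:defu} shows that $\bbu_{\bm w,a,a}(\uubx) = \bbl_{\bm w,a,a}(\uubx)$ for every $\bm w$, so the object to bound is simply the norm of
\[
\bbu_{\bv,a,a}-\bbl_{\bu,a,a} = \sqrt{\tfrac{n\cK}{b^2m}} \sum_{i=1}^{\ell_m} \delta_i, \qquad \delta_i := \ind\bigl(\tbM_{ma,i}\le \bF_{ma}^{\leftarrow}(\bv)\bigr) - \ind\bigl(\tbM_{ma,i}\le \bF_{ma}^{\leftarrow}(\bu)\bigr),
\]
with $\delta_i\in\{-1,0,1\}$. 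In particular the two delicate terms $H_1$ and $H_3$ from the proof of Lemma~\ref{lemma:Lipschitz_1} (which required the long clipping/blocking arguments and forced $|c-a|\ge m^{-1/2}$) disappear entirely, and only a term analogous to $H_2$ survives.

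First I would apply the Cauchy--Schwarz step $(\sum_{i=1}^{\ell_m}\delta_i)^2\le \ell_m\sum_{i=1}^{\ell_m}\delta_i^2=\ell_m\sum_{i=1}^{\ell_m}|\delta_i|$ used in~\eqref{eq:Lipschitz}, combined with $\cK\ell_m=b/2$ and $\ell_m=(a_\vee+1)m$ (after enlarging $a_\vee$ to accommodate $A'$), to obtain
\[
\Big\|\bbu_{\bv,a,a}-\bbl_{\bu,a,a}\Big\|_{P_m,2}^{2} \le (a_\vee'+1)\,\Exp\bigl[|\delta_1|\bigr].
\]
Next I would bound $\Exp[|\delta_1|]$ by passing to the marginal difference: using $|\delta_1|\le \ind(\tbM_{ma,1}\le \bF_{ma}^{\leftarrow}(\bu\vee\bv)) - \ind(\tbM_{ma,1}\le \bF_{ma}^{\leftarrow}(\bu\wedge\bv))$ together with $\tbU_{ma,1}\stackrel{d}{=}\bU_{ma,1}$ (property (i) of~\eqref{eq:coupling}) and the fact that each $U_{ma,1,j}$ is uniform on $[0,1]$, exactly as in the treatment of $H_2$ in~\eqref{eq:Lipschitz_2_bound}:
\[
\Exp\bigl[|\delta_1|\bigr] \le \Prob\bigl(\bU_{ma,1}\le \bu\vee\bv,\ \bU_{ma,1}\nleq \bu\wedge\bv\bigr) \le \sum_{j=1}^{d}\bigl|v_j-u_j\bigr| \le d\,\|\bu-\bv\|_\infty.
\]

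Combining the two displays yields the claim with the explicit constant $K=\sqrt{(a_\vee'+1)d}$, which depends only on $A'$ and $d$, and the argument is uniform in $a\in(\bbZ/m)\cap A'$ and in $\bu,\bv\in[0,1]^d$ for all sufficiently large $m$ (the restriction $m\ge m_0$ is only needed to absorb $A'$ into the interval $A=[a_\wedge,a_\vee]$ used for the definition of $\ell_m$ and $\cK$). There is no real obstacle here; the only point that requires care is the pointwise sandwich $|\delta_1|\le \ind(\cdot\le \bF_{ma}^{\leftarrow}(\bu\vee\bv)) - \ind(\cdot\le \bF_{ma}^{\leftarrow}(\bu\wedge\bv))$, which is needed because $\bu$ and $\bv$ are not assumed to be ordered componentwise.
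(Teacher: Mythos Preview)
Your proposal is correct and takes essentially the same approach as the paper, which simply invokes the chain~\eqref{eq:Lipschitz} with $c=a$ (so that $H_1=H_3=0$) together with the bound~\eqref{eq:Lipschitz_2_bound} on $H_2$. Your version is in fact slightly more careful than the paper's two-line argument, since you explicitly handle the case where $\bu$ and $\bv$ are not componentwise ordered via the sandwich $|\delta_1|\le \ind(\cdot\le \bF_{ma}^{\leftarrow}(\bu\vee\bv))-\ind(\cdot\le \bF_{ma}^{\leftarrow}(\bu\wedge\bv))$.
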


\begin{proof}[Proof of Lemma \ref{lemma:Lipschitz_2}]
As in the previous proof, write $a_\vee = a_\vee'$. By \eqref{eq:Lipschitz} and \eqref{eq:Lipschitz_2_bound},
\begin{align*}
\Big\|\bbu_{\bv,a,a}-\bbl_{\bu,a,a}\Big\|_{P_m,2}^2
&\leq (a_{\vee}+1) \Prob(\tbM_{ma,1}\leq\bF_{ma}^{\leftarrow}(\bv))-\Prob(\tbM_{ma,1}\leq \bF_{ma}^{\leftarrow}(\bu))
\\
&\leq (a_{\vee}+1)d\|\bu-\bv\|_{\infty}. \qedhere
\end{align*}
\end{proof}

\subsection{Proofs for Section~\ref{sec:estmar}}

The following notation is taken from the proof of Theorem 3.5 in~\cite{BucSeg14}. Let
\begin{multline*}
\mathcal{D}_0 := \{f: [0,1]^d \to \R \mid f \mbox{ continuous and } f(\bu) = 0\mbox{ for } \bu = (1,\dots,1)
\\
\mbox{or if at least one coordinate of $\bu$ is equal to 0} \},
\end{multline*}
denote by $\mathcal{D}_\Phi$ the set of all cdfs on $[0,1]^d$ whose marginals put no mass at zero and define
\begin{align*}
\mathcal{D}_{k,n} := \{\alpha \in \ell^\infty([0,1]^d) \mid C_k + (n/k)^{-1/2}\alpha \in \mathcal{D}_\Phi\}, \qquad 1\le k \le n, n\in\N.
\end{align*}
Consider the copula mapping
\[
\Phi: \mathcal D_\Phi \to \ell^\infty([0,1]^d), \quad  H \mapsto H(H_1^-,\dots,H_d^-),
\]
where $H^-$ denotes the left-continuous generalized inverse function, and let
\begin{align*}
g_{k,n} : \cD_{k,n} \to \ell^\infty([0,1]^d); \quad 
&\alpha \mapsto \Big( \bm u \mapsto \sqrt{n/k}\{ \Phi(C_k + (n/k)^{-1/2}\alpha) - \Phi(C_k)\} (\bu) \Big),
\\
g : \cD_0 \to  \ell^\infty([0,1]^d); \quad 
&\alpha \mapsto \Big( \bu \mapsto \alpha(\bu) - \sum_{j=1}^d \dot{C}_{\infty,j}(\bu) \alpha(\bu^{(j)}) \Big).
\end{align*}
In the proof of their Theorem 3.2, \cite{BucSeg14} established the following result under conditions (i) and (ii), the proof under condition (iii) is new to this paper.

\begin{Prop} \label{prop:BucSeg14} 
Assume that Assumption~\ref{assump:copula_convergence} is met, with $C_\infty$ satisfying Assumption~\ref{assump:pd}. Let $k_n$ be a strictly increasing sequence of natural numbers with $k_n = o(n)$ such that one of the following conditions is met:
\begin{compactenum}
\item[(i)] $\SC_1(k_n)$ from Assumption~\ref{assump:BuSe3.4_new}(a) holds;
\item[(ii)] Assumption~\ref{assump:BuSe3.4_new}(b) holds; 
\item[(iii)] $\SC_2(k_n)$ from Assumption~\ref{assump:BuSe3.4_new}(c) holds.
\end{compactenum}
Then, for any sequence $\alpha_n$ in $\mathcal{D}_{k_n,n}$ with $\alpha_n \to \alpha$ in $(\ell^\infty([0,1]^d),\|\cdot\|_\infty)$ where $\alpha \in \mathcal{D}_0$, we have $g_{k_n,n}(\alpha_n) \to g(\alpha)$.
\end{Prop}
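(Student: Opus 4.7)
The argument under conditions (i) and (ii) is already carried out in \cite{BucSeg14}, so I focus on the genuinely new case (iii), which relaxes Assumption~\ref{assump:BuSe3.4_new}(a) by allowing the bias $C_{k_n} - C_\infty$ to be of larger order than $(k_n/n)^{1/2}$, provided only that the residual after subtracting $\ca(k_n) S$ is of that order. My plan is to follow the template of the proof under condition~(i) in \cite{BucSeg14}, decompose $g_{k_n,n}(\alpha_n)(\bu)$ into a stochastic part and a deterministic finite-difference part, and dispose of the potentially non-negligible bias contribution using the Hölder smoothness of $S$ together with the rate condition $(n/k_n)^{(1-\delta)/2}\ca(k_n) = o(1)$.

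Concretely, setting $H_n := C_{k_n} + (n/k_n)^{-1/2}\alpha_n$ and $\tU_{n,j}(\bu) := H_{n,j}^-(u_j)$, I would first rewrite
\[
g_{k_n,n}(\alpha_n)(\bu) = \alpha_n(\tbU_n(\bu)) + \sqrt{n/k_n}\{C_{k_n}(\tbU_n(\bu)) - C_{k_n}(\bu)\}.
\]
Since the marginals of $C_{k_n}$ are uniform, inverting the marginals of $H_n$ exactly as in \cite{BucSeg14} yields $\tU_{n,j}(\bu) - u_j = -(k_n/n)^{1/2}\alpha_{n,j}(u_j) + o((k_n/n)^{1/2})$ uniformly in $u_j$, so that $\alpha_n(\tbU_n(\bu)) \to \alpha(\bu)$ uniformly in $\bu$. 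Into the second term I would then plug the expansion $C_{k_n} = C_\infty + \ca(k_n) S + r_n$ from Assumption~\ref{assump:high_order}, where by $\SC_2(k_n)$ the rescaled remainder $R_n := \sqrt{n/k_n}\, r_n$ is relatively compact in $\cC([0,1]^d)$, to split it into three pieces corresponding to $C_\infty$, $\ca(k_n) S$ and $r_n$, respectively.

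The $C_\infty$-piece is handled exactly as in \cite{BucSeg14}, producing the target limit $-\sum_{j=1}^d \dC_{\infty,j}(\bu)\alpha(\bu^{(j)})$ via Assumption~\ref{assump:pd} and a Taylor expansion in the interior of $[0,1]^d$ (the usual boundary conventions $\dC_{\infty,j}=0$ and $\alpha(\bu^{(j)})=0$ taking care of the remaining faces). The $r_n$-piece vanishes by a subsequence argument: along any subsequence relative compactness yields a further subsequence with $R_{n'} \to R \in \cC([0,1]^d)$ uniformly, and combined with $\tbU_n(\bu) \to \bu$ this gives $R_{n'}(\tbU_{n'}(\bu)) - R_{n'}(\bu) \to R(\bu) - R(\bu) = 0$, hence the full sequence converges to zero.

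The main new obstacle is the middle $\ca(k_n) S$-piece, because $\sqrt{n/k_n}\,\ca(k_n)$ is no longer guaranteed to be bounded under (iii). This is precisely where the uniform Hölder-continuity of $S$ of order $\delta$ enters: using $\|\tbU_n(\bu) - \bu\|_\infty = O((k_n/n)^{1/2})$ uniformly in $\bu$, I obtain
\[
\sqrt{n/k_n}\,\ca(k_n)\,|S(\tbU_n(\bu)) - S(\bu)| \le L \sqrt{n/k_n}\,\ca(k_n)\|\tbU_n(\bu) - \bu\|_\infty^\delta = O\bigl((n/k_n)^{(1-\delta)/2}\ca(k_n)\bigr) = o(1)
\]
uniformly in $\bu$, by the rate assumption built into $\SC_2(k_n)$. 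Combining the three deterministic pieces with the limit of $\alpha_n(\tbU_n(\bu))$ yields $g_{k_n,n}(\alpha_n) \to g(\alpha)$ uniformly on $[0,1]^d$, with the usual compactness argument of \cite{BucSeg14} taking care of the uniformity across the boundary of the cube.
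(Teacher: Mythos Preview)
Your proposal is correct and follows essentially the same route as the paper: the paper also reduces case~(iii) to the three uniform convergences for the $C_\infty$-piece (handled as in \cite{BucSeg14}), the rescaled remainder $\Delta_n=\sqrt{n/k_n}\{C_{k_n}-C_\infty-\ca(k_n)S\}$ (handled via relative compactness in $\cC([0,1]^d)$), and the $\ca(k_n)S$-piece (handled via H\"older continuity of $S$, Vervaat's lemma for $\|\bI_n(\bu)-\bu\|_\infty=O((k_n/n)^{1/2})$, and the rate condition $(n/k_n)^{(1-\delta)/2}\ca(k_n)=o(1)$). The only cosmetic difference is that the paper invokes Vervaat's lemma explicitly rather than writing out the marginal-inverse expansion.
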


\begin{proof} The result under $(i)$ and $(ii)$ was given in the proof of Theorem 3.2 in \cite{BucSeg14} and it remains to establish the statement under $(iii)$. We recall some additional notation from \cite{BucSeg14}: let $\alpha_{nj}(u_{j})=\alpha_{n}(1,\dots,1,u_{j},1,\dots,1)$, $\text{id}_{[0,1]}$ be the identity function on $[0,1]$, $I_{nj}(u_{j})=(\text{id}_{[0,1]}+\sqrt{k_n/n}\alpha_{nj})^{-}(u_{j})$, and $\bI_{n}(\bu)=(I_{n1}(u_{1}),\dots, I_{nd}(u_{d}))$. 

Similar to the proof of Theorem 3.5 in \cite{BucSeg14}, it suffices to show that 
\begin{equation*} 
\sup_{\bu\in [0,1]^{d}}\Big|\sqrt{n/k_n}\{C_{k_n}(\bI_{n}(\bu))-C_{k_n}(\bu)\}+\sum_{j=1}^{d}\dot{C}_{\infty,j}(\bu)\alpha(\bu^{(j)})\Big|=0, 
\end{equation*}
which in turn follows if we show that 
\begin{align}
\sup_{\bu\in [0,1]^{d}}\Big|\sqrt{n/k_n}\{C_{\infty}(\bI_{n}(\bu))-C_{\infty}(\bu)\}+\sum_{j=1}^{d}\dot{C}_{\infty,j}(\bu)\alpha(\bu^{(j)})\Big|&\to 0,\label{eq:BucSeg14_2_1}
\\ 
\sup_{\bu\in [0,1]^{d}}|\Delta_{n}(\bI_{n}(\bu))-\Delta_{n}(\bu)|&\to 0, \label{eq:BucSeg14_2_2}
\\
\sup_{\bu\in [0,1]^{d}}\Big|\sqrt{n/k_n}\ca(k_n)[S(\bI_{n}(\bu))-S(\bu)]\Big|&\to 0, \label{eq:BucSeg14_2_3}
\end{align}
where $\Delta_{n}=\sqrt{n/k_n} \{ C_{k_n}-C_{\infty}-\ca(k_n)S \}$. 
Note that \eqref{eq:BucSeg14_2_1} follows by exactly the same arguments as (A.8) in \cite{BucSeg14}, while the proof of \eqref{eq:BucSeg14_2_2} is similar to the proof of (A.9) in \cite{BucSeg14}, using that
$\Delta_n$ is relatively compact in $\cC([0,1]^d)$.
For \eqref{eq:BucSeg14_2_3}, by H{\"o}lder-continuity of $S$,
\[
|S(\bI_{n}(\bu))-S(\bu)|\leq \big(\max_{j=1,\dots,d}|I_{nj}(u_{j})-u_{j}|\big)^{\delta}.
\]
By Vervaat's Lemma, see also formula (4.2) in \cite{BucVol13},
\[
\max_{j=1,\dots,d}|I_{nj}(u_{j})-u_{j}|=O(\sqrt{k_n/n}).
\]
Hence,
\[
\sup_{\bu\in [0,1]^{d}}\Big|\sqrt{n/k_n}\ca(k_n)[S(\bI_{n}(\bu))-S(\bu)]\Big|=O((n/k_n)^{(1-\delta)/2}\ca(k_n))=o(1)
\]
by assumption.
\end{proof}

We will generalize this proposition by including the additional parameter $a\in A$. Define 
\begin{align*}
\widetilde \cD_{k,n} &:= \{\alpha \in \ell^\infty([0,1]^d \times A): C_{\ang{ak}} + (n/k)^{-1/2}\alpha(\, \cdot \, , a) \in \cD_{\Phi}~ \forall a\in A\},
\\
\widetilde \cD_{0} &:= \{\alpha \in \mathcal{C}([0,1]^d \times A): \alpha(\cdot,a) \in \cD_{0}~ \forall a\in A\},
\\
\widetilde g_{k,n}&: \widetilde \cD_{k,n} \to \ell^\infty([0,1]^d \times A), 
\quad 
\alpha \mapsto \Big((\bu,a) \mapsto \sqrt{n/k}\big\{ \Phi(C_{\ang{ak}} + (n/k)^{-1/2}\alpha(\,\cdot\,,a)) - \Phi(C_{\ang{ak}})\big\}(\bu)  \Big),
\\
\widetilde g &: \widetilde \cD_{0} \to \ell^\infty([0,1]^d \times A), 
\quad 
\alpha \mapsto \Big((\bu,a) \mapsto  \alpha(\bu,a) - \sum_{j=1}^d \dot{C}_{\infty,j}(\bu) \alpha(\bu^{(j)},a) \Big).
\end{align*}

\begin{Prop}\label{prop:copblock} 
Assume that Assumption~\ref{assump:copula_convergence} is met, with $C_\infty$ satisfying Assumption~\ref{assump:pd}. Further, let $m=m_n$ be a strictly increasing integer sequence with $m_n=o(n)$ such that one of the following conditions is met:
\begin{compactenum}
\item[(i)]
$\SC_1(\ang{m_n a_n})$ from Assumption~\ref{assump:BuSe3.4_new}(a) holds for every converging sequence $a_n$ in $A$;
\item[(ii)] Assumption~\ref{assump:BuSe3.4_new}(b) holds;
\item[(iii)] $\SC_2(\ang{m_n a_n})$ from Assumption~\ref{assump:BuSe3.4_new}(c) holds for every converging sequence $a_n$ in $A$.
\end{compactenum}  
Then, for any sequence $\alpha_n$ in $\widetilde\cD_{m_n,n}$ with $\alpha_n \to \alpha$ in $(\ell^\infty([0,1]^d \times A),\|\cdot\|_\infty)$ where $\alpha \in \widetilde\cD_0$, we have $\widetilde g_{m_n,n}(\alpha_n) \to \widetilde g(\alpha)$.
\end{Prop}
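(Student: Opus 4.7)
The plan is to reduce the uniform-in-$a$ statement to the pointwise-in-$a$ version in Proposition~\ref{prop:BucSeg14} via a subsequence argument paired with a careful rescaling. I would argue by contradiction: if $\|\widetilde g_{m_n,n}(\alpha_n)-\widetilde g(\alpha)\|_\infty \not\to 0$, then there exist $\epsilon>0$ and a sequence $(\bu_n,a_n)\in[0,1]^d\times A$, along a subsequence, such that $|\widetilde g_{m_n,n}(\alpha_n)(\bu_n,a_n)-\widetilde g(\alpha)(\bu_n,a_n)|>\epsilon$. First I would extract a further subsequence so that $(\bu_n,a_n)\to (\bu^*,a^*)\in [0,1]^d\times A$ by compactness; noting that $k_n := \ang{a_n m_n}\geq a_\wedge m_n\to\infty$ and $k_n\leq a_\vee m_n = o(n)$, a final extraction makes $k_n$ strictly increasing.

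Next I would unwind the two distinct normalizations $\sqrt{n/m_n}$ and $\sqrt{n/k_n}$. Setting $\beta_n(\bu):=\sqrt{m_n/k_n}\,\alpha_n(\bu,a_n)$, a direct computation yields $(n/k_n)^{-1/2}\beta_n=(n/m_n)^{-1/2}\alpha_n(\cdot,a_n)$, whence
\[
\widetilde g_{m_n,n}(\alpha_n)(\cdot,a_n) \;=\; \sqrt{k_n/m_n}\;g_{k_n,n}(\beta_n).
\]
Because $\alpha\in\widetilde \cD_0$ is continuous on the compact set $[0,1]^d\times A$ and hence uniformly continuous, $\alpha(\cdot,a_n)\to\alpha(\cdot,a^*)$ in $\ell^\infty([0,1]^d)$; combined with $\|\alpha_n-\alpha\|_\infty\to 0$ and $\sqrt{m_n/k_n}\to 1/\sqrt{a^*}$, this gives $\beta_n\to\beta:=(1/\sqrt{a^*})\alpha(\cdot,a^*)$ in $\ell^\infty([0,1]^d)$, with $\beta\in\cD_0$. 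Moreover $\beta_n\in\cD_{k_n,n}$, since $C_{k_n}+(n/k_n)^{-1/2}\beta_n=C_{\ang{a_n m_n}}+(n/m_n)^{-1/2}\alpha_n(\cdot,a_n)\in\cD_\Phi$ by $\alpha_n\in\widetilde \cD_{m_n,n}$.

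I would then invoke Proposition~\ref{prop:BucSeg14} for the sequence $k_n$: under hypotheses~(i) or~(iii) the assumptions $\SC_1(\ang{m_n a_n})$ or $\SC_2(\ang{m_n a_n})$ are posited for every converging sequence $(a_n)$ in $A$, which applies to our chosen $a_n\to a^*$, while under~(ii) the hypothesis is independent of the sequence $(a_n)$. Thus $g_{k_n,n}(\beta_n)\to g(\beta)$ in $\ell^\infty([0,1]^d)$, and linearity of $g$ combined with $\sqrt{k_n/m_n}\to\sqrt{a^*}$ yields
\[
\widetilde g_{m_n,n}(\alpha_n)(\cdot,a_n)\;\longrightarrow\;\sqrt{a^*}\,g(\beta)\;=\;g(\alpha(\cdot,a^*))\;=\;\widetilde g(\alpha)(\cdot,a^*)
\]
in $\ell^\infty([0,1]^d)$. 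On the other hand $\widetilde g(\alpha)(\cdot,a_n)\to\widetilde g(\alpha)(\cdot,a^*)$ uniformly in $\bu$, using uniform continuity of $\alpha$ together with the bound $0\leq\dot C_{\infty,j}\leq 1$. Subtracting and evaluating at $\bu_n$ gives $|\widetilde g_{m_n,n}(\alpha_n)(\bu_n,a_n)-\widetilde g(\alpha)(\bu_n,a_n)|\to 0$, which is the sought contradiction.

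The main obstacle will be the normalization bookkeeping: the rescaling $\beta_n=\sqrt{m_n/k_n}\,\alpha_n(\cdot,a_n)$ must be chosen with care so that the perturbation in the argument of $\Phi$ takes precisely the form required by $g_{k_n,n}$; after that, the external factor $\sqrt{k_n/m_n}$ and the linearity of $g$ absorb the remaining mismatch in a transparent way. A secondary technicality is checking that the $a$-uniform versions of Assumption~\ref{assump:BuSe3.4_new} in hypotheses~(i) and~(iii) transfer to the sequence $k_n=\ang{a_n m_n}$ extracted along the subsequence; this is immediate, however, since those hypotheses are quantified over all converging sequences in $A$.
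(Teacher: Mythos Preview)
Your proposal is correct and follows essentially the same approach as the paper's proof: a contradiction argument that extracts a convergent sequence $(a_n)\to a^*$ in $A$, rescales via $\beta_n=\sqrt{m_n/\ang{a_n m_n}}\,\alpha_n(\cdot,a_n)$, and then applies Proposition~\ref{prop:BucSeg14} along $k_n=\ang{a_n m_n}$ together with the linearity of $g$ and the uniform continuity of $\alpha$. Your additional care in passing to a further subsequence so that $k_n$ is strictly increasing (as required by Proposition~\ref{prop:BucSeg14}) is a nice touch that the paper leaves implicit.
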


\begin{proof}[Proof of Proposition~\ref{prop:copblock}] We will proceed by contradiction. Assume that $\widetilde g_{m_n,n}(\alpha_n)$ does not converge to $\widetilde g(\alpha)$. Then there exists $\varepsilon>0$, an increasing sequence $(n_j)_{j\in \N}$ of natural numbers and sequences $(a_n)_{n \in \N}$ in $A$, $(\bu_n)_{n \in \N}$ in $[0,1]^d$ such that 
\begin{equation} \label{eq:tildegn_eps}
\Big|[\widetilde g_{m_{n_j},n_j}(\alpha_{n_j})](\bu_{n_j},a_{n_j}) - [\widetilde g(\alpha)](\bu_{n_j},a_{n_j})\Big| \geq \varepsilon  \quad \forall j \in \N.
\end{equation} 
By compactness of $[0,1]^d\times A$ there further exists a sub-sequence $(n_{j_r})_{r\in \N}$ such that, as $r \to \infty$, $\bu_{n_{j_r}} \to \bu_0 \in [0,1]^d$, $a_{n_{j_r}} \to a_0 \in A$. To simplify notation we shall abbreviate $n_{j_r} = n$. Since $\alpha_n \to \alpha$ uniformly and since $\alpha$ is continuous (thus uniformly continuous as it is defined on a compact set) it follows that 
\[
\|\alpha_n(\cdot,a_n) - \alpha(\cdot, a_0)\|_{\infty} \leq \|\alpha_n(\cdot,a_n) - \alpha(\cdot, a_n)\|_{\infty} + \|\alpha(\cdot,a_n) - \alpha(\cdot, a_0)\|_{\infty} = o(1), 
\]
i.e., $(\beta_n)_{n \in \N}$ with $\beta_n = ( m_n/ \ang{a_nm_n})^{1/2} \alpha_{n}(\cdot, a_n)$
is a sequence in $\cD_{\ang{a_nm_n},n}$ with $\beta_n \to \beta = a_0^{-1/2} \alpha(\cdot, a_0) \in \cD_0$ uniformly. 
We may hence apply Proposition~\ref{prop:BucSeg14} to obtain that
\begin{align*} \label{eq:gbe}
g_{\ang{m_na_n},n}(\beta_n) \to g(\beta).
\end{align*}

Now, the left-hand side of this display equals
\[
\sqrt{n/\ang{a_nm_n}} \big\{ \Phi(C_{\ang{a_nm_n}} + (n/\ang{a_nm_n})^{-1/2} \beta_n {)} - \Phi(C_{\ang{a_nm_n}})\big\}
=
\sqrt{m_n/\ang{a_nm_n}}  \widetilde g_{m_n,n}(\alpha_n)(\,\cdot\,, a_n),
\]
while the right-hand side is equal to
\[
a_0^{-1/2} \widetilde g(\alpha)(\cdot, a_0).
\]
As a consequence, since $a_\wedge >0$, 
\[
\widetilde g_{m_n,n}(\alpha_n)(\,\cdot\,, a_n)
\to
\widetilde g(\alpha)(\cdot, a_0),
\]
uniformly on $[0,1]^d$. Finally, since the mapping $\widetilde g$ is continuous and since $\alpha(\cdot, a_n) \to \alpha(\cdot, a_0)$, it follows that $\widetilde g(\alpha)(\cdot, a_n) \to \widetilde g(\alpha)(\cdot, a_0)$ so that
\[
\lim_{n\to\infty}\Big\|[\widetilde g_{m_n,n}(\alpha_n)](\cdot,a_n) - [\widetilde g(\alpha)](\cdot,a_n)\Big\|_\infty = 0.
\]
This contradicts~\eqref{eq:tildegn_eps}, and thus $\widetilde g_{m_n,n}(\alpha_n) \to \widetilde g(\alpha)$ as asserted. 
\end{proof}

\begin{proof}[Proof of Theorem~\ref{theo:estmar}]
From Theorem~\ref{theo:known} (for the continuity of sample paths) and a careful calculation (for the other condition imposed in the definition of $\widetilde \cD_0$) it is easy to see that there exists a version of $\bbC^{\lozenge}$ with sample paths that are in $\widetilde \cD_0$ almost surely.  Hence, Proposition~\ref{prop:copblock} combined with the extended continuous mapping theorem (Theorem 1.11.1 in \citealp{VanWel96}) implies that
\begin{align*}
\sqrt{n/m}  \big( \hat C_{n,\ang{ma}}^{\text{alt}}- C_{\ang{ma}} \big)
=
\widetilde g_{m, n} ( \bbC_{n,m}^{\lozenge} )   
\dto
\widetilde g(\bbC^{\lozenge}) = \widehat \bbC^{\lozenge},
\end{align*}
where
\[
 \hat C_{n,\ang{ma}}^{\text{alt}}(\bm u) = \hat C_{n,\ang{ma}}^\circ( (\hat C_{n,\ang{ma},1}^\circ)^-(u_1), \dots, (\hat C_{n,\ang{ma},d}^\circ)^-(u_d)). 
\]
Following the lines of the proof of Lemma {A}.2 in \cite{BucSeg14} one can further show that
\[
\sup_{(\bm u,a) \in [0,1]^d \times A}  \Big| \hat C_{n,\ang{ma}}^{\text{alt}}(\bm u)  -  \hat C_{n,\ang{ma}}(\bm u) \Big| = o_\Prob(\sqrt{m/n}),
\]
which implies the assertion.
\end{proof}

\subsection{Proofs for Section~\ref{sec:comp}} \label{sec:proofcomp}

Before proving Theorem~\ref{theo:varcomp}, we state the following lemma which provides a crucial technical ingredient.

\begin{Lemma}\label{lem:varcomp}
Suppose that, for any $n\in\N$, $(X_{n,i})_{i=1,2,\dots,n}$ is an excerpt from a univariate strictly stationary time series. 
Suppose $m=m_{n}$ is a sequence of positive integers such that $m\to\infty$ and $m/n\to 0$ as $n\to \infty$. For $h\in\N$, let $\Gamma_{n}(h)=\Cov(X_{n,i},X_{n,i+h})$ and assume that $\sup_{h \in \Z, n \in \N} |\Gamma_{n}(h)| < \infty$. If there exists a sequence $\alpha(\cdot)$ such that $\sum_{h=1}^{\infty}\alpha(h)<\infty$ and $|\Gamma_{n}(h+m_n)|\leq\alpha(h)$ for all $h=1,2,\dots$ and $n=1,2,\dots$, then 
\[
 \Var\Big(\sqrt{n/m} \frac{1}{n}\sum_{i=1}^{n} X_{n,i}\Big) \le  \Var\Big( \sqrt{n/m} \frac{m}{n}\sum_{1 \leq h \leq \ang{n/m}} X_{n,1 + m(h-1)} \Big) + o(1), \quad n \to \infty.
\]
\end{Lemma}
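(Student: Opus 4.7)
The plan is to exploit stationarity to reduce both sides of the inequality to expressions involving a single quantity
\[
\sigma_n^2(k) := \Var\Big(\sum_{l=0}^{k-1} X_{n,\,1+ml}\Big),
\]
which by stationarity depends on $k$ but not on the starting index. Writing $b = \ang{n/m}$ and $n = bm+r$ with $0 \le r < m$, the right-hand side of the claim (without the $o(1)$ correction) equals exactly $R_0 := (m/n)\sigma_n^2(b)$. For the left-hand side, the idea is to partition $\{1,\dots,n\}$ into $m$ arithmetic progressions $P_j = \{j, j+m, j+2m, \dots\}\cap\{1,\dots,n\}$, $j = 1,\dots,m$. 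A direct count shows $|P_j|=b+1$ for $j=1,\dots,r$ and $|P_j|=b$ for $j=r+1,\dots,m$, and by stationarity the partial sums $S_j^\ast := \sum_{i \in P_j} X_{n,i}$ satisfy $\Var(S_j^\ast) = \sigma_n^2(|P_j|)$.

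Starting from $\sum_{i=1}^n X_{n,i} = \sum_{j=1}^m S_j^\ast$, I would apply Minkowski's inequality to the $L^2$-norms of the centred sums, followed by the elementary inequality $(\sum_{j=1}^m c_j)^2 \le m \sum_{j=1}^m c_j^2$, to obtain
\[
L := \Var\Big(\sqrt{n/m}\,\tfrac{1}{n}\sum_{i=1}^n X_{n,i}\Big)
= \frac{1}{nm}\Var\Big(\sum_{j=1}^m S_j^\ast\Big)
\le \frac{r\,\sigma_n^2(b+1) + (m-r)\,\sigma_n^2(b)}{n}.
\]
Subtracting $R_0$ on both sides then collapses the estimate to
\[
L - R_0 \;\le\; \frac{r}{n}\big(\sigma_n^2(b+1) - \sigma_n^2(b)\big).
\]

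The main obstacle is to show that $|\sigma_n^2(b+1) - \sigma_n^2(b)| = O(1)$ uniformly in $n$. A direct calculation yields
\[
\sigma_n^2(b+1) - \sigma_n^2(b) = \Gamma_n(0) + 2\sum_{j=1}^{b}\Gamma_n(mj).
\]
The term $j=1$ and $\Gamma_n(0)$ are controlled by the uniform bound $\sup_{h,n}|\Gamma_n(h)| < \infty$; for $j\ge 2$ the hypothesis $|\Gamma_n(h+m_n)|\le\alpha(h)$ applied with $h=(j-1)m\ge 1$ gives $|\Gamma_n(mj)|\le\alpha((j-1)m)$. Since $\alpha$ may be taken non-negative, the key observation is that $\sum_{l=1}^{\infty}\alpha(lm)$ is a sum of $\alpha$ over a subset of $\{1,2,\dots\}$ and is therefore dominated by $\sum_{h=1}^{\infty}\alpha(h) < \infty$. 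This yields $|\sigma_n^2(b+1) - \sigma_n^2(b)| = O(1)$ uniformly in $n$, and combining with $r < m$ and $m/n\to 0$ gives $L - R_0 \le (m/n) \cdot O(1) = o(1)$, which is the claim.
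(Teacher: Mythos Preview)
Your argument is correct and takes a genuinely different route from the paper's proof.

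The paper expands both variances as weighted sums of autocovariances and reduces the claim to showing that
\[
\sum_{|h|<n}\gamma_n(h)\Gamma_n(h)\ge o(1),\qquad
\gamma_n(h)=
\begin{cases}
\tfrac{m-1}{m},& h\equiv 0 \pmod m,\\
-\tfrac{1}{m},& h\not\equiv 0 \pmod m.
\end{cases}
\]
It then constructs an auxiliary stationary sequence $(\phi_{n,t})_t$, independent of $(X_{n,t})_t$, whose autocovariance is exactly $\gamma_n(\cdot)$, and reads off the desired nonnegativity from
\(
0\le \Var\big(n^{-1/2}\sum_{t=1}^n\phi_{n,t}X_{n,t}\big)=\sum_{|h|<n}(1-|h|/n)\gamma_n(h)\Gamma_n(h),
\)
followed by an edge-term analysis to dispose of the factor $1-|h|/n$.

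Your proof bypasses this kernel--auxiliary-variable construction entirely. By partitioning $\{1,\dots,n\}$ into the $m$ residue classes mod $m$, each class yields a partial sum with the same law as the disjoint-blocks sum (of length $b$ or $b+1$), and the elementary bound $\Var(\sum_j Y_j)\le m\sum_j\Var(Y_j)$ (Minkowski followed by Cauchy--Schwarz) gives $L\le n^{-1}\{r\sigma_n^2(b+1)+(m-r)\sigma_n^2(b)\}$ directly. The only remaining work is the $O(1)$ bound on $\sigma_n^2(b+1)-\sigma_n^2(b)$, which you handle cleanly.

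What each approach buys: your argument is shorter, fully elementary, and makes transparent \emph{why} the sliding-blocks variance cannot exceed the disjoint-blocks one---the full sum is literally an average of $m$ shifted disjoint-blocks sums. The paper's approach, on the other hand, identifies the difference of variances with a quadratic form in $\Gamma_n(\cdot)$ whose kernel $\gamma_n$ is positive semidefinite; this is a more structural viewpoint that could be useful if one wanted, say, to characterize when equality holds or to extend the comparison to other subsampling schemes.
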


\begin{proof}[Proof of Lemma~\ref{lem:varcomp}]
Without loss of generality, we may assume that $\Exp[X_{n,i}]=0$. Observe that
\[
\Var\Big(\sqrt{n/m} \frac{1}{n}\sum_{i=1}^{n} X_{n,i}\Big) = \frac{1}{n} \sum_{i,j = 1}^{n} \frac{\Gamma_n(|i-j|)}{m} = \frac{1}{m} \sum_{|h|<n} (1-|h|/n) \Gamma_n(h).
\]
By uniform boundedness and symmetry of $\Gamma_n(\cdot)$,
\begin{align*}
\frac{1}{m} \sum_{|h|<n} \frac{|h| |\Gamma_n(h)|}{n} 
&= \frac{1}{n} \sum_{|h|\leq m} \frac{|h|}{m}|\Gamma_n(h)| 
+ \frac{2}{m} \sum_{k=1}^{n-m-1} \frac{k+m}{n} |\Gamma_n(k+m)|
\\
& \leq O(m/n) + \frac{1}{m} O(1) \sum_{k=1}^{n-m-1} \alpha(k) = o(1).  
\end{align*}
Hence 
\[
\Var\Big(\sqrt{n/m} \frac{1}{n}\sum_{i=1}^{n} X_{n,i}\Big) = \frac{1}{m} \sum_{|h|<n} \Gamma_n(h) + o(1).
\]
Similarly
\begin{align*}
\Var\Big( \sqrt{n/m} \frac{m}{n}\sum_{1 \leq h \leq \ang{n/m}} X_{n,1 + m(h-1)} \Big) 
&= \Var\Big( \frac{1}{\sqrt{\ang{n/m}}}\sum_{1 \leq h \leq \ang{n/m}} X_{n,1 + m(h-1)} \Big)(1+o(1)) 
\\
&= \sum_{|h| < \ang{n/m}} \Gamma_n(mh) + o(1).
\end{align*}
From now on assume without loss of generality that $n/m$ is an integer and that $m\ge 2$. It suffices to prove that
\begin{align} \label{eq:tbs}
\sum_{|h| < \ang{n/m}} \Gamma_n(mh) - \frac{1}{m} \sum_{|h|<n} \Gamma_n(h)  = 
\sum_{|h|<n} \gamma_n(h) \Gamma_n(h) \geq o(1),
\end{align}
where
\[
    \gamma_{n}(h)  = 
    \begin{cases}
    \frac{m-1}{m} & \text{if}\ h=0    \pmod {m}, \\
    -\frac{1}{m} & \text{if}\ h\neq 0 \pmod {m}.
    \end{cases}
\]
To this end, for $n\in\N$, let $U_{n}$ be a random variable uniformly distributed on $\{0,1,\dots,m-1\}$ and independent of $X_{n,i}$, $i=1,2,\dots,n$. For $t\in\Z$, let
\[
\phi_{n,t}= 
\begin{cases} 
a_{n} & \text{if}\ t=U_{n}     \pmod {m}, \\
b_{n} & \text{if}\ t\neq U_{n} \pmod {m},
\end{cases}
\]
where $a_{n}=\frac{(m-1)}{\sqrt{m}}$, $b_{n}=-\frac{1}{\sqrt{m}}$. Note that, for any $n\in\N$, $(\phi_{n,t})_{t\in \Z}$ is a second order stationary time series. More precisely, when $h=0 \pmod {m}$, 
\[
\Exp[\phi_{n,t}\phi_{n,t+h}]=\frac{1}{m}a_{n}^{2}+\frac{m-1}{m}b_{n}^{2}=\frac{m-1}{m},
\]
while, for $h\neq 0 \pmod {m}$,
\[
\Exp[\phi_{n,t}\phi_{n,t+h}]=\frac{2}{m}a_{n}b_{n}+\frac{m-2}{m}b_{n}^{2}=-\frac{1}{m},
\]
whence $\Exp[\phi_{n,t}\phi_{n,t+h}] = \gamma_{n}(h)$.
Next, since $X_{n,t}$ is centered, we have
\begin{equation}\label{eq:avg_1}
\begin{aligned}
    0&\leq \Var\Big(\frac{1}{\sqrt{n}}\sum_{t=1}^{n}\phi_{n,t}X_{n,t}\Big)=\sum_{|h|<n}\Big(1-\frac{|h|}{n}\Big)\gamma_{n}(h)\Gamma_{n}(h)=\sum_{|h|<n}\gamma_{n}(h)\Gamma_{n}(h)-\sum_{|h|<n}\frac{|h|}{n}\gamma_{n}(h)\Gamma_{n}(h).
\end{aligned}
\end{equation}
By the Dominated Convergence Theorem,
\begin{equation}\label{eq:avg_2}
    \begin{aligned}
 \bigg|\sum_{|h|<n}\frac{|h|}{n}\gamma_{n}(h)\Gamma_{n}(h)\bigg|
        &\leq
        \bigg|\sum_{|h|<m}\frac{|h|}{n}\gamma_{n}(h)\Gamma_{n}(h)\bigg|
        +
        \bigg|2 \sum_{m\leq h<n}\frac{h}{n}\gamma_{n}(h)\Gamma_{n}(h)\bigg|    \\     
        &\leq
        \sum_{|h|<m}\frac{|h|}{mn}\Big|\Gamma_{n}(h)\Big|
        +        
        2 \sum_{1 \le h <n-m}\frac{h+m}{n}\Big|\Gamma_{n}(h+m)\Big| 
        =O\Big(\frac{m}{n}\Big) + o(1).
    \end{aligned}
\end{equation}
By \eqref{eq:avg_1} and \eqref{eq:avg_2}, 
\[
\sum_{|h|<n}\gamma_{n}(h)\Gamma_{n}(h)+o(1)\geq 0,
\]
which is \eqref{eq:tbs}.
\end{proof}

\begin{proof}[Proof of Theorem~\ref{theo:varcomp}]
Since the proofs for the version with known and estimated margins are similar, we restrict our attention to the case of estimated margins. We need to show that, for any $k\in\N$, $w_1,\dots,w_k$ and $\bu_1,\dots,\bu_k$,
\[
\Var\Big( \sum_{j=1}^k w_j\widehat\bbC^{\lozenge}(\bu_j,1) \Big) \le \Var\Big(\sum_{j=1}^k w_j \widehat\bbC^{D}(\bu_j)\Big).
\] 
For the ease of writing, we only consider the case $k=1$ and $w_1=1$; the general case follows along similar lines.
Let $(\bm X_t)_t$ denote an i.i.d.\ sequence with cdf $F_{1}=C_\infty$. In that case, Assumptions~\ref{assump:copula_convergence}, \ref{assump:mixing} and \ref{assump:BuSe3.4_new} are trivially met, provided $m=m_n\to\infty$ and $m=o(n)$, whence we may apply all results from the proof of Theorem~\ref{theo:estmar}.  
Let us next show that
\[
\Var\Big( \widehat\bbC^{\lozenge}(\bu,1) \Big) = \lim_{n \to \infty} \Var\Big( \sqrt{n/m} \frac{1}{n}\sum_{i=1}^{n} V_{n,i}(\bu) \Big)
\]
where 
\[
V_{n,i}(\bu) := \ind(\bU_{m,i}\leq \bu) - C_m(\bu) - \sum_{j=1}^d \dot C_{\infty,j}(\bu) \{\ind(U_{m,i,j}\leq u_j) - u_j\}.
\] 
Indeed,
\[
V_{n,i}(\bu) = W(\bu)^\top \Big(\ind(\bU_{m,i}\leq \bu),\ind(U_{m,i,1}\leq u_1),...,\ind(U_{m,i,d}\leq u_d) \Big)^\top =: W(\bu)c_{n,i}(\bu) 
\] 
where $W(\bu)^\top = (1,-\dot C_{\infty,1}(\bu),\dots ,-\dot C_{\infty,d}(\bu))$.  Now, the covariance matrix of $\frac{1}{b}\sum_{i=1}^b c_{n,i}(\bu)$ converges by Lemma~\ref{lemma:cov}, and the same is true for $\frac{1}{n}\sum_{i=1}^n c_{n,i}(\bu)$, since $b=n(1+o(1))$ and since the $c_{n,i}$ are bounded by $1$ coordinate-wise. The claim then follows by simple linear algebra.

A similar argument can be used to show that 
\[
\Var\Big( \widehat\bbC^{D}(\bu) \Big) = \lim_{n \to \infty} \Var\Big( \sqrt{n/m} \frac{m}{n}\sum_{1 \leq h \leq \ang{n/m}} V_{1 + m(h-1),n}(\bu) \Big);
\]
note that the summands on the right-hand side are independent due to the independence of $(\bm X_t)_t$

Finally, apply Lemma~\ref{lem:varcomp} with $X_{n,i} := V_{n,i}(\bu)$ (note that the condition on the autocovariances is trivially satisfied with $\alpha\equiv0$). 
\end{proof}

\subsection{Analytical expressions for the variances in Figure~\ref{fig:compvarth}} \label{sec:asyvar}
A tedious but straightforward calculation shows that one may alternatively express $\gamma(\bv,\bu,c,a)$ from Theorem~\ref{theo:known}, for $a_\wedge\leq a\leq c\leq a_\vee$, as
\[
\gamma(\bv,\bu,c,a) = 
\begin{cases}
0, & C_{\infty}(\bv^{a/c}\wedge\bu)= C_{\infty}(\bv^{a/c})C_{\infty}(\bu)
\\
-(c+a)C_{\infty}(\bv)C_{\infty}(\bu) & C_{\infty}(\bv^{a/c}\wedge\bu) = 0 \neq C_{\infty}(\bv^{a/c})C_{\infty}(\bu)
\end{cases}
\]
and otherwise
\begin{multline*}
\gamma(\bv,\bu,c,a) = C_{\infty}(\bv^{1-a/c})\Big[2a\frac {C_{\infty}(\bv^{a/c}\wedge\bu)-C_{\infty}(\bv^{a/c})C_{\infty}(\bu)}{\ln C_{\infty}(\bv^{a/c}\wedge\bu)-\ln \big(C_{\infty}(\bv^{a/c})C_{\infty}(\bu)\big)}
\\
+(c-a)C_{\infty}(\bv^{a/c}\wedge\bu)-(c+a)C_{\infty}(\bv^{a/c})C_{\infty}(\bu)\Big].
\end{multline*}
For $a=c=1$, the expression further simplifies to
\[
\gamma(\bu,\bv,1,1)=\begin{cases}
0,      & \text{if}\ C_{\infty}(\bu\wedge\bv)= C_{\infty}(\bu)C_{\infty}(\bv), 
\\
-2C_{\infty}(\bu)C_{\infty}(\bv),      & \text{if}\ C_{\infty}(\bu\wedge\bv) = 0 \neq C_{\infty}(\bu)C_{\infty}(\bv), 
\\
2\Big(\frac{C_{\infty}(\bu\wedge\bv)-C_{\infty}(\bu)C_{\infty}(\bv)}
{\ln C_{\infty}(\bu\wedge\bv)-\ln(C_{\infty}(\bu)C_{\infty}(\bv))}
-C_{\infty}(\bu)C_{\infty}(\bv)\Big),      & \text{else}. 
\end{cases}
\]

For the bivariate Gumbel--Hougaard copula $C_\infty$ with shape parameter $\beta\geq1$, it is then straightforward to show that, for $u\in (0,1),$
\begin{align*}
    C_{\infty}(u,u)&=u^{(2^{1/\beta})},\\
    \dC_{\infty,j}(u,u)&=u^{(2^{1/\beta}-1)}2^{1/\beta-1}, \qquad (j\in\{1,2\}),\\
    \Var(\widehat\bbC^{\lozenge}(u,u,1))&=2\bigg(\frac{C_{\infty}(u,u)-(C_{\infty}(u,u))^{2}}{-\ln C_{\infty}(u,u)}-(C_{\infty}(u,u))^{2}\bigg)\\
    &\mathph{=}+\ 4(\dC_{\infty,1}(u,u))^{2}\bigg[\bigg(\frac{u-u^{2}}{-\ln u}-u^{2}\bigg)+\ind\{\beta>1\}\bigg(\frac{C_{\infty}(u,u)-u^{2}}{\ln C_{\infty}(u,u)- 2\ln u}- u^{2}\bigg)\bigg]\\
    &\mathph{=}-\ 8\dC_{\infty,1}(u,u)\bigg(\frac{C_{\infty}(u,u)-C_{\infty}(u,u)u}{-\ln u}-C_{\infty}(u,u)u\bigg),\\
    \Var(\widehat\bbC^{D}(u,u))&=C_{\infty}(u,u)-(C_{\infty}(u,u))^{2}+2(\dC_{\infty,1}(u,u))^{2}\Big(u-u^{2}+C_{\infty}(u,u)-u^{2}\Big)\\
    &\mathph{=} -\ 4\dC_{\infty,1}(u,u)\Big(C_{\infty}(u,u)-C_{\infty}(u,u)u\Big)
\end{align*}
In particular, when $\beta=1$, the Gumbel--Hougaard copula degenerates to the independent copula, and 
\begin{align*}
    \Var(\widehat\bbC^{\lozenge}(u,u,1))&=2\bigg(\frac{u^2-u^4}{\ln u^2 -\ln u^4}-u^4\bigg)-4\bigg(\frac{u^3-u^4}{\ln u^3 -\ln u^4}-u^4\bigg),\\
    \Var(\widehat\bbC^{D}(u,u))&=u^2-2u^3+u^4.
\end{align*}

\section{Proofs for Section~3}
\label{sec:proof2}

\subsection{Proof of Lemma~\ref{lem:regvarab} and Lemma~\ref{lem:bounddiffa}}

Elementary calculations as in Lemma 2.2 in \cite{BucVolZou19} show that, if Assumption~\ref{assump:high_order} and the expansion in \eqref{eq:unifCk} is met, then
\begin{equation}\label{eq:homS}
\frac{S(\bm u^s)}{C_\infty(\bm u^s)} = s^{1-\rho_{\ca}} \frac{S(\bm u)}{C_\infty(\bm u)}
\end{equation}
for all $s>0, \bm u\in(0,1]^d$.

The following consequence of \eqref{eqn:thirdor}, holding for any $x \in (0,\infty)$ with $x_k := \ang{xk}/k$ and for any $\bm u \in (0,1]^d$, will be used repeatedly:
\begin{align} 
&C_k(\bu^{1/x_k})^{x_k} - C_\infty(\bu^{1/x_k})^{x_k}\nonumber
\\ 
=\ & x_k C_\infty(\bu^{1/x_k})^{x_k-1} \big\{ C_k(\bu^{1/x_k}) - C_\infty(\bu^{1/x_k}) \big\} \nonumber
\\
& + \frac{x_k(x_k-1)}{2}C_\infty(\bu^{1/x_k})^{x_k-2} \big\{C_k(\bu^{1/x_k}) - C_\infty(\bu^{1/x_k}) \big\}^2 + (x_k-1)O(\ca(k)^3 ) \nonumber
\\
=\ & x_k C_\infty(\bu^{1/x_k})^{x_k-1}\Big\{ \ca(k)S(\bu^{1/x_k}) + \ca(k)\cb(k)T(\bu^{1/x_k}) + o(\ca(k)\cb(k))\Big\} \nonumber
\\
&+ \frac{x_k(x_k-1)}{2}C_\infty(\bu^{1/x_k})^{x_k-2}\Big\{ \ca(k)S(\bu^{1/x_k}) + \ca(k)\cb(k)T(\bu^{1/x_k}) + o(\ca(k)\cb(k))\Big\}^2 \nonumber
\\
&+(x_k-1)O(\ca(k)^3) \nonumber 
\\
=\ & x_k C_\infty(\bu)\Big\{ \ca(k)\frac{S(\bu^{1/x_k})}{C_\infty(\bu^{1/x_k})} + \ca(k)\cb(k)\frac{T(\bu^{1/x_k})}{C_\infty(\bu^{1/x_k})}
+ \ca(k)^2 \frac{(x_k-1)}{2}\Big[\frac{S(\bu^{1/x_k})}{C_\infty(\bu^{1/x_k})}\Big]^2 \Big\} \nonumber
\\
& + (x_k-1)O(\ca(k)^3) + o(\ca(k)\cb(k)) \label{eqn_cmxexp1}
\\
=\ & x_k C_\infty(\bu)\Big\{ \ca(k)\frac{S(\bu)}{C_\infty(\bu)}x_k^{\rho_\ca-1} + \ca(k)\cb(k)\frac{T(\bu^{1/x_k})}{C_\infty(\bu^{1/x_k})}
+ (1+o(1))\ca(k)^2 \frac{(x_k-1)x_k^{2\rho_\ca-2} }{2}\Big[\frac{S(\bu)}{C_\infty(\bu)}\Big]^2 \Big\}\nonumber
\\
& + o(\ca(k)\cb(k)). \label{eqn_cmxexp2}
\end{align} 

For the proof of Lemma~\ref{lem:bounddiffa}, 
note that the $o$-terms in~\eqref{eqn_cmxexp2} are uniform in $x\in \cX$.
Next, observe that 
\begin{align*}
 C_k(\bu^{1/x_k})^{x_k} - C_\infty(\bu^{1/x_k})^{x_k} 
 &= 
 C_{kx_k}(\bu) - C_\infty(\bu) + O(\delta(k)) \\
&=
 \ca(kx_k) S(\bu) + \ca(kx_k)\cb(kx_k) T(\bu) + o\Big(\ca(kx_k)\cb(kx_k)\Big)  + O(\delta(k))
\end{align*}
uniformly  in $x\in \cX$, by \eqref{eq:unifCk} and \eqref{eqn:thirdor}.
Take the difference of this expansion with~\eqref{eqn_cmxexp2} and note that by regular variation of $\cb$ we have, uniformly in $x \in \cX$, $\cb(\ang{kx}) = O(\cb(k))$. We obtain 
\begin{align*}
\Big\{\ca(\ang{kx}) - \big(\tfrac{\ang{kx}}{k}\big)^{\rho_\ca}\ca(k)\Big\}S(\bu) = O\Big(\ca(k)\cb(k) + \ca^2(k) + \delta(k) \Big)
\end{align*}
uniformly in $x \in \cX$, which implies Lemma~\ref{lem:bounddiffa}.

Next let us prove Lemma~\ref{lem:regvarab}. Note that under the i.i.d. assumption on $(\bX_t)_{t \in \Z}$ we have 
\[
C_{kx_k}(\bu) - C_\infty(\bu) = C_k(\bu^{1/x_k})^{x_k} - C_\infty(\bu^{1/x_k})^{x_k}.
\]
By linear independence of  $S, S^2/C_\infty, T$ there exist points $\bu_1,\bu_2,\bu_3 \in (0,1]^d$ such that the vectors $(S,S^2/C_\infty,T)(\bu_i), i=1,\dots,3$ are linearly independent. By continuity of $S,C_\infty,T$ this implies the existence of an open interval $\cX \subset (1/2,2)$ with $1 \in \cX$ such that the determinant of the matrix with columns $(S,S^2/C_\infty,T)(\bu_i^{1/x}), i=1,\dots,3$ is bounded away from zero uniformly on $x \in \cX$.

A combination of~\eqref{eq:homS} and \eqref{eqn_cmxexp1} shows that (recall that $x_k \to x$)
\begin{align*}
&\ca(kx_k)x_k^{1-\rho_\ca}  C_\infty(\bu)\frac{S(\bu^{1/x_k})}{C_\infty(\bu^{1/x_k})} + \ca(kx_k)\cb(kx_k) T(\bu) + o\Big(\ca(kx_k)\cb(kx_k)\Big)
\\
=\ & C_{kx_k}(\bu) - C_\infty(\bu) 
\\
=\ & x_k \frac{C_\infty(\bu)}{C_\infty(\bu^{1/x_k})}\Big\{ \ca(k)S(\bu^{1/x_k}) + \ca(k)\cb(k)T(\bu^{1/x_k})
+ \ca(k)^2 \frac{(x_k-1)}{2}\frac{S^2(\bu^{1/x_k})}{C_\infty(\bu^{1/x_k})}(1+o(1)) \Big\} 
\\
& \hspace{9cm} + o(\ca(k)\cb(k)).
\end{align*}
Re-arrange terms, recall that $x_k \to x$ and use regular variation of $\ca$ to obtain 
\begin{align}
\frac{T(\bu)C_\infty(\bu^{1/x})}{xC_\infty(\bu)} 
=\ & \frac{\frac{\ca(k)}{\ca(kx_k)} - x_k^{-\rho_\ca} }{\cb(kx_k)}\Big[S(\bu^{1/x})+r_{k,1}(\bu,x)\Big] + \frac{\cb(k)}{\cb(kx_k)}\Big[x^{-\rho_\ca}T(\bu^{1/x})+r_{k,2}(\bu,x)\Big]  \nonumber
\\
& + \frac{\ca(k)}{\cb(kx_k)}\frac{x_k-1}{2}\Big[\frac{S^2(\bu^{1/x})}{C_\infty(\bu^{1/x})}x^{-\rho_\ca} + r_{k,3} (\bu,x)\Big] + o(1), \label{eq:linrep}
\end{align}
where the remainder terms satisfy $r_{k,i}(\bu,x) = o(1), i=1,2,3$ uniformly in $x \in \cX$. 

Define the vectors
\[
v_k(\bu,x) := \Big(S(\bu^{1/x})+r_{k,1}(\bu,x),x^{-\rho_\ca}T(\bu^{1/x})+r_{k,2}(\bu,x),\frac{S^2(\bu^{1/x})}{C_\infty(\bu^{1/x})}x^{-\rho_\ca} + r_{k,3}(\bu,x)\Big)^\top. 
\]
By construction of the points $\bu_i$ and the interval $\cX$, there exists $M\in\N$ such that the determinant of the matrix $V_k(x)$ with rows $v_k(\bu_1,x),v_k(\bu_2,x),v_k(\bu_3,x)$ is bounded away from zero uniformly in $x \in \cX$ for all $k > M$. Define the vector 
\[
\bw_k(x) := \Big(\frac{ \frac{\ca(k)}{\ca(kx_k)}- x_k^{-\rho_\ca} }{\cb(kx_k)},\frac{\cb(k)}{\cb(kx_k)},\frac{x_k-1}{2}\frac{\ca(k)}{\cb(kx_k)}\Big)^\top.
\] 
Applying~\eqref{eq:linrep} with $\bu = \bu_i, i=1,2,3$ we find that, 
\[
V_k(x)\bw_k(x) \to \Big(\frac{T(\bu_1)C_\infty(\bu_1^{1/x})}{xC_\infty(\bu_1)},\frac{T(\bu_2)C_\infty(\bu_2^{1/x})}{xC_\infty(\bu_2)},\frac{T(\bu_3)C_\infty(\bu_3^{1/x})}{xC_\infty(\bu_3)} \Big)^\top \quad \text{as} \quad k \to \infty.
\]
Since $V_k(x)$ converges to an invertible matrix, we obtain that $\bw_k(x)$ converges to a vector $\bw(x)$ with entries $w_1(x),w_2(x),w_3(x)$ where each entry is a continuous function of $x$. Since $\bw_k(1) = (0,1,0)$ for all $k$, it follows that $w_2(1) = 1$. This implies that the limit of $\cb(\ang{tx})/\cb(\ang{t}) = \cb(x_t \ang{t})/\cb(\ang{t})$ with $x_t := {\ang{tx}}/{\ang{t}} \to x$ exists and is positive for all $x$ in an open set containing $1$. Regular variation of the function $t \mapsto \cb(\ang{t})$ follows by an application of Theorem B.1.3 in~\cite{DehFer06}. 
\qed

\subsection{Proof of Lemma~\ref{rem:raiid}}

We begin by proving a couple of preliminary results. All results hold under Assumptions~\ref{assump:copula_convergence} and~\ref{assump:mixing} and all convergences are for $m\to\infty$, if not mentioned otherwise. 

\begin{Lemma}\label{lemma:trans_block_length}
Let $v\in (0,1]$ and $y, z\in \bbN$ with $2y\leq z$. If
\begin{equation}\label{eq:assump_trans_block_length}
(z/(2y))\alpha(y) \leq v/2,
\end{equation}
then, for any $j=1,\dots,d$, 
\[
\Prob\Big(M_{y,1,j} \ge  F_{z,j}^{\leftarrow}(v) \Big) \leq \ang{z/(2y)}^{-1}[-\log (v/2)].
\]
\end{Lemma}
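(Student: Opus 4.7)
The plan is to bound $F_{y,j}(F_{z,j}^{\leftarrow}(v))$ from below by a standard blocking/mixing argument, and then translate this lower bound on the c.d.f.\ into the desired upper bound on $\Prob(M_{y,1,j} \ge F_{z,j}^{\leftarrow}(v))$. The proof is of the same type as the blocking arguments already used in the proof of Lemma~\ref{lemma:Lipschitz_1}, e.g.\ the displays \eqref{eq:blocking_large_c-a_large_u} and \eqref{eq:blocking_large_c-a_small_u_1}.

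First, set $K = \ang{z/(2y)}$ and single out $K$ disjoint sub-blocks of length $y$ inside $\{1,\dots,z\}$, separated by gaps of length $y$; since $2Ky\le z$ this is feasible. Observing that $\{M_{z,1,j}\le t\}$ is contained in the intersection over those $K$ sub-blocks of the events ``sub-block maximum $\le t$'', and applying the telescoping $\alpha$-mixing bound $|\Prob(\cap_k A_k) - \prod_k \Prob(A_k)| \le (K-1)\alpha(y)$ together with stationarity of $(\bX_t)$, one obtains
\[
F_{z,j}(t) \le F_{y,j}(t)^{K} + (K-1)\alpha(y), \qquad t \in \R.
\]

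Second, evaluate at $t = F_{z,j}^{\leftarrow}(v)$. By right-continuity of $F_{z,j}$ and the definition of the left-continuous generalized inverse one has $F_{z,j}(F_{z,j}^{\leftarrow}(v)) \ge v$, while hypothesis~\eqref{eq:assump_trans_block_length} gives $(K-1)\alpha(y) \le K\alpha(y) \le (z/(2y))\alpha(y) \le v/2$. Combining these facts yields
\[
F_{y,j}\big(F_{z,j}^{\leftarrow}(v)\big)^{K} \ge v/2,
\quad\text{i.e.,}\quad
F_{y,j}\big(F_{z,j}^{\leftarrow}(v)\big) \ge (v/2)^{1/K}.
\]

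Third, recall the standing assumption that the marginals of $\bX_1$ are continuous, which implies that $M_{y,1,j}$ also has a continuous distribution, so $\Prob(M_{y,1,j}\ge t) = 1 - F_{y,j}(t)$. Using the elementary inequality $1-x \le -\log x$ for $x\in(0,1]$ applied to $x = (v/2)^{1/K}$, one concludes
\[
\Prob\big(M_{y,1,j}\ge F_{z,j}^{\leftarrow}(v)\big) \le 1-(v/2)^{1/K} \le -\tfrac{1}{K}\log(v/2) = \ang{z/(2y)}^{-1}\big[-\log(v/2)\big],
\]
which is exactly the claimed bound.

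There is no real obstacle; the only point requiring a moment's care is the choice of $K = \ang{z/(2y)}$ (rather than $\lfloor z/y\rfloor$) so that one gains a factor $1/K$ after inversion and so that the slack left inside $\{1,\dots,z\}$ is large enough to insert mixing gaps of length $y$. Everything else is routine manipulation using only stationarity, $\alpha$-mixing, continuity of marginals, and $1-x\le -\log x$.
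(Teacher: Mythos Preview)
Your proof is correct and follows essentially the same route as the paper: a blocking/mixing bound of the form $F_{z,j}(t)\le F_{y,j}(t)^{K}+K\alpha(y)$ with $K=\ang{z/(2y)}$, then the assumption \eqref{eq:assump_trans_block_length} to absorb the mixing error, and finally $1-x\le -\log x$. The only cosmetic difference is that the paper rewrites the event as $\{F_{z,j}(M_{y,1,j})\ge v\}$ and applies the blocking bound inside that probability, whereas you evaluate the blocking inequality at $t=F_{z,j}^{\leftarrow}(v)$ directly; the two are equivalent.
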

\begin{proof}[Proof of Lemma \ref{lemma:trans_block_length}]
By Assumption~\ref{assump:mixing} we have, for any $x \in \R$ and $j=1,\dots,d$,
\begin{equation}\label{eq:trans_block_length}
\begin{aligned}
F_{z,j}(x)&=\Prob(\max_{t=1,\dots,z}X_{t,j}\leq x) 
\\
&\leq\Prob(\max_{t=2(k-1)y+1,\dots,(2k-1)y}X_{t,j}\leq x,\ k=1,2,\dots,\ang{z/(2y)})
\\
&\leq\prod_{k=1}^{\ang{z/(2y)}}\Prob\Big(\max_{t=2(k-1)y+1,\dots,(2k-1)y}X_{t,j}\leq x\Big)+\ang{z/(2y)}\alpha(y)
\\
&= \Big(F_{y,j}(x)\Big)^{\ang{z/(2y)}}+\ang{z/(2y)}\alpha(y).
\end{aligned}
\end{equation}
By \eqref{eq:assump_trans_block_length} and \eqref{eq:trans_block_length},
\begin{equation*}
\begin{aligned}
\Prob\bigg(M_{y,1,j} \ge  F_{z,j}^{\leftarrow}(v)\bigg)&=\Prob\bigg(F_{z,j}\big(M_{y,1,j}\big) \ge  v\bigg)\leq \Prob\bigg(\Big(F_{y,j}\big(M_{y,1,j}\big)\Big)^{\ang{z/(2y)}}+\ang{z/(2y)}\alpha(y) \ge v\bigg)
\\
&\leq\Prob\bigg(F_{y,j}\big(M_{y,1,j}\big) \ge (v/2)^{1/\ang{z/(2y)}}\bigg)
=1-(v/2)^{1/\ang{z/(2y)}}
\\
&\leq \ang{z/(2y)}^{-1}[-\log (v/2)].
\end{aligned}
\end{equation*}
where the last line follows since $ 1-x \le - \log x $ for $x \in [0,1]$. 
\end{proof}

\begin{Lemma}\label{lemma:trans_block_length_limit}
Let $\bv\in (0,1]^{d}$. Suppose $y=y_{m}$ and $z=z_{m}$ are $\bbN$-valued sequences with $y=o(z)$ and $(z/y)\alpha(y)\to 0$,
then
\[
\Prob\Big(\bM_{y,1}\nleq \bF_{z}^{\leftarrow}(\bv)\Big) =O(y/z).
\]
\end{Lemma}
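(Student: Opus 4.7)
The plan is to reduce the claim to a componentwise application of Lemma~\ref{lemma:trans_block_length} by a union bound. Writing $\bv = (v_1, \dots, v_d) \in (0,1]^d$, the event $\{\bM_{y,1} \nleq \bF_z^{\leftarrow}(\bv)\}$ is by definition $\bigcup_{j=1}^d \{M_{y,1,j} > F_{z,j}^{\leftarrow}(v_j)\}$, and thus
\[
\Prob\big(\bM_{y,1} \nleq \bF_z^{\leftarrow}(\bv)\big) \;\le\; \sum_{j=1}^d \Prob\big(M_{y,1,j} \ge F_{z,j}^{\leftarrow}(v_j)\big).
\]

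Next I would check that the hypotheses of Lemma~\ref{lemma:trans_block_length} are met eventually for every fixed coordinate $j$. Since $y = o(z)$, we have $2y \le z$ for all sufficiently large $m$, and since $(z/y)\alpha(y) \to 0$, we eventually have $(z/(2y))\alpha(y) \le v_j/2$ for every fixed $v_j \in (0,1]$ (this is the key place where $v_j > 0$ is used). Applying Lemma~\ref{lemma:trans_block_length} with $v = v_j$ then yields, for all $m$ large enough,
\[
\Prob\big(M_{y,1,j} \ge F_{z,j}^{\leftarrow}(v_j)\big) \;\le\; \ang{z/(2y)}^{-1}\,[-\log(v_j/2)].
\]

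Finally I would translate the right-hand side into the required order $O(y/z)$. Because $z/y \to \infty$, one has $\ang{z/(2y)} \ge z/(2y) - 1 \ge z/(4y)$ for all sufficiently large $m$, so $\ang{z/(2y)}^{-1} \le 4y/z$. Since $\bv$ is fixed, the quantities $-\log(v_j/2)$ are bounded constants, and summing over $j = 1, \dots, d$ delivers
\[
\Prob\big(\bM_{y,1} \nleq \bF_z^{\leftarrow}(\bv)\big) \;\le\; \frac{4y}{z}\sum_{j=1}^d [-\log(v_j/2)] \;=\; O(y/z),
\]
as claimed. There is no real obstacle here beyond checking the two eventual inequalities needed to invoke Lemma~\ref{lemma:trans_block_length}; the statement is essentially a packaging of that lemma together with a union bound.
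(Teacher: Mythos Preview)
The proposal is correct and follows essentially the same approach as the paper: a union bound over coordinates followed by an application of Lemma~\ref{lemma:trans_block_length}. Your write-up is simply more explicit about verifying the hypotheses $2y \le z$ and $(z/(2y))\alpha(y) \le v_j/2$ and about converting $\ang{z/(2y)}^{-1}$ into $O(y/z)$, which the paper leaves implicit.
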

\begin{proof}[Proof of Lemma \ref{lemma:trans_block_length_limit}]
Notice
\begin{equation}\label{eq:trans_block_length_limit}
\Prob\Big(\bM_{y,1}\nleq \bF_{z}^{\leftarrow}(\bv)\Big)
\leq \sum_{j=1}^{d}\Prob\Big(M_{y,1,j}> F_{z,j}^{\leftarrow}(v_{j})\Big).  
\end{equation}
Lemma \ref{lemma:trans_block_length_limit} follows from Lemma \ref{lemma:trans_block_length} and \eqref{eq:trans_block_length_limit}. 
\end{proof}

\begin{Lemma}\label{lemma:separate_block}
Suppose $\Delta=\Delta_{m}$ is an $\bbN$-valued sequence satisfying $\Delta=o(m)$ and $(m/\Delta) \alpha(\Delta)=o(1)$. Let
\begin{align*}
\cS=\bigg\{\Big(\{J_{m}\}_{m\in\bbN},\{b_{m,j}\}_{m\in\bbN,j=0,\dots,J_{m}}\Big): 
&J_{m}\in \bbN,\ b_{m,j}\in \bbN_0,\ 0=b_{m,0}\leq b_{m,1}\leq \dots \leq b_{m,J_{m}}, \\
&\Delta_{m}<\min_{j=1,\dots,J_{m}}(b_{m,j}-b_{m,j-1}) \bigg\}.
\end{align*}
Each element of $\cS$ should be interpreted as a sequence of `numbers of blocks' and a triangular array of `end points of blocks' in a block-wise partition of the first $b_{m, J_m}$ integers. 
Then
\begin{align*}
\sup_{(J,b)\in \cS}J_{m}^{-1}\bigg|&\Prob\Big(\bM_{b_{m,J_{m}},1}\leq \bF_{m}^{\leftarrow}(\bv)\Big) - \prod_{j=1}^{J_m} \Prob\Big(\bM_{b_{m,j}-b_{m,j-1},1}\leq \bF_{m}^{\leftarrow}(\bv)\Big)
\bigg| = O\Big(\alpha(\Delta)+\Delta/m\Big). 
\end{align*}
\end{Lemma}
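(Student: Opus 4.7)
The plan is to apply a classical Bernstein blocking argument: trim $\Delta$ observations from the end of each block in the partition so that the resulting shortened blocks are separated by gaps of length exactly $\Delta$, across which $\alpha$-mixing can be invoked. The total error then decomposes into a trimming error (bounded via Lemma~\ref{lemma:trans_block_length_limit}) and a decoupling error of $\alpha(\Delta)$ per block.

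Fix a partition $(J_m,\{b_{m,j}\}) \in \cS$, and for $j=1,\dots,J_m$ set
\[
A_j = \big\{\bM_{b_{m,j}-b_{m,j-1},\, b_{m,j-1}+1} \le \bF_m^{\leftarrow}(\bv)\big\}, \quad
\tilde A_j = \big\{\bM_{b_{m,j}-b_{m,j-1}-\Delta,\, b_{m,j-1}+1} \le \bF_m^{\leftarrow}(\bv)\big\}.
\]
Since $b_{m,j}-b_{m,j-1} > \Delta$, the trimmed block is nonempty, and by stationarity combined with the inclusion $A_j \subset \tilde A_j$,
\[
0 \le \Prob(\tilde A_j) - \Prob(A_j) \le \Prob\big(\bM_{\Delta,1} \nleq \bF_m^{\leftarrow}(\bv)\big) = O(\Delta/m),
\]
where Lemma~\ref{lemma:trans_block_length_limit} is applied with $y=\Delta,\,z=m$; the hypotheses $\Delta = o(m)$ and $(m/\Delta)\alpha(\Delta) \to 0$ are granted. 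Crucially, this bound depends only on $\Delta$ and $m$, not on the block, and is therefore uniform over $\cS$.

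Combining via the triangle inequality yields three contributions. A union bound using $\cap_j A_j \subset \cap_j \tilde A_j$ gives
\[
\Big|\Prob\big(\cap_{j=1}^{J_m} A_j\big) - \Prob\big(\cap_{j=1}^{J_m} \tilde A_j\big)\Big| \le \sum_{j=1}^{J_m} \Prob(\tilde A_j \setminus A_j) = J_m \cdot O(\Delta/m).
\]
A telescoping product estimate exploiting $0 \le \Prob(A_j),\Prob(\tilde A_j) \le 1$ gives
\[
\Big|\prod_{j=1}^{J_m}\Prob(A_j) - \prod_{j=1}^{J_m}\Prob(\tilde A_j)\Big| \le \sum_{j=1}^{J_m} |\Prob(A_j)-\Prob(\tilde A_j)| = J_m \cdot O(\Delta/m).
\]
Finally, since $\tilde A_1,\dots,\tilde A_{k-1}$ is measurable with respect to observations ending at time $b_{m,k-1}-\Delta$, while $\tilde A_k$ is measurable with respect to observations starting at time $b_{m,k-1}+1$, the intervening gap has length $\Delta$, and an induction on $k$ using the definition of $\alpha$-mixing produces
\[
\Big|\Prob\big(\cap_{j=1}^{J_m}\tilde A_j\big) - \prod_{j=1}^{J_m}\Prob(\tilde A_j)\Big| \le (J_m-1)\alpha(\Delta).
\]
Summing the three bounds and dividing by $J_m$ yields the claimed $O(\alpha(\Delta)+\Delta/m)$ estimate, uniformly over $\cS$.

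The main obstacle is ensuring uniformity over $\cS$ throughout. The trimming construction makes this painless because each trimmed gap has length exactly $\Delta$, so neither the per-block trimming error $O(\Delta/m)$ nor the mixing error $\alpha(\Delta)$ depends on the specific block lengths beyond the constraint $b_{m,j}-b_{m,j-1} > \Delta$. A minor technicality is to verify that each $\tilde A_j$ is measurable with respect to the $\sigma$-algebra generated by observations on its trimmed block so that the standard $\alpha$-mixing inequality applies; this is immediate since $\bM$ is a deterministic function of the underlying time series.
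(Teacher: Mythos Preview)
Your proof is correct and follows essentially the same approach as the paper's: both trim $\Delta$ observations from the end of each block, decompose the error into the same three pieces (trimming of the joint probability, trimming of the product, and decoupling of the trimmed blocks via $\alpha$-mixing), and bound the per-block trimming error through Lemma~\ref{lemma:trans_block_length_limit}. The only cosmetic difference is that the paper absorbs the factor $J_m^{-1}$ into the definition of the three pieces $L_1,L_2,L_3$ from the start, whereas you divide by $J_m$ at the end.
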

\begin{proof}[Proof of Lemma \ref{lemma:separate_block}]
Observe the decomposition
\begin{align*}
&\mathph{=}J_{m}^{-1}\bigg\{\Prob\Big(\bM_{b_{m,J_{m}},1}\leq \bF_{m}^{\leftarrow}(\bv)\Big)
- \prod_{j=1}^{J_m} \Prob\Big(\bM_{b_{m,j}-b_{m,0},j-1}\leq \bF_{m}^{\leftarrow}(\bv)\Big) \bigg\}
=L_{1}+L_{2}+L_{3},
\end{align*}
where
\begin{align*}
L_{1} &= J_{m}^{-1}\bigg\{ \Prob\Big(\bM_{(b_{m,j-1}+1):b_{m,j}}\leq \bF_{m}^{\leftarrow}(\bv), j=1,...,J_m\Big) - \Prob\Big(\bM_{(b_{m,j-1}+1):(b_{m,j}-\Delta)}\leq \bF_{m}^{\leftarrow}(\bv), j=1,...,J_m\Big)\bigg\},
\\
L_{2} &= J_{m}^{-1}\bigg\{\Prob\Big(\bM_{(b_{m,j-1}+1):(b_{m,j}-\Delta)}\leq \bF_{m}^{\leftarrow}(\bv), j=1,...,J_m\Big) - \prod_{j=1}^{J_m} \Prob\Big(\bM_{(b_{m,j-1}+1):(b_{m,j}-\Delta)}\leq \bF_{m}^{\leftarrow}(\bv)\Big)   \bigg\},
\\
L_{3} &= J_{m}^{-1}\bigg\{ \prod_{j=1}^{J_m} \Prob\Big(\bM_{(b_{m,j-1}+1):(b_{m,j}-\Delta)}\leq \bF_{m}^{\leftarrow}(\bv)\Big) - \prod_{j=1}^{J_m} \Prob\Big(\bM_{b_{m,j}-b_{m,j-1},1}\leq \bF_{m}^{\leftarrow}(\bv)\Big) \bigg\}.
\end{align*}
By Assumption \ref{assump:mixing} 
\begin{equation}\label{eq:separate_block_2}
|L_{2}|\leq \alpha(\Delta).
\end{equation}
In addition, note that for arbitrary finite events $V_k \subset W_k, k=1,\dots,K$ we have 
\[
\Prob(\cap_k W_k) - \Prob(\cap_k V_k) \leq \sum_{k=1}^K \Prob(W_k) - \Prob(V_k).
\] 
Hence 
\begin{equation}\label{eq:separate_block_1_preliminary}
\begin{aligned}
    |L_{1}|&\leq
J_{m}^{-1}\sum_{j=1}^{J_{m}}\bigg|\Prob\Big(\bM_{(b_{m,j-1}+1):b_{m,j}}\leq \bF_{m}^{\leftarrow}(\bv)\Big)-\Prob\Big(\bM_{(b_{m,j-1}+1):b_{m,j}-\Delta}\leq \bF_{m}^{\leftarrow}(\bv)\Big)\bigg|.
\end{aligned}
\end{equation}
Since $\Delta=o(m)$ and $(m/\Delta) \alpha(\Delta)=o(1)$, by Lemma \ref{lemma:trans_block_length_limit}, 
\begin{equation}\label{eq:separate_block_1_preliminary_2}
\sup_{(J,b)\in \cS} \bigg|\Prob\Big(\bM_{(b_{m,j-1}+1):b_{m,j}}\leq \bF_{m}^{\leftarrow}(\bv)\Big)-\Prob\Big(\bM_{(b_{m,j-1}+1):b_{m,j}-\Delta}\leq \bF_{m}^{\leftarrow}(\bv)\Big)\bigg|=O\Big(\Delta/m\Big).
\end{equation}
Hence, by \eqref{eq:separate_block_1_preliminary} and \eqref{eq:separate_block_1_preliminary_2},
\begin{equation}\label{eq:separate_block_1}
\sup_{(J,b)\in \cS}|L_{1}|=O\Big(\Delta/m\Big).
\end{equation}
By~\eqref{eq:separate_block_1_preliminary_2} and since $|\prod_i a_i - \prod_i b_i| \leq \sum_i |a_i - b_i|$ if $a_i,b_i \in [0,1]$ we have
\begin{align}\label{eq:separate_block_3}
\sup_{(J,b)\in \cS}|L_{3}| \nonumber
&\leq 
{\sup_{(J,b)\in \cS}} J_{m}^{-1}\sum_{j=1}^{J_{m}}\bigg|\Prob\Big(\bM_{(b_{m,j-1}+1):b_{m,j}}\leq \bF_{m}^{\leftarrow}(\bv)\Big)-\Prob\Big(\bM_{(b_{m,j-1}+1):b_{m,j}-\Delta}\leq \bF_{m}^{\leftarrow}(\bv)\Big)\bigg| \\
&=O\Big( \Delta/m\Big).
\end{align}
Lemma \ref{lemma:separate_block} follows from \eqref{eq:separate_block_2}, \eqref{eq:separate_block_1}, and \eqref{eq:separate_block_3}.
\end{proof}

\begin{Lemma}\label{lemma:integer_multiplier}
For fixed $\bv\in (0,1]^{d}$ (hence $C_{\infty}(\bv)>0$) and $\ell \in \bbN$, we have 
\[
\bigg|\Prob\big(\bM_{ml,1}\leq \bF_{m}^{\leftarrow}(\bv)\big)-\Big(\Prob\big(\bM_{m,1}\leq \bF_{m}^{\leftarrow}(\bv)\big)\Big)^{\ell }\bigg|=O\Big( m^{-(1+\varrho)/(2+\varrho)}\Big),
\]
where $\varrho>0$ is defined in Assumption~\ref{assump:mixing}(ii).
\end{Lemma}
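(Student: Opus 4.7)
The plan is to apply Lemma~\ref{lemma:separate_block} with the deterministic equi-spaced partition $b_{m,j}=jm$ for $j=0,1,\dots,\ell$, so that $J_m=\ell$ is fixed and every block length $b_{m,j}-b_{m,j-1}$ equals $m$. By strict stationarity of $(\bX_t)_{t\in\bbZ}$, each factor in the product on the right-hand side of Lemma~\ref{lemma:separate_block} reduces to
\[
\Prob\big(\bM_{m,1}\leq \bF_{m}^{\leftarrow}(\bv)\big),
\]
and there are exactly $\ell$ such factors. Hence the left-hand side of Lemma~\ref{lemma:integer_multiplier} is bounded, up to the fixed multiplicative constant $\ell$, by $O(\alpha(\Delta)+\Delta/m)$ for any admissible separating sequence $\Delta=\Delta_m$.

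The remaining step is then to choose $\Delta_m$ so as to balance the two error contributions. Under Assumption~\ref{assump:mixing}(ii), $\alpha(h)=o(h^{-(1+\varrho)})$. The balance $\alpha(\Delta)\asymp \Delta/m$ leads to $\Delta^{2+\varrho}\asymp m$, and this motivates the choice
\[
\Delta_m = \big\lfloor m^{1/(2+\varrho)} \big\rfloor.
\]
I will verify that this sequence meets the hypotheses of Lemma~\ref{lemma:separate_block}: since $1/(2+\varrho)<1$, we have $\Delta_m=o(m)$ and in particular $\Delta_m<m$ for sufficiently large $m$, so the constraint $\Delta_m<\min_j(b_{m,j}-b_{m,j-1})=m$ holds; moreover
\[
(m/\Delta_m)\alpha(\Delta_m)=O\!\left(m^{1-1/(2+\varrho)}\right)\cdot o\!\left(m^{-(1+\varrho)/(2+\varrho)}\right)=o(1).
\]

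With this choice both error terms are of order $m^{-(1+\varrho)/(2+\varrho)}$:
\[
\Delta_m/m = O\!\left(m^{-(1+\varrho)/(2+\varrho)}\right),\qquad \alpha(\Delta_m)=o\!\left(m^{-(1+\varrho)/(2+\varrho)}\right),
\]
so Lemma~\ref{lemma:separate_block} yields the claimed bound, since $\ell$ is fixed and can be absorbed into the $O(\cdot)$. No further analytic difficulty is anticipated; the only minor subtlety is ensuring the sequence $\Delta_m$ satisfies the admissibility conditions uniformly in $m$, which is immediate from the definition above. Conceptually, the only work is reducing the problem to the right application of Lemma~\ref{lemma:separate_block} and optimizing the tradeoff between the mixing decay and the `clipping' loss $\Delta/m$.
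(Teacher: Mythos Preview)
Your proposal is correct and matches the paper's proof essentially verbatim: the paper also applies Lemma~\ref{lemma:separate_block} with $J_m=\ell$, $b_{m,j}=jm$, and $\Delta=\ang{m^{1/(2+\varrho)}}$, then reads off $O(\alpha(\Delta)+\Delta/m)=O(m^{-(1+\varrho)/(2+\varrho)})$. Your added justification (verifying the admissibility conditions and explaining the balancing choice of $\Delta$) is welcome detail that the paper leaves implicit.
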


\begin{proof}[Proof of Lemma \ref{lemma:integer_multiplier}]
Applying Lemma \ref{lemma:separate_block} with $\Delta=\ang{m^{1/(2+\varrho)}}$, $J=J_m=\ell$, and $b_{m,j}=j m$ for $j=0, \dots \ell$, we get 
\[
\bigg|\Prob\big(\bM_{ml,1}\leq \bF_{m}^{\leftarrow}(\bv)\big)-\Big(\Prob\big(\bM_{m,1}\leq \bF_{m}^{\leftarrow}(\bv)\big)\Big)^{\ell}\bigg|
=O\Big(\alpha(\Delta)+\Delta/m\Big)
=O\Big(m^{-(1+\varrho)/(2+\varrho)}\Big).   \qedhere
\]
\end{proof}

\begin{Lemma}\label{lemma:one_over_multiplier}
Let $\bv\in (0,1]^{d}$ and $L=L_{m}$ be an $\bbN$-valued sequence with $L_{m}=o(m^{(1+\varrho)/(2+\varrho)})$.
Then
\[
\max_{\ell=1,\dots,L}\bigg|\Prob\big(\bM_{\ang{m/\ell},1}\leq \bF_{m}^{\leftarrow}(\bv)\big)-\Big(\Prob\big(\bM_{m,1}\leq \bF_{m}^{\leftarrow}(\bv)\big)\Big)^{1/\ell}\bigg|=O\Big( m^{-(1+\varrho)/(2+\varrho)}\Big). 
\]
\end{Lemma}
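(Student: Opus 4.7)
The plan is to apply Lemma~\ref{lemma:separate_block} to split a block of size $\ell k_\ell$ (with $k_\ell := \ang{m/\ell}$) into $\ell$ blocks of size $k_\ell$ and then take $\ell$-th roots, so that the factor $\ell$ coming from the number of blocks is absorbed by the derivative $\tfrac{1}{\ell}x^{1/\ell-1}$ of $x\mapsto x^{1/\ell}$. Set $\Delta=\ang{m^{1/(2+\varrho)}}$, so that Assumption~\ref{assump:mixing}(ii) gives $\alpha(\Delta)+\Delta/m=O(m^{-(1+\varrho)/(2+\varrho)})$ and $\Delta<k_\ell$ for large $m$ uniformly in $\ell\le L$ (using $L=o(m^{(1+\varrho)/(2+\varrho)})$). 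Choosing $J_m=\ell$ and $b_{m,j}=jk_\ell$ in Lemma~\ref{lemma:separate_block} then yields, uniformly in $\ell\le L$,
\[
\Prob\bigl(\bM_{\ell k_\ell,1}\le\bF_m^\leftarrow(\bv)\bigr)
=\Prob\bigl(\bM_{k_\ell,1}\le\bF_m^\leftarrow(\bv)\bigr)^{\ell}+O\bigl(\ell\cdot m^{-(1+\varrho)/(2+\varrho)}\bigr).
\]

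The next step is to replace $\Prob(\bM_{\ell k_\ell,1}\le\bF_m^\leftarrow(\bv))$ by $p_m:=\Prob(\bM_{m,1}\le\bF_m^\leftarrow(\bv))$. Writing $r:=m-\ell k_\ell\in[0,\ell)$, stationarity and a union bound over coordinates and over the $r$ leftover time points give
\[
0\le\Prob(\bM_{\ell k_\ell,1}\le\bF_m^\leftarrow(\bv))-p_m
\le r\sum_{j=1}^{d}\Prob\bigl(X_{1,j}>F_{m,j}^\leftarrow(v_j)\bigr).
\]
The key estimate is $\Prob(X_{1,j}>F_{m,j}^\leftarrow(v_j))\le\Prob(M_{\Delta,1,j}\ge F_{m,j}^\leftarrow(v_j))=O(m^{-(1+\varrho)/(2+\varrho)})$, obtained by applying Lemma~\ref{lemma:trans_block_length} with $y=\Delta$, $z=m$; its premise $(m/(2\Delta))\alpha(\Delta)\le v_j/2$ is met for large $m$ because $(m/\Delta)\alpha(\Delta)=o(1)$. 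Consequently this leftover error is also $O(\ell\cdot m^{-(1+\varrho)/(2+\varrho)})$, uniformly in $\ell\le L$.

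To conclude, note that $\bv\in(0,1]^d$ together with Assumption~\ref{assump:copula_convergence} yields $p_m\to C_\infty(\bv)>0$, so $p_m$, $\Prob(\bM_{\ell k_\ell,1}\le\bF_m^\leftarrow(\bv))$ and $\Prob(\bM_{k_\ell,1}\le\bF_m^\leftarrow(\bv))^\ell$ all lie above a common positive constant $c_0\in(0,1]$ for large $m$, uniformly in $\ell\le L$. The elementary inequality $|x^{1/\ell}-y^{1/\ell}|\le(\ell c_0)^{-1}|x-y|$ valid for $x,y\ge c_0$ then converts the combined error of order $\ell\cdot m^{-(1+\varrho)/(2+\varrho)}$ from the previous two steps into the claimed bound of order $m^{-(1+\varrho)/(2+\varrho)}$. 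The main subtlety is the leftover-block step: since $r$ may be as small as $1$, Lemma~\ref{lemma:trans_block_length} cannot be applied to a single block of size $r$; the workaround is to enlarge the event $\{X_{1,j}>F_{m,j}^\leftarrow(v_j)\}$ to $\{M_{\Delta,1,j}\ge F_{m,j}^\leftarrow(v_j)\}$, where the lemma does apply.
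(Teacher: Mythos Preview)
Your proof is correct and follows essentially the same three-step structure as the paper's: apply Lemma~\ref{lemma:separate_block} with $\Delta=\ang{m^{1/(2+\varrho)}}$ to compare $\Prob(\bM_{\ell k_\ell,1}\le\bF_m^\leftarrow(\bv))$ with $\Prob(\bM_{k_\ell,1}\le\bF_m^\leftarrow(\bv))^\ell$, control the leftover block of size $r=m-\ell k_\ell<\ell$, and then take $\ell$-th roots via the mean value theorem, using that the relevant quantities are bounded below by a positive constant since $C_m(\bv)\to C_\infty(\bv)>0$.

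The only noteworthy difference is in the leftover step. The paper bounds $|N_{1,\ell}|\le\ell^{-1}\Prob(\bM_{\ell,1}\nleq\bF_m^\leftarrow(\bv))$ and then splits into the cases $\ell\le\ang{m^{1/(2+\varrho)}}$ and $\ell>\ang{m^{1/(2+\varrho)}}$, invoking Lemma~\ref{lemma:trans_block_length_limit} separately in each regime. Your union bound over the $r$ individual leftover observations, followed by the enlargement $\{X_{1,j}>F_{m,j}^\leftarrow(v_j)\}\subset\{M_{\Delta,1,j}\ge F_{m,j}^\leftarrow(v_j)\}$ and a single application of Lemma~\ref{lemma:trans_block_length}, avoids this case distinction and is slightly more streamlined; it yields the same $O(\ell\cdot m^{-(1+\varrho)/(2+\varrho)})$ error, which the factor $1/\ell$ from the derivative then absorbs.
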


\begin{proof}[Proof of Lemma \ref{lemma:one_over_multiplier}]
For fixed $\ell\in\N$, we have
\[
\ell^{-1}\bigg(\Prob\big(\bM_{m,1}\leq \bF_{m}^{\leftarrow}(\bv)\big)-\Big(\Prob\big(\bM_{\ang{m/\ell},1}\leq \bF_{m}^{\leftarrow}(\bv)\big)\Big)^{\ell}\bigg)=N_{1,\ell}+N_{2,\ell},
\]
where
\begin{align*}
    N_{1,\ell}&=\ell^{-1}\bigg(\Prob\big(\bM_{m,1}\leq \bF_{m}^{\leftarrow}(\bv)\big)-\Prob\big(\bM_{\ell\ang{m/\ell},1}\leq \bF_{m}^{\leftarrow}(\bv)\big)\bigg),\\
    N_{2,\ell}&=\ell^{-1}\bigg(\Prob\big(\bM_{\ell\ang{m/\ell},1}\leq \bF_{m}^{\leftarrow}(\bv)\big)-\Big(\Prob\big(\bM_{\ang{m/\ell},1}\leq \bF_{m}^{\leftarrow}(\bv)\big)\Big)^{\ell}\bigg).
\end{align*}
Applying Lemma \ref{lemma:separate_block} with $\Delta=\ang{m^{1/(2+\varrho)}}$, $J_m\equiv\ell$, and $b_{m,j}=j \ang{m/\ell}$, we get 
\begin{equation}\label{eq:pre_Taylor_1}
\sup_{\ell=1,\dots,L}|N_{2,\ell}|= O\Big(m^{-(1+\varrho)/(2+\varrho)}\Big).
\end{equation}
Further, if $\ell\leq \ang{m^{1/(2+\varrho)}}$, 
\begin{equation*} 
    |N_{1,\ell}|\leq \ell^{-1}\Prob\big(\bM_{\ell}\nleq \bF_{m}^{\leftarrow}(\bv) \big)\leq \Prob\big(\bM_{\ell}\nleq \bF_{m}^{\leftarrow}(\bv) \big)\leq \Prob\big(\bM_{\ang{m^{1/(2+\varrho)}}}\nleq \bF_{m}^{\leftarrow}(\bv) \big).
\end{equation*}
Hence, by Lemma \ref{lemma:trans_block_length} and Lemma \ref{lemma:trans_block_length_limit},
\begin{equation}\label{eq:pre_Taylor_2}
\begin{aligned}
\max_{\ell=1,\dots,L}|N_{1,\ell}|
&\leq 
\max\Big\{\max_{\ell=\ang{m^{1/(2+\varrho)}}+1,\dots,L}|N_{1,\ell}|,\max_{\ell=1,\dots,\ang{m^{1/(2+\varrho)}}}|N_{1,\ell}|\Big\}
\\
&\leq \max_{\ell=\ang{m^{1/(2+\varrho)}}+1,\dots,L}|N_{1,\ell}|+\max_{\ell=1,\dots,\ang{m^{1/(2+\varrho)}}}|N_{1,\ell}|
\\
&=\max_{\ell=\ang{m^{1/(2+\varrho)}}+1,\dots,L}\ell^{-1}\Prob\big(\bM_{\ell}\nleq \bF_{m}^{\leftarrow}(\bv) \big)+\Prob\big(\bM_{\ang{m^{1/(2+\varrho)}}}\nleq \bF_{m}^{\leftarrow}(\bv)\big)
\\
&=O(m^{-1})+O(m^{-(1+\varrho)/(2+\varrho)}) = O(m^{-(1+\varrho)/(2+\varrho)}).
\end{aligned}
\end{equation}
By \eqref{eq:pre_Taylor_1} and \eqref{eq:pre_Taylor_2}, 
\begin{equation}\label{eq:pre_Taylor}
\max_{\ell=1,\dots,L}\ell^{-1}\bigg|\Prob\big(\bM_{m,1}\leq \bF_{m}^{\leftarrow}(\bv)\big)-\Big(\Prob\big(\bM_{\ang{m/\ell},1}\leq \bF_{m}^{\leftarrow}(\bv)\big)\Big)^{\ell}\bigg|=O\Big(m^{-(1+\varrho)/(2+\varrho)}\Big).
\end{equation}

Next, by the Mean Value Theorem, 
\begin{equation}\label{eq:Taylor}
\begin{aligned}
&\mathph{=}\bigg|\Big(\Prob\big(\bM_{m,1}\leq \bF_{m}^{\leftarrow}(\bv)\big)\Big)^{1/\ell}-\Prob\big(\bM_{\ang{m/\ell},1}\leq \bF_{m}^{\leftarrow}(\bv)\big)\bigg|\\
&=\bigg|\Big(\Prob\big(\bM_{m,1}\leq \bF_{m}^{\leftarrow}(\bv)\big)\Big)^{1/\ell}-\Big(\Big(\Prob\big(\bM_{\ang{m/\ell},1}\leq \bF_{m}^{\leftarrow}(\bv)\big)\Big)^{\ell}\Big)^{1/\ell}\bigg|\\
&=\xi^{1/\ell-1}\ell^{-1}\bigg|\Prob\big(\bM_{m,1}\leq \bF_{m}^{\leftarrow}(\bv)\big)-\Big(\Prob\big(\bM_{\ang{m/\ell},1}\leq \bF_{m}^{\leftarrow}(\bv)\big)\Big)^{\ell}\bigg|,
\end{aligned}
\end{equation}
where $\xi=\xi_{m, \ell}$ is a quantity between $\Prob\big(\bM_{m,1}\leq \bF_{m}^{\leftarrow}(\bv)\big)$ and $(\Prob\big(\bM_{\ang{m/\ell},1}\leq \bF_{m}^{\leftarrow}(\bv)\big))^{\ell}$.
Since $\xi\geq C_{m}(\bv)-|C_{m}(\bv)-\xi|$, by Assumption \ref{assump:copula_convergence} and \eqref{eq:pre_Taylor}, 
\begin{align*}
    \min_{\ell=1,\dots,L}\xi&\geq \min_{\ell=1,\dots,L}\Big(C_{m}(\bv)-|C_{m}(\bv)-\xi|\Big)
    = C_{m}(\bv)+\min_{\ell=1,\dots,L}\Big(-|C_{m}(\bv)-\xi|\Big)\\
    &\geq
    C_{m}(\bv)-\max_{\ell=1,\dots,L}\bigg|\Prob\big(\bM_{m,1}\leq \bF_{m}^{\leftarrow}(\bv)\big)-\Big(\Prob\big(\bM_{\ang{m/\ell},1}\leq \bF_{m}^{\leftarrow}(\bv)\big)\Big)^{\ell}\bigg|=C_{\infty}(\bv)+o(1).
\end{align*}
Hence, noting that $C_{\infty}(\bv)>0$, we have $\max_{\ell=1,\dots,L}\xi^{1/\ell-1}\leq \max_{\ell=1,\dots,L}\xi^{-1}=(\min_{\ell=1,\dots,L}\xi)^{-1}=O(1)$. Lemma \ref{lemma:one_over_multiplier} follows from \eqref{eq:pre_Taylor} and \eqref{eq:Taylor}.  
\end{proof}

\begin{Lemma}\label{lemma:fraction_multiplier}
Let $\bv\in (0,1]^{d}$, $\cX=[x_\wedge, x_{\vee}] \subset (0,\infty)$ a compact interval
and 
\begin{equation}\label{eq:mesh_width}
J=J_{m}=\ang{\frac{1+\varrho}{2+\varrho}\log_{2}m-\log_{2}\log m}. 
\end{equation}
Then 
\[
\max_{x\in \bbN/2^{J}\cap \cX}\bigg|\Prob\big(\bM_{\ang{mx},1}\leq \bF_{m}^{\leftarrow}(\bv)\big)-\Big(\Prob\big(\bM_{m,1}\leq \bF_{m}^{\leftarrow}(\bv)\big)\Big)^{x}\bigg|=O\Big(\log(m)m^{-(1+\varrho)/(2+\varrho)}\Big). 
\]
\end{Lemma}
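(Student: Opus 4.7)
The plan is to exploit the binary expansion of $k$ in $x=k/2^J$ so as to decompose $\{1,\dots,\ang{mx}\}$ into a logarithmic number of consecutive blocks, each of a size for which Lemma~\ref{lemma:one_over_multiplier} applies. Write $k=\sum_{j=1}^{I} 2^{a_j}$ with $0\le a_1<\cdots<a_I$; since $k\le x_\vee 2^J$, the number of summands satisfies $I\le J+O(1)\lesssim \log m$. Set $\tilde N:=\sum_{j=1}^{I}\ang{m/2^{J-a_j}}$ and $r:=\ang{mx}-\tilde N$. Because $\sum_j 2^{a_j}/2^J=x$, at most one unit is lost per floor, so $r\in\{0,1,\dots,I-1\}$.

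With $\Delta=\ang{m^{1/(2+\varrho)}}$, the minimal block length $\ang{m/2^J}\gtrsim m^{1/(2+\varrho)}\log m$ exceeds $\Delta$ for large $m$, and Lemma~\ref{lemma:separate_block} applied with $J_m=I$ delivers an approximation error of order $I\cdot O(m^{-(1+\varrho)/(2+\varrho)})=O(\log(m)m^{-(1+\varrho)/(2+\varrho)})$. Each level $\ell_j:=2^{J-a_j}\le 2^J=o(m^{(1+\varrho)/(2+\varrho)})$ is admissible in Lemma~\ref{lemma:one_over_multiplier}, which replaces $\Prob(\bM_{\ang{m/\ell_j},1}\le \bF_m^\leftarrow(\bv))$ by $p^{1/\ell_j}$ (with $p:=C_m(\bv)$) at cost $O(m^{-(1+\varrho)/(2+\varrho)})$ per factor. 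Combining these $I\lesssim\log m$ errors via $|\prod u_j-\prod v_j|\le\sum|u_j-v_j|$ for values in $[0,1]$, and noting $\prod_{j} p^{1/2^{J-a_j}}=p^{\sum_j 2^{a_j}/2^J}=p^x$, yields
\[
\bigl|\Prob(\bM_{\tilde N,1}\le \bF_m^\leftarrow(\bv))-p^x\bigr|=O\bigl(\log(m)\,m^{-(1+\varrho)/(2+\varrho)}\bigr),
\]
uniformly in $x\in\bbN/2^J\cap\cX$ (all constants above depend only on $I\lesssim\log m$, not on $k$ itself).

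The main obstacle is closing the small gap between $\tilde N$ and $\ang{mx}$: stationarity gives
\[
\bigl|\Prob(\bM_{\tilde N,1}\le\bF_m^\leftarrow(\bv))-\Prob(\bM_{\ang{mx},1}\le\bF_m^\leftarrow(\bv))\bigr|\le \Prob(\bM_{r,1}\nleq\bF_m^\leftarrow(\bv)),
\]
but $r\lesssim\log m$ lies far below the $m^{1/(2+\varrho)}$-threshold at which Lemma~\ref{lemma:trans_block_length_limit} is applicable (the mixing condition $(m/r)\alpha(r)\to 0$ fails). To circumvent this I would use monotonicity of $\Prob(\bM_{\cdot,1}\nleq\cdot)$ in the block size: setting $\ell^*:=\ang{m^{(1+\varrho)/(2+\varrho)}/\log m}=o(m^{(1+\varrho)/(2+\varrho)})$, the inequality $\ang{m/\ell^*}\gtrsim m^{1/(2+\varrho)}\log m\gg r$ and Lemma~\ref{lemma:one_over_multiplier} give
\[
\Prob(\bM_{r,1}\nleq\bF_m^\leftarrow(\bv))\le 1-p^{1/\ell^*}+O(m^{-(1+\varrho)/(2+\varrho)}).
\]
Since $C_\infty(\bv)\in(0,1)$ for $\bv\ne\mathbf 1$ (the case $\bv=\mathbf 1$ being trivial), $1-p^{1/\ell^*}=O(1/\ell^*)=O(\log(m)m^{-(1+\varrho)/(2+\varrho)})$, and a final triangle inequality yields the claim.
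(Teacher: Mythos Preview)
Your binary-expansion strategy is essentially the paper's proof: it writes $x=\sum_{j=-J}^{J_\vee-1}2^j r_x(j)$ and splits the error into three pieces $Q_1$ (block separation via Lemma~\ref{lemma:separate_block}), $Q_2$ (floor corrections, handled by the same monotonicity trick you use for $r$), and $Q_3$ (replacing each factor by a power of $p$ via Lemmas~\ref{lemma:integer_multiplier} and~\ref{lemma:one_over_multiplier}); your remainder $r$ simply aggregates the paper's term-by-term $Q_2$ into a single tail piece. One small oversight: when $x_\vee>1$ the top bits may satisfy $a_j>J$, so $\ell_j=2^{J-a_j}<1$ is not an admissible level in Lemma~\ref{lemma:one_over_multiplier}; for those factors the block size is $m\cdot 2^{a_j-J}$ and you need Lemma~\ref{lemma:integer_multiplier} instead, exactly as the paper does for the $j\ge 0$ part of its dyadic range.
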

\begin{proof}[Proof of Lemma \ref{lemma:fraction_multiplier}]
Let $J_{\vee} > 1$ be an integer such that $x_{\vee}<2^{J_{\vee}}$ and note that, for all sufficiently large $m$, $J_{m} > 0$ so that $2^{J_{\vee}-1} > 2^{-J_m}$.  Without loss of generality we only consider such $m$ from now on. Each $x \in \bbN/2^{J}\cap \cX$ has a dyadic expansion of the form $x=\sum_{j=-J}^{J_{\vee}-1}2^{j}r_x(j)$ with $r_x(j)\in \{0,1\}$. Let $x_{i}=\sum_{j=i}^{J_{\vee}-1}2^{j}r_x(j), i=-J,\dots,J_{\vee}-1$ and define $x_{J_{\vee}} = 0$.
Then
\[
\Prob\big(\bM_{\ang{mx},1}\leq \bF_{m}^{\leftarrow}(\bv)\big)-\Big(\Prob\big(\bM_{m,1}\leq \bF_{m}^{\leftarrow}(\bv)\big)\Big)^{x}=Q_{1}+Q_{2}+Q_{3}, 
\]
where
\begin{align*}
Q_{1} &= \Prob\big(\bM_{\ang{mx},1}\leq \bF_{m}^{\leftarrow}(\bv)\big) 
- \prod_{ {-J \leq j \leq J_{\vee}-1 \atop r_x(j)=1}} \Prob\Big(\bM_{\ang{mx_{j}}-\ang{mx_{j+1}},1}\leq \bF_{m}^{\leftarrow}(\bv)\Big),
\\
Q_{2}& = \prod_{ {-J \leq j \leq J_{\vee}-1 \atop r_x(j)=1}} \Prob\Big(\bM_{\ang{mx_{j}}-\ang{mx_{j+1}},1}\leq \bF_{m}^{\leftarrow}(\bv)\Big) - \prod_{ {-J \leq j \leq J_{\vee}-1 \atop r_x(j)=1}} \Prob\Big(\bM_{\ang{mx_{j}-mx_{j+1}},1}\leq \bF_{m}^{\leftarrow}(\bv)\Big),
\\
Q_{3}&= \prod_{ {-J \leq j \leq J_{\vee}-1 \atop r_x(j)=1}} \Prob\Big(\bM_{\ang{mx_{j}-mx_{j+1}},1}\leq \bF_{m}^{\leftarrow}(\bv)\Big) -\Big(\Prob\big(\bM_{m,1}\leq \bF_{m}^{\leftarrow}(\bv)\big)\Big)^{x}.
\end{align*}
Notice that $2^{J}$ has an order of $(\log m)^{-1}m^{(1+\varrho)/(2+\varrho)}$. Applying Lemma \ref{lemma:separate_block} with $\Delta=\ang{m^{1/(2+\varrho)}}$ and  $b_{m,j}=\ang{mx_{J_{\vee}-j}}$, we get 
\[
    \max_{x\in \bbN/2^{J}\cap \cX}|Q_{1}|=O\Big( (J_{\vee}+J)m^{-(1+\varrho)/(2+\varrho)}\Big)=O\Big(\log(m) m^{-(1+\varrho)/(2+\varrho)}\Big).
\]
Next, since $|\prod_i a_i - \prod_i b_i| \leq \sum_i |a_i - b_i|$ for $a_i,b_i \in [0,1]$, an application of Lemma \ref{lemma:trans_block_length_limit} yields
\begin{multline*}
\max_{x\in \bbN/2^{J}\cap \cX}|Q_{2}|\leq (J_{\vee}+J)\Prob\Big(\bM_{1,1} \nleq F_{m}^{\leftarrow}(\bv)\Big)
\\
\leq (J_{\vee}+J) \Prob\Big(\bM_{\ang{m^{1/(2+\varrho)}},1}\nleq F_{m}^{\leftarrow}(\bv)\Big)=O\Big(  \log(m)m^{-(1+\varrho)/(2+\varrho)}\Big).
\end{multline*}
Finally, note that
\begin{multline*}
\Big(\Prob\big(\bM_{m,1}\leq \bF_{m}^{\leftarrow}(\bv)\big)\Big)^{x}=\Big(\Prob\big(\bM_{m,1}\leq \bF_{m}^{\leftarrow}(\bv)\big)\Big)^{x_{-J}}
= \prod_{ {-J \leq j \leq J_{\vee}-1 \atop r_x(j)=1}} \Big\{ \Big(\Prob\big(\bM_{m,1}\leq \bF_{m}^{\leftarrow}(\bv)\big)\Big)^{x_{j}-x_{j+1}}  \Big\}
\end{multline*}
Since$|\prod_i a_i - \prod_i b_i| \leq \sum_i |a_i - b_i|$ for $a_i,b_i \in [0,1]$, applying Lemma \ref{lemma:integer_multiplier} and Lemma \ref{lemma:one_over_multiplier} (recall that $2^J$ is of the order $(\log m)^{-1}m^{(1+\varrho)/(2+\varrho)}$),  we obtain
\[
\max_{x\in \bbN/2^{J}\cap \cX}|Q_{3}|=O\Big( (J_{\vee}+J)m^{-(1+\varrho)/(2+\varrho)}\Big)=O\Big(\log(m) m^{-(1+\varrho)/(2+\varrho)}\Big).
\]
This implies Lemma \ref{lemma:fraction_multiplier}.
\end{proof}

\begin{proof}[Proof of Lemma~\ref{rem:raiid}]
Choose $x_\wedge, x_\vee \in (0,\infty)$ such that $\cX \subset [x_\wedge, x_\vee]$. Observe that, uniformly in $x \in \cX$, we have $\ang{mx}/m - x = O(1/m)$ and $m/\ang{mx} - 1/x = O(1/m)$. Hence, for any fixed $\bu \in (0,1]^d$, it holds that $\sup_{x \in \cX} \|\bu^{1/x} - \bu^{m/\ang{mx}}\|_1 = O(1/m)$. By Lipschitz continuity of $C_m$ for every $m$ with Lipschitz constant $1$ (note that every $C_m$ is a copula), we thus have $\sup_{x \in \cX} |C_m(\bu^{1/x}) - C_m(\bu^{m/\ang{mx}})| = O(1/m)$. Finally, since $\bu \in (0,1]^d$, it follows that $\inf_{x \in \cX} \bu^{1/x} \ge \bu^{1/x_\vee} \in (0,\infty]^d$ where the infimum and inequality are understood component-wise. Hence 
\[
\lim_{m \to \infty} \inf_{x \in \cX} C_m(\bu^{1/x}) \geq \lim_{m \to \infty}  C_m(\bu^{1/x_{\vee}}) = C_\infty(\bu^{1/x_{\vee}}) > 0.
\]
Given this bound, a Taylor expansion shows that
\[
\sup_{x \in \cX} \Big|C_m(\bu^{1/x})^x - C_m(\bu^{m/\ang{mx}})^{\ang{mx}/m}\Big| = O(1/m) = o(m^{-(1+\varrho)/(2+\varrho)}\log m).
\]
Hence, in order to prove Lemma~\ref{rem:raiid}, it suffices to show that
\[
\sup_{x \in \cX} \Big| C_{\ang{mx}}(\bu)-\Big(C_{m}(\bu^{1/x})\Big)^{x} \Big| = O(m^{-(1+\varrho)/(2+\varrho)}\log m)
\]

For that purpose, write
\[
C_{\ang{mx}}(\bu)-\Big(C_{m}(\bu^{1/x})\Big)^{x}=G_{1}+G_{2},
\]
where 
\begin{align*}
    G_{1}&=\Prob\big(\bM_{\ang{mx},1}\leq \bF_{\ang{mx}}^{\leftarrow}(\bu)\big)-\Prob\big(\bM_{\ang{mx},1}\leq \bF_{m}^{\leftarrow}(\bu^{1/x})\big),\\
    G_{2}&=\Prob\big(\bM_{\ang{mx},1}\leq \bF_{m}^{\leftarrow}(\bu^{1/x})\big)-\Big(\Prob\big(\bM_{m,1}\leq \bF_{m}^{\leftarrow}(\bu^{1/x})\big)\Big)^{x}.
\end{align*}
We start with $G_{2}$; and note that all subsequent $O$-terms are uniform in $x\in \cX$. Define $J=J_m$ as in \eqref{eq:mesh_width}. Notice $G_{2}=G_{2,1}+G_{2,2}+G_{2,3}$, where
\begin{align*}
    G_{2,1}&=\Prob\big(\bM_{\ang{mx},1}\leq \bF_{m}^{\leftarrow}(\bu^{1/x})\big)-\Prob\big(\bM_{\ang{m\ang{x2^{J}}2^{-J}},1}\leq \bF_{m}^{\leftarrow}(\bu^{1/x})\big)\\
    G_{2,2}&=\Prob\big(\bM_{\ang{m\ang{x2^{J}}2^{-J}},1}\leq \bF_{m}^{\leftarrow}(\bu^{1/x})\big)-\Big(\Prob\big(\bM_{m,1}\leq \bF_{m}^{\leftarrow}(\bu^{1/x})\big)\Big)^{\ang{x2^{J}}2^{-J}}\\
    G_{2,3}&=\Big(\Prob\big(\bM_{m,1}\leq \bF_{m}^{\leftarrow}(\bu^{1/x})\big)\Big)^{\ang{x2^{J}}2^{-J}}-\Big(\Prob\big(\bM_{m,1}\leq \bF_{m}^{\leftarrow}(\bu^{1/x})\big)\Big)^{x}.
\end{align*}
Notice $2^{J}$ has an order of $(\log m)^{-1}m^{(1+\varrho)/(2+\varrho)}$. By Lemma \ref{lemma:trans_block_length_limit},
\[
    |G_{2,1}|=O (2^{-J})=O\Big((\log m) m^{-(1+\varrho)/(2+\varrho)}\Big).
\]
For large enough $m$, $\ang{x2^{J}}2^{-J}\geq x_{\wedge}/2$. By Lemma \ref{lemma:fraction_multiplier}, applied with $\tilde \cX = [x_{\wedge}/2, x_{\vee}]$,
\[
    |G_{2,2}|=O\Big((\log m) m^{-(1+\varrho)/(2+\varrho)}\Big).
\]
By the Mean Value Theorem and Assumption \ref{assump:copula_convergence},
\[
    |G_{2,3}|\leq 2^{-J}\log \Big(\Prob\big(\bM_{m,1}\leq \bF_{m}^{\leftarrow}(\bu^{1/x})\big)\Big)=2^{-J}\log \Big(C_{m}(\bu^{1/x})\Big)=O\Big((\log m) m^{-(1+\varrho)/(2+\varrho)}\Big).
\]
Hence, 
\begin{equation}\label{eq:approx_to_iid_2}
    |G_{2}|=O\Big((\log m) m^{-(1+\varrho)/(2+\varrho)}\Big).
\end{equation}

We next turn to $G_{1}$. Note that plugging-in $(1,\dots, 1,u_j,1,\dots,1)$ instead of $\bu$ into the definition of $G_2$, \eqref{eq:approx_to_iid_2} yields
\begin{equation}\label{eq:approx_to_iid_2_d=1}
    \bigg|\Prob\big(M_{\ang{mx},1,j}\leq F_{m,j}^{\leftarrow}(u_{j}^{1/x})\big)-\Big(\Prob\big(M_{m,1,j}\leq F_{m,j}^{\leftarrow}(u_{j}^{1/x})\big)\Big)^{x}\bigg|=O\Big((\log m) m^{-(1+\varrho)/(2+\varrho)}\Big),
\end{equation}
uniformly in  $j=1, \dots, d$.
We further have
\begin{equation}\label{eq:approx_to_iid_1_preliminary}
\begin{aligned}
|G_{1}|&=\Prob\Big(\bM_{\ang{ma},1}\leq\bF_{m}^{\leftarrow}(\bu^{1/x}) \vee \bF_{\ang{ma}}^{\leftarrow}(\bu), \bM_{\ang{mx},1}\nleq \bF_{m}^{\leftarrow}(\bu^{1/x}) \wedge \bF_{\ang{mx}}^{\leftarrow}(\bu)\Big)
\\
&\leq \sum_{j=1}^{d}\Prob\big(M_{\ang{mx},1,j}\leq F_{m,j}^{\leftarrow}(u_{j}^{1/x})\vee F_{\ang{mx},j}^{\leftarrow}(u_{j}), M_{\ang{mx},1,j}>F_{m,j}^{\leftarrow}(u_{j}^{1/x}) \wedge F_{\ang{mx},j}^{\leftarrow}(u_{j})\big)
\\
&= \sum_{j=1}^{d}\bigg|\Prob\big(M_{\ang{mx},1,j}\leq F_{m,j}^{\leftarrow}(u_{j}^{1/x})\big)-\Prob\big(M_{\ang{mx},1,j} \leq F_{\ang{mx},j}^{\leftarrow}(u_{j})\big)\bigg|
\\
&= \sum_{j=1}^{d}\bigg|\Prob\big(M_{\ang{mx},1,j}\leq F_{m,j}^{\leftarrow}(u_{j}^{1/x})\big)-u_{j}\bigg|
\\
&= \sum_{j=1}^{d}\bigg|\Prob\big(M_{\ang{mx},1,j}\leq F_{m,j}^{\leftarrow}(u_{j}^{1/x})\big)-\Big(\Prob\big(M_{m,1,j} \leq F_{m,j}^{\leftarrow}(u_{j}^{1/x})\big)\Big)^{x}\bigg|.
\end{aligned}    
\end{equation}
By \eqref{eq:approx_to_iid_2_d=1} and \eqref{eq:approx_to_iid_1_preliminary},
\begin{equation}\label{eq:approx_to_iid_1}
    |G_{1}|=O\Big((\log m) m^{-(1+\varrho)/(2+\varrho)}\Big).
\end{equation}
Lemma~\ref{rem:raiid} follows from \eqref{eq:approx_to_iid_2} and \eqref{eq:approx_to_iid_1}.
\end{proof}

\subsection{Proofs for Sections~\ref{sec:simpagg}--\ref{sec:estrho}}

\begin{proof}[Proof of Proposition~\ref{prop:simpleagg}]
Observe the decomposition 
\begin{align*}
&\hat C_{n,(M,w)}^{\mathrm{agg}}(\bu) - C_\infty(\bu) - \sum_{k \in M_n} w_{n,k} \{C_{k}(\bu) - C_\infty(\bu)\} 
= 
\sum_{k \in M_n} w_{n,k} \{\hat C_{n,k}(\bu) - C_k(\bu)\} 
\end{align*}
which can be rewritten as
\begin{multline*}
\sqrt{\frac{m}n}\frac{1}{m} \sum_{k: k/m \in A} \{ f(k/m) + o(1) \} \widehat \bbC^{\lozenge}_{n,m}(\bu,k/m)\\
= 
\sqrt{\frac{m}n}\frac{1}{m} \sum_{k: k/m \in A} f(k/m)\widehat \bbC^{\lozenge}_{n,m}(\bu,k/m) + O_\Prob(\sqrt{m/n}).
\end{multline*}
Next, by continuity of $f$ and since additionally
\begin{equation*} 
\sup_{\bu \in [0,1]^d} \sup_{k: k/m \in A} \sup_{a \in [k/m,(k+1)/m]} |\widehat \bbC^{\lozenge}_{n,m}(\bu,k/m)-\widehat \bbC^{\lozenge}_{n,m}(\bu,a)| = O_\Prob(1)
\end{equation*}
by asymptotic equicontinuity of $\bbC^{\lozenge}_{n,m}$,
it follows that 
\[
\sqrt{m/n}\frac{1}{m} \sum_{k: k/m \in A} f(k/m)\widehat \bbC^{\lozenge}_{n,m}(\bu,k/m) = \sqrt{m/n} \int_A f(a)\widehat \bbC^{\lozenge}_{n,m}(\bu,a) \diff a + O_\Prob(\sqrt{m/n}) 
\]
uniformly in $\bu \in [0,1]^d$. By similar but simpler arguments we also have  
\[
\frac{1}{m}\sum_{k \in M_n} w_{n,k} = \int_A f(a) \diff a + o(1)
\]
and the claim follows by an application of the continuous mapping theorem.
\end{proof}
 
\begin{proof}[Proof of Proposition~\ref{prop:bcnai} and~\ref{prop:bcagg}]
Begin by observing that 
\begin{multline*}
\sqrt{\frac{n}{m}}\Big(\hat C_{n,m}(\bu) - \frac{\hat C_{n,\ang{ma}}(\bu) - \hat C_{n,m}(\bu)}{(\ang{ma}/m)^{\rho_\ca} - 1} - \Big\{C_m(\bu)  - \frac{C_{\ang{ma}}(\bu) - C_m(\bu)}{(\ang{ma}/m)^{\rho_\ca} - 1} \Big\} \Big)
\\
= \widehat \bbC^{\lozenge}_{n,m}(\bu,1) - \frac{\widehat \bbC^{\lozenge}_{n,m}(\bu,a) - \widehat \bbC^{\lozenge}_{n,m}(\bu,1)}{(\ang{ma}/m)^{\rho_\ca} - 1}. 
\end{multline*}
Since $\ang{ma}/m \to a$ uniformly in $a \in A$ and by a simple application of the continuous mapping theorem, this implies the following process convergence result
\begin{multline*} 
\Big[ \sqrt{\frac{n}{m}}\Big(\hat C_m(\bu) - \frac{\hat C_{\ang{ma}}(\bu) - \hat C_m(\bu)}{(\ang{ma}/m)^{\rho_\ca} - 1} - \Big\{C_m(\bu)  - \frac{C_{\ang{ma}}(\bu) - C_m(\bu)}{(\ang{ma}/m)^{\rho_\ca} - 1} \Big\} \Big) \Big]_{a \in A, \bu \in [0,1]^d}
\\
\dto \Big[\bbC^{\lozenge}(\bu,1) - \frac{\bbC^{\lozenge}(\bu,a) - \bbC^{\lozenge}(\bu,1)}{a^{\rho_\ca} - 1}\Big]_{a \in A, \bu \in [0,1]^d} \quad in~~\ell^\infty(A\times [0,1]^d).
\end{multline*}
The expansion for 
\begin{multline*}
C_m(\bu)  - \frac{C_{\ang{ma}}(\bu) - C_m(\bu)}{(\ang{ma}/m)^{\rho_\ca} - 1} - C_\infty(\bu)
\\
= \Big\{\ca(m) r_a(m) \frac{S(\bm u)}{a^{\rho_\ca}-1} + \ca(m) \cb(m) \frac{1-a^{\rho_\cb}}{1-a^{-\rho_\ca}} T(\bm u) \Big\} + \ca(m)o\Big(\cb(m)+|r_a(m)|\Big)
\end{multline*}
with remainder terms holding uniformly in $a \in A$ follows from regular variation of $\ca, \cb$ and~\eqref{eqn:thirdor} after some tedious but straightforward computations.
The claim regarding $\hat{C}_{n,(m,m')}^{\mathrm{bc,nai}}$ in Proposition~\ref{prop:bcnai} follows immediately upon setting $A = \{a\}$ in the previous display. The proof for $\hat{C}_{n,(M,w)}^{\mathrm{bc,agg}}$ in Proposition~\ref{prop:bcagg} is very similar to the proof of Proposition~\ref{prop:simpleagg}; details are omitted for the sake of brevity.

Next, concerning the claim regarding $\check{C}_{n,(m,m')}^{\mathrm{bc,nai}}$ in Proposition~\ref{prop:bcnai}, observe the decomposition
\begin{align*}
\check{C}_{n,(m,m')}^{\mathrm{bc,nai}}(\bu)  - \hat{C}_{n,(m,m')}^{\mathrm{bc,nai}} (\bu) 
&=
\frac{\hat C_{n,m'}(\bu) - \hat C_{n,m}(\bu)}{(m'/m)^{\rho_\ca} - 1} - \frac{\hat C_{n,m'}(\bu) - \hat C_{n,m}(\bu)}{(m'/m)^{\hat \rho_\ca} - 1}  \\
&= 
\frac{\hat C_{n,m'}(\bu) - \hat C_{n,m}(\bu)}{((m'/m)^{\rho_\ca} - 1)((m'/m)^{\hat\rho_\ca} - 1)}\Big((m'/m)^{\hat \rho_\ca} - (m'/m)^{\rho_\ca} \Big). 
\end{align*}
We have 
\begin{align*}
\hat C_{n,m'}(\bu) - \hat C_{n,m}(\bu) &= C_{m'}(\bu) - C_{m}(\bu) + \sqrt{m/n}\{\widehat \bbC^{\lozenge}_{n,m}(\bu,m'/m) - \widehat \bbC^{\lozenge}_{n,m}(\bu,1)\} 
\\
&= O(\ca(m)) + O_\Prob(\sqrt{m/n}).
\end{align*}
Now the final assertion of Proposition~\ref{prop:bcnai} follows directly since $m'/m \to a \neq 0$ and hence $(m'/m)^{\hat \rho_\ca} - (m'/m)^{\rho_\ca} = O_\Prob(|\hat \rho_\ca-\rho_\ca|)$. The final assertion of Proposition~\ref{prop:bcagg} follows after some simple algebra upon observing that the assertion regarding $\check{C}_{n,(m,m')}^{\mathrm{bc,nai}}$  holds uniformly over $m' \in M_n$. 
\end{proof}

\begin{proof}[Proof of Proposition~\ref{prop:bcreg}]
Begin by observing that, for $v \in \{0,1,2\}$,
\[
\mu_{v,n} = \sum_{k \in M_n} w_{n,k} (k/m)^{v \rho_\ca} = \int_A f(a) a^{v\rho_\ca} da + o(1) = \kappa_v + o(1).
\] 
Moreover, for $v = 0,1$
\[
\sum_{k \in M_n} w_{n,k} (k/m)^{v \rho_\ca} \hat C_{n,k}(\bu)  = 
\sum_{k \in M_n} w_{n,k} (k/m)^{v \rho_\ca} C_k(\bu) + \sum_{k \in M_n} w_{n,k} (k/m)^{v \rho_\ca} \{\hat C_{n,k}(\bu) - C_k(\bu)\}.
\]
Now, under the imposed assumptions of Theorem~\ref{theo:estmar},
\[
\sqrt{\frac{n}{m}}\left(
\begin{array}{c}
\sum_{k \in M_n} w_{n,k} \{\hat C_{n,k}(\cdot) - C_k(\cdot)\}
\\
\sum_{k \in M_n} w_{n,k} (k/m)^{\rho_\ca} \{\hat C_{n,k}(\cdot) - C_k(\cdot)\}
\end{array}
\right)
\dto 
\left(
\begin{array}{c}
\int_A f(a) \widehat\bbC^{\lozenge}(\cdot,a) \diff a
\\
\int_A f(a) a^{\rho_\ca}\widehat\bbC^{\lozenge}(\cdot,a) \diff a
\end{array}
\right) 
= \left(
\begin{array}{c}
T_0(\cdot)
\\
T_1(\cdot)
\end{array}
\right),
\]
which follows by arguments that are similar to those given in the proof of Proposition~\ref{prop:simpleagg}. Next note that for $k \in M_n$ we have 
\[
\ca(k) = \ca(m)\frac{\ca(k)}{\ca(m)} 
= 
\ca(m)(k/m)^{\rho_\ca} + \ca(m)\Big(\frac{\ca(k)}{\ca(m)} - \frac{k^{\rho_\ca}}{m^{\rho_\ca}} \Big) 
= 
\ca(m)(k/m)^{\rho_\ca} + O(\ca(m)d_{m,n}(A))
\]
where 
\[
d_{m,n}(A) := \sup_{a \in A}\Big|\frac{\ca(\ang{ma})}{\ca(m)} - (\ang{ma}/m)^{\rho_\ca}\Big|.
\]
From this we obtain the expansion (holding uniformly in $\bu$)
\begin{align*}
\sum_{k \in M_n} w_{n,k} (k/m)^{v \rho_\ca} C_k(\bu) 
&= 
\sum_{k \in M_n} w_{n,k} (k/m)^{v \rho_\ca} \{C_\infty(\bu) + \ca(k)S(\bu) + \ca(k)\cb(k)T(\bu)+o(\ca(k)\cb(k))\}
\\
&= \mu_{v,n}C_\infty(\bu) + \mu_{v+1,n} \ca(m) S(\bu) \\
&\mathph{=}+ 
\sum_{k \in M_n} w_{n,k} (k/m)^{v \rho_\ca}\ca(k)\cb(k)T(\bu)\\
&\mathph{=}+
\sum_{k \in M_n} w_{n,k} (k/m)^{v \rho_\ca}\ca(m)\Big(\frac{\ca(k)}{\ca(m)} - \frac{k^{\rho_\ca}}{m^{\rho_\ca}} \Big)S(\bu)+o(r(m))\\
&=\mu_{v,n}C_\infty(\bu) + \mu_{v+1,n} \ca(m) S(\bu) + \cT_{m,v}(\bu)+o(r(m));
\end{align*}
recall that 
\begin{equation*} 
r(m) = \ca(m)O\Big(\cb(m) + d_{m,n}(A)\Big)
\end{equation*}
by \eqref{eq:rmn}. 

Next observe the representation
\begin{align*}
\Bigg(
\begin{array}{cc} 
\mu_{0,n} & \mu_{1,n} \\ 
\mu_{1,n} & \mu_{2,n}  
\end{array}
\Bigg)^{-1}
 =& \frac{1}{\mu_{2,n}\mu_{0,n} - \mu_{1,n}^2} \Bigg(
\begin{array}{cc} 
\mu_{2,n} & -\mu_{1,n} \\ 
-\mu_{1,n} & \mu_{0,n}  
\end{array}
\Bigg)
\\
=& \frac{1}{\kappa_{2}\kappa_{0} - \kappa_{1}^2 + o(1)}
\Bigg(
\begin{array}{cc} 
\kappa_{2} + o(1) & -\kappa_{1} + o(1) \\ 
-\kappa_{1}+ o(1) & \kappa_{0} + o(1)  
\end{array}
\Bigg).
\end{align*}
Using the latter two representations, we obtain
\begin{align*}
&\Bigg(
\begin{array}{cc} 
\mu_{0,n} & \mu_{1,n} \\ 
\mu_{1,n} & \mu_{2,n}  
\end{array}
\Bigg)^{-1}
\Bigg(
\begin{array}{c} 
\sum_{k \in M_n} w_{n,k} C_k(\bu) \\ 
\sum_{k \in M_n} w_{n,k} (k/m)^{\rho_\ca} C_k(\bu) 
\end{array}
\Bigg)
\\
=& \Bigg(
\begin{array}{cc} 
\mu_{0,n} & \mu_{1,n} \\ 
\mu_{1,n} & \mu_{2,n}  
\end{array}
\Bigg)^{-1}
\Bigg\{
\Bigg(
\begin{array}{c} 
\mu_{0,n} \\ 
\mu_{1,n}
\end{array}
\Bigg) C_\infty(\bu)
+ \Bigg(
\begin{array}{c} 
\mu_{1,n}\\ 
\mu_{2,n}  
\end{array}
\Bigg) \ca(m) S(\bu)
+ 
\Bigg(
\begin{array}{c} 
\cT_{m,0}(\bu) \\ 
\cT_{m,1}(\bu)
\end{array}
\Bigg) 
+ 
\Bigg(
\begin{array}{c} 
o(r(m)) \\ 
o(r(m))
\end{array}
\Bigg) 
\Bigg\} 
\\
 =&\Bigg(\begin{array}{c} 
C_\infty(\bu)  \\ 
B_m(\bu)
\end{array}\Bigg)
+
\Bigg(
\begin{array}{cc} 
\mu_{0,n} & \mu_{1,n} \\ 
\mu_{1,n} & \mu_{2,n}  
\end{array}
\Bigg)^{-1}
\Bigg(\begin{array}{c} 
\cT_{m,0}(\bu) \\ 
\cT_{m,1}(\bu) 
\end{array}\Bigg)
+
\Bigg(\begin{array}{c} 
o(r(m)) \\ 
o(r(m))
\end{array}\Bigg)
.
\end{align*}
Putting together all of the above results, all claims in Proposition~\ref{prop:bcreg} except for the one involving $\check C_{n,(M,w)}^{\mathrm{bc,reg}}$ follow. 

For the missing proof regarding $\check C_{n,(M,w)}^{\mathrm{bc,reg}}$, we begin by noting that, similarly as above,   
\[
\hat \mu_{v,n} := \sum_{k \in M_n} w_{n,k} (k/m)^{v \hat\rho_\ca} = \int_A f(a) a^{v\rho_\ca} da + o_\Prob(1)
\] 
(also note that $\hat \mu_{0,n}=\mu_{0,n}$) and
\begin{multline*}
\sqrt{\frac{n}{m}}\left(
\begin{array}{c}
\sum_{k \in M_n} w_{n,k} \{\hat C_{n,k}(\cdot) - C_k(\cdot)\}
\\
\sum_{k \in M_n} w_{n,k} (k/m)^{\hat \rho_\ca} \{\hat C_{n,k}(\cdot) - C_k(\cdot)\}
\end{array}
\right)
\\
= \sqrt{\frac{n}{m}}\left(
\begin{array}{c}
\sum_{k \in M_n} w_{n,k} \{\hat C_{n,k}(\cdot) - C_k(\cdot)\}
\\
\sum_{k \in M_n} w_{n,k} (k/m)^{\rho_\ca} \{\hat C_{n,k}(\cdot) - C_k(\cdot)\}
\end{array}
\right)
+ O_\Prob(|\hat\rho_\ca - \rho_\ca|).
\end{multline*}
We also have
\[
\sum_{k \in M_n} w_{n,k} (k/m)^{v \hat \rho_\ca}(k/m)^{\rho_\ca} = \hat\mu_{v+1,n} + O_\Prob(|\hat \rho_\ca - \rho_\ca|).
\]
This implies
\begin{align*}
\sum_{k \in M_n} w_{n,k} (k/m)^{v \hat  \rho_\ca} C_k(\bu) 
&= 
\sum_{k \in M_n} w_{n,k} (k/m)^{v \hat \rho_\ca} \{C_\infty(\bu) + \ca(k)S(\bu) + O(\ca(k)\cb(k))\}
\\
&= \hat \mu_{v,n}C_\infty(\bu) + \sum_{k \in M_n} w_{n,k} (k/m)^{v \hat \rho_\ca}(k/m)^{\rho_\ca} \ca(m) S(\bu) + O_\Prob(r(m)) 
\\
&= \hat \mu_{v,n}C_\infty(\bu) +  \hat \mu_{v+1,n} \ca(m) S(\bu) + O_\Prob(r(m)+\ca(m)|\hat \rho_\ca - \rho_\ca|).
\end{align*}
where $r(m)$ is defined in~\eqref{eq:rmn}. Moreover
\begin{align*}
\Bigg(
\begin{array}{cc} 
\hat \mu_{0,n} & \hat \mu_{1,n} \\ 
\hat \mu_{1,n} & \hat \mu_{2,n}  
\end{array}
\Bigg)^{-1}
&= \frac{1}{\hat \mu_{2,n} \hat \mu_{0,n} -  \hat \mu_{1,n}^2} \Bigg(
\begin{array}{cc} 
\hat \mu_{2,n} & - \hat \mu_{1,n} \\ 
-\hat \mu_{1,n} & \hat \mu_{0,n}  
\end{array}
\Bigg)
\\
&= \frac{1}{\kappa_{2}\kappa_{0} - \kappa_{1}^2 }
\Bigg(
\begin{array}{cc} 
\kappa_{2} & -\kappa_{1}  \\ 
-\kappa_{1} & \kappa_{0}  
\end{array}
\Bigg) + o_\Prob(1).
\end{align*}
Using this representation yields
\begin{align*}
&\Bigg(
\begin{array}{cc} 
\hat \mu_{0,n} & \hat\mu_{1,n} \\ 
\hat\mu_{1,n} & \hat\mu_{2,n}  
\end{array}
\Bigg)^{-1}
\Bigg(
\begin{array}{c} 
\sum_{k \in M_n} w_{n,k} C_k(\bu) \\ 
\sum_{k \in M_n} w_{n,k} (k/m)^{\hat\rho_\ca} C_k(\bu) 
\end{array}
\Bigg)
\\
=& \Bigg(
\begin{array}{cc} 
\hat \mu_{0,n} & \hat\mu_{1,n} \\ 
\hat\mu_{1,n} & \hat\mu_{2,n}  
\end{array}
\Bigg)^{-1}
\Bigg\{
\Bigg(
\begin{array}{c} 
\hat\mu_{0,n} \\ 
\hat\mu_{1,n}
\end{array}
\Bigg) C_\infty(\bu)
+ \Bigg(
\begin{array}{c} 
\hat\mu_{1,n}\\ 
\hat\mu_{2,n}  
\end{array}
\Bigg) \ca(m) S(\bu)
+ 
\Bigg(
\begin{array}{c} 
O_\Prob(r(m)+\ca(m)|\hat \rho_\ca - \rho_\ca|) \\ 
O_\Prob(r(m)+\ca(m)|\hat \rho_\ca - \rho_\ca|)
\end{array}
\Bigg) 
\Bigg\} 
\\
 =& \Big(\begin{array}{c} 
C_\infty(\bu) + O_\Prob(r(m)+\ca(m)|\hat \rho_\ca - \rho_\ca|) \\ 
\ca(m) S(\bu) + O_\Prob(r(m)+\ca(m)|\hat \rho_\ca - \rho_\ca|))
\end{array}\Big).
\end{align*}
Combining the derivations so far and the corresponding expansions for $\hat C_{n,(M,w)}^{\mathrm{bc,reg}}$ the result follows.
\end{proof}

\begin{proof}[Proof of Proposition~\ref{prop:hatrhonai}]
Define
\[
\Delta_{m_\rho,n}(\bu,a) := 
\sqrt{\frac{m_\rho}{n}}\Big(\widehat \bbC^{\lozenge}_{n,m_\rho}(\bu,a) - \widehat \bbC^{\lozenge}_{n,m_\rho}(\bu,1)\Big).
\]
Recall the definition of $r_a$ in \eqref{eq:ra}. We have the following expansion
{\small\begin{align*}
&\frac{\hat C_{n,\ang{m_\rho a^2}}(\bu) - \hat C_{n,m_\rho}(\bu)}{\hat C_{n,\ang{{m_\rho a}}}(\bu) - \hat C_{n,m_\rho}(\bu)}
\\
=\ & \frac{\Delta_{m_\rho,n}(\bu,a^2) + C_{\ang{m_\rho a^2}}(\bu) - C_{m_\rho}(\bu)}{\Delta_{m_\rho,n}(\bu,a) + C_{\ang{m_\rho a}}(\bu) - C_{m_\rho}(\bu)}
\\
=\ & 
\frac{\frac{1}{\ca(m_\rho)}\Delta_{m_\rho,n}(\bu,a^2) + \Big(\frac{\ca(\ang{m_\rho a^2})}{\ca(m_\rho)}-1\Big)S(\bu) + \cb(m_\rho)(a^{2\rho_\ca + 2\rho_\cb}-1)T(\bu) + o(\cb(m_\rho))}{\frac{1}{\ca(m_\rho)}\Delta_{m_\rho,n}(\bu,a) + \Big(\frac{\ca(\ang{m_\rho a})}{\ca(m_\rho)}-1\Big)S(\bu) + \cb(m_\rho)(a^{\rho_\ca + \rho_\cb}-1)T(\bu) + o(\cb(m_\rho))}
\\
=\ & 
\frac
{\frac{1}{\ca(m_\rho)}\Delta_{m_\rho,n}(\bu,a^2) + \Big(a^{2\rho_\ca}-1\Big)S(\bu) + \cb(m_\rho)(a^{2\rho_\ca + 2\rho_\cb}-1)T(\bu) + O(r_{a^2}(m_\rho)+m_\rho^{-1}) + o(\cb(m_\rho))}
{\frac{1}{\ca(m_\rho)}\Delta_{m_\rho,n}(\bu,a) + \Big(a^{\rho_\ca}-1\Big)S(\bu) + \cb(m_\rho)(a^{\rho_\ca + \rho_\cb}-1)T(\bu) + O(r_a(m_\rho)+m_\rho^{-1}) + o(\cb(m_\rho))}
\\
=\ & 
\frac
{a^{2\rho_\ca}-1 +\frac{1}{\ca(m_\rho)}\frac{\Delta_{m_\rho,n}(\bu,a^2)}{S(\bu)} +  \cb(m_\rho)(a^{2\rho_\ca + 2\rho_\cb}-1)\frac{T(\bu)}{S(\bu)} + O(r_{a^2}(m_\rho)+m_\rho^{-1}) + o(\cb(m_\rho))}{a^{\rho_\ca}-1 +\frac{1}{\ca(m_\rho)}\frac{\Delta_{m_\rho,n}(\bu,a)}{S(\bu)} +  \cb(m_\rho)(a^{\rho_\ca + \rho_\cb}-1)\frac{T(\bu)}{S(\bu)} + O(r_a(m_\rho)+m_\rho^{-1}) + o(\cb(m_\rho))}
\\
=\ & 
a^{\rho_\ca} + 1 + \frac{1}{a^{\rho_\ca}-1}\Big\{\frac{1}{\ca(m_\rho)}\frac{\Delta_{m_\rho,n}(\bu,a^2)}{S(\bu)} +  \cb(m_\rho)(a^{2\rho_\ca + 2\rho_\cb}-1)\frac{T(\bu)}{S(\bu)} \Big\}
\\
& - \frac{a^{\rho_\ca} + 1}{a^{\rho_\ca} - 1}\Big\{\frac{1}{\ca(m_\rho)}\frac{\Delta_{m_\rho,n}(\bu,a)}{S(\bu)} +  \cb(m_\rho)(a^{\rho_\ca + \rho_\cb}-1)\frac{T(\bu)}{S(\bu)}\Big\}
\\
& + O_\Prob\Big( \Big\{\ca(m_\rho) \sqrt{m_\rho/n}\Big\}^2 \Big) + O(r_{a^2}(m_\rho) + r_{a}(m_\rho)+m_\rho^{-1}) + o(\cb(m_\rho)).
\end{align*}
A Taylor expansion of $x\mapsto \log_a(x)$ in the point $x = a^{\rho_\ca}$ yields
\begin{align*}
\hat\rho_{\ca}^{\mathrm{nai}}(a,\bu)  - \rho_\ca = &\log_a\Big(\frac{\hat C_{n,\ang{m_\rho a^2}}(\bu) - \hat C_{n,m_\rho}(\bu)}{\hat C_{n,\ang{m_\rho a^2}}(\bu) - \hat C_{n,m_\rho}(\bu)} - 1 \Big) - \log_a(a^{\rho_\ca})
\\
=\ & 
\frac{1}{a^{\rho_\ca} \log a}\Big[\frac{1}{a^{\rho_\ca}-1}\Big\{\frac{1}{\ca(m_\rho)}\frac{\Delta_{m_\rho,n}(\bu,a^2)}{S(\bu)} +  \cb(m_\rho)(a^{2\rho_\ca + 2\rho_\cb}-1)\frac{T(\bu)}{S(\bu)} \Big\}
\\
& - \frac{a^{\rho_\ca} + 1}{a^{\rho_\ca} - 1}\Big\{\frac{1}{\ca(m_\rho)}\frac{\Delta_{m_\rho,n}(\bu,a)}{S(\bu)} +  \cb(m_\rho)(a^{\rho_\ca + \rho_\cb}-1)\frac{T(\bu)}{S(\bu)}\Big\} \Big]
\\
& +  O_\Prob\Big( \Big\{\ca(m_\rho) \sqrt{m_\rho/n}\Big\}^2 \Big)  + O(r_{a^2}(m_\rho) + r_{a}(m_\rho)+m_\rho^{-1}) + o(\cb(m_\rho))
\end{align*}}
The claim follows from this expansion after rearranging terms. 
\end{proof}

\begin{Lemma}\label{lemma:M_est} Let $U$ be an arbitrary set and let $(\Theta, d)$ be a metric space. Suppose that $\cL \in \ell^\infty(\Theta)$ is a deterministic function and $(\ccL_{n})_{n\in\N}$ is a sequence of random elements in $\ell^\infty(\Theta\times U)$. Assume that $\cL(\cdot)$ has a unique maximizer $\theta_0$ and let
\[
\ctheta_{n}(\bu) \in \argmax_{\theta\in \Theta}\ccL_{n}(\theta;\bu) \quad \forall \, n \in \N,\bu \in U;
\]
note in particular that we do not assume that $\ccL_{n}(\cdot;\bu)$ has a unique maximizer. If
\begin{equation}\label{eq:M_cond_1}
\sup_{\bu\in U}\sup_{\theta \in \Theta}\Big|\ccL_{n}(\theta;\bu)-\cL(\theta)\Big|=\op{1},
\end{equation}
then 
\[
\sup_{\bu\in U}\Big[\cL(\theta_{0})-\ccL_{n}(\ctheta_{n}(\bu);\bu)\Big]\leq \op{1}.
\]
If, additionally, for all $\ep>0$, 
\begin{equation}\label{eq:M_cond_2}
\sup_{\theta: d(\theta,\theta_{0})\geq \ep }\cL(\theta)<\cL(\theta_{0}),    
\end{equation}
then
\[
\sup_{\bu\in U}d(\ctheta_{n}(\bu),\theta_{0}) = \op{1}.
\]
\end{Lemma}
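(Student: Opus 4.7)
The plan is to adapt the classical consistency proof for M-estimators to the uniform setting, exploiting the fact that condition~\eqref{eq:M_cond_1} already gives uniformity in $\bu \in U$ for free.

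First, for the opening inequality, the plan is to exploit that $\ctheta_n(\bu)$ is a maximizer of $\ccL_n(\cdot; \bu)$, whence $\ccL_n(\ctheta_n(\bu); \bu) \geq \ccL_n(\theta_0; \bu)$ for every $\bu \in U$. This immediately yields the pointwise bound
\[
\cL(\theta_0) - \ccL_n(\ctheta_n(\bu); \bu) \leq \cL(\theta_0) - \ccL_n(\theta_0; \bu) \leq \sup_{\theta \in \Theta}\big|\ccL_n(\theta; \bu) - \cL(\theta)\big|.
\]
Taking the supremum over $\bu \in U$ on both sides and invoking~\eqref{eq:M_cond_1} gives the first claim.

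Second, for the consistency statement under~\eqref{eq:M_cond_2}, I would combine the previous bound with a second bound going in the opposite direction. Namely, one also has
\[
\cL(\ctheta_n(\bu)) - \ccL_n(\ctheta_n(\bu); \bu) \leq \sup_{\theta \in \Theta}\big|\ccL_n(\theta; \bu) - \cL(\theta)\big|,
\]
which together with the first step yields
\[
\sup_{\bu \in U}\Big[\cL(\theta_0) - \cL(\ctheta_n(\bu))\Big] \leq 2 \sup_{\bu \in U}\sup_{\theta \in \Theta}\big|\ccL_n(\theta; \bu) - \cL(\theta)\big| = \op{1}.
\]

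Finally, given $\ep > 0$, condition~\eqref{eq:M_cond_2} provides a separation constant $\delta = \cL(\theta_0) - \sup_{d(\theta,\theta_0) \geq \ep} \cL(\theta) > 0$. The key observation is the contrapositive: if $d(\ctheta_n(\bu), \theta_0) \geq \ep$ for some $\bu$, then $\cL(\theta_0) - \cL(\ctheta_n(\bu)) \geq \delta$ for that $\bu$, so $\sup_{\bu \in U}[\cL(\theta_0) - \cL(\ctheta_n(\bu))] \geq \delta$. Combined with the display above, this gives
\[
\Prob\Big(\sup_{\bu \in U} d(\ctheta_n(\bu), \theta_0) \geq \ep\Big) \leq \Prob\Big(\sup_{\bu \in U}[\cL(\theta_0) - \cL(\ctheta_n(\bu))] \geq \delta\Big) \to 0,
\]
which is the desired conclusion. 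There is no serious obstacle here: the whole argument is a uniform-in-$\bu$ rerun of the textbook M-estimator consistency proof (e.g., Theorem 5.7 of van der Vaart), and the uniformity is handed over gratis by the hypothesis~\eqref{eq:M_cond_1}, so no measurability or compactness issues arise.
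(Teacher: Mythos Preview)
Your proof is correct and follows essentially the same approach as the paper's own proof: both use the maximizer property $\ccL_n(\ctheta_n(\bu);\bu)\ge\ccL_n(\theta_0;\bu)$ for the first claim, and then combine two applications of the uniform approximation~\eqref{eq:M_cond_1} with the separation condition~\eqref{eq:M_cond_2} for the second. The only cosmetic difference is that the paper works directly with chains of event inclusions, whereas you first isolate the intermediate bound $\sup_{\bu}[\cL(\theta_0)-\cL(\ctheta_n(\bu))]\le 2\sup_{\bu,\theta}|\ccL_n(\theta;\bu)-\cL(\theta)|$ and then invoke the contrapositive; the underlying arithmetic is identical.
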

\begin{proof}[Proof of Lemma \ref{lemma:M_est}]
By the definition of $\ctheta_{n}(\bu)$ and \eqref{eq:M_cond_1},
\begin{equation*} 
\begin{aligned}
\mathph{\leq}
\sup_{\bu\in U}\Big[\cL(\theta_{0})-\ccL_{n}(\ctheta_{n}(\bu);\bu)\Big] 
&= \sup_{\bu\in U}\Big[\cL(\theta_{0}) - \ccL_{n}(\theta_{0};\bu) + \ccL_{n}(\theta_{0};\bu) - \ccL_{n}(\ctheta_{n}(\bu);\bu)\Big]\\
&\leq \sup_{\bu\in U}\Big[\cL(\theta_{0})-\ccL_{n}(\theta_{0};\bu)\Big]+\sup_{\bu\in U}\Big[\ccL_{n}(\theta_{0};\bu)-\ccL_{n}(\ctheta_{n}(\bu);\bu)\Big] \\
&\leq \sup_{\bu\in U}\Big[\cL(\theta_{0})-\ccL_{n}(\theta_{0};\bu)\Big]=\op{1}.
\end{aligned}
\end{equation*}
By \eqref{eq:M_cond_2}, for all $\ep>0$, there exists $\eta_{\ep}>0$, such that
\begin{equation}\label{eq:M_result_2}
\inf_{\theta:d(\theta,\theta_{0}) \geq \ep} \Big[\cL(\theta_{0})-\cL(\theta)\Big]>\eta_{\ep}.
\end{equation}
By \eqref{eq:M_result_2} and the definition of $\ctheta_{n}(\cbu)$, for all $\ep>0$,
\begin{align*}
    \Big\{\sup_{\bu\in U}d(\ctheta_{n}(\bu),\theta_{0})>\ep\Big\}
    &\subset\Big\{\exists\, \cbu\in U: d(\ctheta_{n}(\cbu),\theta_{0}) \geq\ep\Big\}\\
    &\subset\Big\{\exists\, \cbu\in U: \cL(\theta_{0})-\cL(\ctheta_{n}(\cbu))>\eta_{\ep}\Big\}\\
    &\subset\Big\{\exists\, \cbu\in U: \ccL_{n}(\theta_{0};\cbu)-\ccL_{n}(\ctheta_{n}(\cbu),\cbu)+2\sup_{\bu\in U}\sup_{\theta\in \Theta}|\ccL_{n}(\theta;\bu)-\cL(\theta)|>\eta_{\ep}\Big\}\\
    &\subset\Big\{2\sup_{\bu\in U}\sup_{\theta\in \Theta}|\ccL_{n}(\theta;\bu)-\cL(\theta)|>\eta_{\ep}\Big\}.
\end{align*}
Hence, by \eqref{eq:M_cond_1},
\[
\Prob\Big(\sup_{\bu\in U}d(\ctheta_{n}(\bu),\theta_{0})>\ep\Big)\leq \Prob\Big(2\sup_{\bu\in U}\sup_{\theta\in \Theta}|\ccL_{n}(\theta;\bu)-\cL(\theta)|>\eta_{\ep}\Big)\to 0. \qedhere
\]
\end{proof}

\begin{proof}[Proof of Proposition \ref{prop:reg_est_rho}]
Let
\[
\Big(\tb_{0}(\bu),\tb_{1}(\bu),\trho(\bu)\Big) \in \argmin_{b_{0} , b_{1} \in \R, K'\leq \rho\leq K''} \tRSS(b_{0},b_{1},\rho;\bu).
\]
Define
\[
\Big(\check b_0(\rho;\bu),\check b_1(\rho;\bu)\Big) := \argmin_{b_{0}, b_{1}\in\R}\tRSS(b_{0},b_{1},\rho;\bu).
\]
Note that in the minimization problem above $\rho$ is fixed, whence $\check b_0(\rho;\bu)$ and $\check b_1(\rho;\bu)$ can be computed explicitly as the solution of a weighted simple linear regression problem. Recalling that by assumption $\sum_{k \in M_n} w_{n,k} = 1$
standard results (or a tedious computation) show that
\[
\tRSS(\check b_0(\rho;\bu),\check b_1(\rho;\bu),\rho;\bu) = \tilde S_{yy}(\bu)(1 - \tcL_{n}(\rho;\bu))
\]  
where 
\begin{align*}
\tilde S_{yy}(\bu) 
&:= 
\sum_{k \in M_n} w_{n,k}\Big\{ \hat C_{n,k}(\bu) - \sum_{i \in M_n} w_{n,i}\hat C_{n,i}(\bu) \Big\}^2,
\\
\tilde S_{xx}(\rho;\bu) &:=  \sum_{k \in M_n} w_{n,k}\Big \{ (k/m)^\rho - \sum_{i \in M_n} w_{n,i}(i/m)^\rho \Big\}^2,
\\
\tilde S_{xy}(\rho;\bu) 
&:= 
\sum_{k \in M_n} w_{n,k} \Big\{ \hat C_{n,k}(\bu) - \sum_{i \in M_n} w_{n,i}\hat C_{n,i}(\bu)\Big\} \Big\{ (k/m)^\rho - \sum_{i \in M_n} w_{n,i} (i/m)^\rho \Big\},
\\
\tcL_{n}(\rho;\bu) &:= \frac{\tilde S_{xy}^2(\rho;\bu)}{\tilde S_{yy}(\bu)\tilde S_{xx}(\rho;\bu)}.
\end{align*} 
From the definitions above it is clear that
\begin{align*}
& \min_{b_{0}, b_{1} \in \R, K'\leq \rho\leq K''} \hRSS_\eta(b_{0},b_{1},\rho;\bu)
\\
&= \min_{ K'\leq \rho\leq K''}\Big( \min_{b_{0}, b_{1} \in \R} \tRSS(b_0,b_1,\rho;\bu) + \frac{\eta}{|\rho|}\tRSS(\tb_{0}(\bu),\tb_{1}(\bu),\trho(\bu);\bu)\Big) 
\\
&= \tilde S_{yy}(\bu) \min_{ K'\leq \rho\leq K''}\Big\{ 1 - \tcL_{n}(\rho;\bu)  + \frac{\eta}{|\rho|}\Big(1 - \tcL_{n}(\tilde\rho(\bu);\bu) \Big)\Big\}. 
\end{align*}
Hence,
\[
\hat\rho(\bu) \in \argmax_{K'\leq \rho\leq K''} \Big\{\tcL_{n}(\rho;\bu) - \frac{\eta}{|\rho|}\Big(1 - \tcL_{n}(\tilde\rho(\bu);\bu) \Big)\Big\}
\]
and by similar but simpler arguments
\[
\trho(\bu) \in \argmin_{K'\leq \rho \leq K''}\tRSS\Big(\check b_{0}(\rho;\bu),\check b_{1}(\rho;\bu),\rho;\bu\Big) = \argmax_{K'\leq \rho\leq K''} \tcL_{n}(\rho;\bu).
\]
Next observe that by an application of Theorem~\ref{theo:estmar}, regular variation of $\ca(\cdot)$, and straightforward calculations
\begin{align*}
\sup_{\bu \in U} \Big|\frac{\tilde S_{yy}(\bu)}{\{\ca(m)S(\bu)\}^2} - \int_A f(a)(a^{\rho_\ca} - \mu_{\rho_\ca})^2   \diff a\Big| =\op{1}.
\\
\sup_{\bu \in U} \Big| \tilde S_{xx}(\rho;\bu)  - \int_A f(a)(a^\rho - \mu_\rho )^2  \diff a\Big| = \op{1}.
\\
\sup_{\bu \in U} \Big| \frac{\tilde S_{xy}(\rho;\bu)}{\ca(m)S(\bu)} - \int_A f(a)(a^{\rho_\ca} - \mu_{\rho_\ca})(a^\rho - \mu_\rho)   \diff a\Big| = \op{1}.
\end{align*}
where $\mu_\rho := \int_A a^\rho f(a)  \diff a$. Next define 
\[
\cL(\rho) = \frac{\Big\{\int_{A}(a^{\rho}-\mu_\rho)(a^{\rho_\ca}-\mu_{\rho_\ca} ) f(a) \diff a \Big\}^{2}}{\int_{A}(a^{\rho}-\mu_\rho)^{2} f(a) \diff a \int_{A}(a^{\rho_\ca}-\mu_{\rho_\ca})^{2}f(a) \diff a }.
\]
The arguments given above show that
\begin{equation}\label{eq:unifL}
\sup_{K'\leq \rho\leq K''}\sup_{\bu \in U} |\tcL_{n}(\rho;\bu) - \cL(\rho)| = o_P(1).
\end{equation}
Next we show that $\cL$ satisfies~\eqref{eq:M_cond_2} with $\theta_0 = \rho_\ca$ and $\Theta = [K',K'']$. Since $\cL$ is continuous, $\{\rho\in [K',K'']:|\rho-\rho_\ca|\geq \ep\}$ is compact, $\cL(\rho)\leq 1$ by Cauchy-Schwarz, and $\cL(\rho_\ca)=1$, it suffices to show that 
\begin{equation*}
    \cL(\rho)=1, \quad \text{only if} \ \rho=\rho_\ca.
\end{equation*} 
This, however, follows again from Cauchy-Schwarz and linear independence of the functions $A \ni a \mapsto a^\rho$ and $A\ni a \mapsto a^\rho$ for $\rho \neq \rho_\ca$.

Next, apply Lemma \ref{lemma:M_est} with
\begin{equation*}
\Theta=[K',K''], \quad U=U,\quad  \ccL_{n}=\tcL_{n},\quad \cL=\cL, \quad\ctheta_{n}=\trho, \quad\theta_{0}=\rho_\ca.
\end{equation*}
Condition \eqref{eq:M_cond_1} follows directly from~\eqref{eq:unifL} and hence, by the first part of Lemma \ref{lemma:M_est} and by the fact that $\tcL_{n}(\tilde\rho(\bu);\bu) \le 1$ by Cauchy-Schwarz, we have
\[
\tcL_{n}(\tilde\rho(\bu);\bu) = \cL(\rho_\ca) + \op{1} = 1 + \op{1}
\]
uniformly in $\bu \in U$. This implies
\[
\hcL_{n}(\bu,\rho) := \tcL_{n}(\rho;\bu) - \frac{\eta}{|\rho|}\Big(1 - \tcL_{n}(\tilde\rho(\bu);\bu) \Big) = \cL(\rho) + \op{1}
\]
uniformly in $\rho \in [K',K''], \bu \in U$. Hence, we may apply Lemma \ref{lemma:M_est} again, with
\begin{equation*}
\Theta=[K',K''], \quad U=U, \quad \ccL_{n}=\hcL_{n}, \quad \cL=\cL, \quad \ctheta_{n}=\hat \rho, \quad \theta_{0}=\rho_\ca,
\end{equation*}
and the result follows by the definition of $\hat \rho(\bu)$.
\end{proof} 

\section{Derivations for Section~4}
\label{sec:proof3}

\begin{proof}[Proof  of the claims regarding Example~\ref{ex:movmax}]
By a straight-forward calculation, see also \cite{BucSeg14}, the copula $C_m$ is given by
\[
C_m(\bu) = \prod_{s=1-p}^m D( (u_j^{\beta_{mjs}})_{j=1}^d) ,
\]
where 
\begin{align*}
\beta_{mjs} = \frac{\alpha_{mjs}}{\alpha_{mj\bullet}}, \quad
\alpha_{mjs} =  \max(a_{ij}: i=\max(1-s,0), \dots, \min(m-s,p)\}, \quad
\alpha_{mj\bullet} = \sum_{s=1-p}^m \alpha_{mjs}.
\end{align*}
Without loss of generality assume that $m > p$. For all $s=1, \dots, m-p$, we have
\[
\alpha_{mjs} = A_{j} := \max(a_{0j}, \dots, a_{pj}),
\]
whence we may rewrite
\begin{align} \label{eq:cmmov}
C_m(\bu) = \prod_{s=1-p}^0 D\big( (u_j^{\beta_{mjs}})_{j=1}^d\big) \times  \big\{ D\big( (u_j^{B_{mj}})_{j=1}^d\big) \big\}^{m-p} \times  \prod_{s=m-p+1}^m D\big( (u_j^{\beta_{mjs}})_{j=1}^d\big),
\end{align}
where
\[
B_{mj} = \frac{A_j}{\alpha_{mj\bullet}} = \frac{A_j}{(m-p) A_j +  c_{j}}, \qquad  
c_{j} = \sum_{s=1-p}^0  \alpha_{mjs} + \sum_{s=m-p+1}^m  \alpha_{mjs}.
\]
Note that $c_{j}$ does not depend on $m$.

Now, using the Taylor expansion $u^x= \exp(x \log u) = 1+x\log u + O(x^2)$ for $x\to0$ (which is uniform in $u$ bounded away from zero), each factor in the two products on the right-hand side of \eqref{eq:cmmov} can be written as
\[
D\big( (u_j^{\beta_{mjs}})_{j=1}^d \big) = D\big( (1+ \tfrac{\alpha_{mjs}}{(m-p)A_j +  c_{j} } \log u_j + O(m^{-2}) )_{j=1}^d\big) = 1+O(1/m),
\]
where we have used Lipschitz-continuity of $D$ and where the $O$-term is uniform on $[\delta,1]^d$ for any $\delta>0$. The factor in the middle of the right-hand side of \eqref{eq:cmmov} can be rewritten as
\begin{align*}
D\big( (u_j^{B_{mj}})_{j=1}^d\big)^{m-p}
&=
D_{m-p} \big( (u_j^{(m-p) B_{mj}})_{j=1}^d\big) \\
&=
D_\infty\big( (u_j^{(m-p) B_{mj}})_{j=1}^d\big) + \ca_D(m-p)  S_D\big( (u_j^{(m-p) B_{mj}})_{j=1}^d\big) + o(a(m)).
\end{align*}
Since we have, again uniform on $[\delta,1]^d$ for any $\delta>0$,
\[
u_j^{(m-p) B_{mj}} 
=
u_j \Big\{ 1 - \frac{c_{j}}{(m-p)A_j + c_{j}}  \log u_j  + O(m^{-2}) \Big\} = u_j + O(1/m),
\]
we may use Lipschitz-continuity of $D_\infty$ and continuity of $S_D$ to obtain that
\begin{align*}
D\big( (u_j^{B_{mj}})_{j=1}^d\big)^{m-p}
&=
D_\infty( \bu ) + \ca_D(m)  S_D(\bu) + o(\ca_D(m)) + O(1/m).
\end{align*}

Assembling terms, and additionally assuming that $1/m = o(\ca_D(m))$, we finally obtain that
\begin{align*}
C_m(\bu) 
&= 
(1+o(\ca_D(m)))^{2p} \Big\{ D_\infty( \bu ) + \ca_D(m)  S_D(\bu) + o(a(m))\Big\} \\
&= D_\infty( \bu ) + \ca_D(m)  S_D(\bu) + o(a(m)),
\end{align*}
uniformly on $[\delta, 1]^d$.  In order to obtain the result uniformly on $[0,1]^d$, one may use similar arguments as in the proof of Lemma 2.4 in \cite{BucVolZou19}, explicitly making use of the special structure of $C_m$ in \eqref{eq:cmmov} with the middle factor being equal to $D_{m-p} \big( (u_j^{(m-p) B_{mj}})_{j=1}^d\big)$. 
\end{proof}

\section{Additional simulation results in higher dimensions}\label{sec:addsim}

\subsection{Comparison of bias-corrected estimators in higher dimensions} \label{sec:highdim}

In this section we compare the performance of the bias-corrected estimators when $d=4,8$. The simulation settings are similar to those in Section \ref{sec:finiteprop}; specifically, each estimator is computed for all values $\bu \in \cU$, where $\cU = \{.25,.50,.75\}^4$ when $d=4$ and $\cU = \{.25,.75\}^8$ when $d=8$, and block size $m \in \{1,\dots,20\}$ (except for the aggregated versions, for which we specify the set of block length parameters below). Squared bias, variance and MSE of each estimator and in each point $\bu \in \cU$ for sample size $n = 1000$ was estimated based on $1000$ Monte Carlo replications. For the sake of brevity we only report summary results which correspond to taking averages of the squared bias, MSE and variance over all values $\bu \in \cU$. 

When $d=4,8$, as in Section \ref{sec:finiteprop}, we generate data from the $t$-Copula and the outer-power transformation of Clayton Copula; see Examples \ref{example:thighdim} and \ref{example:outerhighdim} below. In particular, we present results on the following models:

\begin{enumerate}
\item[(M6)] iid realizations from an outer Power Clayton Copula with $d = 4, \theta = 1, \beta = \log(2)/\log(2-0.25)$. 
\item[(M7)] A moving maximum process based on the outer Power Clayton Copula with $d=4, \theta = 1, \beta = \log(2)/\log(2-0.25)$ and  $a_{1(2k-1)} = 0.25, a_{1(2k)} = 0.75$, $k = 1, 2$.   
\item[(M8)] iid realizations from a $t$-Copula with $d = 4, \nu = 5, \theta = 0.5$.  
\item[(M9)] A moving maximum process based on a $t$-Copula with $d = 4, \nu = 5, \theta = 0.5$ and $a_{1(2k-1)} = 0.25, a_{1(2k)} = 0.75$, $k = 1, 2$.  
\item[(M10)] A moving maximum process based on a $t$-Copula with $d = 4, \nu = 3, \theta = 0.25$ and $a_{1(2k-1)} = 0.25, a_{1(2k)} = 0.75$, $k = 1, 2$.  
\item[(M11)] iid realizations from an outer Power Clayton Copula with $d = 8, \theta = 1, \beta = \log(2)/\log(2-0.25)$. 
\item[(M12)] A moving maximum process based on the outer Power Clayton Copula with $d=8, \theta = 1, \beta = \log(2)/\log(2-0.25)$ and $a_{1(2k-1)} = 0.25, a_{1(2k)} = 0.75$, $k = 1, 2, 3, 4$.  
\item[(M13)] iid realizations from a $t$-Copula with $d = 8, \nu = 5, \theta = 0.5$.  
\item[(M14)] A moving maximum process based on a $t$-Copula with $d = 8, \nu = 5, \theta = 0.5$ and $a_{1(2k-1)} = 0.25, a_{1(2k)} = 0.75$, $k = 1, 2, 3, 4$.  
\item[(M15)] A moving maximum process based on a $t$-Copula with $d = 8, \nu = 3, \theta = 0.25$ and $a_{1(2k-1)} = 0.25, a_{1(2k)} = 0.75$, $k = 1, 2, 3, 4$.
\end{enumerate}
For the sake of brevity, we do not include an iid version of Model (M10) and (M15) because the findings are very similar to the time series case. Further note that we also investigated other parameter combinations, but chose to only present results for the above models as they provide, to a large extent, a representative subset of the results.

We implement the same collection of bias-corrected estimators and choose the same tuning parameters as in Section \ref{sec:finiteprop}. The results are presented in Figure \ref{fig:bc4} and Figure \ref{fig:bc8}. Notice that when $d=4, 8$, for $\bu\in \cU$, $C_{\infty}(\bu)$ in general gets closer to zero, and as a result the variances of the estimators become smaller compared to their squared biases. Other than that, when $d=4, 8$, the patterns of the bias-corrected estimators are very similar to the patterns when $d=2$; in particular, when $d=4, 8$, the superiority of the aggregated naive and the regression-based corrected estimators is preserved. Overall, for a generic dimension $d$, we would recommend using the aggregated bias-corrected estimator among all bias-corrected estimators since it leads to better results than the naive estimator, is reasonably fast to compute (see Section \ref{sec:comptime}), and is simpler to implement than the regression-based estimator.

\begin{figure}[t!]
\begin{center}
\includegraphics[width=0.82\textwidth]{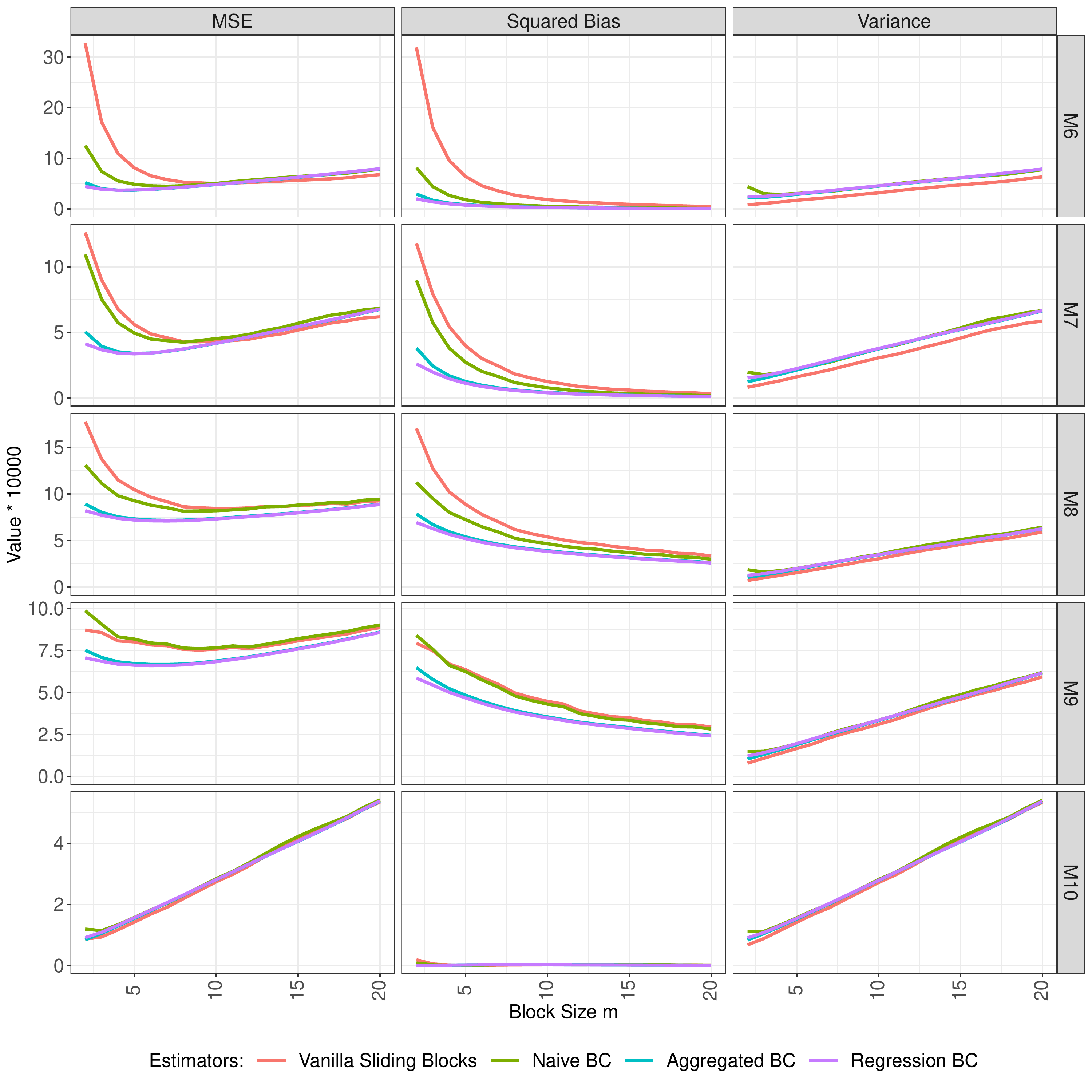}
\vspace{-.6cm}
\end{center}
\caption{\label{fig:bc4} 
$10^{4} \times $ average MSE, average squared bias and average variance of sliding blocks estimator,  naive bias corrected estimator, and aggregated naive bias corrected estimator.} 
\vspace{-.3cm}
\end{figure}

\begin{figure}[t!]
\begin{center}
\includegraphics[width=0.82\textwidth]{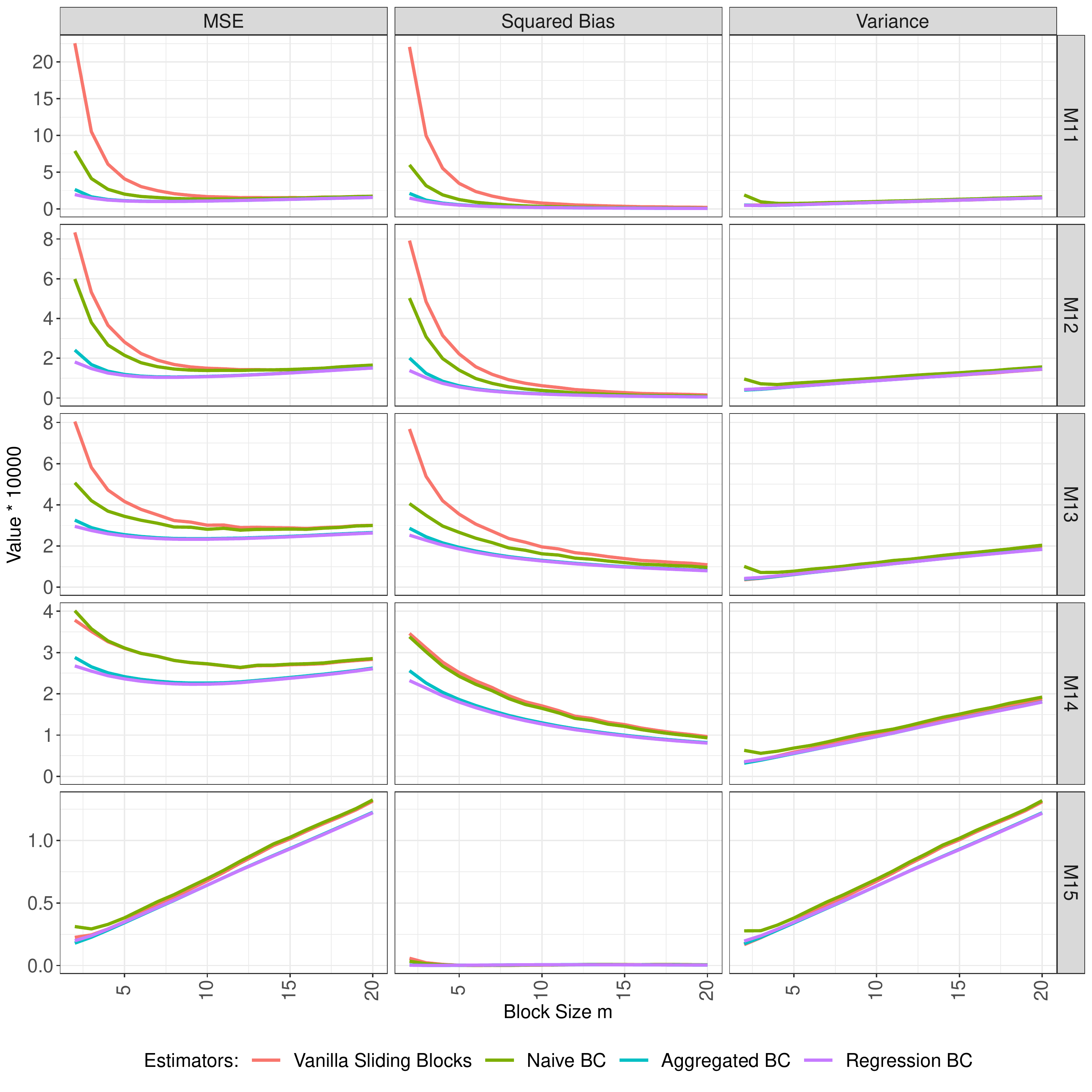}
\vspace{-.6cm}
\end{center}
\caption{\label{fig:bc8} 
$10^{4} \times $ average MSE, average squared bias and average variance of sliding blocks estimator,  naive bias corrected estimator, and aggregated naive bias corrected estimator.} 
\vspace{-.3cm}
\end{figure}

\begin{example}[$t$-Copula, iid case, $d\geq 2$]\label{example:thighdim}
When $d\geq2$, the $t$-copula is defined as
\[
D(\bu;\nu,\theta)=\int_{-\infty}^{t_{\nu}^{-1}(u_{1})}\cdots\int_{-\infty}^{t_{\nu}^{-1}(u_{d})}\frac{\Gamma\Big(\frac{\nu+d}{2}\Big)}{\Gamma\big(\frac{\nu}{2}\big)\sqrt{(\pi\nu)^{d} |P|}}\bigg(1+\frac{\bx'P^{-1}\bx}{\nu}\bigg)^{-\frac{\nu+d}{2}}\diff \bx, \quad \bu=(u_{1},\dots,u_{d})\in [0,1]^{d},
\]
where $P$ is a $d\times d$ correlation matrix and  $t_{\nu}$ is the cumulative distribution function of a standard univariate $t$-distribution with degrees of freedom $\nu$. When the off-diagonal entries of $P$ all equal $\theta$, with straightforward calculations, the stable tail dependence function of the $d$-dimensional $t$-copula is given by
\begin{align*}
L(\bx)=\sum_{j=1}^{d}x_{j}t_{(\bm{0},\tilde{P}-\theta^{2}\bm{1}\bm{1}',\nu+1)}\Bigg(&\sqrt{\nu+1}\bigg[\Big(\frac{x_{j}}{x_{1}}\Big)^{1/\nu}-\theta\bigg],\dots,\sqrt{\nu+1}\bigg[\Big(\frac{x_{j}}{x_{j-1}}\Big)^{1/\nu}-\theta\bigg],\\
&\sqrt{\nu+1}\bigg[\Big(\frac{x_{j}}{x_{j+1}}\Big)^{1/\nu}-\theta\bigg],\dots, \sqrt{\nu+1}\bigg[\Big(\frac{x_{j}}{x_{d}}\Big)^{1/\nu}-\theta\bigg]\Bigg)
\end{align*}
where $t_{(\bm{0},\tilde{P}-\theta^{2}\bm{1}\bm{1}',\nu+1)}$ is the cumulative distribution function of a $(d-1)$-dimensional $t$-distribution with mean $\bm{0}$, shape matrix $\tilde{P}-\theta^{2}\bm{1}\bm{1}'$, and degrees of freedom $\nu-1$; $\bm{0}$ is a $(d-1)$-dimensional column vector with all entries 0, $\bm{1}$ is a $(d-1)$-dimensional column vector with all entries 1, and $\tilde{P}$ is a $(d-1)\times(d-1)$-dimensional matrix with all diagonal entries 1 and all off-diagononal entries $\theta$.
\end{example}

\begin{example}[Outer-power transformation of Clayton Copula, iid case, $d\geq 2$]\label{example:outerhighdim}
When dimension $d\geq 2$, the outer-power transformation of a Clayton Copula is defined as 
\[
D(\bu;\theta, \beta)=\bigg[1+\Big\{\sum_{j=1}^{d}(u_{j}^{-\theta}-1)^{\beta}\Big\}^{1/\beta}\bigg]^{-1/\theta},\quad \bu=(u_{1},\dots,u_{d})\in [0,1]^{d}.
\]
Notice that $D$ is in the copula domain of attraction of the Gumbel--Hougaard Copula with shape parameter $\beta$, defined by
\[
D_{\infty}(\bu)=D(\bu;\beta)\coloneqq \exp\bigg[-\Big\{\sum_{j=1}^{d}(-\log u_{j})^{\beta}\Big\}^{1/\beta}\bigg], \quad \bu=(u_{1},\dots,u_{d})\in [0,1]^{d}. 
\]
\end{example}

\subsection{Computation time of bias-corrected estimators} \label{sec:comptime}

Computation times for evaluating each bias-corrected estimator at a single point $\bm u \in [0,1]^d$ for dimensions $d = 2,4,8$ and different sample sizes and all values of $m \in \{1,\dots,100\}$ are collected in Table~\ref{tab1} with the \texttt{R} implementation used in our simulations on a standard laptop. Notice that for even the largest sample size ($n = 5000$) and dimension $d=8$, the average computation time for the bias-corrected estimators does not exceed $5$ seconds. Hence, in even moderate dimensions and with reasonably large sample sizes, the bias-corrected estimators can be computed fairly quickly. Further, notice that a substantial proportion of the computation time for all bias-corrected estimators is used for computing $\hat \rho$. Since $\hat \rho$ needs to be computed only once, the evaluation of estimators at additional points will be much cheaper.

\begin{table}[H]
\begin{center}
\begin{tabular}{|l|c|c|c|c|c|c|}
\hline
& \multicolumn{2}{c|}{$d=2$} & \multicolumn{2}{c|}{$d=4$} & \multicolumn{2}{c|}{$d=8$} \\ 
\hline
& $n=1000$ & $n=5000$ & $n=1000$ & $n=5000$ & $n=1000$ & $n=5000$ \\ 
\hline
$\hat C_{n,m}$ & 6 & 28 & 10 & 45 & 20 & 116 \\ 
\hline
$\hat C_{n,(m,m')}^{bc,nai}$ & 95 & 171 & 107 & 271 & 140 & 448 \\ 
\hline
$\hat C_{n,(m',M,w)}^{bc,agg}$ & 98 & 181 & 111 & 275 & 145 & 450 \\ 
\hline
$\hat C_{n,(M,w)}^{bc,reg}$ & 96 & 182 & 117 & 263 & 127 & 452 \\ 
\hline
$\hat \rho$ & 85 & 148 & 98 & 203 & 117 & 302 \\ 
\hline
\end{tabular}
\end{center} 
\caption{\label{tab1}
Run time (in seconds) for evaluating a single estimator 100 times at a single point $\bu = (0.5,\dots,0.5)$ and for all values $m \in \{1,\dots,100\}$ (for $\hat C_{n,m}$), or for all $m' =1, m \in \{1,\dots,60\}$ (for $\hat C_{n,(m,m')}^{bc,nai}$), or for all $m' = 1, M = \{m,\dots,m+9\}, m \in \{2,\dots,50\}$ (for $\hat C_{n,(m',M,w)}^{bc,agg}$) or for all $M = \{1,m,\dots,m+9\}, m \in \{2,\dots,50\}$ (for $\hat C_{n,(M,w)}^{bc,reg}$). Computation times for bias-corrected estimators include the time to compute $\hat \rho$.} 
\end{table}

\bibliographystyle{apalike}
\bibliography{oevc}

\end{document}